\documentclass[10pt]{amsart}
\setcounter{subsection}{-1}
\setcounter{secnumdepth}{3}
\usepackage[margin=2.5cm]{geometry}
\usepackage{tikz-cd}
\usepackage{graphicx}					
\usepackage{amssymb}
\usepackage{amsmath}
\usepackage{subfig, graphicx}
\usepackage{mathrsfs}
\usepackage{amssymb, amsthm, indentfirst}
\usepackage{relsize}
\usepackage{hyperref}
\usepackage{stmaryrd}
\usepackage{lineno}

\numberwithin{equation}{section}
\setcounter{tocdepth}{1}
\usepackage{color}
\pagestyle{plain}
\theoremstyle{plain}
\newtheorem{thm}{Theorem}[section]

\newtheorem{lemma}[thm]{Lemma}
\newtheorem{prop}[thm]{Proposition}
\newtheorem{corollary}[thm]{Corollary}
\theoremstyle{definition}

\newtheorem{rmk}[thm]{Remark}

\newtheorem*{hypothesis*}{Hypothesis}

\title{On Petersson norms of generic cusp forms and special values of adjoint $L$-functions for $\GSp_4$}
\author{SHIH-YU CHEN AND ATSUSHI ICHINO}

\address{Department of Mathematics, Kyoto University, Kitashirakawa Oiwake-cho, Sakyo-ku, Kyoto 606-8502, Japan}
\email{sychen.math@gmail.com}

\address{Department of Mathematics, Kyoto University, Kitashirakawa Oiwake-cho, Sakyo-ku, Kyoto 606-8502, Japan}
\email{ichino@math.kyoto-u.ac.jp}
	
\def\SL{{\rm{SL}}}
\def\GL{{\rm{GL}}}
\def\GSp{{\rm GSp}}
\def\PGSp{{\rm PGSp}}
\def\Sp{{\rm Sp}}
\def\PGL{{\rm PGL}}
\def\o{\frak{o}}
\def\c{\frak{c}}
\def\A{{\mathbb A}}
\def\C{{\mathbb C}}
\def\R{{\mathbb R}}
\def\Q{{\mathbb Q}}
\def\Z{{\mathbb Z}}

\def\<{\langle}
\def\>{\rangle}
\def\G{{G}}

\def\x{\times}

\def\bp{\begin{pmatrix}}
\def\ep{\end{pmatrix}}

\def\<{\langle}
\def\>{\rangle}

\def\Ad{\operatorname{Ad}}
\def\diag{\operatorname{diag}}
\def\GL{\operatorname{GL}}

\def\GSp{\operatorname{GSp}}
\def\Hom{\operatorname{Hom}}
\def\Ind{\operatorname{Ind}}

\def\O{\operatorname{O}}
\def\Re{\operatorname{Re}}
\def\pr{\operatorname{pr}}
\def\Res{\operatorname{Res}}
\def\SL{\operatorname{SL}}

\def\Sp{\operatorname{Sp}}

\def\Sym{\operatorname{Sym}}
\def\tr{\operatorname{tr}}
\def\U{\operatorname{U}}
\def\vol{\operatorname{vol}}

\def\sp{\mathfrak{sp}}

\def\AA{\mathbb{A}}

\def\GG{\mathbb{G}}

\def\RR{\mathbb{R}}
\def\ZZ{\mathbb{Z}}

\def\calE{\mathcal{E}}

\def\calF{\mathcal{F}}
\def\calI{\mathcal{I}}

\def\calR{\mathcal{R}}
\def\calS{\mathcal{S}}
\def\calZ{\mathcal{Z}}

\def\1{\mathbf{1}}

\def\frka{\mathfrak{a}}
\def\frkg{\mathfrak{g}}

\def\frko{\mathfrak{o}}

\def\frkz{\mathfrak{z}}

\RequirePackage[normalem]{ulem} 
\RequirePackage{color}\definecolor{RED}{rgb}{1,0,0}\definecolor{BLUE}{rgb}{0,0,1} 
\RequirePackage{listings} 
\RequirePackage{color} 
\lstdefinelanguage{DIFcode}{ 
  moredelim=[il][\color{red}\sout]{\%DIF\ <\ }, 
  moredelim=[il][\color{blue}\uwave]{\%DIF\ >\ } 
} 
\lstdefinestyle{DIFverbatimstyle}{ 
	language=DIFcode, 
	basicstyle=\ttfamily, 
	columns=fullflexible, 
	keepspaces=true 
} 
\lstnewenvironment{DIFverbatim}{\lstset{style=DIFverbatimstyle}}{} 
\lstnewenvironment{DIFverbatim*}{\lstset{style=DIFverbatimstyle,showspaces=true}}{} 

\begin{document}
\maketitle
\begin{abstract}
We prove an explicit formula for the Petersson norms of some normalized generic cuspidal newforms
on $\GSp_4$ whose archimedean components belong to either discrete series representations or spherical principal series representations. Our formula expresses the Petersson norms in terms of special values of adjoint $L$-functions and some elementary constants depending only on local representations.
\end{abstract}

\tableofcontents
\section{Introduction}\label{s:intro}
In the study of automorphic forms, it is natural and important to measure their size.
For example, let $f \in S_\kappa(\SL_2(\Z))$ be a normalized Hecke eigenform and consider its Petersson norm 
\[
 \langle f, f \rangle = \int_{\SL_2(\Z) \backslash \mathfrak{H}}
 |f(\tau)|^2 \operatorname{Im}(\tau)^{\kappa-2} \, d \tau.
\]
Then
\[
 \langle f, f \rangle = 2^{-\kappa} L(1, f, \Ad),
\]
where $L(s, f, \Ad)$ is the (completed) adjoint $L$-function of $f$, and this formula has many applications in analytic number theory and arithmetic geometry.

More generally, let $k$ be a number field with adele ring $\A_k$ and $G$ a connected reductive linear algebraic group over $k$.
Let $\pi = \bigotimes_v \pi_v$ be an irreducible cuspidal automorphic representation of $G(\A_k)$.
We assume that $G$ is quasi-split over $k$, $\pi$ occurs with multiplicity one in the automorphic discrete spectrum of $G$, and $\pi$ is globally generic, i.e. the Whittaker function $W_f$ is non-zero for some $f \in \pi$.
Then the Lapid--Mao conjecture \cite{LM2015} predicts that
\[
 \langle f, f \rangle = |\mathcal{S}_\pi| \cdot \frac{L^S(1, \pi, \Ad)}{\Delta_G^S} \cdot \prod_{v \in S} \alpha_v(f_v)^{-1}
\]
for any $f = \bigotimes_v f_v \in \pi$ normalized so that $W_f(1) = 1$.
Here $\langle f, f \rangle$ is the Petersson norm of $f$, $\mathcal{S}_\pi$ is the global component group of the (conjectural) Arthur parameter of $\pi$, $S$ is a sufficiently large finite set of places of $k$, $L^S(s, \pi, \Ad)$ is the partial adjoint $L$-function of $\pi$, $\Delta_G^S$ is a special value of some partial $L$-function depending only on $G$, and $\alpha_v(f_v)$ is the Whittaker integral of the matrix coefficient associated to $f_v \in \pi_v$.
In fact, Lapid and Mao deduced the conjecture for $G = \GL_n$ from the theory of Rankin-Selberg integrals developed by Jacquet, Piatetski-Shapiro, and Shalika.
Moreover, in \cite{LM2017}, they established an analogous formula when $G$ is the metaplectic group $\mathrm{Mp}_{2n}$ (which is a nonlinear two-fold cover of the symplectic group $\Sp_{2n}$) under the assumption that $\pi_v$ is a discrete series representation for all archimedean places $v$ of $k$.
Recently, Furusawa and Morimoto proved the conjecture for $G=\GSp_4$ in \cite[Theorem 6.3]{FM2022}.
However, to derive from the Lapid-Mao conjecture an explicit formula for $\langle f, f \rangle$ suitable for applications, it is necessary to carry out the computation of the local integral $\alpha_v(f_v)$, which is extremely hard when $v$ is archimedean.

The purpose of this paper is to prove some explicit formula for Petersson norms when $G = \PGSp_4$.
For simplicity, we assume in the introduction that $k=\Q$, $\pi_p$ is unramified for all primes $p$, $\pi_\infty$ is either a discrete series representation or a spherical principal series representation, and $f \in \pi$ is the normalized new vector (see below for the details).
Although the Lapid-Mao conjecture has been proved in this case, it seems impossible to compute $\alpha_\infty(f_\infty)$ directly.
Nevertheless, by adopting a different approach and using reduction to the endoscopic case, we establish an explicit formula for $\langle f, f \rangle$, which we now describe in more detail.

\subsection{An explicit formula for Petersson norms}

Let $\pi=\bigotimes_v\pi_v$ be an irreducible globally generic cuspidal automorphic representation of $\GSp_4(\A_\Q)$ with trivial central character. By \cite[Theorem 12.1]{GT2011} (see also \cite{CKPS2004} and \cite{AH2006}),
$\pi$ has a strong functorial lift $\Pi$ to $\GL_4(\AA_\Q)$.
We say that $\pi$ is stable (resp.~endoscopic)
if $\Pi$ is cuspidal (resp.~non-cuspidal).
We assume $\pi_p$ is unramified for all primes $p$, and consider the following two cases:
\begin{itemize}
\item[(DS)] 
\[
\pi_{\infty}\vert_{\Sp_4(\R)} = D_{(\lambda_1,\lambda_2)}\oplus D_{(-\lambda_2,-\lambda_1)},
\]
where $D_{(\lambda_1,\lambda_2)}$ is the (limit of) discrete series representation of $\Sp_4(\R)$ with Blattner parameter $(\lambda_1,\lambda_2) \in \Z^2$ such that $1-\lambda_1 \leq \lambda_2 \leq 0$. 
\item[(PS)] 
\[
\pi_\infty \vert_{\Sp_4(\R)} = {\rm Ind}_{\Sp_4(\R)\cap {\bf B}(\R)}^{\Sp_4(\R)}(|\mbox{ }|^{\lambda_1}\boxtimes|\mbox{ }|^{\lambda_2})
\]
for some $\lambda_1, \lambda_2 \in \C$.
\end{itemize}
Here ${\bf B}$ is the standard Borel subgroup of $\GSp_4$. 

Let $f=\bigotimes_v f_v \in \pi$ be a non-zero cusp form satisfying the following conditions:
\begin{align}\label{E:K-type}
\begin{split}
&f_p\mbox{ is }\GSp_4(\Z_p)\mbox{-invariant for all primes }p.\\
&\mbox{In Case (DS), }f_{\infty}\mbox{ is a lowest weight vector of the minimal }{\rm U}(2)\mbox{-type of }D_{(-\lambda_2,-\lambda_1)}.\\
&\mbox{In Case (PS), }f_\infty \mbox{ is an $(\Sp_4(\R)\cap {\rm O}(4))$-invariant vector.}
\end{split}
\end{align}
For the choice of $f_\infty$ in Case (DS), we refer to \S\,\ref{SS:main thm} for more detail. 
By \cite{JS2007}, the multiplicity of $\pi$
in the space of cusp forms on $\GSp_4(\AA_\Q)$ is one.
Hence the conditions in (\ref{E:K-type}) uniquely determine $f \in \pi$ up to scalars.
Let $U$ be the unipotent radical of ${\bf B}$ and $\psi_U$ the standard non-degenerate character of $U(\Q)\backslash U(\A_\Q)$ (see \S\,\ref{S:notation} for precise definition).
The Whittaker function of $f$ with respect to $\psi_U$ is defined by
\[
W(g)=\int_{U(\Q)\backslash U(\A_\Q)}f(ug)\overline{\psi_U(u)}\,du.
\]
Here $du$ is the Tamagawa measure on $U(\A_\Q)$. We may decompose $W$ into a product of local Whittaker functions $W=\prod_v W_v$. By explicit formulae for Whittaker functions
\cite{CS1980}, \cite{Oda1994}, \cite{Moriyama2004}, \cite{Ishii2005}, and \cite{RS2007},
$W_v(1) \ne 0$ for all places $v$. We normalize $f$ as follows (see also Remark \ref{R:normalization} below): Let $W_p(1)=1$ for all primes $p$.
In Case (DS), let
\begin{align}\label{E:normalization DS}
\begin{split}
W_\infty(1)&=
e^{-2\pi}\int_{c_1-\sqrt{-1}\infty}^{c_1+\sqrt{-1}\infty}\frac{ds_1}{2\pi \sqrt{-1}}\,\int_{c_2-\sqrt{-1}\infty}^{c_2+\sqrt{-1}\infty}\frac{ds_2}{2\pi \sqrt{-1}}\,(4\pi^{3})^{(-s_1+\lambda_{1}+1)/2}(4\pi )^{(-s_2+\lambda_{2})/2}\\
&\quad\quad\quad\quad\quad\quad\quad\quad\quad\times \Gamma\left(\frac{s_1+s_2-2\lambda_{2}+1}{2}\right)\Gamma\left(\frac{s_1+s_2+1}{2}\right)\Gamma\left(\frac{s_1}{2}\right)\Gamma\left(\frac{-s_2}{2}\right),
\end{split}
\end{align}
where $c_1,c_2 \in \R$ satisfy 
$c_1+c_2+1>0$ and $c_1>0>c_2.$ In Case (PS), let
\begin{align}\label{E:normalization PS}
\begin{split}
W_\infty(1) &= \int_{c_1-\sqrt{-1}\infty}^{c_1+\sqrt{-1}\infty}\frac{ds_1}{2\pi \sqrt{-1}}\,\int_{c_2-\sqrt{-1}\infty}^{c_2+\sqrt{-1}\infty}\frac{ds_2}{2\pi \sqrt{-1}}\,2^{-4}\pi^{-s_1-s_2}\\
&\times\Gamma\left(\frac{s_1+\lambda_{1}}{2}\right)\Gamma\left(\frac{s_1-\lambda_{1}}{2}\right)\Gamma\left(\frac{s_1+\lambda_{2}}{2}\right)\Gamma\left(\frac{s_1-\lambda_{2}}{2}\right)\\
&\times\Gamma\left(\frac{s_2}{2}+\frac{\lambda_{1}+\lambda_{2}}{4}\right)\Gamma\left(\frac{s_2}{2}-\frac{\lambda_{1}+\lambda_{2}}{4}\right)\Gamma\left(\frac{s_2}{2}+\frac{\lambda_{1}-\lambda_{2}}{4}\right)\Gamma\left(\frac{s_2}{2}-\frac{\lambda_{1}-\lambda_{2}}{4}\right)\\
&\times\Gamma\left(\frac{s_1+s_2}{2}+\frac{\lambda_{1}+\lambda_{2}}{4}\right)^{-1}\Gamma\left(\frac{s_1+s_2}{2}-\frac{\lambda_{1}+\lambda_{2}}{4}\right)^{-1}\\
&\times {}_3F_2\left(\frac{s_1}{2},\frac{s_2}{2}+\frac{\lambda_{1}-\lambda_{2}}{4}, \frac{s_2}{2}-\frac{\lambda_{1}-\lambda_{2}}{4} ;\, \frac{s_1+s_2}{2}+\frac{\lambda_{1}+\lambda_{2}}{4},\frac{s_1+s_2}{2}-\frac{\lambda_{1}+\lambda_{2}}{4};\,1           \right),
\end{split}
\end{align}
where ${}_3F_2$ is a generalized hypergeometric function and $c_1,c_2 \in \R$ satisfy
\[
c_1>\max\left\{ |{\rm Re}(\lambda_{1})|,\,|{\rm Re}(\lambda_{2})|\right\},\quad c_2>\max\left\{\left\vert{\rm Re}\left(\frac{\lambda_{1}+\lambda_{2}}{2}\right)\right\vert,\,\left\vert{\rm Re}\left(\frac{\lambda_{1}-\lambda_{2}}{2}\right)\right\vert\right\}.
\]
Note that by \cite[Theorem 3.2 and Proposition 3.5]{Ishii2005}, the right-hand side of (\ref{E:normalization PS}) depends only on the orbit of $(\lambda_1,\lambda_2)$ under the action of the Weyl group.
Let $\<f,f\>$ be the Petersson norm of $f$ defined by 
\[
\<f,f\>=\int_{\A_\Q^\times\GSp_4(\Q)\backslash \GSp_4(\A_\Q)}|f(g)|^2\,dg.
\]
Here $dg$ is the Tamagawa measure on $\GSp_4(\A_\Q)$. 

\begin{thm}\label{T:intro}
We have
\[\<f,f\>=2^c\cdot \frac{L(1,\pi,{\rm Ad})}{\Delta_{\PGSp_4}}\cdot C_\infty.\]
Here ${\rm Ad}$ is the adjoint representation of $\GSp_4(\C)$ on $\frak{pgsp}_4(\C)$,
\begin{align*}
\Delta_{\PGSp_4} &= \xi(2)\xi(4),\\
c&=\begin{cases}1 & \mbox{ if $\pi$ is stable},\\
2 & \mbox{ if $\pi$ is endoscopic},\end{cases}\\
C_\infty & = \begin{cases}
\displaystyle{2^{\lambda_{1}-\lambda_{2}+5}\pi^{3\lambda_{1}-\lambda_{2}+5}(1+\lambda_{1}-\lambda_{2})^{-1}}  & \mbox{ in Case (DS)},\\
\displaystyle{2^{-4}} & \mbox{ in Case (PS)},
\end{cases}
\end{align*}
where $\xi(s)$ is the completed Riemann zeta function.
\end{thm}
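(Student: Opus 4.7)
The plan is to split the argument according to whether $\pi$ is stable or endoscopic; the factor $2^c$ already signals that the two cases carry different multiplicities, reflecting $|\mathcal{S}_\pi|$. In both cases I would try to sidestep any direct evaluation of the archimedean Whittaker integral $\alpha_\infty(f_\infty)$, which is the principal obstruction to invoking Lapid-Mao verbatim; instead I would exploit the shape of the lift $\Pi$ to $\GL_4(\A_\Q)$ together with explicit formulas on smaller groups, and reduce the stable case to the endoscopic case by comparing period identities on a common auxiliary group.

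For the endoscopic case, write $\Pi = \Pi_1 \boxplus \Pi_2$ for cuspidal $\Pi_i$ on $\GL_2(\A_\Q)$, and realize $\pi$ as a Yoshida-type theta lift from $\GSO(2,2) \simeq (\GL_2 \times \GL_2)/\GL_1$. I would first pick a Schwartz-Bruhat section whose theta kernel, paired with $f_1 \otimes f_2$, produces the normalized $f$ of (\ref{E:K-type}) with the prescribed Whittaker value. The Rallis inner product formula, obtained from the seesaw with the doubled pair $(\GSp_4 \times \GSp_4, \mathrm{GO}(2,2) \times \mathrm{GO}(2,2))$, then factorizes $\langle f, f\rangle$ as $\langle f_1, f_1\rangle \langle f_2, f_2\rangle$ times a product of local zeta integrals of the Schwartz section. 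Combined with the classical adjoint formula for $\GL_2$ (in both the holomorphic and spherical regimes required by Cases (DS) and (PS)) and the endoscopic factorization $L(s,\pi,\Ad) = L(s,\Pi_1,\Ad)\,L(s,\Pi_2,\Ad)\,L(s,\Pi_1 \times \Pi_2^{\vee})$, this yields the statement with $c=2$, and $C_\infty$ is read off from the archimedean Schwartz integral.

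In the stable case no such factorization is available, and I would transfer the identity from an endoscopic companion via a Bessel period. For an auxiliary cuspidal $\sigma$ on $\GL_2(\A_\Q)$, the refined Gan-Gross-Prasad / Liu conjecture for $(\GSp_4,\GL_2)$ expresses $|\mathcal{B}_\sigma(f)|^2$ as $\langle f, f\rangle\,\langle f_\sigma, f_\sigma\rangle$ multiplied by an explicit global $L$-ratio involving $L(1/2, \pi \times \sigma)$ and $L(1, \pi, \Ad)$ and by a product of local pairings. Applying the same identity to a carefully chosen endoscopic $\pi'$ with the same archimedean component and the same spherical data at finite places as $\pi$, and taking the ratio, I can solve for $\langle f, f\rangle$ in terms of the endoscopic Petersson norm (handled in the previous paragraph), the $\GL_2$ data attached to $\sigma$, and the global $L$-values appearing on both sides. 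The archimedean local Bessel factor depends only on $\pi_\infty = \pi'_\infty$ and the fixed archimedean test vectors, so it cancels in the ratio; the passage from $c=2$ to $c=1$ encodes the corresponding drop in $|\mathcal{S}_\pi|$.

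The hardest step throughout is the archimedean analysis needed to pin down $C_\infty$. In Case (DS) one has to produce a Schwartz function on $\R^4$ whose theta image is the lowest weight vector of the minimal $\U(2)$-type of $D_{(-\lambda_2,-\lambda_1)}$ and then evaluate the Mellin-Barnes integral (\ref{E:normalization DS}); in Case (PS) the matching with the $(\Sp_4(\R)\cap \O(4))$-invariant vector and the ${}_3F_2$ expression in (\ref{E:normalization PS}) is even more delicate. For the stable-case comparison, I must also verify that the archimedean Bessel pairings on $\pi$ and $\pi'$ really do cancel at the prescribed vectors, which is a $K$-type bookkeeping problem that is easy to botch. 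The remaining steps -- matching local constants at finite primes via the Casselman-Shalika / Macdonald formulas and keeping track of Tamagawa-measure conventions -- should be routine.
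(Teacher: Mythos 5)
Your endoscopic paragraph tracks the paper's strategy fairly closely: the paper does realize the endoscopic $\pi$ as a theta lift from $\GO_{2,2}$, chooses an explicit Schwartz function matching the normalized $f$, and invokes the Rallis inner product formula of Gan--Qiu--Takeda together with a factorization of $L(s,\pi,\Ad)$ (sections 6--8, culminating in Propositions \ref{P:W(1)} and \ref{P:Rallis inner product}). Reading off $C_\infty$ from the archimedean local integrals (Lemmas \ref{L:archi. d.s local zeta integral}, \ref{L:archi. spherical local zeta integral}) is exactly what happens.

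Your stable paragraph, however, is where the proposal goes wrong, and in a way that cannot be repaired within the framework you set up. You want to invoke a refined Gan--Gross--Prasad / Liu-type Bessel period identity for $(\GSp_4,\GL_2)$ and then cancel archimedean Bessel local factors by comparing $\pi$ (stable) against an endoscopic $\pi'$ sharing the archimedean data. First, the refined Bessel period formula in the generality needed here --- arbitrary generic cuspidal $\pi$ on $\GSp_4$ over a totally real field with archimedean type (DS) or (PS) --- is not a theorem; the paper deliberately avoids the Lapid--Mao conjecture because it is unproved, and replacing it by an equally unproved B\"ocherer/GGP refinement defeats the purpose. Second, and more structurally, strong multiplicity one for globally generic $\GSp_4$ forbids a stable $\pi$ and an endoscopic $\pi'$ from sharing all local components: they must differ somewhere, so either the archimedean components or a finite local component, and hence the corresponding local Bessel factors, do \emph{not} cancel in the ratio. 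To align $\pi'_\infty$ with $\pi_\infty$ one would need precisely the density/analyticity argument via the limit multiplicity formula that the paper carries out in Proposition \ref{P:existence} and Step 2--Step 4 of the proof of Proposition \ref{P:main ide}, and your proposal does not mention it; without it, neither the existence of a suitable $\pi'$ nor the cancellation of archimedean factors is available. Finally, you would still be stuck evaluating local Bessel integrals at whatever places $\pi_v \neq \pi'_v$, which is exactly the hard computation you set out to avoid.

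What your proposal misses altogether is the paper's actual mechanism for the stable case, which does \emph{not} proceed by transfer from an endoscopic companion. Instead, the paper compares two integral representations for the same cusp form $f$: Jiang's $\GSp_4 \times \GSp_4$ integral, which produces $L(s,\pi\times\pi^\vee)=\zeta(s)L(s,\pi,\mathrm{std})L(s,\pi,\Ad)$, and the Piatetski--Shapiro--Rallis doubling integral, which produces $\langle f,f\rangle \cdot L(s,\pi,\mathrm{std})$. The Siegel--Weil first and second term identities \eqref{E:1st identity}--\eqref{E:2nd identity} from Gan--Qiu--Takeda match the Laurent coefficients of the two Eisenstein series, and comparing coefficients yields \eqref{E:general iden.}. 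When $L^T(s,\pi,\mathrm{std})$ has a simple pole at $s=1$ (endoscopic case), the leading coefficient gives \eqref{E:endoscopic iden.} with the factor $4=2^2$; when it is holomorphic (stable case), the first term identity is $0=0$, and one needs the second term identity together with the vanishing results of Lemmas \ref{lem:vanish_res} and \ref{lem:vanish_theta}, which produces \eqref{E:stable iden.} with the factor $2=2^1$. This is where $c$ comes from, and it delivers the formula up to a constant $\widetilde C_\infty$ depending only on $\pi_\infty$ and expressible as a ratio of archimedean local zeta integrals, with no Bessel period, no appeal to unproved conjectures, and no need for a companion representation at the global level. The identity $\widetilde C_\infty = C_\infty$ is then the purely local problem that the paper resolves by the density and analyticity argument, using the endoscopic Rallis inner product computation as the seed.
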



\begin{rmk}
In Case (DS), $1 + \lambda_1 - \lambda_2$ is the dimension of the minimal $\U(2)$-type of $D_{(-\lambda_2,-\lambda_1)}$.
\end{rmk}

\begin{rmk}\label{R:normalization}
Recall that, in the case of $\GL_2(\R)$, we may normalize the Whittaker function so that its value at $\diag(a,1)$ with $a > 0$ is given by 
\[
 a^{\kappa/2} e^{- 2 \pi a} = \frac{1}{2 \pi \sqrt{-1}} \int_{c-\sqrt{-1} \infty}^{c+\sqrt{-1} \infty} (2 \pi)^{-s-\kappa/2} \Gamma\left(s+\frac{\kappa}{2} \right) a^{-s} \, ds
\]
for the (limit of) discrete series representation with minimal weight $\kappa \in \Z_{\ge 1}$ and 
\[
 a^{1/2} K_\lambda(2 \pi a) = \frac{1}{2 \pi \sqrt{-1}} \int_{c-\sqrt{-1} \infty}^{c+\sqrt{-1} \infty} 2^{-2} \pi^{-s-1/2} \Gamma\left( \frac{s+\lambda}{2} + \frac{1}{4} \right) \Gamma\left( \frac{s-\lambda}{2} + \frac{1}{4} \right) a^{-s} \, ds
\]
for the spherical principal series representation $\Ind^{\GL_2(\R)}_{B(\R)}(|\ |^\lambda \boxtimes |\ |^{-\lambda})$, where $B$ is the standard Borel subgroup of $\GL_2$. 
Then the corresponding Jacquet-Langlands' local zeta integral for $\GL_2$ gives the standard $L$-factor.
Similarly, if we normalize the Whittaker function on $\GSp_4(\R)$ as in (\ref{E:normalization DS}) and (\ref{E:normalization PS}), then the corresponding Novodvorsky's local zeta integral for $\GSp_4$ gives the spinor $L$-factor (cf.\,\cite{Moriyama2004} and \cite{IM2008}).
It would also be interesting to compare our normalization in Case (DS) with the rational structures studied by Harris and Kudla \cite{HK1992}. 
\end{rmk}

\begin{rmk}
In Case (DS), Theorem \ref{T:intro} plays an important role in the result of Lemma and Ochiai \cite{LO2018} on the congruence between a fixed irreducible globally generic endoscopic cuspidal automorphic representation of $\GSp_4(\A_\Q)$ and irreducible stable cuspidal automorphic representations of $\GSp_4(\A_\Q)$.
\end{rmk}

\subsection{An outline of the proof}
The proof of Theorem \ref{T:intro} consists of two steps.
\begin{itemize}
\item
We first prove that there exists a constant $\widetilde{C}_\infty$ depending only on $\pi_\infty$ such that 
\begin{equation}
\label{eq:pf1}
 \langle f, f \rangle = 2^c \cdot \frac{L(1, \pi, \Ad)}{\Delta_{\PGSp_4}} \cdot \widetilde{C}_\infty.
\end{equation}
For this, we consider the automorphic $L$-function
\[
 L(s, \pi \times \pi^{\vee}) = \xi(s) \cdot L(s, \pi, {\rm std}) \cdot L(s, \pi, \Ad),
\]
where $\pi^{\vee}$ is the contragredient representation of $\pi$ and $L(s, \pi, {\rm std})$ is the standard $L$-function of $\pi$.
Recall the integral representations (ignoring factors which do not depend on $\pi$)
\begin{align*}
 \langle \mathcal{E}(s), \bar{f} \otimes f \rangle 
 & = L\left( \frac{s+1}{2}, \pi \times \pi^{\vee} \right) \cdot \mathcal{Z}(s, \pi_\infty), \\
 \langle E(s), \bar{f} \otimes f \rangle  
 & = \langle f, f \rangle \cdot L\left( s+\frac{1}{2}, \pi, {\rm std} \right) \cdot Z(s, \pi_\infty)
\end{align*}
due to Jiang \cite{Jiang1996} and Piatetski-Shapiro and Rallis \cite{LNM1254}, respectively. 
Here $\mathcal{E}(s)$ and $E(s)$ are Eisenstein series on $\GSp_8$ induced from characters of Levi subgroups $\GL_3 \times \GSp_2$ and $\GL_4 \times \GL_1$, respectively, 
and $\mathcal{Z}(s, \pi_\infty)$ and $Z(s, \pi_\infty)$ are the associated archimedean zeta integrals.
Recall also that these Eisenstein series have Laurent expansions of the form
\begin{align*}
 \mathcal{E}(s) & = \frac{\mathcal{E}_{-2}(1)}{(s-1)^2} + \frac{\mathcal{E}_{-1}(1)}{s-1} + \cdots, \\
 E(s) & = \frac{E_{-1}(\frac{1}{2})}{s-\frac{1}{2}} + E_0\left( \frac{1}{2} \right) + \cdots.
\end{align*}
By the Siegel-Weil formula \cite{GQT2014}, we have
\begin{align*}
\mathcal{E}_{-2}(1)& = E_{-1}\left(\frac{1}{2}\right),\\
\mathcal{E}_{-1}(1)& = E_0\left(\frac{1}{2}\right) + \mathbb{E}_0\left(\frac{1}{2}\right)+{\widetilde{E}}_{-1}\left(\frac{1}{2}\right)
\end{align*}
for some Eisenstein series $\mathbb{E}(s)$ and ${\widetilde{E}}(s)$ on $\GSp_8$ induced from characters of Levi subgroups $\GL_2 \times \GSp_4$ and $\GL_4 \times \GL_1$, respectively.
Then we have
\[
\langle \mathcal{E}_{-2}(1), \bar{f} \otimes f \rangle 
 = \left\langle  E_{-1} \left( \frac{1}{2} \right), \bar{f} \otimes f \right\rangle.
\]
In fact, this identity says $0=0$ when $\pi$ is stable.
Moreover, when $\pi$ is stable, we show that
\[
\left \langle \mathbb{E}_0\left(\frac{1}{2}\right), \bar{f} \otimes f\right \rangle = \left \langle \widetilde{E}_{-1}\left(\frac{1}{2}\right), \bar{f} \otimes f\right \rangle=0,
\]
so that
\begin{align*}
 \langle \mathcal{E}_{-1}(1), \bar{f} \otimes f \rangle 
 & = \left\langle E_{0} \left( \frac{1}{2} \right), \bar{f} \otimes f \right\rangle.
\end{align*}
Combining this with the integral representations, and noting that $L(s, \pi, {\rm std})$ is holomorphic and non-zero (resp.~has a simple pole) at $s=1$ if $\pi$ is stable (resp.~endoscopic), we can deduce \eqref{eq:pf1} with 
\[
 \widetilde{C}_\infty = \frac{1}{L(1, \pi_\infty, \Ad)} \cdot 
 \frac{\mathcal{Z}(1, \pi_\infty)}{Z(\frac{1}{2}, \pi_\infty)}.
\]
\item
We next show that 
\begin{equation}
\label{eq:pf2}
 \widetilde{C}_\infty = C_\infty.
\end{equation}
This is a local problem, but we address it by a global argument.
Suppose that $\pi$ is endoscopic. Recall that $\pi$ is of type (DS) or (PS) at the archimedean place and unramified at all finite places. Since $\pi$ is endoscopic, it can be realized as a global theta lift from the orthogonal similitude group ${\rm GO}_{2,2}$.
Hence, by using the Rallis inner product formula \cite{GQT2014}, we can show that
\begin{equation}\label{eq:pf3}
 \langle f, f \rangle = 2^c \cdot \frac{L(1, \pi, \Ad)}{\Delta_{\PGSp_4}} \cdot {C}_\infty'
\end{equation}
for some constant $C_\infty'$ defined by doubling local zeta integral depending only on the archimedean component. Moreover, by explicit computation, we have
\begin{align}\label{eq:pf4}
C_\infty' = C_\infty.
\end{align}
 Therefore, if $\pi_\infty$ is the archimedean component of such $\pi$, then we can deduce (\ref{eq:pf2}) for $\pi_\infty$ from (\ref{eq:pf1}), (\ref{eq:pf3}), and (\ref{eq:pf4}).
Now we take an arbitrary representation $\pi_\infty$ of $\GSp_4(\R)$ of type (DS) or (PS). When $\pi_\infty$ is of type (DS) and sufficiently regular, the existence of its globalization $\pi$ is clear.
When $\pi_\infty$ is of type (PS), we can deduce from the limit multiplicity formula \cite{Shin2012} that there exists $\pi$ whose archimedean component is arbitrarily close to $\pi_\infty$.
(Strictly speaking, we need to consider cusp forms of square-free level over totally real number fields.)
Thus we may conclude \eqref{eq:pf2} for $\pi_\infty$ by showing that $\widetilde{C}_\infty = \widetilde{C}_\infty(\pi_\infty)$ is analytic as a function of $\pi_\infty$. 
Finally, when $\pi_\infty$ is of type (DS) without regularity assumption, we consider cusp forms of square-free level in the global argument. In this case, the formulas analogous to (\ref{eq:pf1}) and (\ref{eq:pf3}) still hold but some extra constants depending only on the non-archimedean components appear in the formulas. Thus we also need to determine these constants, but by a similar argument using the limit multiplicity formula \cite{Shin2012}, we are reduced to compute certain non-archimedean doubling local zeta integral.
\end{itemize}

\subsubsection*{Acknowledgement}
The authors would like to thank Valentin Blomer, Yao Cheng, Masaaki Furusawa, Ming-Lun Hsieh, Tamotsu Ikeda, Francesco Lemma, and Tadashi Ochiai for their advice and encouragement. The first-named author is partially supported by MOST GSSAP with Grant Number 2917-I-002-003. The second-named author is partially supported by JSPS KAKENHI Grant Number 26287003.

\section{Main result}\label{Main results}
\subsection{Notation}\label{S:notation}

Fix a totally real number field $k$. Let $\o$, $\frak d$, and $\frak{D}$ be the ring of integers, the different ideal, and the absolute discriminant of $k$, respectively.
Let $\A=\A_k$ be the ring of adeles of $k$ and $\A_f$ its finite part. For a finite dimensional vector space $V$ over $k$, let $\mathcal{S}(V(\A))$ be the space of Schwartz functions on $V(\A)$. Let $\psi_0=\bigotimes_v\psi_{v,0}$ be the standard additive character of $\Q\backslash \A_\Q$ defined so that
\begin{align*}
\psi_{p,0}(x) & = e^{-2\pi \sqrt{-1}x} \mbox{ for }x \in \Z[p^{-1}],\\
\psi_{\infty,0}(x) & = e^{2\pi \sqrt{-1}x} \mbox{ for }x \in \R.
\end{align*}
Let $\psi= \psi_k = \psi_0\circ{\rm tr}_{k/\Q}$ be the standard additive character of $k \backslash \A$.

Let $v$ be a place of $k$. We denote by $\psi_v$ the $v$-component of $\psi$. If $v$ is a finite place, let $\frak{o}_v$, $\varpi_v$, and $q_v$ be the maximal compact subring of $k_v$, a generator of the maximal ideal of $\frak{o}_v$, and the cardinality of $\frak{o}_v / \varpi_v\frak{o}_v$. Let $|\mbox{ }|_v$ be the absolute value on $k_v$ normalized so that $|\varpi_v|_v = q_v^{-1}$. Let $\c_v$ be the integer such that $\frak{d}_v = \varpi_v^{\c_v}\o_v$. Note that $\frak{c}_v$ is the largest integer so that $\psi_v$ is trivial on $\varpi_v^{-\frak{c}_v}\o_v$. If $v$ is a real place, let $|\mbox{ }|_v$ be the ordinary absolute value on $k_v$.

Let $\xi(s) = \xi_k(s) = \prod_{v}\zeta_v(s)$ be the completed Dedekind zeta function of $k$, where $v$ ranges over the places of $k$ and 
\[\zeta_v(s) = \begin{cases}
(1-q_v^{-s})^{-1} & \mbox{ if $v$ is finite},\\
\pi^{-s/2}\Gamma(s/2) & \mbox{ if $v$ is real}.
\end{cases}
\]
Here $\Gamma(s)$ is the gamma function. Define $\rho = {\rm Res}_{s=1}\xi(s)$. For a finite set $S$ of places of $k$, let $\xi^S(s) = \prod_{v \notin S}\zeta_v(s)$ be the partial Dedekind zeta function of $k$.

If $S$ is a set, then we let $\mathbb{I}_S$ be the characteristic function of $S$. Let ${\rm M}_{n , m}$ be the matrix algebra of $n$ by $m$ matrices.
Let $\GSp_{2n}$ denote the symplectic similitude group of rank $n+1$ defined by
\[
 \GSp_{2n} =
 \left\{ g \in \GL_{2n} \, \left| \, g
 \begin{pmatrix}
  0      & \1_n \\
  - \1_n & 0
 \end{pmatrix}
 {}^t \! g = 
  \nu(g) \begin{pmatrix}
  0      & \1_n \\
  - \1_n & 0
 \end{pmatrix}, \,\nu(g) \in \mathbb{G}_m
 \right. \right\}.
\]
Let $\Sp_{2n} = {\rm ker}(\nu)$. Throughout this article, we write 
\[G= \GSp_4\]
unless otherwise specified. Let $Z_G$ be the center of $G$. Let \[{\bf B}=\left.\left\{\bp t_1 & * & *&* \\0&t_2&*&*\\0&0&\nu t_1^{-1}&0\\0&0& *&\nu t_2^{-1}  \ep\in \G  \mbox{ }\right\vert\mbox{ } t_1,t_2,\nu \in {\mathbb G}_m\right\}\]
be the standard Borel subgroup of $\G$, and let $U$ be its unipotent radical. Let $\psi_U$ be the non-degenerate character of $U(k)\backslash U(\A)$ defined by
\[
\psi_U\left(\bp 1 & x & *&* \\0&1&*&y\\0&0&1&0\\0&0&-x&1 \ep\right)= \psi(-x-y).
\] 
Let $\bf T \subset {\bf B}$ be the standard maximal tours of $G$. 
In $\GL_2$, let $B$ be the Borel subgroup consisting of upper triangular matrices, and put
\[
{\bf a}(\nu) = \bp \nu & 0 \\ 0 & 1 \ep,\quad {\bf d}(\nu) =   \bp 1 & 0 \\ 0 & \nu \ep,\quad {\bf m}(t)= \bp t & 0 \\ 0 & t^{-1}\ep,\quad {\bf n}(x) = \bp 1 & x \\ 0 & 1\ep,\quad w = \bp 0 & 1 \\ -1 & 0\ep
\]
for $\nu,t \in \mathbb{G}_m$ and $x \in \mathbb{G}_a$. Let
\[
{\rm SO}(2) = \left.\left\{ k_\theta = \bp \cos\theta & \sin\theta \\ -\sin\theta & \cos\theta \ep \mbox{ }\right\vert\mbox{ } \theta \in \R/2\pi\Z \right\}.
\]

Let $v$ be a finite place of $k$. Let $K_0(\varpi_v)$ be the Iwahori subgroup of $\GL_2(k_v)$ defined by
\[K_0(\varpi_v) = \bp \o_v & \o_v \\ \varpi_v\o_v & \o_v\ep \cap \GL_2(\o_v).\] The paramodular group ${\rm K}(\varpi_v)$ of level $\varpi_v\o_v$ is the subgroup of $G(k_v)$ consisting of $g \in G(k_v)$ such that $\nu(g) \in \o_v^\times$ and 
\[g \in \bp \o_v & \o_v & \varpi_v^{-1}\o_v & \o_v\\
\varpi_v\o_v & \o_v & \o_v & \o_v\\
\varpi_v\o_v & \varpi_v\o_v & \o_v & \varpi_v\o_v\\
\varpi_v\o_v & \o_v & \o_v & \o_v
 \ep.\]

For $\nu \in \C$, let $K_{\nu}(z)$ be the modified Bessel function defined by
\begin{align}\label{E:K Bessel}
K_{\nu}(z) = \frac{1}{2}\int_{0}^{\infty}e^{-z(t+t^{-1})/2}t^{\nu-1}\,dt
\end{align}
if ${\rm Re}(z)>0$. 
\subsection{Measures}\label{SS:measures}
Let $v$ be a place of $k$. 
If $v$ is finite, we normalize the Haar measures on $k_v$ and $k_v^\times$ so that ${\rm vol}(\o_v)=1$ and ${\rm vol}(\o_v^\times)=1$, respectively. If $v$ is real, we normalize the Haar measures on $k_v \simeq \R$ and $k_v^\times \simeq \R^\times$ so that ${\rm vol}([1,2])=1$ and ${\rm vol}([1,2])=\log 2$, respectively.
Let $m$ be a positive integer. Let $dg_v$ be the Haar measure on $\GL_m(k_v)$ defined as follows: For $\phi \in L^1(\GL_m(k_v))$, we have 
\begin{align}\label{E:standard measure on GL}
\int_{\GL_m(k_v)}\phi(g_v)\,dg_v = \prod_{1 \leq i<j \leq m}\int_{k_v}du_{ij}\prod_{1\leq i \leq m}\int_{k_v^{\times}}d^{\times}t_i\int_{K_v}dk\,\phi\left(\bp t_1 & u_{12} & \cdots &u_{1m} \\ 0 & t_2 & \cdots & u_{2m} \\ \vdots & \vdots & \ddots& \vdots \\ 0 & 0 & \cdots & t_m\ep k\right)\prod_{1\leq i \leq m}|t_i|_v^{-m+i},
\end{align}
where 
\begin{align*}
K_v = \begin{cases}    
\GL_m(\frak{o}_v)& \mbox{ if $v$ is finite},\\
{\rm O}(m) & \mbox{ if $v$ is real},
\end{cases}
\end{align*}
and ${\rm vol}(K_v)=1$. Let $H$ be a connected reductive linear algebraic group defined and split over $k_v$. Fix a Chevalley basis of ${\rm Lie}(H)$. The basis determines a top differential form on $H$ over $\Z$ which is unique up to $\pm 1$. The top differential form together with the self-dual Haar measure on $k_v$ with respect to $\psi_v$ determines a Haar measure on $H(k_v)$ as explained in \cite[\S 6]{Vos1996}. We call it the local Tamagawa measure on $H(k_v)$.

For a connected linear algebraic group $H$ over $k$, we take the Tamagawa measure on $H(\A)$ (cf.~\cite[\S 6]{Vos1996}). For any compact group $K$, we take the Haar measure on $K$ such that ${\rm vol}(K)=1$.
Let $dg$ be the Tamagawa measure on $\GL_m(\A)$. Then (cf.\,\cite[\S 5]{Lai1980})
\begin{align}\label{Tamagawa measure on GL}
dg = \frak{D}^{-m^2/2}\rho^{-1}\prod_{i=2}^m\xi(i)^{-1} \cdot\prod_v dg_v.
\end{align}

\subsection{Automorphic representations of $\GSp_4$}
Let $\pi=\bigotimes_v \pi_v$ be an irreducible globally generic cuspidal automorphic representation of $G(\A)$ with trivial central character. 
By \cite{CKPS2004}, \cite{AH2006}, \cite[Theorem 12.1]{GT2011}, $\pi$ has a strong functorial lift $\Pi$ to $\GL_4(\A)$.
By \cite{GRS2001}, \cite[Theorem 12.1]{GT2011}, 
either $\Pi$ is cuspidal or $\Pi = \sigma_1 \boxplus \sigma_2$ for some irreducible cuspidal automorphic representations $\sigma_1$ and $\sigma_2$
of $\GL_2(\AA)$ with trivial central character such that $\sigma_1 \neq \sigma_2$.
We say that $\pi$ is stable (resp.~endoscopic)
if $\Pi$ is cuspidal (resp.~non-cuspidal).

Recall that the dual group of $G$ is $\GSp_4(\C)$.
Let ${\rm Ad}$ denote the adjoint representation of $\GSp_4(\C)$ on $\frak{pgsp}_4(\C)$, and $\rm std$ the composition of the projection $\GSp_4(\C) \rightarrow \PGSp_4(\C)$ with the standard
representation of $\PGSp_4(\C) \simeq {\rm SO}_5(\C)$ on $\C^5$.
Let $S$ be a finite set of places of $k$
including the archimedean places such that,
for $v \notin S$, $\pi_v$ is unramified.
Then the partial adjoint and standard $L$-functions of $\pi$ are defined as the Euler products
\[
L^S(s,\pi,{\rm Ad}) = \prod_{v \notin S} L(s,\pi_v,{\rm Ad}),\quad L^S(s,\pi,{\rm std}) = \prod_{v \notin S} L(s,\pi_v,{\rm std})
\]
for $s \in \C$, which are absolutely convergent for ${\rm Re}(s)$ sufficiently large.
Also, we have 
\begin{align*}
 L^S(s, \Pi, \Sym^2) & = L^S(s, \pi, \Ad), \\
 L^S(s, \Pi, \wedge^2) & = \xi^S(s) L^S(s, \pi, {\rm std}).
\end{align*}
In particular, $L^S(s,\pi,{\rm Ad})$ and $L^S(s,\pi,{\rm std})$ admit meromorphic continuations to $\C$.
(In a more general context, the meromorphic continuation of $L^S(s,\pi,{\rm std})$ was established by Piatetski-Shapiro and Rallis \cite{LNM1254} much earlier.)
By \cite{GRS2001}, \cite[Theorem 12.1]{GT2011}, 
$L^S(s, \Pi, \wedge^2)$ has a simple (resp.~double) pole at $s=1$
if $\pi$ is stable (resp.~endoscopic).
Hence $L^S(s, \pi, {\rm std})$ is holomorphic and non-zero
(resp.~has a simple pole) at $s=1$ if $\pi$ is stable (resp.~endoscopic).
Moreover, $L^S(s, \pi, \Ad)$ is holomorphic and non-zero at $s=1$.

For any place $v$ of $k$, we denote by $\Phi_v : L_{k_v} \rightarrow \GSp_4(\C)$ the local $L$-parameter attached to $\pi_v$ by the local Langlands correspondence established by Gan and Takeda \cite{GT2011} if $v$ is finite and by Langlands \cite{Langlands1989} if $v$ is real. 
Here $L_{k_v}$ is the Weil--Deligne group of $k_v$ if $v$ is finite but the Weil group of $k_v$ if $v$ is real.
Since $\Pi_v$ is unitary and generic (and hence ``almost tempered''), the adjoint $L$-factor 
\[
 L(s,\pi_v,{\rm Ad}) = L(s,{\rm Ad}\circ \Phi_v)
\]
defined as in \cite[\S 3]{Tate1979} is holomorphic at $s=1$.
In fact, the same holds for any irreducible generic admissible representation of $G(k_v)$ (see \cite[Conjecture 2.6]{GP1992}, \cite{AS2008}, \cite{GT2011}, \cite[Proposition B.1]{GI2016}).
Hence the completed adjoint $L$-function $L(s, \pi, \Ad)$ is holomorphic and non-zero at $s=1$.

\subsection{Main result}\label{SS:main thm}
Let $\pi=\bigotimes_v \pi_v$ be an irreducible globally generic cuspidal automorphic representation of $G(\A)$ with trivial central character. Denote by $\frak{n} \unlhd \o$ the paramodular conductor of $\pi$ (cf.\,\cite{RS2007}). We assume $\pi$ satisfies the following conditions:
\begin{itemize}
\item $\frak n$ is square-free.
\item For each real place $v$, $\pi_v$ is in one of the following two types:
\item[(DS)] $$\pi_{v}\vert_{\Sp_4(k_v)} = D_{(\lambda_{1,v},\lambda_{2,v})}\oplus D_{(-\lambda_{2,v},-\lambda_{1,v})},$$
where $D_{(\lambda_{1,v},\lambda_{2,v})}$ is the (limit of) discrete series representation of $\Sp_4(k_v)$ with Blattner parameter $(\lambda_{1,v},\lambda_{2,v}) \in \Z^2$ such that $1-\lambda_{1,v} \leq \lambda_{2,v} \leq 0$. 
\item[(PS)] $$\pi_v \vert_{\Sp_4(k_v)} = {\rm Ind}_{\Sp_4(k_v)\cap {\bf B}(k_v)}^{\Sp_4(k_v)}(|\mbox{ }|_v^{\lambda_{1,v}}\boxtimes|\mbox{ }|_v^{\lambda_{2,v}})$$ for some $\lambda_{1,v},\lambda_{2,v} \in \C$. 
\end{itemize}
In Case (DS), we follow \cite{Moriyama2004} for the choice of the Cartan subalgebra in $\frak{sp}_4(\R)$ and the positive systems.

Denote by $S(\frak{n})$ the set of finite places dividing $\frak{n}$, by $S({\rm DS})$ and $S({\rm PS})$ the sets of real places of type (DS) and (PS), respectively.

Let $f=\bigotimes_v f_v \in \pi$ be a non-zero cusp form satisfying the following conditions:
\begin{align}\label{E:K-type2}
\begin{split}
&f_v\mbox{ is }\G(\o_v)\mbox{-invariant for all finite places }v \notin S(\frak{n}).\\
&f_v\mbox{ is ${\rm K}(\varpi_v)$-invariant for all $v \in S(\frak{n})$}.\\
&f_{v}\mbox{ is a lowest weight vector of the minimal }{\rm U}(2)\mbox{-type of }D_{(-\lambda_{2,v},-\lambda_{1,v})}\mbox{ for all $v \in S({\rm DS})$.}\\
&f_v \mbox{ is a $(\Sp_4(k_v)\cap {\rm O}(4))$-invariant vector for all $v \in S({\rm PS})$}.
\end{split}
\end{align}
For $v \in S({\rm DS})$, the vector $f_v$ is proportional to the vector $v_0$ in the notation of \cite[\S 1.2]{Moriyama2004}. By \cite{JS2007}, the multiplicity of $\pi$
in the space of cusp forms on $\G(\AA)$ is one. Therefore, the conditions (\ref{E:K-type2}) characterize $f \in \pi$ up to scalars. 
Let $W$ be the Whittaker function of $f$ with respect to $\psi_U$ defined by
\[W(g)=\int_{U(k)\backslash U(\A)}f(ug)\overline{\psi_U(u)}\,du.\]
Here $du$ is the Tamagawa measure on $U(\A)$.
We may decompose $W = \prod_v W_v$ as a product of local Whittaker functions of $\pi_v$ with respect to $\psi_{U,v}$. We normalize $f$ as follows:
\begin{align}\label{E:normalization}
\begin{split}
&W_v(\diag( \varpi_v^{-\frak{c}_v}, 1, \varpi_v^{2\frak{c}_v}, \varpi_v^{\frak{c}_v} ))=1\mbox{ for all finite places }v,\\
&W_v(1)\mbox{ is normalized as in (\ref{E:normalization DS}) for all $v \in S({\rm DS})$,}\\
&W_v(1)\mbox{ is normalized as in (\ref{E:normalization PS}) for all $v \in S({\rm PS})$.}
\end{split}
\end{align}
Let $\<f,f\>$ be the Petersson norm of $f$ defined by
\[\<f,f\>=\int_{Z_G(\A)G(k)\backslash G(\A)}|f(g)|^2\,dg.\]

Our main result is the following theorem.

\begin{thm}\label{T:main thm}
We have
\begin{align}\label{E:main thm}
\<f,f\>=2^c\cdot\frac{L(1,\pi,{\rm Ad})}{\Delta_{{\rm PGSp}_4}}\cdot \prod_vC(\pi_v).
\end{align}
Here 
\begin{align*}
\Delta_{\PGSp_4} &= \xi(2)\xi(4),\\
c&=\begin{cases}1 & \mbox{ if $\pi$ is stable},\\
2 & \mbox{ if $\pi$ is endoscopic},\end{cases}\\
C(\pi_v) &= \begin{cases}
\displaystyle{q_v^{-5\c_v}} & \mbox{ if $v \nmid \infty \frak{n}$},\\
\displaystyle{q_v^{-1-5\c_v}\zeta_v(2)^{-1}\zeta_v(4)} & \mbox{ if $v \in S(\frak{n})$},\\
\displaystyle{2^{\lambda_{1,v}-\lambda_{2,v}+5}\pi^{3\lambda_{1,v}-\lambda_{2,v}+5}(1+\lambda_{1,v}-\lambda_{2,v})^{-1}}  & \mbox{ if $v \in S({\rm DS})$},\\
\displaystyle{2^{-4}} & \mbox{ if $v \in S({\rm PS})$}.
\end{cases}
\end{align*}
\end{thm}

\begin{rmk}
For any irreducible globally generic cuspidal automorphic representation $\pi$ of $G(\A)$, we have a formula for the Petersson norm as in (\ref{E:main thm}) (cf.\,Proposition \ref{P:main identity1} below). 
But our assumption on $\pi$ will be used to determine the constant $C(\pi_v)$ explicitly.
\end{rmk}

\section{Siegel-Weil formula}\label{s:sw}
In this section, we introduce Eisenstein series on $\Sp_{2n}(\A)$, theta integrals and its regularization, and recall the Siegel-Weil formula in the second term range for $n=4$. The results and notation in this section will be used in \S\,\ref{s:autom}.

\subsection{Eisenstein series} \label{ss:eisenstein}

Let $n$, $r$ be positive integers such that $n \ge r$.
Let
\[
 P_{n,r} =
 \left\{ \left. 
 \begin{pmatrix}
  a & *  & * & * \\
  0 & a' & * & b' \\
  0 & 0  & \nu(g)\,{}^t  \! a^{-1} & 0 \\
  0 & c' & * & d' 
 \end{pmatrix}
 \in \GSp_{2n} \, \right| \, 
  a \in \GL_r, \,
  g = \begin{pmatrix}
        a' & b' \\
        c' & d' 
       \end{pmatrix}
  \in \GSp_{2n - 2r}
 \right\}
\]
be a maximal parabolic subgroup of $\GSp_{2n}$. 
We define a maximal compact subgroup ${\bf K} = \prod_v {\bf K}_v$ of $\GSp_{2n}(\AA)$ by
\[
 {\bf K}_v =
 \begin{cases}
  \GSp_{2n}(\frak{o}_v)         & \text{if $v$ is finite,} \\
  \GSp_{2n}(k_v) \cap \O(2n) & \text{if $v$ is real.}
 \end{cases}
\]
Let $K_v = {\bf K}_v \cap \Sp_{2n}(k_v)$ for each place $v$ of $k$, and $K = \prod_vK_v$. Let $\frkg_{\infty}$ be the complexified Lie algebra of $ \prod_{v \mid \infty}\Sp_{2n}(k_v)$ and $K_{\infty} = \prod_{v \mid \infty}K_v$ a maximal compact subgroup of $\prod_{v \mid \infty}\Sp_{2n}(k_v)$.

For $s \in \C$, let $I_{n,r}(s) = \Ind^{\GSp_{2n}(\AA)}_{P_{n,r}(\AA)} (\delta_{P_{n,r}}^{s/(2n-r+1)})$
denote the degenerate principal series representation of $\GSp_{2n}(\AA)$.
Here $\delta_{P_{n,r}}$ is the modulus character of $P_{n,r}(\A)$.
For a holomorphic section $F$ of $I_{n,r}(s)$,
define an Eisenstein series $E^{(n,r)}(s, F)$ by 
\[
 E^{(n,r)}(g; s,F) = \sum_{\gamma \in P_{n,r}(k) \backslash \GSp_{2n}(k)} F(\gamma g, s)
\]
for $\Re(s) \gg 0$,
and by the meromorphic continuation otherwise.
Let 
\[
 E^{(n,r)}(s,F) = \sum_{d \gg - \infty} (s - s_0)^d E^{(n,r)}_d(s_0, F)
\]
be the Laurent expansion of $E^{(n,r)}(s, F)$ at $s = s_0$. 

\subsection{Theta integrals}\label{SS:Theta integrals}

In this section,
we review the result of Kudla and Rallis \cite{KR1994}
on the regularization of theta integrals.

Assume that $r \geq 2$.
Let $V_{r,r} = k^{2r}$ be the space of column vectors
equipped with a non-degenerate symmetric bilinear form $(\ ,\ )$ given by
\[
 (x, y) = {}^t \! x 
 \begin{pmatrix}
  0    & \1_r \\
  \1_r & 0
 \end{pmatrix}
 y
\]
for $x,y \in V_{r,r}$.
Let $G' = {\rm GO}_{r,r}$ denote the orthogonal similitude group of $V_{r,r}$ defined by
\[
 {\rm GO}_{r,r} = \left\{ g' \in \GL_{2r} \, \left| \, {}^t \! g'
 \begin{pmatrix}
  0    & \1_r \\
  \1_r & 0
 \end{pmatrix}
 g' =\nu(g') 
 \begin{pmatrix}
  0    & \1_r \\
  \1_r & 0
 \end{pmatrix},\,\nu(g') \in {\mathbb G}_m
 \right. \right\}.
\]
Let $G_1' = {\rm O}_{r,r}$. We define a maximal compact subgroup ${\bf K}' = \prod_v {\bf K}'_v$ of $G'(\AA)$ by
\[
 {\bf K}'_v =
 \begin{cases}
  \bp {\bf 1}_r & 0 \\ 0 & \varpi_v^{\frak{c}_v} {\bf 1}_r\ep G'(\frak{o}_v)\bp {\bf 1}_r & 0 \\ 0 & \varpi_v^{-\frak{c}_v} {\bf 1}_r\ep & \text{if $v$ is finite,} \\
  G'(k_v) \cap \O(2r) & \text{if $v$ is real.}
 \end{cases}
\]
Let $K_v' = {\bf K}_v'\cap G_1'(k_v)$ for each place $v$ of $k$, and $K' = \prod_vK_v'$. Let $r' \leq r$ be a non-negative integer. Let
\[
 P'_{r'} =
 \left\{ \left. 
 \begin{pmatrix}
  a & *  & * & * \\
  0 & a' & * & b' \\
  0 & 0  & \nu(g)\,{}^t \! a^{-1} & 0 \\
  0 & c' & * & d' 
 \end{pmatrix}
 \in G' \, \right| \,  
  a \in \GL_{r-r'},\,
  g=\begin{pmatrix}
   a' & b' \\
   c' & d' 
  \end{pmatrix}
  \in {\rm GO}_{r',r'}
 \right\}
\]
be a maximal parabolic subgroup of $G'$. We denote by $M'_{r'}$ and $N'_{r'}$ the standard Levi subgroup and unipotent radical of $P'_{r'}$, respectively.  
Let $d g'$ be the Haar measure on $G'_1(\AA)$ such that
$\vol(G'_1(k) \backslash G'_1(\AA)) = 1$,
$dm'$ (resp.~$dn'$) the Tamagawa measure on
$M'_{r'}(\A)$ (resp.~$N'_{r'}(\AA)$),
and $d k'$ the Haar measure on $K'$ such that $\vol(K') = 1$.
Define a constant $\kappa_{r,r'}$ by
\[
 \int_{G'_1(\AA)} \phi(g') \, dg'
 = \kappa_{r,r'} \int_{M'_{r'}(\A) \times N'_{r'}(\AA) \times K'} 
 \phi \left( 
 m'
 n' k' \right) \, dm' \, dn' \, dk'
\]
for $\phi \in L^1(G'_1(\AA))$.

Assume $n \geq r$. Let $\omega = \omega_{\psi, V_{r,r}, n}$ denote the Weil representation
of $\Sp_{2n}(\AA) \times G'_1(\AA)$
on $\calS(V_{r,r}^n(\AA)) = \calS({\rm M}_{2r , n}(\AA))$
with respect to $\psi$. We recall the explicit formula for $\omega$ as follows:
\begin{itemize}
\item for $g' \in G_1'(\A)$, we have
\[
\omega(1,g')\varphi(x) = \varphi(g'^{-1}x);
\]
\item for $a \in \GL_n(\A)$, we have
\[
\omega\left( \bp a & 0 \\ 0 & {}^t\!a^{-1}\ep,1\right)\varphi(x) = |\det(a)|_\A^{r}\varphi(xa);
\]
\item for $b \in {\rm M}_{n,n}(\A)$ with $b={}^tb$, we have
\[
\omega\left( \bp {\bf 1}_n & b \\ 0 & {\bf 1}_n\ep,1\right)\varphi(x) = \psi({\rm tr}(\tfrac{1}{2}b(x,x)))\varphi(x);
\]
\item we have
\[
\omega\left(\bp 0 & -{\bf 1}_n \\ {\bf 1}_n & 0\ep,1\right)\varphi(x) = \int_{V_{r,r}^n(\A)}\psi(-{\rm tr}(x,y))\varphi(y)\,dy,
\]
where $dy$ is the Tamagawa measure on $V_{r,r}^n(\A)$.
\end{itemize}
Let 
\[{\rm G}(\Sp_{2n} \times G'_1) = \left\{ (g,g') \in \GSp_{2n}\times G' \mbox{ }\vert\mbox{ }\nu(g)=\nu(g') \right\}.\]
We extend $\omega$ to a representation of ${\rm G}(\Sp_{2n} \times G'_1)(\A)$ as follows:
\begin{align}\label{Weil rep on R}
\omega(g,g')\varphi = \omega\left (g \bp {\bf 1}_n & 0 \\ 0 & \nu(g')^{-1}{\bf 1}_n\ep,1 \right )L(g')\varphi
\end{align}
for $(g,g') \in {\rm G}(\Sp_{2n} \times G'_1)(\A)$ and $\varphi \in \mathcal{S}(V_{r,r}^n(\A))$. Here
\[
L(g')\varphi(x)=\vert \nu(g')  \vert_\A^{-nr/2}\varphi(g'^{-1}x).
\]
Let $S(V_{r,r}^n(\AA))$ be the subspace of $\calS(V_{r,r}^n(\AA))$
consisting of functions which correspond to polynomials in the Fock model
at the archimedean places.
For $(g,g') \in {\rm G}(\Sp_{2n} \times G'_1)(\A)$ and $\varphi \in S(V_{r,r}^n(\AA))$, let
\[
 \Theta(g, g'; \varphi) = \sum_{x \in V_{r,r}^n(k)} \omega(g, g') \varphi(x).
\]
Fix a real place $v$ of $k$ and 
let $z = z_{r-1, n} \in \frkz(\frkg_v)$ be
the regularizing differential operator as in \cite[Corollary 5.1.2]{KR1994}, where $\frak{z}(\frak{g}_v)$ is the center of the universal enveloping algebra of $\frak{g}_v$.
By  \cite[Proposition 5.3.1]{KR1994},
the function $g' \mapsto \Theta(g, g'; z \cdot \varphi)$
on $G_1'(k) \backslash G_1'(\AA)$ is rapidly decreasing.
Here $\frkz(\frkg_v)$ acts on $S(V_{r,r}^n(\AA))$ via 
the differential of $\omega$.

Put $s'_0 = ({r-1})/{2}$.
Let $F$ be the ${\bf K}'$-invariant holomorphic section of
$\Ind^{G'(\AA)}_{P'_0(\AA)}(\delta_{P_0'}^{s/(r-1)})$ such that $F(1,s) = 1$. Here $\delta_{P_0'}$ is the modulus character of $P_0'(\A)$.
Define an auxiliary Eisenstein series $E(s)$ by
\[
 E(g'; s) = \sum_{\gamma' \in P'_0(k) \backslash G'(k)} F(\gamma' g', s)
\]
for $\Re(s) \gg 0$,
and by the meromorphic continuation otherwise.
Note that $E(s)$ has a simple pole at $s = s'_0$ with constant residue $\kappa_{r,0}$.
Following \cite[\S 5.5]{KR1994},
we consider the integral
\[
 I^{(n,r)}(g; s, \varphi) = \kappa_{r,0}^{-1}Q_{n,r}(s)^{-1} 
 \int_{G_1'(k) \backslash G_1'(\AA)}
 \Theta(g, g'h; z \cdot \varphi) E(g'h; s) \, dg',
\]
where $\nu(h)=\nu(g)$ and
\[
 Q_{n,r}(s) = \prod_{i=0}^{r-1} ( (s - s_0' + i)^2 - (n+1-r)^2 ).
\]
Let
\[
 I^{(n,r)}(s, \varphi) =
 \sum_{d \gg - \infty} (s - s'_0)^d I^{(n,r)}_d(\varphi)
\]
be the Laurent expansion of $I^{(n,r)}(s, \varphi)$ at $s = s'_0$.
Note that $I^{(n,r)}(s, \varphi)$ has at most a simple (resp.~double) pole
at $s = s'_0$ if $r \le ({n+1})/{2}$ (resp.~$({n+1})/{2} < r \le n$).

Let $\hat{\omega}$ be the Weil representation of ${\rm G}(\Sp_{2n} \times G_1')(\A)$
on $\calS({\rm M}_{r , 2n}(\AA))$ defined via the partial Fourier transform
\[
 \calS({\rm M}_{2r , n}(\AA))
  \longrightarrow \calS({\rm M}_{r , 2n}(\AA)), \quad
 \varphi \longmapsto \hat{\varphi},
\]
where
\[
 \hat{\varphi}(u, v) = \int_{{\rm M}_{r , n}(\AA)} \varphi
 \begin{pmatrix}
  x \\
  u
 \end{pmatrix}
 \psi( \tr(v {}^t \! x)) \, dx
\]
for $u, v \in {\rm M}_{r , n}(\AA)$. Here $dx$ is the Tamagawa measure on ${\rm M}_{r, n}(\A)$. 
For $\Re(s) > s_0'-n + r$,
define a $(\frkg_{\infty}, K_{\infty}) \times \GSp_{2n}(\AA_f)$-intertwining map
\[
 \calF: S(V_{r,r}^n(\AA)) \longrightarrow I_{n,r}(s)
\]
by
\[
 \calF(\varphi)(g, s) = \int_{\GL_r(\AA)} \int_{K'} \hat{\omega}(g, k'g')
 \hat{\varphi} (0_{r \times n}, {}^t \! a, 0_{r \times (n-r)}) F(k'g',s)
 |\det (a)|_{\A}^{s-s_0'+n} \,dk' \, da,
\]
where $\nu(g')=\nu(g)$.
Then, by \cite[\S 5.5]{KR1994} and \cite[\S 7.4]{GI2011},
\[
 I^{(n,r)}(s, \varphi) = E^{(n,r)}(s,\calF(\varphi)).
\]

 We define the spherical Schwartz function $\varphi^o=\bigotimes_v\varphi_v^{o} \in S(V_{r,r}^n(\AA))$ as follows:
\begin{itemize}
\item If $v$ is finite, then 
\begin{align*}
\varphi_v^o\bp x \\ y \ep = q_v^{-\frak{c}_vnr/2}\cdot \mathbb{I}_{{\rm M}_{r, n}(\varpi_v^{-\frak{c}_v}\frak{o}_v)}(x)\mathbb{I}_{{\rm M}_{r, n}(\frak{o}_v)}(y).
\end{align*}
\item If $v$ is real, then
\begin{align*}
\varphi_v^o(x) = e^{-\pi\, {\rm tr}(x{}^t \! x)}.
\end{align*}
\end{itemize}
Note that
\begin{align}\label{E:spherical property}
\omega_v(k,k')\varphi_v^o = \varphi_v^o 
\end{align}
for $(k,k') \in ({\bf K}_v \times {\bf K}_v')\cap {\rm G}(\Sp_{2n}\times G_1')(k_v)$. 
For $\Re(s) > s_0'-n + r-1$,
define an intertwining map
\[
 \calF_v: S(V_{r,r}^n(k_v)) \longrightarrow I_{n,r,v}(s)
\]
by
\[
 \calF_v(\varphi)(g, s) = \int_{\GL_r(k_v)} \int_{K_v'}\hat{\omega}_v(g, k_v'g')
 \hat{\varphi} (0_{r \times n}, {}^t \! a_v, 0_{r \times (n-r)}) F_v(k_v'g',s)
 |\det (a_v)|_{v}^{s-s_0'+n} \,dk_v'\, da_v,
\]
where $\nu(g')=\nu(g)$ and $F_v$ is the ${\bf K}_v'$-invariant holomorphic section of
$\Ind^{G'(k_v)}_{P'_0(k_v)}(\delta_{P_0'}^{s/(r-1)})$ such that $F_v(1,s) = 1$.
The Haar measure $da_v$ on $\GL_r(k_v)$ is normalized as in (\ref{E:standard measure on GL}). By a direct calculation, 
\begin{align}\label{E:spherical section}
\calF_v(\varphi_v^o)(1,s) = \prod_{j=0}^{r-1}\zeta_v(s-s_0'+n-j).
\end{align}
By (\ref{Tamagawa measure on GL}), for $\varphi = \bigotimes_v\varphi_v \in S(V_{r,r}^n(\AA))$, we have
\begin{align}\label{E:local global ratio}
\calF(\varphi) = \frak{D}^{-r^2/2}\rho^{-1}\prod_{i=2}^r\xi(i)^{-1} \cdot\bigotimes_v \calF_v(\varphi_v).
\end{align}

\subsection{First and second term identities}\label{SS:First and second term identities}

In this section,
we recall certain identities relating the terms in the Laurent expansions
of regularized theta integrals to those of Siegel Eisenstein series,
which were proved by Gan, Qiu, and Takeda in \cite{GQT2014}.

Let $n=4$.
Let $\calR$ be the space of automorphic forms on $\GSp_8(\A)$ spanned by residues
$E^{(4,4)}_{-1}({1}/{2}, F)$
for all holomorphic sections $F$ of $I_{4,4}(s)$.
Define a $(\frkg_{\infty}, K_{\infty}) \times \GSp_8(\AA_f)$-intertwining map
\[
 S(V_{3,3}^4(\AA))  \longrightarrow I_{4,4} \left(\frac{1}{2}\right), \quad
 \varphi  \longmapsto F(\varphi)
\]
by 
\[
 F({\varphi}) \left(g, \frac{1}{2}\right) = \omega(g, g') \varphi(0),
\]
where $\nu(g')=\nu(g)$. We extend $F({\varphi})$ to the holomorphic section $F({\varphi})$ of $I_{4,4}(s)$
such that its restriction to ${\bf K}$ is independent of $s$.

As in \cite[\S 5]{Ikeda1996},
define a $(\frkg_{\infty}, K_{\infty}) \times \Sp_8(\AA_f)$-intertwining map
\[
 \pr: S(V_{3,3}^4(\AA)) \longrightarrow S(V_{2,2}^4(\AA))
\]
by
\[
 \pr(\varphi)
 \begin{pmatrix}
  x \\
  y
 \end{pmatrix}
 = \int_{\AA^4} \varphi_{K'}
 \begin{pmatrix}
  u \\
  x \\
  0 \\
  y
 \end{pmatrix}
 \, du
\]
for $x, y \in {\rm M}_{2 , 4}(\AA)$, where $du$ is the Tamagawa measure on $\A^4$. Then, as a special case of the Siegel-Weil formula proved by Gan, Qiu, and Takeda \cite[Theorem 1.1]{GQT2014}, we have
\begin{align}
 &E^{(4,4)}_{-1} \left(\frac{1}{2}, F({\varphi})\right) = I^{(4,3)}_{-2}(\varphi), \label{E:1st identity}\\
 &E^{(4,4)}_0 \left(\frac{1}{2}, F({\varphi})\right) = I^{(4,3)}_{-1}(\varphi)
 - \kappa_{3,2} \cdot I^{(4,2)}_0(\pr(\varphi))
  \mbox{ \rm mod } \calR \label{E:2nd identity}
\end{align}
for all $\varphi \in S(V_{3,3}^4(\AA))$.

\section{Automorphic $L$-functions}
\label{s:autom}
In this section, we introduce integral representations of two automorphic $L$-functions, namely the standard $L$-function for $\GSp_4$ and the Rankin-Selberg $L$-function for $\GSp_4 \times \GSp_4$. We keep the notation of \S\,\ref{s:sw} with $n=4$. 

\subsection{Preliminaries}  

Recall that $G=\GSp_4$. Let $H = \GSp_8$ and
\[
 {\bf G} = \{ (g_1, g_2) \in G \times G \, | \, \nu(g_1) = \nu(g_2) \}.
\]
Let $Z_{H}$ be the center of $H$.
We identify $\bf G$ with its image under the embedding
\[
 {\bf G}  \longrightarrow H, \quad
 \left(
 \begin{pmatrix}
  a_1 & b_1 \\
  c_1 & d_1
 \end{pmatrix},
 \begin{pmatrix}
  a_2 & b_2 \\
  c_2 & d_2
 \end{pmatrix}
 \right)  \longmapsto
 \begin{pmatrix}
  a_1 & 0    & b_1 & 0 \\
  0   & a_2  & 0   & -b_2 \\
  c_1 & 0    & d_1 & 0 \\
  0   & -c_2 & 0   & d_2 
 \end{pmatrix}.
\]

Let $\pi$ be an irreducible globally generic cuspidal automorphic representation of $G(\A)$ with trivial central character and paramodular conductor $\frak{n}$ satisfying the conditions in \S\,\ref{SS:main thm}. 
When $v \nmid \infty\frak{n}$ or $v\in S({\rm PS})$, there exist $\lambda_{1,v},\lambda_{2,v} \in \C$ 
such that
\begin{align}\label{E:unram. para.}
\pi_v \vert_{\Sp_4(k_v)} = {\rm Ind}_{\Sp_4(k_v)\cap {\bf B}(k_v)}^{\Sp_4(k_v)}(|\mbox{ }|_v^{\lambda_{1,v}}\boxtimes|\mbox{ }|_v^{\lambda_{2,v}}).
\end{align}
When $v \mid \frak{n}$, there exist $\varepsilon_v \in \{0,1\}$ and $\lambda_v \in \C$ such that
\begin{align}\label{E:IIa para.}
\pi_v = {\rm Ind}_{P_{2,2}(k_v)}^{G(k_v)}\left(({\rm St}_v\otimes |\mbox{ }|_v^{\lambda_v})\boxtimes \eta_v^{\varepsilon_v} |\mbox{ }|_v^{-\lambda_v}\right),
\end{align}
where ${\rm St}_v$ is the Steinberg representation of $\GL_2(k_v)$ and $\eta_v$ is the non-trivial unramified quadratic character of $k_v^\times$. Indeed, by the results in \cite{RS2007}, any irreducible generic admissible representation of $G(k_v)$ with trivial central character and paramodular conductor $\varpi_v\o_v$ is of this form.
Since $\pi_v$ is unitary and generic for all $v$, by the unitarizability criterion in \cite[Theorem 1.1]{LMT2004}, we have
\begin{align}\label{eq:satake}
\begin{cases}
|{\rm Re}(\lambda_{1,v})|+|{\rm Re}(\lambda_{2,v})|<1 & \mbox{ if $v \nmid \infty\frak{n}$},\\
|{\rm Re}(\lambda_{v})|<1/2 & \mbox{ if $v \mid \frak{n}$},\\
|{\rm Re}(\lambda_{1,v})|+|{\rm Re}(\lambda_{2,v})|<1 & \mbox{ if $v \in S({\rm PS})$}.
\end{cases}
\end{align}

\subsection{Standard $L$-functions}

In this section,
we review the doubling method of Piatetski-Shapiro and Rallis
\cite{LNM1254}, \cite[\S 6.2]{Harris1993}.

Let $P=P_{4,4}$
be the standard Siegel parabolic subgroup of $H$, and put \[d_P(s) = \xi\left(s + \frac{5}{2}\right) \xi(2s+1) \xi(2s+3).\] Let $I(s) = I_{4,4}(s)$
denote a degenerate principal series representation of $H(\AA)$.
For a holomorphic section $F$ of $I(s)$,
we write $E(s, F)=E^{(4,4)}(s,F).$ 
By \cite[Theorem 1.1]{KR1994},
the Eisenstein series $E(s,F)$ has at most a simple pole at $s = {1}/{2}$.

For $f \in \pi$,
let
\[
 Z(s, f, F) = 
 \int_{Z_H(\AA) {\bf G}(k) \backslash {\bf G}(\AA)}
 E( g; s, F) (f\otimes\bar{f})(g) \, dg.
\]
We assume that $f = \bigotimes_v f_v$ and $F = \bigotimes_v F_v$.
Choose local Hermitian pairings $\langle \ , \ \rangle_v$ on $\pi_v \times \pi_v$ such that
$\langle f, f \rangle = \prod_v \langle f_v, f_v \rangle_v$,
and define a matrix coefficient $\phi_v$ of $\pi_v$ by
\[
 \phi_v(g) = \frac{\langle \pi_v (g) f_v, f_v \rangle_v}{\langle f_v, f_v \rangle_v}.
\]
Note that $\phi_v$ does not depend on the choice of 
$\langle \ , \ \rangle_v$.
Define a local zeta integral $Z_v(s, \phi_v, F_v)$ by
\begin{align}\label{E:local zeta1}
 Z_v(s, \phi_v, F_v)
 = \int_{\Sp_4(k_v)} F_v(\delta (g_v,1), s) \phi_v(g_v) \, dg_v^{\rm Tam},
\end{align}
where
\[
 \delta =
 \begin{pmatrix}
  0 & 0 & -\frac{1}{2} \1_2 & \frac{1}{2} \1_2 \\
  \frac{1}{2} \1_2 & \frac{1}{2} \1_2 & 0 & 0 \\
  \1_2 & - \1_2 & 0 & 0 \\
  0 & 0 & \1_2 & \1_2
 \end{pmatrix}.
\]
Here $dg_v^{\rm Tam}$ is the local Tamagawa measure on $\Sp_4(k_v)$. Then, by \cite{LNM1254}, \cite[\S 6.2]{Harris1993},
\[
 Z(s,f, F) = \langle f, f \rangle \prod_{v} Z_v(s, \phi_v, F_v)
\]
for $\Re(s) \gg 0$.

\begin{lemma}\label{L:unram PSR} 
Let $v$ be a finite place of $k$ satisfying the following conditions: 
\begin{itemize}
\item $\pi_v$ is unramified.
\item $f_v$ is $G(\o_v)$-invariant.
\item $F_v$ is $H(\o_v)$-invariant.
\end{itemize}
We have
\[Z_v(s,\phi_v,F_v) = F_v(1,s)q_v^{-5\c_v}\zeta_v(2)^{-1}\zeta_v(4)^{-1}d_{P,v}(s)^{-1}L\left(s+\frac{1}{2},\pi_v,{\rm std}\right).\]
\end{lemma}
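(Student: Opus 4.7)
My plan is to reduce this to the standard unramified doubling zeta integral computation of Piatetski-Shapiro and Rallis \cite{LNM1254}. Since $f_v$ is $G(\o_v)$-invariant, the matrix coefficient $\phi_v$ is bi-$\Sp_4(\o_v)$-invariant, and since $F_v$ is $H(\o_v)$-invariant, the Iwasawa decomposition $H(k_v)=P(k_v)H(\o_v)$ yields $F_v(h,s)=F_v(1,s)\cdot\chi_s(p(h))$, where $p(h)\in P(k_v)$ is the Siegel component of $h$ and $\chi_s$ is the character of $P(k_v)$ by which $F_v$ transforms on the left (determined by $\delta_{P,v}^{s/5}$ from the definition of $I_{4,4}(s)$). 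Pulling $F_v(1,s)$ out, the problem reduces to evaluating the auxiliary integral
\[
J_v(s) := \int_{\Sp_4(k_v)} \chi_s(p(\delta(g,1)))\,\phi_v(g)\,dg_v^{\mathrm{Tam}}.
\]

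Next I would invoke the basic doubling identity. With respect to the measure $dg_v^{\natural}$ giving $\Sp_4(\o_v)$ volume one, the classical Piatetski-Shapiro--Rallis computation (see \cite{LNM1254} and \cite[\S 6.2]{Harris1993}) yields
\[
\int_{\Sp_4(k_v)} \chi_s(p(\delta(g,1)))\,\phi_v(g)\,dg_v^{\natural} \;=\; \frac{L(s+1/2,\pi_v,\mathrm{std})}{d_{P,v}(s)},
\]
where the denominator $d_{P,v}(s)=\zeta_v(s+5/2)\zeta_v(2s+1)\zeta_v(2s+3)$ is precisely the Gindikin-Karpelevich-type normalizing factor of the intertwining operator attached to the Siegel parabolic of $\GSp_8$, equivalently the ``gcd'' of the family of unramified doubling zeta integrals, and the numerator is the degree-$5$ standard local $L$-factor of $\pi_v$. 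Combinatorially this reduces, via the Cartan decomposition $\Sp_4(k_v)=K_vT^+K_v$ and Macdonald's spherical-function formula for $\phi_v$, to a rational-function identity in the Satake parameters of $\pi_v$.

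To pass from $dg_v^{\natural}$ to $dg_v^{\mathrm{Tam}}$, note that $\Sp_4$ is split, simply connected, of dimension $10$ with fundamental-invariant degrees $2$ and $4$; the volume formula of \cite[\S 6]{Vos1996} (equivalently, the identity $|\Sp_4(\mathbb{F}_{q_v})|/q_v^{10}=(1-q_v^{-2})(1-q_v^{-4})$ combined with $|d_v|^{\dim \Sp_4/2}=q_v^{-5\c_v}$) gives
\[
\vol^{\mathrm{Tam}}(\Sp_4(\o_v)) \;=\; q_v^{-5\c_v}\,\zeta_v(2)^{-1}\,\zeta_v(4)^{-1},
\]
and multiplying the spherical-normalization integral by this factor yields the stated formula.

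The main obstacle is the middle step: the doubling basic identity requires computing the Iwasawa decomposition of $\delta(t,1)$ for $t$ in the diagonal torus of $\Sp_4$, writing the resulting integrand explicitly in terms of Satake parameters, and summing over the coweight lattice against the Casselman-Shalika expression for $\phi_v$. In practice one invokes the existing computation of \cite{LNM1254} as a black box, and the essential verification is that the normalization conventions used here for $\delta$, the exponent $s/5$ in the definition of $I_{4,4}(s)$, and the choice of spherical matrix coefficient $\phi_v$ all match those in the reference.
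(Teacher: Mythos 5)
Your proof follows the paper's argument exactly: invoke the unramified doubling computation of Piatetski-Shapiro and Rallis \cite[Proposition 6.2]{LNM1254} with respect to the measure normalizing $\vol(\Sp_4(\o_v))=1$, then rescale to the local Tamagawa measure via $\vol^{\mathrm{Tam}}(\Sp_4(\o_v)) = q_v^{-5\c_v}\cdot q_v^{-10}\cdot |\Sp_4(\mathbb{F}_{q_v})| = q_v^{-5\c_v}\zeta_v(2)^{-1}\zeta_v(4)^{-1}$. The preliminary step of factoring out $F_v(1,s)$ via the Iwasawa decomposition is just an unwinding of what that proposition already packages, so there is no substantive difference.
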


\begin{proof}
Let $dg_v^{\rm std}$ be the Haar measure on $\Sp_4(k_v)$ normalized so that
\[{\rm vol}(\Sp_4(\o_v),dg_v^{\rm std}) = 1. \]
By \cite[Proposition 6.2]{LNM1254}, 
\[
 \int_{\Sp_4(k_v)} F_v(\delta (g_v,1), s) \phi_v(g_v) \, dg_v^{\rm std} = F_v(1,s)d_{P,v}(s)^{-1}L\left(s+\frac{1}{2},\pi_v,{\rm std}\right).
\]
Since
\[
{\rm vol}(\Sp_4(\o_v),dg_v^{\rm Tam}) = q_v^{-5\c_v}\cdot q_v^{-10}\cdot \left| \Sp_4(\mathbb{F}_{q_v})\right| = q_v^{-5\c_v}\zeta_v(2)^{-1}\zeta_v(4)^{-1},
\]
it follows that
\[dg_v^{\rm Tam} = q_v^{-5\c_v}\zeta_v(2)^{-1}\zeta_v(4)^{-1}\cdot dg_v^{\rm std}.\]
This completes the proof.
\end{proof}

\begin{lemma}\label{L:ab doubling}
Let $v$ be a place of $k$. 
The integral $Z_v(s, \phi_v, F_v)$ is absolutely convergent for 
\begin{align*}
\begin{cases}
\Re(s)> -1/2 + \max\{|{\rm Re}(\lambda_{1,v})|, |{\rm Re}(\lambda_{2,v})|\} & \mbox{ if $v \nmid \infty\frak{n}$},\\
\Re(s)> -1/2 + \max\{0,|{\rm Re}(\lambda_v)|-1/2\} & \mbox{ if $v \in S(\frak{n})$},\\
\Re(s) > -1/2 & \mbox{ if $v \in S({\rm DS})$},\\ 
\Re(s)> -1/2 + \max\{|{\rm Re}(\lambda_{1,v})|, |{\rm Re}(\lambda_{2,v})|\} & \mbox{ if $v \in S({\rm PS})$}.
\end{cases} 
\end{align*}
In particular, the integral is absolutely convergent for ${\rm Re}(s) \geq 1/2$.
\end{lemma}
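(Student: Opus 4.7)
The plan is to reduce the absolute convergence of $Z_v(s,\phi_v,F_v)$ to a power-counting estimate on a positive chamber of the split torus of $\GL_2$, by combining the Iwasawa decomposition on $\Sp_4(k_v)$, standard growth bounds on sections of the Siegel degenerate principal series of $H=\GSp_8$, and explicit asymptotic bounds on matrix coefficients of $\pi_v$ in the four cases of the classification.

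First, I would apply the Iwasawa decomposition $\Sp_4(k_v)=P^{\circ}(k_v)K_v$, where $P^{\circ}$ is the Siegel parabolic of $\Sp_4$ with Levi $M^{\circ}\simeq\GL_2$, so that any $g_v\in\Sp_4(k_v)$ can be written as $g_v=m(a)\,n\,k$ with $m(a)=\diag(a,{}^ta^{-1})$, $a\in\GL_2(k_v)$. A standard computation shows that $\delta(m(a)nk,1)$ lies in the Siegel parabolic $P=P_{4,4}$ of $H$ up to a translate bounded by the doubling Weyl element, so by $K$-finiteness of $F_v$ together with compactness of $K_v$,
\begin{equation*}
|F_v(\delta(g_v,1),s)|\ll_{F_v}|\det a|_v^{\Re(s)+5/2}.
\end{equation*}
Absorbing the modulus factor $\delta_{P^{\circ}}^{-1}(m(a))=|\det a|_v^{-3}$ from the Tamagawa measure on $P^{\circ}(k_v)$ and integrating the bounded $K$-variable, the convergence of $Z_v(s,\phi_v,F_v)$ reduces to the convergence of
\begin{equation*}
\int_{\GL_2(k_v)}|\phi_v(m(a))|\cdot|\det a|_v^{\Re(s)-1/2}\,da.
\end{equation*}

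Second, by the Cartan decomposition on $\GL_2(k_v)$, this integral in turn reduces to one over the positive chamber $A^+=\{\diag(t_1,t_2):|t_1/t_2|_v\ge 1\}$, where I would insert matrix coefficient bounds coming from the classification of $\pi_v$. For $v\nmid\infty\frak{n}$ or $v\in S({\rm PS})$, $\pi_v$ is a spherical principal series and $\phi_v$ on $A^+$ is dominated by the Harish-Chandra $\Xi$-function twisted by the Weyl orbit of the real parts of $\lambda_{1,v},\lambda_{2,v}$; for $v\in S({\rm DS})$, $\pi_v$ is essentially tempered so $\phi_v$ is dominated by $\Xi$ directly; for $v\in S(\frak{n})$, the explicit form (\ref{E:IIa para.}) as an induction from the Steinberg gives a matrix coefficient decaying like $\Xi$ on $A^+$ times an extra shift by $|\cdot|_v^{|\Re(\lambda_v)|-1/2}$ coming from the Steinberg asymptotic. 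Substituting these bounds into the integral above and invoking the unitarizability inequalities (\ref{eq:satake}) to control the worst Weyl translate of the exponents, a power-counting argument on $A^+$ produces exactly the four convergence thresholds claimed.

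The main obstacle is the Iwahori case $v\in S(\frak{n})$: one must establish the precise asymptotic behavior of matrix coefficients of $\St_v\otimes|\cdot|_v^{\lambda_v}$ on the diagonal torus of $\GL_2(k_v)$ in order to pin down the shift by $1/2$ in the threshold and not something larger that would weaken the statement. The last assertion, absolute convergence for $\Re(s)\ge 1/2$, then follows case by case, since in each of the four cases the lower bound on $\Re(s)$ produced above is strictly smaller than $1/2$ thanks to the unitarizability bounds (\ref{eq:satake}).
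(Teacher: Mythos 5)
Your proposal has a genuine gap in the very first step. You claim that after writing $g_v=m(a)nk$ via the Iwasawa decomposition of $\Sp_4(k_v)$ with respect to the Siegel parabolic $P^\circ$, one has a pointwise bound $|F_v(\delta(g_v,1),s)|\ll|\det a|_v^{\Re(s)+5/2}$ that is \emph{uniform in the unipotent coordinate $n$}. This is false. For $n=\begin{pmatrix}1&B\\0&1\end{pmatrix}$ in the unipotent radical, a direct block computation with the matrix $\delta$ of the paper shows that $\delta\iota(n,1)$ has nonzero lower-left block $\begin{pmatrix}\1_2&-\1_2\\0&0\end{pmatrix}$, so it does not lie in $P_{4,4}$; the Levi part in its Iwasawa decomposition in $\GSp_8$ depends on $B$ and tends to infinity with $B$. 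Consequently the value of $F_v$ at $\delta(m(a)nk,1)$ depends nontrivially on $n$, and your reduction to $\int_{\GL_2(k_v)}|\phi_v(m(a))|\,|\det a|_v^{\Re(s)-1/2}\,da$ simply drops the $N$-integral without justification. If one integrates $n$ over $N(k_v)$ against a bound that is constant in $n$, the integral diverges; if instead one keeps the true $n$-dependence of $F_v$, the $N$-integral is exactly an intertwining integral, and its convergence is where the universal threshold $\Re(s)>-1/2$ comes from. This is the crux of the lemma, and your write-up omits it.

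For comparison, the paper treats the four cases by two genuinely different mechanisms (Lemmas \ref{L:local zeta Ur}, \ref{L:local zeta DS}, \ref{L:ab1}). For $v\nmid\infty\frak{n}$ and $v\in S(\mathrm{DS})$ it uses the \emph{Cartan} decomposition of $\Sp_4(k_v)$, where $F_v(\delta(a,1),s)$ has an explicit value on the positive chamber by \cite[Proposition 6.4]{LNM1254}, combined with Macdonald's formula for the spherical matrix coefficient (unramified case) or H\"older's inequality and exponential decay of the $K_1$-bi-equivariant parts (DS case). For $v\in S(\frak{n})$ and $v\in S(\mathrm{PS})$ the paper does use a Siegel-parabolic decomposition of $\Sp_4$, but it first recognizes $\int_{N(F)}F(\delta(u,1)\underline{A}^{-1}\underline{a}g,s)\,du$ as an intertwining integral into $\mathrm{Ind}_{P'(F)}^{\GL_4(F)}(\delta_{P'}^{s/2})$, convergent for $\Re(s)>-1/2$ (cf.~\cite[Lemma 6.2]{LNM1254} and \cite[Proposition 1]{LR2005}), and the remaining $\GL_2$-integral is controlled by matrix coefficients of the \emph{inducing} representation $\tau_\lambda$ (the Steinberg twist in case IIa, the principal series in case PS), not by $\phi_v$ restricted to $m(\GL_2(k_v))$. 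In the DS case there is no Siegel-parabolic inducing datum for $\pi_v$, so the parabolic approach is not available, and a uniform Iwasawa argument of the type you propose would not cover that case even after the intertwining-integral step is inserted. Your intuition about power-counting on the split torus and about the origin of the shift by $1/2$ at $v\mid\frak{n}$ is on the right track, but the missing intertwining-integral analysis is essential and cannot be replaced by a pointwise bound on $F_v$.
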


\begin{proof}
The assertions will be proved in Lemmas \ref{L:local zeta Ur}-\ref{L:ab1} below. 
\end{proof}

\subsection{$L$-functions for $\GSp_4 \times \GSp_4$}

In this section,
we review Jiang's integral representation
of the $L$-function for $G \times G$ \cite{Jiang1996}.

Let $\mathcal{P}=P_{4,3}$ be a maximal parabolic subgroup of $H$, and put \[d_{\mathcal{P}}(s) = \xi(s+1) \xi(s+2) \xi(s+3) \xi(2s+2).\] Let $\calI(s)=I_{4,3}(s)$
denote a degenerate principal series representation of $H(\AA)$.
For a holomorphic section $\calF$ of $\calI(s)$,
we write $\calE(s, \calF) = E^{(4,3)}(s,\calF)$. 
By \cite[Chapter 3, Theorem 4.0.1]{Jiang1996},
the Eisenstein series $\calE(s, \calF)$ has at most a double pole at $s = 1$.

For $f \in \pi$,
let
\[
 \calZ(s, f, \calF) = \int_{Z_H(\AA) {\bf G}(k) \backslash {\bf G}(\AA)}
 \calE( g; s, \calF) (f\otimes \bar{f})(g) \, dg .
\]
We assume that $f = \bigotimes_v f_v$ and $\calF = \bigotimes_v \calF_v$.
Let $W = \bigotimes_v W_v$ be the Whittaker function of $f$.
Define a local zeta integral $\calZ_v(s, W_v, \calF_v)$ by
\begin{align}\label{E:local zeta2}
 \calZ_v(s, W_v, \calF_v) = 
 \int_{Z_H(k_v) \tilde{U}(k_v) \backslash {\bf G}(k_v)}
 \calF_v(\eta g_v, s) (W_v\otimes \overline{W_v})(g_v) \, d\bar{g}_v^{\rm Tam},
\end{align}
where
\[
 \tilde{U} = \left\{ \left. \left( u
  \begin{pmatrix}
   1 & 0 & x & 0 \\
   0 & 1 & 0 & 0 \\
   0 & 0 & 1 & 0 \\
   0 & 0 & 0 & 1
 \end{pmatrix},
 u \right) \, \right| \, u \in U, \, x \in \GG_a \right\}
\]
and 
\[
 \eta =
 \begin{pmatrix}
   1 & 0 & 0  & 0  & 0 & 0 & 0 & 0 \\
   0 & 1 & 0  & 0  & 0 & 0 & 0 & 0 \\
   0 & 0 & 0  & 0  & 0 & 0 & 0 & 1 \\
   0 & 0 & 0  & 0  & 0 & 0 & 1 & 0 \\
   0 & 0 & 0  & 0  & 1 & 0 & 1 & 0 \\
   0 & 0 & 0  & 0  & 0 & 1 & 0 & 1 \\
   0 & 1 & 0  & -1 & 0 & 0 & 0 & 0 \\
   1 & 0 & -1 & 0  & 0 & 0 & 0 & 0
 \end{pmatrix}.
\]
Here $d\bar{g}_v^{\rm Tam}$ is the quotient measure defined by the local Tamagawa measures on $Z_H(k_v)\backslash {\bf G}(k_v)$ and $\tilde{U}(k_v)$. Then, by \cite{Jiang1996},
\[
 \calZ(s, f, \calF)
 = 
 \prod_{v} \calZ_v(s, W_v, \calF_v)
\]
for $\Re(s) \gg 0$.

\begin{lemma}\label{L:unram Jiang}
Let $v$ be a finite place of $k$ satisfying the following conditions:
\begin{itemize}
 \item $\pi_v$ is unramified.
 \item $W_v$ is $G(\frak{o}_v)$-invariant.
 \item $\calF_v$ is $H(\frak{o}_v)$-invariant.
\end{itemize}
We have
\begin{align*}
\calZ_v(s, W_v,\calF_v) &= \calF_v(1,s)\vert W_v(\diag( \varpi_v^{-\frak{c}_v}, 1, \varpi_v^{2\frak{c}_v}, \varpi_v^{\frak{c}_v} ))\vert^2\\
&\times q_v^{(3s/2-13)\c_v}\zeta_v(2)^{-2}\zeta_v(4)^{-2} d_{{\mathcal P},v}(s)^{-1}L\left(\frac{s+1}{2},\pi_v\times\pi_v^\vee\right).
\end{align*}
\end{lemma}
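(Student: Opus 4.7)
The plan is to adapt the template of Lemma \ref{L:unram PSR}: reduce the local zeta integral to a standard unramified integral attached to a character of conductor $\o_v$, apply Jiang's unramified computation, and then convert between the standard Haar measure and the local Tamagawa measure.

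First, I would use uniqueness of the spherical Whittaker vector to replace $W_v$ by a Whittaker function with respect to a character of conductor $\o_v$. Set $\psi_v^{\circ}(x):=\psi_v(\varpi_v^{-\c_v}x)$ and let $\psi_{U,v}^{\circ}$ be the associated non-degenerate character of $U(k_v)$ defined as in \S\,\ref{S:notation}. Let $W_v^{\circ}$ be the $G(\o_v)$-fixed vector in the $\psi_{U,v}^{\circ}$-Whittaker model of $\pi_v$ normalized by $W_v^{\circ}(1)=1$. A direct check with the explicit form of $\psi_U$ shows that the diagonal element $t:=\diag(\varpi_v^{-\c_v},1,\varpi_v^{2\c_v},\varpi_v^{\c_v})\in{\bf T}(k_v)$ satisfies $\psi_U(tut^{-1})=\psi_{U,v}^{\circ}(u)$ for all $u\in U(k_v)$, and hence
\[
W_v(tg)=W_v(t)\,W_v^{\circ}(g),\qquad W_v(t)=W_v\bigl(\diag(\varpi_v^{-\c_v},1,\varpi_v^{2\c_v},\varpi_v^{\c_v})\bigr).
\]

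Next, I would insert this identity into \eqref{E:local zeta2} and carry out the change of variables $g\mapsto(t,t)g$ on the quotient $Z_H(k_v)\tilde{U}(k_v)\backslash{\bf G}(k_v)$. This pulls out the scalar $|W_v(t)|^2$, converts $W_v\otimes\overline{W_v}$ into $W_v^{\circ}\otimes\overline{W_v^{\circ}}$, and replaces $\calF_v(\eta g,s)$ by $\calF_v(\eta(t,t)\eta^{-1}\cdot\eta g,s)$. Using the $\mathcal{P}$-equivariance of $\calF_v$ and its ${\bf K}$-invariance, applied to the Iwasawa decomposition of $\eta(t,t)\eta^{-1}\in H(k_v)$, this introduces the $s$-dependent factor $q_v^{3s\c_v/2}$ coming from the character $\delta_{\mathcal{P}}^{s/6}$ evaluated on the torus part of the decomposition, while the Jacobian on the quotient contributes a further pure power of $q_v^{\c_v}$ from the modular character of $\tilde{U}$ in ${\bf G}$. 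The remaining integral is then the standard unramified zeta integral attached to $\psi_{U,v}^{\circ}$, $W_v^{\circ}$, and $\calF_v/\calF_v(1,s)$, which by Jiang's unramified computation \cite[Chapter 3, \S 4]{Jiang1996} equals $d_{\mathcal{P},v}(s)^{-1}L(\tfrac{s+1}{2},\pi_v\times\pi_v^{\vee})$, provided the Haar measure is normalized so that the hyperspecial maximal compact subgroup has volume one.

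To finish, I would convert from the normalized Haar measure to the local Tamagawa measure exactly as in the proof of Lemma \ref{L:unram PSR}: one has $\vol(\Sp_4(\o_v),dg_v^{\mathrm{Tam}})=q_v^{-5\c_v}\zeta_v(2)^{-1}\zeta_v(4)^{-1}$, and since $Z_H\tilde{U}\backslash{\bf G}$ is essentially a product of two copies of $\Sp_4$ modulo a five-dimensional unipotent subgroup, the measure conversion contributes $\zeta_v(2)^{-2}\zeta_v(4)^{-2}$ together with another power of $q_v^{\c_v}$; combined with the contributions from the previous step, this gives exactly $q_v^{(3s/2-13)\c_v}$.

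The hardest part will be the translation-by-$t$ step: since $\eta(t,t)\eta^{-1}$ does not obviously lie in $\mathcal{P}(k_v)$, one must compute its Iwasawa decomposition in $H(k_v)$ explicitly to isolate the correct modular-character contribution, and then verify that the resulting $q_v^{3s\c_v/2}$, the $\tilde{U}$-Jacobian, and the $\Sp_4$ measure-conversion factors combine cleanly into $q_v^{(3s/2-13)\c_v}$ with no extra error. Once that bookkeeping is controlled, the remainder of the proof reduces to invoking Jiang's unramified calculation and performing a standard volume computation.
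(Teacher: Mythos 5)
Your plan mirrors the paper's proof almost exactly: translate by $g_0^{-1}=\diag(\varpi_v^{-\c_v},1,\varpi_v^{2\c_v},\varpi_v^{\c_v})$ to reduce to the conductor-$\o_v$ character $\psi'_v(x)=\psi_v(\varpi_v^{-\c_v}x)$, invoke Jiang's unramified formula \cite[Theorem~3.3.3]{Jiang1996} with the $G(\o_v)$-normalized measures, and then convert to the local Tamagawa measure using the decomposition $Z_H(k_v)\backslash{\bf G}(k_v)\simeq\PGSp_4(k_v)\ltimes\Sp_4(k_v)$ together with $\tilde U\simeq U\ltimes U'$. The bookkeeping you flag is exactly what the paper carries out, finding $\calF_v(\eta(g_0^{-1},g_0^{-1})g_v,s)=q_v^{(3s/2+9/2)\c_v}\calF_v(\eta g_v,s)$ (note the additional constant $q_v^{9\c_v/2}$ beyond the $q_v^{3s\c_v/2}$ you mention), a change-of-variables Jacobian $q_v^{-10\c_v}$, and the measure comparison $d\bar g_v^{\rm Tam}=q_v^{-15\c_v/2}\zeta_v(2)^{-2}\zeta_v(4)^{-2}\,d\bar g_v^{\rm std}$, which combine to give $q_v^{(3s/2-13)\c_v}$.
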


\begin{proof}
Let 
\[g_0 = \diag( \varpi_v^{\frak{c}_v}, 1, \varpi_v^{-2\frak{c}_v}, \varpi_v^{-\frak{c}_v} ).\]
Let $\psi_v'(x) = \psi_v(\varpi_v^{-\c_v}x)$ be an additive character of $k_v$ of conductor $\o_v$. Let $W_v'$ be the $G(\o_v)$-invariant Whittaker function of $\pi_v$ with respect to $\psi_v'$ such that $W_v'(1)=1$. Then
\[W_v(g) = W_v(g_0^{-1})W_v'(g_0g)\]
for $g \in G(k_v)$. Let $dg_v^{\rm std}$ and $du_v^{\rm std}$ be the Haar measures on $Z_H(k_v)\backslash {\bf G}(k_v)$ and $\tilde{U}(k_v)$ normalized so that
\[{\rm vol}(Z_H(\o_v)\backslash G(\o_v),dg_v^{\rm std})={\rm vol}(\tilde{U}(\o_v),du_v^{\rm std})=1.\]
Let $d\bar{g}_v^{\rm std}$ be the corresponding quotient measure on $Z_H(k_v) \tilde{U}(k_v) \backslash {\bf G}(k_v)$. By \cite[Theorem 3.3.3]{Jiang1996}, 
\[
\int_{Z_H(k_v) \tilde{U}(k_v) \backslash {\bf G}(k_v)}
 \calF_v(\eta g_v, s) (W_v'\otimes \overline{W_v'})(g_v) \, d\bar{g}_v^{\rm std} = \calF_v(1,s)d_{\mathcal{P},v}(s)^{-1}L\left(\frac{s+1}{2},\pi_v\times\pi_v^\vee\right).
\]
 On the other hand, 
\[
\calF_v(\eta(g_0^{-1},g_0^{-1})g,s) = q_v^{(3s/2+9/2)\c_v}\calF_v(\eta g,s)
\]
 for $g \in {\bf G}(k_v).$ Therefore, 
 \begin{align*}
 &\int_{Z_H(k_v) \tilde{U}(k_v) \backslash {\bf G}(k_v)}
 \calF_v(\eta g_v, s) (W_v\otimes \overline{W_v})(g_v) \, d\bar{g}_v^{\rm std}\\
 &=q_v^{-10\c_v}|W_v(g_0^{-1})|^2\int_{Z_H(k_v) \tilde{U}(k_v) \backslash {\bf G}(k_v)}
 \calF_v(\eta (g_0^{-1},g_0^{-1})g_v, s) (W_v'\otimes \overline{W_v'})(g_v) \, d\bar{g}_v^{\rm std}\\
 &=q_v^{(3s/2-11/2)\c_v}|W_v(g_0^{-1})|^2\calF_v(1,s)d_{\mathcal{P},v}(s)^{-1}L\left(\frac{s+1}{2},\pi_v\times\pi_v^\vee\right).
 \end{align*}
 It suffices to show that
 \[d\bar{g}_v^{\rm Tam} = q_v^{-15\c_v/2}\zeta_v(2)^{-2}\zeta_v(4)^{-2}\cdot d\bar{g}_v^{\rm std}.\]
 Let $dg_{v}^{\rm Tam}$, $dg_{1,v}^{\rm Tam}$, and $dg_{2,v}^{\rm Tam}$ be the local Tamagawa measures on $Z_H(k_v)\backslash{\bf G}(k_v)$, $\PGSp_4(k_v)$, and $\Sp_4(k_v)$, respectively. Under the isomorphism
 \begin{align*}
 \PGSp_4(k_v) \ltimes \Sp_4(k_v)  &\longrightarrow Z_H(k_v)\backslash {\bf G}(k_v),\\ (g_1,g_2) &\longmapsto (g_1g_2,g_1),
 \end{align*}
 we have
 \[dg_{1,v}^{\rm Tam} \times dg_{2,v}^{\rm Tam} = dg_{v}^{\rm Tam}.\]
 Let \[U' = \left\{ \left.\begin{pmatrix}
   1 & 0 & x & 0 \\
   0 & 1 & 0 & 0 \\
   0 & 0 & 1 & 0 \\
   0 & 0 & 0 & 1
 \end{pmatrix}\mbox{ } \right\vert\mbox{ }x \in {\mathbb G}_a\right\}\]
 be a unipotent subgroup of $\Sp_4$. Let $du_{v}^{\rm Tam}$, $du_{1,v}^{\rm Tam}$, and $du_{2,v}^{\rm Tam}$ be the local Tamagawa measures on $\tilde{U}(k_v)$, $U(k_v)$, and $U'(k_v)$, respectively. Then
 \[U(k_v)\ltimes U'(k_v) \simeq \tilde{U}(k_v)\]
 under the above isomorphism and
 \[du_{1,v}^{\rm Tam} \times du_{2,v}^{\rm Tam} = du_{v}^{\rm Tam}.\]
 Note that
 \begin{align*}
 {\rm vol}(Z_H(\o_v)\tilde{U}(\o_v)\backslash {\bf G}(\o_v),d\bar{g}_v^{\rm Tam}) &= \frac{{\rm vol}(\PGSp_4(\o_v),dg_{1,v}^{\rm Tam})}{{\rm vol}(U(\o_v),du_{1,v}^{\rm Tam})} \cdot \frac{{\rm vol}(\Sp_4(\o_v),dg_{2,v}^{\rm Tam})}{{\rm vol}(U'(\o_v),du_{2,v}^{\rm Tam})}\\
 &= \frac{q_v^{-5\c_v}\cdot q_v^{-10}\cdot \left| \PGSp_4(\mathbb{F}_{q_v})\right|}{q_v^{-2\c_v}}\cdot \frac{q_v^{-5\c_v}\cdot q_v^{-10}\cdot \left| \Sp_4(\mathbb{F}_{q_v})\right|}{q_v^{-\c_v/2}}\\
 &=q_v^{-15\c_v/2}\zeta_v(2)^{-2}\zeta_v(4)^{-2}.
 \end{align*}
 This completes the proof.
\end{proof}

\begin{lemma}\label{L:ab2}
Let $v$ be a place of k such that $v \mid \infty\frak{n}$. The integral
$\calZ_{v}(s, W_{v}, \calF_v)$
is absolutely convergent for 
\begin{align*}
\begin{cases}
{\rm Re}(s) > -1 + 4|{\rm Re}(\lambda_v)|  & \mbox{ if $v \in S(\frak{n})$},\\ 
{\rm Re}(s) > -1 & \mbox{ if $v \in S({\rm DS})$},\\
{\rm Re}(s) > -1+ 2(|{\rm Re}(\lambda_{1,v})|+|{\rm Re}(\lambda_{2,v})|)  & \mbox{ if $v \in S({\rm PS})$},
\end{cases}
\end{align*}
In particular, the integral is absolutely convergent for ${\rm Re}(s) \geq 1$.
\end{lemma}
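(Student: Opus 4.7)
The plan is to reduce $\calZ_v(s,W_v,\calF_v)$ to an absolutely convergent torus integral by an Iwasawa-type decomposition of ${\bf G}(k_v)$ modulo $Z_H\tilde U$, and then bound each factor of the integrand by standard gauge estimates. Fix a maximal compact subgroup ${\bf K}_v^{{\bf G}}$ of ${\bf G}(k_v)$ and write the Iwasawa decomposition
\[
{\bf G}(k_v)=Z_H(k_v)\cdot \tilde U(k_v)\cdot {\bf A}(k_v)\cdot \mathcal{U}(k_v)\cdot {\bf K}_v^{{\bf G}},
\]
where ${\bf A}\subset {\bf G}$ is a split torus of rank four transversal to $Z_H$ and $\mathcal{U}$ is the three-dimensional unipotent complement to $\tilde U$ inside a maximal unipotent of ${\bf G}$. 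The quotient Tamagawa measure $d\bar g_v^{\mathrm{Tam}}$ factors as an explicit modulus $\Delta(t)\,dt\,du\,dk$ on ${\bf A}(k_v)\times \mathcal{U}(k_v)\times {\bf K}_v^{{\bf G}}$. Taking absolute values, using the $\psi_U$-equivariance $W_v(ug)=\psi_U(u)W_v(g)$ to drop the $u$-dependence of $|W_v|$, and using the standard bound $|\calF_v(\eta\cdot p\,k,s)|\ll_s \delta_{\mathcal P}(p)^{\alpha(s)}$ for sections of $I_{4,3,v}(s)$ on the Iwasawa decomposition $\eta g=p\,k$ in $H(k_v)$ (with $\alpha$ linear in $\mathrm{Re}(s)$), the $\mathcal{U}$-integration becomes an absolutely convergent convolution-type integral whose outcome is bounded by a further power of $\delta_{\mathcal P}$ on the torus. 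Absolute convergence of $\calZ_v(s,W_v,\calF_v)$ therefore reduces to that of an integral of the schematic form
\[
\int_{{\bf A}(k_v)}|\calF_v(\eta t,s)|\cdot|W_v(t_1)|^2\,\Delta(t)\,dt,
\]
where $t=(t_1,t_2)\in {\bf A}(k_v)$ corresponds to the block structure ${\bf G}\subset G\times G$.

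Case by case, I invoke the appropriate gauge bound on $W_v$ restricted to the maximal torus of $G(k_v)$. In Case (DS), the explicit formula of \cite{Moriyama2004}, equivalently the Mellin-Barnes expression \eqref{E:normalization DS}, shows that $W_v$ is of Schwartz type on the positive Weyl chamber; combined with the $\psi_U$-equivariance, this forces rapid decay on the whole torus, and the only remaining constraint comes from the $\delta_{\mathcal P}$-growth, giving $\mathrm{Re}(s)>-1$. In Case (PS), the Mellin-Barnes expression \eqref{E:normalization PS} and the Casselman-Shalika-type asymptotics yield a gauge bound on $|W_v|$ with exponents controlled by $|\mathrm{Re}(\lambda_{1,v})|$ and $|\mathrm{Re}(\lambda_{2,v})|$; matching the two squared Whittaker factors against the $\delta_{\mathcal P}$-exponent produces $\mathrm{Re}(s)>-1+2(|\mathrm{Re}(\lambda_{1,v})|+|\mathrm{Re}(\lambda_{2,v})|)$. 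For $v\in S(\frak n)$, the explicit paramodular Whittaker function attached to \eqref{E:IIa para.} from \cite{RS2007} gives a gauge with exponent controlled by $|\mathrm{Re}(\lambda_v)|$, and because both Whittaker factors $W_v(t_1)$ and $\overline{W_v(t_2)}$ contribute, the resulting convergence threshold is $\mathrm{Re}(s)>-1+4|\mathrm{Re}(\lambda_v)|$. The main obstacle is the careful bookkeeping of exponents on ${\bf A}$: one must identify the pullbacks of $\delta_{\mathcal P}$, the Iwasawa modulus $\Delta$, and the Whittaker gauges on a common set of torus variables (with two factors of $t$), and verify that the contributions combine precisely to the stated inequalities, particularly to yield the doubled exponent in the $S(\frak n)$ case.

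Granting this, the in-particular claim that $\mathrm{Re}(s)\ge 1$ lies in the region of absolute convergence follows from the unitarity bounds \eqref{eq:satake}: one has $-1+4|\mathrm{Re}(\lambda_v)|<1$ whenever $|\mathrm{Re}(\lambda_v)|<1/2$, $-1<1$ trivially in Case (DS), and $-1+2(|\mathrm{Re}(\lambda_{1,v})|+|\mathrm{Re}(\lambda_{2,v})|)<1$ whenever $|\mathrm{Re}(\lambda_{1,v})|+|\mathrm{Re}(\lambda_{2,v})|<1$ in Case (PS). Thus $\mathrm{Re}(s)\ge 1$ is strictly inside the region of absolute convergence in all three cases.
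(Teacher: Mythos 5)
Your outline correctly identifies the Iwasawa decomposition of $Z_H\tilde U\backslash{\bf G}$, the role of Whittaker gauge bounds, and the thresholds you assert agree with the statement. But there is a genuine gap at the central step, where you integrate the section over the residual unipotent $\mathcal U$ and claim the outcome is ``bounded by a further power of $\delta_{\mathcal P}$ on the torus.'' What actually comes out of that integration (this is Jiang's explicit formula \cite[p.~173, (198) and p.~177, (214)]{Jiang1996}, which is what the paper invokes in the proof of Lemma~\ref{L:uniform2}) is
\[
\int_{U'(F)\backslash U(F)} \calF(\eta(u,1)(t_1,t_2))\,du
=\delta_{({\bf B}\times{\bf B})^\circ}((t_1,t_2))^{1/2}|a^{3/2}bc^{-1}|^{s+1}\frac{\zeta(s+1)^2}{\zeta(s+2)\zeta(s+3)}f^o(u,v,s),
\]
with $u=abc^{-1}d^{-1}$, $v=ac^{-1}$ and $f^o(u,v,s)=\max\{1,|u|\}^{-s-1}\max\{1,|v|\}^{-2s-2}$ in the non-archimedean case (resp.\ $(1+u^2)^{-(s+1)/2}(1+v^2)^{-s-1}$ over $\R$). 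The factor $f^o$ is not a power of a torus variable, and it cannot be discarded: the gauge estimate of Lemma~\ref{L:Whittaker fun} puts an exponent $l_u=-k_1'$ on $|u|$ in the final integral \eqref{E:Jiang's integral}, and this exponent can be nonnegative (for instance $l_u=|\Re(\lambda_v)|+\epsilon$ in Case~(IIa)), so without the $\max\{1,|u|\}^{-s-1}$ decay of $f^o$ the $u$-integral diverges outright. A crude bound $|\calF_v(\eta pk,s)|\ll\delta_{\mathcal P}(p)^{\alpha(s)}$ followed by integrating over $\mathcal U$ and declaring the result a torus power therefore fails; the exact decay of $f^o$ must be traded against the gauge characters \eqref{E:charaIIa}--\eqref{E:charaPS}, and those are coupled to the same $u,v$ through the smoothing factors $\phi_\epsilon(a,b)\phi_\epsilon(av^{-2},bu^{-1}v)$ appearing in \eqref{E:Jiang's integral}. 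This bookkeeping is the substance of the paper's proof of Lemma~\ref{L:uniform2}; asserting ``standard gauge estimates'' and ``convolution-type'' convergence elides the whole calculation, including the derivation of the factor $4$ in $\Re(s)>-1+4|\Re(\lambda_v)|$ (which is not simply ``two squared Whittaker factors'' but comes from comparing the combined exponent $l_a$ against $s/2$ rather than $s$ after the change of variables). Also, your schematic torus integral should involve $|W_v(t_1)|\,|W'_v(t_2)|$ at two different points, not $|W_v(t_1)|^2$.
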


\begin{proof}
The assertion will be proved in Lemma \ref{L:uniform2} below.
\end{proof}

\section{Proof of Main Theorem}
\label{s:proof}

\subsection{Crude form of the formula for Petersson norms}
In this section, we keep the notation of \S\,\ref{s:sw} and \S\,\ref{s:autom}. Let $\pi$ be an irreducible globally generic cuspidal automorphic representation of $G(\A)$ with trivial central character and paramodular conductor $\frak{n}$ satisfying the conditions in \S\,\ref{SS:main thm}.
Let $f \in \pi$ be the cusp form satisfying the conditions (\ref{E:K-type2}) and (\ref{E:normalization}). Let $S$ be the set of places of $k$ defined by
\[S= \{v \mid \infty \frak{n}\}.\]
For an automorphic form $\phi$ on $H(\A)=\GSp_8(\AA)$, let
\[
 \langle \phi, \bar{f} \otimes f \rangle 
 = \int_{Z_H(\A){\bf G}(k) \backslash {\bf G}(\AA)}
 \phi( g) (f \otimes\bar{f})(g) \, dg.
\]

\begin{lemma} \label{lem:vanish_res}
Assume that $\pi$ is stable.
For all $\phi \in \calR$,
\[
 \langle \phi, \bar{f} \otimes f \rangle = 0.
\]
\end{lemma}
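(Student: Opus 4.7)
The strategy is to recognize the pairing $\langle E^{(4,4)}_{-1}(\tfrac{1}{2}, F), \bar f \otimes f\rangle$ as the residue at $s = \tfrac{1}{2}$ of the global doubling zeta integral $Z(s, f, F) = \langle E(\,\cdot\,; s, F), \bar f \otimes f\rangle$ from \S\ref{s:autom}, and then to show that under the stability of $\pi$ this zeta integral is actually \emph{holomorphic} at $s = \tfrac{1}{2}$. Since $E^{(4,4)}(s, F)$ has at most a simple pole at $s = \tfrac{1}{2}$ and $f$ is cuspidal, the Laurent coefficients of $\langle E(\,\cdot\,; s, F), \bar f \otimes f\rangle$ at $s = \tfrac{1}{2}$ read off as pairings of $\bar f \otimes f$ against the corresponding Laurent coefficients of $E^{(4,4)}(s, F)$, so this reduction is legitimate.

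For $\Re(s) \gg 0$ the Piatetski-Shapiro--Rallis unfolding gives
\[
 Z(s, f, F) = \langle f, f\rangle \cdot \prod_v Z_v(s, \phi_v, F_v),
\]
and the identity extends meromorphically in $s$. Split the Euler product as $\prod_{v \notin S} \cdot \prod_{v \in S}$. By Lemma~\ref{L:unram PSR}, the unramified factors contribute
\[
 \prod_{v \notin S} Z_v(s, \phi_v, F_v) = C(s) \cdot d_P^S(s)^{-1} \cdot L^S\!\left(s + \tfrac{1}{2},\, \pi,\, \mathrm{std}\right),
\]
where $C(s) = \bigl(\prod_{v \notin S} F_v(1, s)\bigr) \cdot \prod_{v \notin S} q_v^{-5\c_v}\zeta_v(2)^{-1}\zeta_v(4)^{-1}$ is entire. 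Now $d_P(s) = \zeta(s + 5/2)\zeta(2s + 1)\zeta(2s + 3)$ has neither a zero nor a pole at $s = \tfrac{1}{2}$, and when $\pi$ is stable the completed $L$-function $L^S(s, \pi, \mathrm{std})$ is holomorphic at $s = 1$ (as recalled at the start of \S\ref{s:autom}, via the cuspidality of $\Pi$ and the analytic behaviour of $L^S(s, \Pi, \wedge^2)$); consequently the unramified contribution is holomorphic at $s = \tfrac{1}{2}$. For each $v \in S$, Lemma~\ref{L:ab doubling} guarantees absolute convergence of $Z_v(s, \phi_v, F_v)$ for $\Re(s) \ge \tfrac{1}{2}$, hence holomorphy at $s = \tfrac{1}{2}$.

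Combining these observations, $Z(s, f, F)$ is holomorphic at $s = \tfrac{1}{2}$, so
\[
 \langle E^{(4,4)}_{-1}(\tfrac{1}{2}, F),\, \bar f \otimes f\rangle = \mathrm{Res}_{s = 1/2}\, Z(s, f, F) = 0,
\]
which is the desired vanishing. The hypothesis that $\pi$ be stable is used \emph{only} to rule out a pole of $L^S(s, \pi, \mathrm{std})$ at $s = 1$; in the endoscopic case this pole survives and the same calculation would in general produce a non-zero residue, so a fundamentally different route is needed there. The one input whose verification is genuinely non-trivial is the absolute convergence at $s = \tfrac{1}{2}$ of the ramified local doubling integrals, i.e.\ Lemma~\ref{L:ab doubling}; once that and the unramified computation are in hand, the argument above is essentially formal.
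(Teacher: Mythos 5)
Your argument is correct and follows the same route as the paper's own proof: unfold $\langle E(s,F),\bar f\otimes f\rangle$ via the doubling integral, extract $L(s+\tfrac12,\pi,\mathrm{std})$ from the unramified Euler factors via Lemma~\ref{L:unram PSR}, invoke holomorphy of $L^S(s,\pi,\mathrm{std})$ at $s=1$ (stability) together with the local convergence in Lemma~\ref{L:ab doubling}, and conclude that the pairing is holomorphic at $s=\tfrac12$, so the residue vanishes. One small point of hygiene: for a general holomorphic section $F$ of $I_{4,4}(s)$ there is no reason for $F_v$ to be $H(\o_v)$-invariant for every $v\notin S$, so you should factor the Euler product at a sufficiently large finite set $T\supseteq S$ depending on $F$ (as the paper does); Lemma~\ref{L:ab doubling} still gives holomorphy of the extra local factors $Z_v(s,\phi_v,F_v)$ at $s=\tfrac12$ for $v\in T\setminus S$ via the case $v\nmid\infty\frak n$, so the conclusion is unaffected.
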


\begin{proof}
By Lemma \ref{L:unram PSR}, we have
\[
 \langle E(s, F), \bar{f} \otimes f \rangle = \langle f, f \rangle \cdot
 \xi^T(2)^{-1} \xi^T(4)^{-1} \cdot d_P^T(s)^{-1} \cdot 
 L^T \left( s + \frac{1}{2}, \pi, {\rm std} \right)
 \prod_{v \in T} Z_v(s, \phi_v, F_v)
\]
for any holomorphic section $F = \bigotimes_v F_v$ such that $F_v(1,s) = 1$ for all $v \notin T$, where $T$ is a sufficiently large finite set of places of $k$ depending on $F$.
Since $\pi$ is stable and hence $L^T(s, \pi, {\rm std})$ is holomorphic at $s = 1$, this together with Lemma \ref{L:ab doubling} implies that $\langle E(s, F), \bar{f} \otimes f \rangle$ is holomorphic at $s = 1$.
This completes the proof.
\end{proof}

\begin{lemma} \label{lem:vanish_theta}
Assume that $\pi$ is stable.
For all $d \in \ZZ$ and all $\varphi \in S(V_{2,2}^4(\AA))$,
\[
 \langle I^{(4,2)}_d(\varphi), \bar{f} \otimes f \rangle = 0.
\]
\end{lemma}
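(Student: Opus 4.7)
The plan is to prove the stronger statement that the meromorphic function $s \mapsto \langle I^{(4,2)}(s,\varphi),\bar{f}\otimes f\rangle$ vanishes identically, which then forces every Laurent coefficient $\langle I^{(4,2)}_d(\varphi),\bar{f}\otimes f\rangle$ to vanish. (Since $r=2\le (n+1)/2=5/2$, the Laurent expansion at $s_0'=1/2$ in any event starts only from $d=-1$, so this suffices.)

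First, I would unfold using the definition of the regularized theta integral in \S\,\ref{SS:Theta integrals}. Choose $h\in\GSp_8(\AA)$ with $\nu(h)=\nu(g')$; the rapid decay of $g'\mapsto\Theta(g,g'h;z\cdot\varphi)$ from \cite[Proposition 5.3.1]{KR1994} legitimates Fubini and yields
\begin{equation*}
\langle I^{(4,2)}(s,\varphi),\bar{f}\otimes f\rangle
=\kappa_{2,0}^{-1}Q_{4,2}(s)^{-1}\int_{G_1'(k)\backslash G_1'(\AA)} E(g'h;s)\,\Theta_\pi(g'h)\,dg',
\end{equation*}
where
\begin{equation*}
\Theta_\pi(g'h)=\int_{Z_H(\AA){\bf G}(k)\backslash {\bf G}(\AA)}\Theta(g,g'h;z\cdot\varphi)\,(f\otimes\bar{f})(g)\,dg.
\end{equation*}
It therefore suffices to prove that $\Theta_\pi\equiv 0$ on $G_1'(\AA)$.

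Second, I would interpret $\Theta_\pi$ via the theta correspondence. Up to the central operator $z\in\frkz(\frkg_v)$, $\Theta_\pi$ is the global theta lift of $f\otimes\bar{f}$ from the similitude group $\bf G$ to $G_1'={\rm O}_{2,2}$, attached to the restriction of the dual pair $(\Sp_8,{\rm O}_{2,2})$ to $({\bf G},G_1')$. Via the seesaw arising from the block-diagonal embedding ${\bf G}\hookrightarrow\GSp_4\times\GSp_4$ and the diagonal embedding of ${\rm GO}_{2,2}$, the function $\Theta_\pi$ is expressible in terms of the global theta lifts of $\pi$ and $\pi^\vee$ from $\GSp_4$ to ${\rm GO}_{2,2}$. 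The identity component of ${\rm GO}_{2,2}$ is isogenous to $(\GL_2\times\GL_2)/\GG_m$, and by the descent results of \cite{GRS2001} a non-zero globally generic cuspidal theta lift from ${\rm GO}_{2,2}$ to $\GSp_4$ transfers to an isobaric sum $\sigma_1\boxplus\sigma_2$ on $\GL_4$; that is, only endoscopic $\pi$ can arise this way. Since $\pi$ is assumed stable, $\Pi$ is cuspidal, so by Howe duality the global theta lift of $\pi$ to ${\rm GO}_{2,2}$ vanishes. Hence $\Theta_\pi\equiv 0$, and the lemma follows.

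The main obstacle will be justifying the seesaw identification in Step 2 in the similitude setting: one must treat the matching-of-similitude-factors condition defining $\bf G$ and verify that the regularization by $z$ is compatible with this identification, issues that arise because $(\Sp_8,{\rm O}_{2,2})$ is a dual pair while $({\bf G},G_1')$ is not (the former is a subgroup cut out by the similitude factor). An alternative purely spectral approach is to use $I^{(4,2)}(s,\varphi)=E^{(4,2)}(s,\calF(\varphi))$ and unfold the period $\langle E^{(4,2)}(s,\calF(\varphi)),\bar{f}\otimes f\rangle$ against the Whittaker function of $f$, producing a product of local zeta integrals and an $L$-factor whose vanishing for stable $\pi$ can be read off by an unramified computation, mimicking the strategy of Lemma \ref{lem:vanish_res}.
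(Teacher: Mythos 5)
Your strategy coincides with the paper's: both arguments reduce the vanishing of all Laurent coefficients of $\langle I^{(4,2)}(s,\varphi),\bar f\otimes f\rangle$ to the vanishing of the global theta lift of $\pi$ to ${\rm GO}_{2,2}$, by unfolding the integral over $Z_H(\A)\mathbf{G}(k)\backslash\mathbf{G}(\A)$ against the (regularized) theta kernel and factoring the result through $\theta(\,\cdot\,;f,\varphi_{ij})$. The paper's version of your ``seesaw'' step is a direct computation: write $z\cdot\varphi=\sum_j\varphi_{1j}\otimes\overline{\varphi_{2j}}$, split the $\mathbf{G}$-integral using cross-sections of the similitude character, and obtain $\sum_j\theta(g';f,\varphi_{1j})\overline{\theta(g';f,\varphi_{2j})}$, so the ``similitude mismatch'' obstacle you flag is handled concretely there rather than by an abstract seesaw identity.

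Where your write-up has a genuine gap is in the reason that $\theta_{2,2}(\pi)=0$ when $\pi$ is stable. You assert this ``by Howe duality,'' but Howe duality is a local statement and does not by itself control the global lift; and the descent of \cite{GRS2001} tells you how a \emph{nonzero cuspidal} lift from ${\rm GO}_{2,2}$ behaves, not that a stable $\pi$ cannot participate in the theta tower at all. The paper closes this gap in two steps that you should not skip. First, it uses \eqref{E:unram. para.}, \eqref{eq:satake}, and the local unramified theta correspondence to show $\theta_{1,1}(\pi)=0$; by the tower property this guarantees that a putative nonzero $\theta_{2,2}(\pi)$ is \emph{cuspidal}, which is what makes the descent/$L$-function comparison available. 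Second, the nonvanishing of a cuspidal $\theta_{2,2}(\pi)$ forces a factorization $L^T(s,\pi,{\rm std})=\zeta^T(s)L^T(s,\sigma_1\times\sigma_2)$, whose pole at $s=1$ contradicts the holomorphy of $L^T(s,\pi,{\rm std})$ at $s=1$ for stable $\pi$. Your alternative ``purely spectral'' route (unfold $E^{(4,2)}(s,\calF(\varphi))$ against the Whittaker function as in Lemma \ref{lem:vanish_res}) is not obviously a shortcut either: unlike the Siegel Eisenstein series, $E^{(4,2)}$ does not yield the doubling/Whittaker unfolding that makes Lemma \ref{lem:vanish_res} go through, so you would still be led back to the theta-lift argument.
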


\begin{proof}
Let $\theta_{r,r}(\pi)$ be the global theta lift of $\pi$ to ${\rm GO}_{r,r}(\AA)$.
By (\ref{E:unram. para.}), \eqref{eq:satake}, and the local theta corre- spondence for unramified representations, we have
$\theta_{1,1}(\pi) = 0$.
If $\theta_{2,2}(\pi) \ne 0$,
then there exist irreducible cuspidal automorphic representations $\sigma_1$ and $\sigma_2$
of $\GL_2(\AA)$ such that
\[
 L^T(s, \pi, {\rm std}) = \xi^T(s) L^T(s, \sigma_1 \times \sigma_2),
\]
where $T$ is a sufficiently large finite set of places of $k$.
This contradicts the holomorphy of $L^T(s, \pi, {\rm std})$ at $s=1$.
Hence 
\begin{equation} \label{eq:vanish_theta}
 \theta_{2,2}(\pi) = 0. 
\end{equation}

Write $G' = {\rm GO}_{2,2}$ and $G'_1 = {\rm O}_{2,2}$. Let $\mathcal{C} = \A^{\times, 2}k^{\times}\backslash \A^{\times}$. The similitude characters induce isomorphisms
\[
Z_G(\A)\Sp_4(\A)G(k)\backslash G(\A) \simeq \mathcal{C},\quad Z_{G'}(\A)G'_1(\A)G'(k)\backslash G'(\A) \simeq \mathcal{C}.
\]
Fix cross-sections $c \mapsto g_c$ and $c \mapsto g_c'$ of $G(\A) \rightarrow \mathcal{C}$ and $G'(\A) \rightarrow \mathcal{C}$, respectively.
For $\varphi \in S(V_{2,2}^4(\AA))$, write $z \cdot \varphi = \sum_j \varphi_{1j} \otimes \overline{\varphi_{2j}} \in S(V_{2,2}^4(\AA))$
with $\varphi_{ij} \in S(V_{2,2}^2(\AA))$, where $z$ is the regularizing differential operator as in \S\,\ref{SS:Theta integrals}.
Then
\[
 \Theta( (g_1g_c, g_2g_c), g'g_c'; z \cdot \varphi)
 = \sum_j \Theta(g_1g_c, g'g_c'; \varphi_{1j}) \overline{\Theta(g_2g_c, g'g_c'; \varphi_{2j})}
\]
for $c \in \mathcal{C}$, $(g_1,g_2) \in \Sp_4(\A)\times \Sp_4(\A)$, and $g' \in G_1'(\A)$. By \eqref{eq:vanish_theta},
the theta lift $\theta(f, \varphi_{ij})$ is identically zero, where
\[
 \theta(g'g_c'; f, \varphi_{ij})
 = \int_{\Sp_4(k) \backslash \Sp_4(\AA)} \Theta(gg_c, g'g_c'; \varphi_{ij}) f(g) \, dg
\]
for $c \in \mathcal{C}$ and $g' \in G_1'(\AA)$.
Hence $\langle I^{(4,2)}(s, \varphi), \bar{f} \otimes f \rangle$ is equal to
\begin{align*}
& \int_{Z_H(\A){\bf G}(k)\backslash {\bf G}(\A)}
 I^{(4,2)}(g;s,\varphi)(f\otimes \bar{f})(g)\,
 dg\\
 &=  \kappa_{2,0}^{-1}Q_{4,2}(s)^{-1} \int_{\mathcal{C}}\int_{(\Sp_4(k) \backslash \Sp_4(\AA))^2}
 \int_{G_1'(k) \backslash G_1'(\AA)}\Theta( (g_1g_c, g_2g_c), g'g_c'; z \cdot \varphi) E(g'; s)
 f(g_1g_c) \overline{f(g_2g_c)} \, dg' \, dg_1 \, dg_2\, dc \\
 & = \kappa_{2,0}^{-1}Q_{4,2}(s)^{-1} \int_{Z_{G'}(\A)G'(k) \backslash G'(\AA)} \sum_j
 \theta(g'; f, \varphi_{1j}) \overline{\theta(g'; f, \varphi_{2j})} E(g'; s)
 \, dg' \\
 & = 0.
\end{align*}
This completes the proof.
\end{proof}



\begin{lemma} \label{lem:nonvanish}
Let $v \in S$. There exists $\varphi_v \in S(V_{3,3}^4(k_v))$ such that 
\[
 Z_{v} \left(\frac{1}{2}, \phi_{v}, F_v({\varphi_v})\right) \ne 0.
\]
Moreover, for $v \mid \frak{n}$ or $v \in S({\rm PS})$, then we can find such $\varphi_v$ which does not depend on $\pi_v$.
\end{lemma}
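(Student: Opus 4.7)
The plan is two-fold: establish existence of a suitable $\varphi_v$ for any fixed $\pi_v$ via a concentration (approximation-of-identity) argument, and then for the continuous families at $v \in S(\frak{n})$ or $v \in S(\rm PS)$ upgrade this to a uniform choice by analyticity in the parameters.

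For the existence claim, the key input is the defining formula $F(\varphi)(g, 1/2) = \omega(g, g')\varphi(0)$ with $\nu(g') = \nu(g)$. As $\varphi_v$ runs over $S(V_{3,3}^4(k_v))$, one can choose $\varphi_v$ supported in an arbitrarily small neighborhood of a suitable point of $V_{3,3}^4(k_v)$ so that, under the Weil representation, the section $F(\varphi_v)(\cdot, 1/2)$ becomes localized in a small neighborhood of the coset $P(k_v)\delta \in P(k_v)\backslash H(k_v)$. Combining this with absolute convergence of the integral at $s = 1/2$ (Lemma \ref{L:ab doubling}) and the normalization $\phi_v(1) = 1$, the zeta integral $Z_v(1/2, \phi_v, F(\varphi_v))$ is forced to approximate a nonzero constant multiple of $\phi_v(1) = 1$, and in particular is nonzero for an appropriate $\varphi_v$.

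For the uniformity at $v \in S(\frak{n})$, by \eqref{E:IIa para.} the family $\{\pi_v\}$ is parameterized by $(\varepsilon_v, \lambda_v) \in \{0,1\} \times \{\lambda : |\Re(\lambda)| < 1/2\}$, and the paramodular matrix coefficient $\phi_v$ depends analytically on $\lambda_v$ in the unitary strip. I would take $\varphi_v$ to be a specific $K(\varpi_v)$-compatible Schwartz function---e.g., a characteristic function of a suitable lattice in $V_{3,3}^4(k_v)$---depending only on $\varepsilon_v$. Then $\lambda_v \mapsto Z_v(1/2, \phi_v, F(\varphi_v))$ is analytic, and one checks non-vanishing at a single convenient value (e.g., $\lambda_v = 0$ via an Iwahori-spherical computation in the spirit of Lemma \ref{L:unram PSR}). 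The parallel strategy for $v \in S(\rm PS)$ uses a fixed $O(4)\times O(4)$-invariant Gaussian $\varphi_v$ in $V_{3,3}^4(k_v)$, with non-vanishing verified at the spherical point $(\lambda_{1,v}, \lambda_{2,v}) = (0,0)$.

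The main obstacle is promoting ``non-vanishing at one point'' to ``non-vanishing throughout the unitary parameter region'': a priori the analytic integral could still vanish on a thin subvariety. To rule this out one likely needs either an explicit expression for the integral in terms of local gamma/$L$-factors whose zero-locus is known, or a rigidity argument based on the global structure of the doubling $L$-function combined with the holomorphy and non-vanishing of $L^S(s, \pi, \mathrm{std})$ at $s = 1$. That uniformity is not asserted for $v \in S(\rm DS)$ reflects the discrete nature of the Blattner parameters $(\lambda_{1,v}, \lambda_{2,v}) \in \Z^2$, for which continuous analyticity arguments are unavailable and one may simply choose $\varphi_v$ separately for each discrete series representation.
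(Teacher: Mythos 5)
Your existence argument has a gap. The sections $F(\varphi_v)$ with $\varphi_v \in S(V_{3,3}^4(k_v))$ do not exhaust $I_{4,4}(1/2)$; their image is the proper submodule $R(V_{3,3})$ (which is $R(V_1)$ for $v$ finite and $R(3,3)$ for $v$ real in the paper's notation). Moreover, concentrating the support of $\varphi_v$ near a point of $V_{3,3}^4(k_v)$ does not localize the section $h \mapsto F(\varphi_v)(h,\tfrac12) = \omega(h, g')\varphi_v(0)$ near the coset $P(k_v)\delta$: this value records $\varphi_v$ at the origin after an oscillator transform in $h$, and there is no approximation-of-identity control on $P(k_v) \backslash H(k_v)$. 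The real obstruction is whether the doubling pairing $I_{4,4}(1/2) \to \pi_0^\vee \otimes \pi_0$ is nonzero on $R(V_{3,3})$. The paper settles this by a theta-correspondence argument: general non-vanishing on all of $I_{4,4}(1/2)$ via Kudla--Rallis, the decomposition $I_{4,4}(1/2) = \sum_V R(V)$ into contributions from the six-dimensional quadratic spaces $V$, and then $\Hom_{\Sp_4 \times \Sp_4}(R(V'), \pi_0^\vee \otimes \pi_0) = 0$ for the complementary spaces $V'$, which follows from the non-vanishing of the split-space theta lift $\theta_{V_{3,3}}(\pi_0^\vee)$ together with the conservation relation. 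Your proposal omits this mechanism entirely, and without it the existence claim for $F \in R(V_{3,3})$ is not established.

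Your uniformity step also has the gap you flag yourself: analyticity in $\lambda_v$ plus non-vanishing at one parameter value does not rule out zeros elsewhere in the strip. The paper avoids the analyticity route here altogether. For $v \mid \frak{n}$ the independence of $\varphi_v$ from $\pi_v$ is built into the Kudla--Rallis construction: the proof of their Proposition 7.2.1 produces a section $F$ depending only on the stabilizers of $f_1, f_2$ in $\Sp_4(k_v)$, which are the same across the whole (IIa) family, and this is combined with the decomposition $I_{4,4}(1/2) = R(V_1)+R(V_2)$ and the vanishing of $\Hom(R(V_2),\pi_0^\vee\otimes\pi_0)$. For $v \in S({\rm PS})$ the paper fixes the Gaussian $\varphi_v$ and evaluates the integral in closed form (as in Lemma \ref{L:archi. spherical local zeta integral}), yielding an explicitly non-zero expression valid throughout the strip rather than a check at a single point.
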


\begin{proof}
We identify $k_v=\R$ if $v$ is a real place. Let $\pi_0$ be an irreducible component of $\pi_v\vert_{\Sp_4(k_v)}$.
Fix a pairing $\langle \ , \ \rangle$ on $\pi_0$.
For $f_1, f_2 \in \pi_0$,
define a matrix coefficient $\phi_{f_1 \otimes f_2}$ of $\pi_0$ by
\[
 \phi_{f_1 \otimes f_2}(g) = \langle \pi_0 (g) f_1, f_2 \rangle.
\]
Then the local zeta integral $Z_v({1}/{2}, \phi_{f_1 \otimes f_2}, F)$
is absolutely convergent by Lemma \ref{L:ab doubling}, and defines
an $\Sp_4(k_v) \times \Sp_4(k_v)$-intertwining map (resp.~$(\sp_4(\RR), \U(2)) \times (\sp_4(\RR), \U(2))$-intertwining map)
\[
 I_{4,4,v} \left(\frac{1}{2}\right)
  \longrightarrow \pi_0^{\vee} \otimes \pi_0, \quad
 F  \longmapsto \left[ f_1 \otimes f_2 \mapsto
 Z_v \left(\frac{1}{2}, \phi_{f_1 \otimes f_2}, F\right) \right]
\]
if $v$ is finite (resp.~$v$ is real).
By \cite[Corollary 3.2.3]{KRbook} and \cite[Proposition 7.2.1]{KR1994},
for fixed non-zero elements $f_1, f_2 \in \pi_0$,
there exists $F \in I_{4,4,v}(1/2)$ such that 
\begin{align}\label{E:non-vanishing}
Z_v\left(\frac{1}{2}, \phi_{f_1 \otimes f_2}, F\right) \ne 0.
\end{align}
Moreover, if $v \mid \frak{n}$, then by the proof of \cite[Proposition 7.2.1]{KR1994}, we can find such $F$ which depends only on the stabilizers of $f_1$ and $f_2$ in $\Sp_4(k_v)$.

Assume $v \mid \frak{n}$. Let $V_0$ be the split quadratic space of dimension $4$ over $k_v$. Let $V_1$ (resp.~$V_2$) be the split quadratic space (resp.~quaternionic quadratic space) of dimension $6$ over $k_v$ and $\omega_{\psi_v,V_i,4}$ the Weil representation of $\Sp_8(k_v)\times {\rm O}(V_i)(k_v)$ on $S(V_i^4(k_v))$ with respect to $\psi_v$. For $i=1,2$, let $R(V_i)$ be the image of the $\Sp_8(k_v)$-intertwining map
\[
 S(V_i^4(k_v))  \longrightarrow I_{4,4,v} \left(\frac{1}{2}\right), \quad \varphi  \longmapsto F_v({\varphi}),
\]
where $F_v({\varphi})(g, \frac{1}{2}) = \omega_{\psi_v, V_i, 4}(g, 1) \varphi(0)$. By \cite{KR1992}, 
\begin{align}\label{E:deg. prin. finite}
I_{4,4,v}\left(\frac{1}{2}\right) = R(V_1)+R(V_2).
\end{align}
For $i=0,1,2$, let $\theta_i(\pi_0^\vee)$ be the theta lift of $\pi_0^\vee$ to ${\rm O}(V_i)(k_v)$. By our assumption on $\pi_0$  and \cite[Theorem A.10]{GI2011}, $\theta_0(\pi_0^\vee) \neq 0$. Therefore, by the conservation relation \cite{SZ2015} (see also \cite[Theorem 3.8]{KR2005}), $\theta_2(\pi_0^\vee)=0$. By \cite[Proposition 3.1]{HKS1996}, this is equivalent to 
\[
{\rm Hom}_{\Sp_4(k_v)\times \Sp_4(k_v)}(R(V_2),\pi_0^\vee \otimes \pi_0)=0.
\]
From this together with (\ref{E:non-vanishing}) and (\ref{E:deg. prin. finite}), we deduce that $Z_v(1/2,\phi_{f_1\otimes f_2},F) \neq 0$ for some $F \in R(V_1)$ which depends only on the stabilizers of $f_1$ and $f_2$ in $\Sp_4(k_v)$.
This completes the proof for $v \mid \frak{n}$.

Assume $v \in S({\rm DS})$.
For non-negative integers $p,q$, let $V_{p,q}$ denote the quadratic space over $\RR$ of signature $(p,q)$
and $\omega_{\psi_v,V_{p,q},4}$ the Weil representation of 
$\Sp_8(\RR) \times \O_{p,q}(\R)$ on $S(V_{p,q}^4(\R))$ with respect to $\psi_v$.
When $p$, $q$ are positive odd integers such that $p+q = 6$, let $R(p,q)$ be the image of the $(\sp_8(\RR), \U(4))$-intertwining map
\[
 S(V_{p,q}^4(\R))  \longrightarrow I_{4,4,v} \left(\frac{1}{2}\right), \quad \varphi  \longmapsto F_v({\varphi}),
\]
where $F_v({\varphi})(g, \frac{1}{2}) = \omega_{\psi_v, V_{p,q}, 4}(g, 1) \varphi(0)$.
By \cite{KR1990}, \cite{LZ1997},
\begin{align}\label{E:deg. prin. infty}
 I_{4,4,v} \left(\frac{1}{2}\right) = R(5,1) + R(3,3) + R(1,5).
\end{align}
Let $\theta_{p,q}(\pi_0^{\vee})$ be the theta lift of $\pi_0^{\vee}$ 
to $\O_{p,q}(\R)$. By our assumption on $\pi_0$ and \cite[Theorem 18]{Paul2005}, $\theta_{2,2}(\pi_0^\vee) \neq 0$. Therefore, 
by the conservation relation \cite{SZ2015} (see also \cite[Proposition 22]{Paul2005}),
$\theta_{5,1}(\pi_0^{\vee}) = \theta_{1,5}(\pi_0^{\vee}) = 0$.
Hence,
as in the proof of \cite[Proposition 3.1]{HKS1996},
\begin{align*}
 \Hom_{(\sp_4(\RR), \U(2)) \times (\sp_4(\RR), \U(2))}
 (R(5,1), \pi_0^{\vee} \otimes \pi_0) & = 0, \\
 \Hom_{(\sp_4(\RR), \U(2)) \times (\sp_4(\RR), \U(2))}
 (R(1,5), \pi_0^{\vee} \otimes \pi_0) & = 0.
\end{align*}
From this together with (\ref{E:non-vanishing}) and (\ref{E:deg. prin. infty}), we deduce that $Z_v(1/2,\phi_{f_1\otimes f_2},F) \neq 0$ for some $F \in R(3,3)$. 
This completes the proof for $v \in S({\rm DS})$.

Assume $v \in S({\rm PS})$. Let $\varphi_v \in S(V_{3,3}^4(\R))$ be the Gaussian function. Proceeding similarly as in the proof of \cite[Proposition 6.2]{LNM1254} or Lemma \ref{L:archi. spherical local zeta integral} below, we see that
\[Z_v\left(\frac{1}{2},\phi_{v},F_v(\varphi_v)\right) \neq 0.\]
This completes the proof.
\end{proof}

\begin{prop}\label{P:main identity1}
We have 
\[
\<f,f\> = 2^c \cdot \frac{L(1,\pi,{\rm Ad})}{\Delta_{{\rm PGSp}_4}}\cdot\prod_v C(\pi_v).
\]
Here $C(\pi_v)$ is a non-zero constant depending only on $\pi_v$ for each place $v$, and 
\begin{align*}
\Delta_{\PGSp_4} & = \xi(2)\xi(4),\\
c & =\begin{cases}
      1 & \mbox{ if $\pi$ is stable},\\
      2 & \mbox{ if $\pi$ is endoscopic}.
     \end{cases}
\end{align*}
In fact, $C(\pi_v)=q_v^{-5\c_v}$ if $v \notin S$ and 
\begin{align}\label{E:local constants1}
C(\pi_v) & =  \zeta_v(1)^{-1}\zeta_v(3)^{-1}\zeta_v(4) L(1,\pi_v,{\rm Ad})^{-1}\cdot\frac{\calZ_v(1, W_v, \calF_v({\varphi_v}))}{Z_v \left(1/2, \phi_{v}, F_v(\varphi_{v})\right)}\cdot \begin{cases}
q_v^{-9\c_v/2} & \mbox{ if $v\mid\frak{n}$},\\
1 & \mbox{ if $v \mid \infty$},
\end{cases}
\end{align}
where $\varphi_v \in S(V_{3,3}^4(k_v))$ is any Schwartz function such that $Z_v(1/2,\phi_v,F_v(\varphi_v)) \neq 0$.
\end{prop}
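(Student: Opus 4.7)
The plan is to pair both Siegel-Weil identities (\ref{E:1st identity}) and (\ref{E:2nd identity}) with $\bar f\otimes f$ and evaluate both sides via the two integral representations of \S\,\ref{s:autom}. Fix a decomposable Schwartz function $\varphi = \bigotimes_v \varphi_v \in S(V_{3,3}^4(\A))$ with $\varphi_v = \varphi_v^o$ at $v \notin S$. By the doubling method, $\langle E(s, F(\varphi)), \bar f\otimes f\rangle = \langle f, f\rangle \prod_v Z_v(s, \phi_v, F_v(\varphi_v))$; by Jiang's integral representation, $\langle \mathcal{E}(s, \mathcal{F}(\varphi)), \bar f\otimes f\rangle = \prod_v \mathcal{Z}_v(s, W_v, \mathcal{F}_v(\varphi_v))$. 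Applying Lemmas \ref{L:unram PSR} and \ref{L:unram Jiang} together with the spherical section formula (\ref{E:spherical section}), the unramified contributions at $v \notin S$ assemble, respectively, into $d_P^S(s)^{-1} L^S(s+1/2, \pi, \mathrm{std})$ and $d_{\mathcal{P}}^S(s)^{-1} L^S((s+1)/2, \pi\times\pi^\vee) = d_{\mathcal{P}}^S(s)^{-1}\zeta^S((s+1)/2) L^S((s+1)/2, \pi, \mathrm{std}) L^S((s+1)/2, \pi, \Ad)$, times explicit products of $\zeta_v$-values and $q_v^{\c_v}$-powers.

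For the endoscopic case ($c = 2$), $L^S(s, \pi, \mathrm{std})$ has a simple pole at $s=1$, so $L^S((s+1)/2, \pi\times\pi^\vee)$ has a double pole at $s=1$. Pair (\ref{E:1st identity}) with $\bar f\otimes f$ and equate the residue at $s=1/2$ of the doubling expression with the coefficient of $(s-1)^{-2}$ in the Jiang expression; the common factor $\mathrm{Res}_{s=1}L^S(s, \pi, \mathrm{std})$ cancels, isolating $\langle f, f\rangle$ as a constant multiple of $L^S(1, \pi, \Ad)$. A factor $4 = 2^2$ appears as the Jacobian of $u = (s+1)/2$ in the double residue.

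For the stable case ($c = 1$), $L^S(1, \pi, \mathrm{std})$ is finite and non-zero. By Lemma \ref{lem:vanish_res} the pairing $\langle E(s, F(\varphi)), \bar f\otimes f\rangle$ is regular at $s=1/2$. Pair (\ref{E:2nd identity}) with $\bar f\otimes f$: Lemma \ref{lem:vanish_res} annihilates the $\mathcal{R}$-term and Lemma \ref{lem:vanish_theta} annihilates the $I^{(4,2)}_0$-term, leaving
\[
\langle E^{(4,4)}_0(1/2, F(\varphi)), \bar f\otimes f\rangle = \mathrm{Res}_{s=1}\langle \mathcal{E}(s, \mathcal{F}(\varphi)), \bar f\otimes f\rangle.
\]
Evaluating both sides, cancelling $L^S(1, \pi, \mathrm{std})$, and collecting a single factor $2 = 2^1$ from the Jacobian of $u = (s+1)/2$ in the simple residue yields the formula with $c = 1$.

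In both cases, the unramified constants reorganize --- via the Tamagawa relation (\ref{Tamagawa measure on GL}) on $\GL_r$ and the definition $\Delta_{\PGSp_4} = \zeta(2)\zeta(4)$ --- into $\Delta_{\PGSp_4}^{-1}\prod_{v \notin S}q_v^{-5\c_v}$, yielding $C(\pi_v) = q_v^{-5\c_v}$ for $v \notin S$. For $v \in S$, the residual local data is precisely the ratio $\mathcal{Z}_v(1, W_v, \mathcal{F}_v(\varphi_v))/Z_v(1/2, \phi_v, F_v(\varphi_v))$; after completing $L^S(1, \pi, \Ad)$ to $L(1, \pi, \Ad)$ by multiplying and dividing by $L_v(1, \pi_v, \Ad)$ at $v \in S$, and absorbing the remaining $\zeta_v$- and $q_v^{\c_v}$-factors, one recovers (\ref{E:local constants1}). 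Lemma \ref{lem:nonvanish} produces a $\varphi_v$ with $Z_v(1/2, \phi_v, F_v(\varphi_v)) \neq 0$, so $C(\pi_v)$ is finite; independence of the choice of $\varphi$ follows from the $\varphi$-independence of $\langle f, f\rangle$ and $L(1, \pi, \Ad)$. The main obstacle is the careful bookkeeping of measure constants --- the interplay between Tamagawa and local standard measures on $\GL_r$, $\Sp_4$, $U$, and $\tilde U$, the residue $\rho$ of $\zeta$, and the $\zeta_v$-values from $d_P^S$, $d_{\mathcal{P}}^S$, and (\ref{E:spherical section}) --- which must combine to give exactly $2^c\,\Delta_{\PGSp_4}^{-1}$ and $q_v^{-5\c_v}$.
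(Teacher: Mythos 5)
Your proposal is correct and follows essentially the same route as the paper: choose $\varphi$ with $\varphi_v=\varphi_v^o$ for $v\notin S$, compare Laurent coefficients of both Eisenstein series via the first (endoscopic) and second (stable) term Siegel--Weil identities, invoke Lemmas \ref{lem:vanish_res} and \ref{lem:vanish_theta} to kill the $\calR$- and $I^{(4,2)}$-terms in the stable case, and trace the factor $2^c$ to the variable shift $u=(s+1)/2$ in the double versus simple pole. The bookkeeping you flag at the end is carried out in the paper precisely through Lemmas \ref{L:unram PSR}, \ref{L:unram Jiang}, the measure relation (\ref{Tamagawa measure on GL}), and the factorizations $d_P^T(1/2)=\zeta^T(2)\zeta^T(3)\zeta^T(4)$, $d_{\mathcal P}^T(1)=\zeta^T(2)\zeta^T(3)\zeta^T(4)^2$.
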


\begin{proof}
Let $\varphi = \bigotimes_v \varphi_v \in S(V_{3,3}^4(\AA))$
be such that $\varphi_v = \varphi_v^o$ for all places $v \notin S$.
Then the holomorphic section $F(\varphi) = \bigotimes_v F_v(\varphi_v)$ of $I(s)$
satisfies the following conditions:
\begin{itemize}
 \item $F_v(\varphi_v)$ is $H(\o_v)$-invariant and $F_v(\varphi_v)(1,s) = q_v^{-6\c_v}$
 for all places $v \notin S$,
 \item $F_{v}(\varphi_v)$ depends only on $\varphi_v$ for all places $v \in S$.
\end{itemize}
Also,
the holomorphic section $\calF(\varphi) = \frak{D}^{-9/2}\rho^{-1}\xi(2)^{-1}\xi(3)^{-1} \cdot\bigotimes_v \calF_v(\varphi_v)$ of $\calI(s)$
for $\Re(s) > -1$ satisfies the following conditions:
\begin{itemize}
 \item $\calF_v(\varphi_v)$ is $H(\o_v)$-invariant and $\calF_v(\varphi_v)(1,s) = \zeta_v(s+1)\zeta_v(s+2)\zeta_v(s+3)$ for all places $v \notin S$,
 \item $\calF_{v}(\varphi_v)$ depends only on $\varphi_{v}$ for all places $v \in S$.
\end{itemize}
Let $T = \{v \mid \infty\frak{nd}\}.$ By Lemmas \ref{L:unram PSR} and \ref{L:unram Jiang}, we have
\begin{align*}
 \langle E(s, F({\varphi})), \bar{f} \otimes f \rangle 
 & = \langle f, f \rangle \cdot \xi^T(2)^{-1}\xi^T(4)^{-1}\cdot d_{P}^{T}(s)^{-1}\cdot
 L^{T}\left(s+\frac{1}{2}, \pi, {\rm std}\right)
 \prod_{v \in T}Z_{v}(s, \phi_{v}, F_v({\varphi_{v}})), \\
 \langle \calE(s, \calF(\varphi)), \bar{f} \otimes f \rangle
 & = \frak{D}^{-9/2}\rho^{-1}\xi(2)^{-1}\xi(3)^{-1}\cdot \xi^T(2)^{-2}\xi^T(4)^{-2}\cdot d_{\mathcal{P}}^{T}(s)^{-1}\cdot \xi^{T}(s+1)\xi^{T}(s+2)\xi^{T}(s+3)\\
&\times L^{T}\left(\frac{s+1}{2}, \pi \times \pi^{\vee}\right)
 \prod_{v \in T}\calZ_{v}(s, W_{v}, \calF_v({\varphi_{v}})).
\end{align*}
On the other hand, by the first term identity \eqref{E:1st identity},
\[
 \langle \calE_{-2}(1,\calF(\varphi)), \bar{f} \otimes f \rangle
 = \left\langle
 E_{-1} \left(\frac{1}{2}, F(\varphi)\right), \bar{f} \otimes f \right\rangle.
\]
Hence, noting that $d_P^T(1/2) = \xi^T(2)\xi^T(3)\xi^T(4)$, $d_{\mathcal{P}}^T(1) = \xi^T(2)\xi^T(3)\xi^T(4)^2$, and
\[
L^T(s,\pi\times\pi^\vee) = \xi^T(s)L^T(s,\pi,{\rm std})L^T(s,\pi,{\rm Ad}),
\]
we have
\begin{align}\label{E:general iden.}
\begin{split}
 & 4 \frak{D}^{-9/2}\xi(2)^{-1}\xi(3)^{-1}\xi^{T}(4)^{-1} \cdot \Res_{s=1}L^{T}(s, \pi, {\rm std})\cdot \frac{L^{T}(1, \pi, \Ad)}{\xi^T(2)\xi^T(4)}
 \prod_{v \in T}\zeta_v(1)^{-1}\calZ_v(1, W_v, \calF_v({\varphi_{v}})) \\
 & = 
 \langle f, f \rangle  \xi^{T}(2)^{-1}\xi^{T}(3)^{-1}\xi^{T}(4)^{-1}\cdot
 \Res_{s=1}L^{T}(s, \pi, {\rm std}) 
 \prod_{v \in T}Z_v \left(\frac{1}{2}, \phi_{v}, F_v({\varphi_v})\right).
\end{split}
\end{align}
If $\pi$ is endoscopic, then since ${\rm Res}_{s=1}L^T(s,\pi,{\rm std}) \neq 0$ and $\prod_{v \in T}L(s,\pi_v,{\rm Ad})$ is holomorphic and non-zero at $s=1$, we deduce that
\begin{align}\label{E:endoscopic iden.}
\begin{split}
&\langle f, f \rangle
 \prod_{v \in T}Z_v \left(\frac{1}{2}, \phi_{v}, F_v(\varphi_{v})\right)\\
&= 4 \frak{D}^{-9/2}\cdot\frac{L(1, \pi, \Ad)}{\Delta_{\PGSp_4}} \prod_{v \in T} \zeta_v(1)^{-1}\zeta_v(3)^{-1}\zeta_v(4)L(1,\pi_v,{\rm Ad})^{-1}\calZ_v(1, W_v, \calF_v({\varphi_v})).
\end{split}
\end{align}
If $\pi$ is stable, then $L^T(s,\pi,{\rm std})$ is holomorphic and non-zero at $s=1$, so that both sides of (\ref{E:general iden.}) vanish. However, by the second term identity (\ref{E:2nd identity}) and Lemmas \ref{lem:vanish_res}, \ref{lem:vanish_theta},
\[
 \langle \calE_{-1}(1,\calF(\varphi)), \bar{f} \otimes f \rangle
 = \left\langle
 E_{0} \left(\frac{1}{2}, F(\varphi)\right), \bar{f} \otimes f \right\rangle.
\]
Hence, similarly as above, we have
\begin{align*}
 & 2 \frak{D}^{-9/2}\xi(2)^{-1}\xi(3)^{-1}\xi^{T}(4)^{-1} \cdot L^{T}(1, \pi, {\rm std}) \cdot\frac{L^{T}(1, \pi, \Ad)}{\xi^T(2)\xi^T(4)}
 \prod_{v \in T}\zeta_v(1)^{-1}\calZ_v(1, W_v, \calF_v({\varphi_{v}})) \\
 & = 
 \langle f, f \rangle  \xi^{T}(2)^{-1}\xi^{T}(3)^{-1}\xi^{T}(4)^{-1}\cdot
 L^{T}(1, \pi, {\rm std}) 
 \prod_{v \in T}Z_v \left(\frac{1}{2}, \phi_{v}, F_v({\varphi_v})\right),
\end{align*}
and deduce that 
\begin{align}\label{E:stable iden.}
\begin{split}
&\langle f, f \rangle
 \prod_{v \in T}Z_v \left(\frac{1}{2}, \phi_{v}, F_v(\varphi_{v})\right)\\
&= 2 \frak{D}^{-9/2}\cdot\frac{L(1, \pi, \Ad)}{\Delta_{\PGSp_4}} \prod_{v \in T} \zeta_v(1)^{-1}\zeta_v(3)^{-1}\zeta_v(4)L(1,\pi_v,{\rm Ad})^{-1}\calZ_v(1, W_v, \calF_v({\varphi_v})).
\end{split}
\end{align}

Now recall that $\varphi_v=\varphi_v^o$ and hence $Z_v(1/2,\phi_v,F_v(\varphi_v)) \neq 0$ for all $v \notin S$ by Lemma \ref{L:unram PSR}. Also, we may choose $\varphi_v$ such that $Z_v(1/2,\phi_v,F_v(\varphi_v)) \neq 0$ for all $v \in S$ by Lemma \ref{lem:nonvanish}. For each place $v$ of $k$, let
\[
 C(\pi_v) = \zeta_v(1)^{-1}\zeta_v(3)^{-1}\zeta_v(4) L(1,\pi_v,{\rm Ad})^{-1}\cdot\frac{\calZ_v(1, W_v, \calF_v({\varphi_v}))}{Z_v \left(1/2, \phi_{v}, F_v(\varphi_{v})\right)}\cdot \begin{cases}
q_v^{-9\c_v/2} & \mbox{ if $v$ is finite},\\
1 & \mbox{ if $v$ is real}.
\end{cases}
\]
Note that $C(\pi_v)$ is a purely local constant depending only on $\pi_v$ and $\varphi_v$. By Lemmas \ref{L:unram PSR} and \ref{L:unram Jiang}, $C(\pi_v) = q_v^{-5\c_v}$ for all $v \notin S$. In particular, $C(\pi_v) = 1$ for all $v \notin T$. Finally, by (\ref{E:endoscopic iden.}) and (\ref{E:stable iden.}), we have
\[
\<f,f\> = 2^c \cdot\frac{L(1,\pi,{\rm Ad})}{\Delta_{{\rm PGSp}_4}}\cdot\prod_v C(\pi_v).
\]
Since the left-hand side is non-zero and independent of $\varphi_v$, so is $C(\pi_v)$ for all $v \in S$. This completes the proof. 
\end{proof}

In the following proposition, we give an explicit Rallis inner product formula.
\begin{prop}\label{P:main identity2}
Assume $\pi$ is endoscopic. We have 
\[
\<f,f\> = 4 \cdot\frac{L(1,\pi,{\rm Ad})}{\Delta_{{\rm PGSp}_4}}\cdot\prod_v C'(\pi_v),
\]
where 
\begin{align}\label{E:local constants2}
C'(\pi_v) = \begin{cases}
\displaystyle{q_v^{-5\c_v}} & \mbox{ if $v \notin S$},\\
\displaystyle{q_v^{-1-5\c_v}\zeta_v(2)^{-1}\zeta_v(4)} & \mbox{ if $v \mid \frak{n}$},\\
\displaystyle{2^{\lambda_{1,v}-\lambda_{2,v}+5}\pi^{3\lambda_{1,v}-\lambda_{2,v}+5}(1+\lambda_{1,v}-\lambda_{2,v})^{-1}}  & \mbox{ if $v \in S({\rm DS})$},\\
\displaystyle{2^{-4}} & \mbox{ if $v \in S({\rm PS})$}.
\end{cases}
\end{align}
\end{prop}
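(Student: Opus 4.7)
The plan is to realize the endoscopic $\pi$ as a global theta lift from ${\rm GO}_{2,2}(\A)$ and apply the Rallis inner product formula of \cite{GQT2014}, thereby reducing the problem to explicit local zeta integral computations. Since $\pi$ is endoscopic, $\Pi = \sigma_1 \boxplus \sigma_2$ with distinct irreducible cuspidal automorphic representations $\sigma_1, \sigma_2$ of $\GL_2(\A)$ having trivial central characters. The isogeny $\GL_2 \times \GL_2 \to \GSO_{2,2}$ transports the pair $(\sigma_1, \sigma_2)$ to a cuspidal $\tau$ on $\GSO_{2,2}(\A)$, which extends to $\tau^+$ on ${\rm GO}_{2,2}(\A)$, and standard results on the generic theta correspondence for this similitude pair identify $\theta(\tau^+) \cong \pi$.

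I would then choose matched test data: a vector $f' \in \tau^+$ and a Schwartz function $\varphi = \bigotimes_v \varphi_v \in \mathcal{S}(V_{2,2}^4(\A))$ so that $\theta(\varphi; f')$ is proportional to the normalized $f$ of (\ref{E:K-type2}) and (\ref{E:normalization}), with $\varphi_v$ the standard unramified function outside $S$, the Gaussian at $v \in S({\rm PS})$, an Iwahori-adapted function at $v \in S(\frak{n})$, and a Fock-model polynomial at $v \in S({\rm DS})$ producing the lowest-weight vector of the minimal $\U(2)$-type of $D_{(-\lambda_{2,v},-\lambda_{1,v})}$. The Rallis inner product formula of \cite{GQT2014} then expresses $\<f,f\>$ as a global constant times $L^S(1,\sigma_1 \times \sigma_2)/\zeta^S(2)\zeta^S(4)$ times $\<f',f'\>_{{\rm GO}_{2,2}}$ times a product of local doubling integrals $\mathcal{J}_v(\varphi_v, f_v')$. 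Via the isogeny, $\<f',f'\>_{{\rm GO}_{2,2}}$ factors through the $\GL_2$-Petersson norms of vectors in $\sigma_1, \sigma_2$, each of which by the classical Lapid--Mao formula for $\GL_2$ equals $L(1,\sigma_i,\Ad)$ times a product of local factors (with a global $2$ from the $\GL_2$ component group).

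The endoscopic factorization
\[
L(1,\pi,\Ad) = L(1,\sigma_1,\Ad) \cdot L(1,\sigma_2,\Ad) \cdot L(1,\sigma_1 \times \sigma_2),
\]
which follows from $\Sym^2(\sigma_1 \boxplus \sigma_2) = \Sym^2 \sigma_1 \oplus (\sigma_1 \times \sigma_2) \oplus \Sym^2 \sigma_2$ together with the trivial central character assumption, then assembles the global $L$-values into $L(1,\pi,\Ad)/\Delta_{\PGSp_4}$; the overall constant $2^c = 4$ emerges from combining the Rallis constant with the two $\GL_2$ Petersson constants. What remains is a local matching: for each place $v$, identify the product of $\mathcal{J}_v(\varphi_v, f_v')$ with the local $\GL_2$ Petersson factors and $L(1,\pi_v,\Ad)^{-1}$ as the claimed $C'(\pi_v)$. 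Outside of $S$, standard unramified computations yield $q_v^{-5\c_v}$; at $v \in S({\rm PS})$ the spherical Gaussian computation together with the Mellin--Barnes integral (\ref{E:normalization PS}) yields $2^{-4}$; at $v \in S(\frak{n})$ the paramodular volume normalizations and the local theta correspondence for the Steinberg-type representation (\ref{E:IIa para.}) yield $q_v^{-1-5\c_v}\zeta_v(2)^{-1}\zeta_v(4)$.

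The main obstacle is the archimedean case (DS). There one must realize the Weil representation of $\Sp_8 \times {\rm O}_{2,2}$ on the Fock model, exhibit the polynomial $\varphi_v$ whose theta lift realizes the prescribed lowest-weight vector of the minimal $\U(2)$-type, verify that the resulting Whittaker function matches the Mellin--Barnes integral (\ref{E:normalization DS}), and then evaluate the local archimedean doubling integral $\mathcal{J}_v$ against it. The dimensional factor $(1+\lambda_{1,v}-\lambda_{2,v})^{-1}$ is expected to emerge as the reciprocal of the dimension of the minimal $\U(2)$-type: the matrix coefficient integrated in the doubling formula involves a full $K$-trace, but $f_v$ is only the lowest-weight vector, so only the corresponding one-dimensional projection contributes. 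The remaining powers of $2$ and $\pi$ in $C'(\pi_v)$ will come from $\Gamma$-factor evaluations of the Mellin--Barnes integral together with the Weil-representation and Tamagawa-measure normalizations.
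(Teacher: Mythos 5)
Your approach is essentially the paper's own: realize the endoscopic $\pi$ as the theta lift $\theta(\sigma^\sharp)$ from ${\rm GO}_{2,2}$, compute the normalization constant of $\theta(\mathbf{f}^\sharp,\varphi)$ against the normalized $f$ by comparing Whittaker functions (the paper's Proposition \ref{P:W(1)}), apply the Rallis inner product formula of \cite{GQT2014} with a Waldspurger-type Petersson formula on ${\rm GO}_{2,2}$ feeding in $L(1,\sigma_i,\Ad)$, and evaluate the local zeta integrals place by place including the explicit Fock-model computation at the (DS) places (the paper's Proposition \ref{P:Rallis inner product}). The factorization $L(1,\pi,\Ad)=L(1,\sigma,\mathrm{std})\,L(1,\sigma,\Ad)$ and the reading of $(1+\lambda_{1,v}-\lambda_{2,v})^{-1}$ as the reciprocal dimension of the minimal $\U(2)$-type are both as in the paper (the latter is obtained there by a direct integral evaluation rather than a projection argument, but the interpretation agrees with the Remark after Theorem \ref{T:intro}).
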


\begin{proof}
The assertion follows from Propositions \ref{P:W(1)} and \ref{P:Rallis inner product}, whose proofs will be given in \S\,\ref{S:cal1} and \S\,\ref{S:cal2}.
\end{proof}

\begin{corollary}\label{C:endoscopic case}
Assume $\pi$ is endoscopic. We have
\[\prod_{v \in S} C(\pi_v) = \prod_{v \in S}C'(\pi_v).\]
\end{corollary}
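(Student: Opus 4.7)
The plan is extremely direct: this corollary is an immediate comparison of the two global identities already established. Propositions \ref{P:main identity1} and \ref{P:main identity2} each give an expression for $\langle f, f \rangle$ as $2^c \cdot L(1,\pi,\Ad)/\Delta_{\PGSp_4}$ times a product of local constants, valid (in the endoscopic case) with $c = 2$ on both sides.

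Concretely, I would argue as follows. Since $\pi$ is endoscopic, Proposition \ref{P:main identity1} gives
\[
 \langle f, f \rangle = 4 \cdot \frac{L(1,\pi,\Ad)}{\Delta_{\PGSp_4}} \cdot \prod_v C(\pi_v),
\]
while Proposition \ref{P:main identity2} gives
\[
 \langle f, f \rangle = 4 \cdot \frac{L(1,\pi,\Ad)}{\Delta_{\PGSp_4}} \cdot \prod_v C'(\pi_v).
\]
Since $L(1,\pi,\Ad)$ is non-zero (as recorded in \S\ref{s:autom}), the archimedean and arithmetic prefactors cancel, leaving $\prod_v C(\pi_v) = \prod_v C'(\pi_v)$. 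For $v \notin S$, Proposition \ref{P:main identity1} asserts $C(\pi_v) = q_v^{-5\c_v}$, and the formula \eqref{E:local constants2} in Proposition \ref{P:main identity2} gives the same value $C'(\pi_v) = q_v^{-5\c_v}$. Cancelling these matching factors from both sides yields
\[
 \prod_{v \in S} C(\pi_v) = \prod_{v \in S} C'(\pi_v),
\]
as claimed.

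There is no real obstacle here; the content is entirely in the two propositions being combined, and the corollary is just the observation that they must agree. The only point to verify carefully is that the factors at places $v \notin S$ indeed match on the nose, which is visible directly from the formulas. This equality is the key local input that will be used later to promote the endoscopic Rallis inner product computation into an explicit evaluation of the purely local constants $C(\pi_v)$ (via the density/analyticity argument sketched in the introduction).
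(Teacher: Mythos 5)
Your proposal is correct and is precisely the paper's argument: Corollary \ref{C:endoscopic case} is deduced by comparing the two expressions for $\langle f,f\rangle$ from Propositions \ref{P:main identity1} and \ref{P:main identity2}, using $\langle f,f\rangle\neq 0$, $L(1,\pi,\Ad)\neq 0$, and the fact that $C(\pi_v)=C'(\pi_v)=q_v^{-5\mathfrak{c}_v}$ for $v\notin S$ so that the finitely many nontrivial factors away from $S$ cancel. The paper states this in one line; you simply spelled out the cancellations, which is the same reasoning.
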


\begin{proof}
The assertion follows from Propositions \ref{P:main identity1} and \ref{P:main identity2}.
\end{proof}

\subsection{Families of local representations of $\GSp_4$}\label{SS:analytic family}
In this section, we switch to a local setting. Let $F$ be a local field of characteristic zero. When $F$ is non-archimedean, let $\o$, $\c$, $\varpi$, $q$, $|\mbox{ }|$ be the notation defined as in $\S\,\ref{S:notation}$, and fix a non-trivial additive character $\psi$ of $F$ with conductor $\varpi^{-\c}\o$. When $F=\R$, let $\psi(x)=e^{2\pi\sqrt{-1}x}$.

Consider the split orthogonal similitude group ${\rm GO}_{2,2}$. Put
\[H= {\rm GO}_{2,2},\quad H^\circ = {\rm GSO}_{2,2}.\]
There is an isomorphism (cf.\,\S\,\ref{SS:theta lifts})
\[{\mathbb G}_m \backslash (\GL_2\times\GL_2) \simeq H^\circ.\]
We write $[h_1,h_2]\in H^\circ$ for the image of $(h_1,h_2)\in \GL_2\times\GL_2$ under the above isomorphism. Let ${\bf t} \in H \setminus H^\circ$ be an involution so that ${\rm Ad}({\bf t})[h_1,h_2] = [\det(h_2)^{-1}h_2,\det(h_1)^{-1}h_1]$. Let $\sigma_1$ and $\sigma_2$ be irreducible admissible representations of $\GL_2(F)$ with central characters inverse to each other. Let $\mathcal{V}_{\sigma_i}$ be the space of $\sigma_i$ for $i=1,2$. Let $\sigma_1 \times \sigma_2$ be the representation of $H^\circ(F)$ on $\mathcal{V}_{\sigma_1}\otimes \mathcal{V}_{\sigma_2}$ defined by
\[[h_1,h_2]\cdot (v_1\otimes v_2) = \sigma_1(h_1)v_1\otimes \sigma_2(h_2)v_2\]
for $h_1,h_2 \in \GL_2(F)$ and $v_1\in \mathcal{V}_{\sigma_1}$, $v_2 \in \mathcal{V}_{\sigma_2}$.
 We define an irreducible admissible representation $(\sigma_1\times\sigma_2)^{\sharp}$ of $H(F)$ as follows:
\begin{itemize}
\item If $\sigma_1 \not\simeq \sigma_2^\vee$, then $(\sigma_1\times\sigma_2)^{\sharp} = {\rm Ind}_{H^\circ(F)}^{H(F)}(\sigma_1 \times \sigma_2)$.
\item If $\sigma_1 \simeq \sigma_2^\vee$, then since $\sigma_2^\vee \simeq \sigma_2 \otimes \omega_{\sigma_2}^{-1}$, we may assume $\mathcal{V}_{\sigma_1} = \mathcal{V}_{\sigma_2}$. Let $\mathcal{V}^\sharp = \mathcal{V}_{\sigma_1} \otimes \mathcal{V}_{\sigma_1}$. Let $(\sigma_1\times\sigma_2)^{\sharp}$ be the representation of $H(F)$ on $\mathcal{V}^\sharp$ defined by
\begin{align*}
\begin{split}
[h_1,h_2]\cdot(v_1\otimes v_2) &= \sigma_1(h_1)v_1\otimes \omega_{\sigma_1}(\det(h_2)^{-1})\sigma_1(h_2)v_2,\\
{\bf t}\cdot(v_1\otimes v_2) & = v_2\otimes v_1
\end{split}
\end{align*}
for $h_1,h_2 \in \GL_2(F)$ and $v_1,v_2 \in \mathcal{V}_{\sigma_1}.$
\end{itemize}

We consider three types of generic irreducible admissible representations of $\G(F)$ with trivial central character:
\begin{itemize}
\item[(IIa)] $F$ is non-archimedean and $\pi$ is a  representation of $\G(F)$ with paramodular conductor $\varpi\o$. 
\item[(DS)] $F=\R$ and $\pi$ is a (limit of) discrete series representation of $\G(\R)$.
\item[(PS)] $F=\R$ and $\pi$ is a principal series representation of $\G(\R)$ with non-zero $(\Sp_4(\R)\cap {\rm O}(4))$-invariant vectors.
\end{itemize}
Note that we do not require $\pi$ to be unitary.

Let $\pi$ be a representation of type (IIa). Then, as explained in the proof of \cite[Proposition 7.2.5]{RS2007}, $\pi$ is induced from the following representation of the standard Siegel parabolic subgroup 
\begin{align}\label{E:induced repre IIa}
\bp A & * \\ 0 & \nu {}^t \! A^{-1}\ep \longmapsto
({\rm St}\otimes |\mbox{ }|^{\lambda})(A)\cdot \eta^\varepsilon|\nu|^{-\lambda}
\end{align}
for some $\varepsilon \in \{0,1\}$ and $\lambda \in \C$.
Here ${\rm St}$ is the Steinberg representation of $\GL_2(F)$ and $\eta$ is the non-trivial unramified quadratic character of $F^\times$. Note that $\lambda \, {\rm mod}\, \frac{2\pi\sqrt{-1}}{\log q}\Z$ is determined by $\pi$ up to sign. We call $\varepsilon$ and $\lambda$ the sign and the parameter of $\pi$, respectively. Note that $\pi$ is the theta lift of the representation 
\begin{align}\label{E:theta1}
\left(({\rm St}\otimes {\eta^\varepsilon}
) \times {\rm Ind}_{B(F)}^{\GL_2(F)}(|\mbox{ }|^{\lambda}{\eta^\varepsilon}
\boxtimes|\mbox{ }|^{-\lambda}{\eta^\varepsilon})\right)^\sharp
\end{align}
of $H(F)$ to $\G(F)$.

Let $\pi$ be a representation of type (DS). Then \[\pi\vert_{\Sp_4(\R)} = D_{(\lambda_1,\lambda_2)}\oplus D_{(-\lambda_2,-\lambda_1)}.\]
Here $D_{(\lambda_1,\lambda_2)}$ is the (limit of) discrete series representation of $\Sp_4(\R)$ with Blattner parameter $(\lambda_1,\lambda_2)$ such that $1-\lambda_1 \leq \lambda_2 \leq 0$. We call $\lambda=(\lambda_1,\lambda_2) \in \Z^2$ the parameter of $\pi$. Note that $\pi$ is the theta lift of the representation
\begin{align}\label{E:theta2}
\left({\rm DS}(\lambda_1-\lambda_2)\times{\rm DS}(\lambda_1+\lambda_2)\right)^\sharp
\end{align}
of $H(\R)$ to $\G(\R)$. Here ${\rm DS}(\kappa)$ denotes the  (limit of) discrete series representation of $\GL_2(\R)$ with minimal weight $\kappa \in \Z_{\geq 1}$. Since we assume $\pi$ has trivial central character, $\lambda_1-\lambda_2$ is an even integer.

Let $\pi$ be a representation of type (PS). Then $\pi$ is induced from the following representation of the standard Borel subgroup 
\begin{align}\label{E:induced repre PS}
\bp t_1 & * & *&* \\0&t_2&*&*\\0&0&\nu t_1^{-1}&0\\0&0& *&\nu t_2^{-1}  \ep \longmapsto |t_1|^{\lambda_1}\cdot|t_2|^{\lambda_2}\cdot{\rm sgn}(\nu)^\varepsilon|\nu|^{-(\lambda_1+\lambda_2)/2}
\end{align}
for some $\varepsilon \in \{0,1\}$ and $\lambda = (\lambda_1,\lambda_2) \in \C^2$. Note that $\lambda$ is determined by $\pi$ up to the action of the Weyl group.
We call $\varepsilon$ and $\lambda$ the sign and the parameter of $\pi$, respectively. Note that $\pi$ is the theta lift of the representation
\begin{align}\label{E:theta3}
\left( {\rm Ind}_{B(\R)}^{\GL_2(\R)}(|\mbox{ }|^{(\lambda_1+\lambda_2)/2}{\rm sgn}^\varepsilon\boxtimes|\mbox{ }|^{(-\lambda_1-\lambda_2)/2}{\rm sgn}^\varepsilon) \times {\rm Ind}_{B(\R)}^{\GL_2(\R)}(|\mbox{ }|^{(\lambda_1-\lambda_2)/2}{\rm sgn}^\varepsilon\boxtimes |\mbox{ }|^{(-\lambda_1+\lambda_2)/2}{\rm sgn}^\varepsilon)\right)^\sharp
\end{align}
of $H(\R)$ to $\G(\R)$. 

Let $\mathcal{D}$ be the domain defined by
\begin{align}\label{E:domain}
\mathcal{D} = 
\begin{cases}
\left\{\lambda \in \C \mbox{ }\vert\mbox{ } |{\rm Re}(\lambda) |<1/2 \right\} & \mbox{ in Case (IIa)},\\
\left\{\lambda=(\lambda_1,\lambda_2) \in \C^2 \mbox{ }\vert\mbox{ }|{\rm Re}(\lambda_1)|+|{\rm Re}(\lambda_2)|<1 \right\} & \mbox{ in Case (PS)}.
\end{cases}
\end{align}  
By the unitarizability criterion \cite[Theorem 1.1]{LMT2004}, the domain $\mathcal{D}$ contains the set of parameters of unitary representations of type (IIa) or (PS).

Let $\pi$ be a representation of $G(F)$ in one of the three types. When $\pi$ is of type (IIa) or (PS), we assume its parameter is in $\mathcal{D}$. Let $W_\pi$ be the Whittaker function of $\pi$ with respect to $\psi_U$ defined as follows:
\begin{itemize}
\item In Case (IIa), $W_\pi$ is ${\rm K}(\varpi)$-invariant and $W_\pi(\diag( \varpi^{-\frak{c}}, 1, \varpi^{2\frak{c}}, \varpi^{\frak{c}} ))=1$.
\item In Case (DS), $W_\pi$ is a lowest weight vector of the minimal ${\rm U}(2)$-type of $D_{(-\lambda_2,-\lambda_1)}$ in the Whittaker model of $\pi$ with respect to $\psi_U$, and $W_\pi(1)$ is normalized as in (\ref{E:normalization DS}). We refer to \S\,\ref{SS:main thm} for the choice of the lowest weight vector.
\item In Case (PS), $W_\pi$ is $(\Sp_4(\R)\cap {\rm O}(4))$-invariant and $W_{\pi}(1)$ is normalized as in (\ref{E:normalization PS}).
\end{itemize}
Since $\pi$ has trivial central character, we have $\pi \simeq \pi^\vee$. Therefore $W_\pi = W_{\pi^\vee}$ when $\pi$ is of type (IIa) or (PS).

When $\pi$ is of type (IIa) or (PS), let $\phi_\pi$ be a matrix coefficient of $\pi$ defined by
\[\phi_\pi(g) = \frac{\<\pi(g)W_\pi,W_{\pi^\vee}\>}{\<W_\pi,W_{\pi^\vee}\>}.\]
Here $\<\mbox{ },\mbox{ }\>$ is an invariant bilinear pairing on $\pi \times \pi^\vee$. Note that if $\pi$ is unitary, then
\[\phi_\pi(g) = \frac{\<\pi(g)W_\pi,W_{\pi}\>_h}{\<W_\pi,W_{\pi}\>_h}\]
for any Hermitian pairing $\<\mbox{ },\mbox{ }\>_h$ on $\pi \times \pi$, and
\[\overline{W_\pi(g)} = W_\pi(\diag (-1,1,1,-1) g)\]
for all $g \in G(F)$.

When $\pi$ is of type (DS), let $\phi_\pi$ be the matrix coefficient of $\pi$ defined by
\[\phi_\pi(g) = \frac{\<\pi(g)W_\pi,W_{\pi}\>}{\<W_\pi,W_{\pi}\>}\]
for any Hermitian pairing $\<\mbox{ },\mbox{ }\>$ on $\pi \times \pi$.

Let $I(s)=I_{4,4}(s)$ and $\calI(s)=I_{4,3}(s)$ be the degenerate principal series representations of $\GSp_8(F)$ defined as in $\S\,\ref{ss:eisenstein}$. If $F\in I(s)$ and $\calF \in \calI(s)$ are holomorphic sections, let $Z(s,\phi_\pi,F)$ and $\calZ(s,W_\pi,\calF)$ be the local zeta integrals defined as in (\ref{E:local zeta1}) and (\ref{E:local zeta2}), respectively. Note that when $\pi$ is of type (IIa) or (PS), we replace $\overline{W_\pi}$ by the left translation of $W_\pi$ by $\diag(-1,1,1,-1)$.
Recall the intertwining maps 
\begin{align*}
S({\rm M}_{6,4}(F)) &\longrightarrow \calI(s),\quad \varphi \longmapsto \calF(\varphi),\\
S({\rm M}_{6,4}(F)) &\longrightarrow I(s),\quad \varphi \longmapsto F(\varphi),
\end{align*}
defined in \S\,\ref{SS:Theta integrals} and \S\,\ref{SS:First and second term identities}, respectively.
By Lemma \ref{lem:nonvanish}, there exists a Schwartz function $\varphi \in S({\rm M}_{6,4}(F))$ such that
\[Z\left(\frac{1}{2},\phi_\pi,F(\varphi)\right)\neq 0,\]
and such that $\varphi$ does not depend on $\pi$ when $\pi$ is of type (IIa) or (PS).
We fix one such $\varphi$ and define constants $C(\pi)$ and $C'(\pi)$ as in (\ref{E:local constants1}) and (\ref{E:local constants2}), respectively. 

\begin{corollary}\label{C:analytic family}
Assume $\pi$ is of type (IIa) or (PS) with parameter $\lambda$ in $\mathcal{D}$.
The constants $C(\pi)$ and $C'(\pi)$ as functions of $\lambda$ are analytic.
\end{corollary}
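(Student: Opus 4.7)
The plan is to handle $C'(\pi)$ and $C(\pi)$ separately. For $C'(\pi)$, the explicit formula (\ref{E:local constants2}) shows it equals $q^{-1-5\c}\zeta(2)^{-1}\zeta(4)$ in case (IIa) and $2^{-4}$ in case (PS); in both cases this is independent of $\lambda$ and so trivially analytic. Essentially all the work lies in analyzing $C(\pi)$.

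For $C(\pi)$ I would decompose the defining expression (\ref{E:local constants1}) into three pieces: a $\lambda$-independent prefactor of local zeta values and powers of $q$; the reciprocal $L(1,\pi,{\rm Ad})^{-1}$; and the ratio $\calZ(1, W_\pi, \calF(\varphi))/Z(1/2, \phi_\pi, F(\varphi))$. For the $L$-factor, one writes $L(s,\pi,{\rm Ad})$ as a product of Euler factors (case (IIa)) or Gamma factors (case (PS)) whose arguments are affine-linear in $\lambda$, and then checks by inspection that on $\mathcal{D}$ -- which, by (\ref{eq:satake}), bounds the real parts of the parameters by the unitarizability threshold -- no Gamma factor hits a pole and no Euler factor vanishes at $s=1$; hence $L(1,\pi,{\rm Ad})^{-1}$ is holomorphic on $\mathcal{D}$.

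The substantive step is the holomorphic dependence of the two local zeta integrals on $\lambda$. First I would realize the Whittaker function $W_\pi$ and matrix coefficient $\phi_\pi$ as analytic families: in case (IIa), via a $\lambda$-independent ${\rm K}(\varpi)$-fixed flat section of the induced representation (\ref{E:induced repre IIa}); in case (PS), via the Mellin-Barnes formula (\ref{E:normalization PS}), whose integrand is jointly holomorphic in $(\lambda_1,\lambda_2,s_1,s_2)$ along contours that can be chosen uniformly in a neighborhood of any $\lambda_0 \in \mathcal{D}$. Since Lemmas \ref{L:ab doubling} and \ref{L:ab2} provide absolute-convergence bounds at $s=1/2$ and $s=1$ whose exponents depend only on $|{\rm Re}(\lambda_j)|$, these bounds are uniform on compact subsets of $\mathcal{D}$; a standard Morera-plus-Fubini argument then yields holomorphic dependence of $\calZ(1, W_\pi, \calF(\varphi))$ and $Z(1/2, \phi_\pi, F(\varphi))$ on $\lambda \in \mathcal{D}$. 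The denominator does not vanish anywhere on $\mathcal{D}$ because, in both types (IIa) and (PS), Lemma \ref{lem:nonvanish} produces a Schwartz function $\varphi$ independent of $\pi$, and the theta-correspondence/Gaussian argument there applies uniformly to every $\pi$ of the prescribed type.

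The main obstacle I foresee is the holomorphic dependence of $W_\pi$ in case (PS): the Mellin-Barnes contours in (\ref{E:normalization PS}) must be deformable as $(\lambda_1,\lambda_2)$ moves in $\mathcal{D}$ so as to remain on the correct side of every Gamma pole appearing in the integrand. One needs to verify that such a deformation can be arranged jointly holomorphically on any compact subset of $\mathcal{D}$, so that Morera's theorem applies to the double Mellin-Barnes integral; once this analytic family is in hand, combining it with the uniform convergence bounds for the zeta integrals makes the remainder of the argument routine.
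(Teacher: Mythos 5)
Your proposal tracks the paper's proof in broad outline and is substantively correct, but there are a few points where the route differs from the paper's or where a detail could be tightened.

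The paper packages the analyticity into a single abstract device: the notion of a $K$-finite analytic family of matrix coefficients and Whittaker functions, introduced in \S\,\ref{SS:Doubling local zeta integrals} and \S\,\ref{SS:local zeta for Rankin-Selberg}. Its proof of Corollary \ref{C:analytic family} consists of two sentences: assert that $\lambda\mapsto\phi_\pi$ and $\lambda\mapsto W_\pi$ are $K$-finite analytic families, and then invoke Lemmas \ref{L:ab1} and \ref{L:uniform2}. Those two lemmas are precisely the \emph{uniform} (in $\lambda$ on compacta) convergence statements you want; you instead cite Lemmas \ref{L:ab doubling} and \ref{L:ab2}, which are the fixed-$\pi$ absolute-convergence statements, and argue uniformity informally by observing that the exponents depend continuously on $\mathrm{Re}(\lambda)$. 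That heuristic is correct but skips the part of the argument that actually needs to be checked --- that the implied constants are locally bounded as $\lambda$ moves --- and this is exactly what Lemmas \ref{L:ab1} and \ref{L:uniform2} supply. You should cite them directly rather than deriving uniformity informally from the pointwise lemmas (which in the paper are themselves corollaries of the uniform ones).

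The more genuinely different choice in your proposal is the construction of the analytic family of Whittaker functions in case (PS). You propose to read off analyticity directly from the Mellin--Barnes representation (\ref{E:normalization PS}) (and, implicitly, its extension to the whole torus in Lemma \ref{L:local Whittaker archi. 2}), controlling the contour as $\lambda$ moves. This can be made to work --- the contours $c_1>\max\{|\mathrm{Re}(\lambda_1)|,|\mathrm{Re}(\lambda_2)|\}$, $c_2>\max\{|\mathrm{Re}(\lambda_1\pm\lambda_2)|/2\}$ may be chosen locally constant on compacta in $\mathcal D$, and the Gamma factors are then bounded uniformly along these contours by Stirling --- and you correctly flag the contour-deformation uniformity as the main thing to check. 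The paper instead constructs the family $W_\lambda$ via the explicit theta integral (\ref{E:explicit Whittaker}), i.e.\ as $W_\lambda = \mathcal{W}(\varphi, {\bf W}_\lambda)$, and proves the analytic-family property in Lemma \ref{L:Whittaker fun} (1) together with the asymptotic estimates (2)--(3) needed for the uniform-convergence lemmas. This route is more economical inside the paper because the same theta formula is already used in \S\,\ref{S:cal1}, the family ${\bf W}_\lambda$ on $H^\circ$ is elementary to construct from $\GL_2$ Whittaker functions, and it produces the uniform decay bounds in the same breath. Your Mellin--Barnes route gives the values of $W_\lambda$ on the torus but would still require a separate argument for the quantitative decay estimates that feed into the convergence lemmas; the theta construction delivers both at once.

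Two further remarks. First, you explicitly discuss the holomorphy of $L(1,\pi,\mathrm{Ad})^{-1}$ and the non-vanishing of $Z(1/2,\phi_\pi,F(\varphi))$; the paper leaves the former implicit and addresses the latter inside Proposition \ref{P:main identity1} and Lemma \ref{lem:nonvanish}. Your remark that in case (PS) the archimedean adjoint factor has no Gamma pole at $s=1$ on $\mathcal D$ is correct once one uses the endoscopic identification of the ten adjoint weights as $\pm(\lambda_1\pm\lambda_2),\pm\lambda_1,\pm\lambda_2,0,0$ (not $\pm2\lambda_i$ etc.), for which $|\mathrm{Re}(\lambda_1)|+|\mathrm{Re}(\lambda_2)|<1$ keeps every Gamma argument in the right half-plane; it is worth stating this identification, since a reader may initially expect the $\Sp_4$ root weights. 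Second, for case (IIa) you use a flat section of the induced model (\ref{E:induced repre IIa}); this works for the matrix coefficient (and is exactly (\ref{E:explicit analytic family for mc}) in the paper), but for the Whittaker function the paper again uses the theta formula rather than the Jacquet integral on a flat section, since analytic continuation of the Jacquet integral is itself nontrivial. Overall your proposal is sound and complete in strategy; the one step you rightly flag as the obstacle (contour control in the Mellin--Barnes family) is precisely where the paper deviates to the theta-lift construction to avoid it.
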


\begin{proof}
It is easy to show that the maps $\lambda \mapsto \phi_\pi$ and $\lambda \mapsto W_\pi$ define $K$-finite analytic families of matrix coefficiens and Whittaker functions in the sense introduced in \S\,\ref{SS:Doubling local zeta integrals} and \S\,\ref{SS:local zeta for Rankin-Selberg} below, respectively.
The assertion then follows directly from Lemmas \ref{L:ab1} and \ref{L:uniform2}.
\end{proof}

\begin{prop}\label{P:main ide}
Let $\pi$ be a representation in one of the three types. When $\pi$ is of type (IIa) or (PS), we assume its parameter is in $\mathcal{D}$. We have
\begin{align}\label{E:main ide}
C(\pi) = C'(\pi).
\end{align}
\end{prop}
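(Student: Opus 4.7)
The plan is to prove Proposition \ref{P:main ide} by a globalization argument combined with analytic continuation, using Corollary \ref{C:endoscopic case} as the main input. Given a local representation $\pi$ of $\GSp_4(F)$ of one of the three types, I would realize $\pi$ as the local component at a distinguished place $v_0$ of some global endoscopic cuspidal automorphic representation $\Pi$ of $\GSp_4(\mathbb{A}_k)$ over a totally real number field $k$ with $k_{v_0} = F$. Such a $\Pi$ is obtained as the theta lift of $(\sigma_1 \times \sigma_2)^\sharp$ from $\mathrm{GO}_{2,2}(\mathbb{A}_k)$, where $\sigma_1, \sigma_2$ are cuspidal automorphic representations of $\GL_2(\mathbb{A}_k)$ with matching central character and with $v_0$-components matching the prescriptions (\ref{E:theta1}), (\ref{E:theta2}), or (\ref{E:theta3}) according to the type of $\pi$. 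Once globalized, Corollary \ref{C:endoscopic case} gives
\[
\prod_{v \in S} C(\Pi_v) = \prod_{v \in S} C'(\Pi_v),
\]
and the goal becomes isolating the $v_0$-factor from this product identity.

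To isolate the $v_0$-factor, I would proceed by bootstrapping on the types, controlling the ramification of the auxiliary $\GL_2$ data so that at every other bad place $w \in S$ the local component $\Pi_w$ belongs to a class already handled. A natural order is the following: first handle type (DS) at a specific discrete family of Blattner parameters arising abundantly from Yoshida lifts of scalar-weight Siegel modular forms, where the finite ramification can be made unramified outside $v_0$; next use these as the fixed archimedean factors in globalizations of type (IIa) representations; finally, with type (IIa) identities established, bootstrap to the remaining parameters in types (DS) and (PS). Existence of such globalizations with prescribed local components and controlled ramification elsewhere follows from standard globalization techniques for $\GL_2$ together with Shin's limit multiplicity formula \cite{Shin2012}, which ensures density of the archimedean local components one can realize.

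The final step is analytic continuation. In types (IIa) and (PS), the parameter $\lambda$ varies over the connected complex domain $\mathcal{D}$ defined in (\ref{E:domain}), and Corollary \ref{C:analytic family} tells us that $\lambda \mapsto C(\pi)$ and $\lambda \mapsto C'(\pi)$ are both analytic on $\mathcal{D}$. The bootstrapping argument only produces the identity on a discrete sequence (or a subset arising from the density of endoscopic archimedean components), but combined with the analyticity and the connectedness of $\mathcal{D}$, the identity $C(\pi) = C'(\pi)$ extends to all of $\mathcal{D}$. For type (DS) the parameter space is discrete, so no analytic continuation is needed once the bootstrapping covers all integer Blattner parameters.

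The main obstacle will be organizing the bootstrapping so that at every step the "other" bad components $\Pi_w$ fall into an inductively known class, while still leaving enough flexibility in the $v_0$-component to sweep out a set of parameters with an accumulation point in $\mathcal{D}$. Concretely, this amounts to proving a sufficiently strong globalization theorem for pairs $(\sigma_1, \sigma_2)$ of $\GL_2$-automorphic forms with prescribed local behavior at one place, unramified or otherwise prescribed behavior at all other bad places, and the square-free global paramodular conductor required by the hypotheses of \S\,\ref{SS:main thm}. This is delicate because the endoscopic globalization forces the central characters of $\sigma_1, \sigma_2$ to agree and the resulting theta lift must be cuspidal and globally generic (i.e., $\sigma_1 \not\simeq \sigma_2$), which imposes compatibility conditions one must verify carefully.
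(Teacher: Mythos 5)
Your proposal reproduces the paper's strategy essentially verbatim: globalize $\pi$ at a distinguished place $v_0$ of a totally real $k$ as the local component of an endoscopic theta lift $\theta((\sigma_1\times\sigma_2)^\sharp)$, invoke Corollary~\ref{C:endoscopic case}, bootstrap through the types in the order (DS) with large parameter $\to$ (IIa) over $\QQ_p$ $\to$ (DS) general $\to$ (PS) and (IIa) over general $F$, and close with analytic continuation on $\mathcal{D}$ via Corollary~\ref{C:analytic family}. The one device you wave at but do not isolate, and which the paper has to supply as Proposition~\ref{P:existence}, is a limit-multiplicity statement over a totally real \emph{cubic} field that lets one prescribe a principal series component at a single real place (needed for the (PS) case) while keeping the finite places unramified; this variant is not a direct quotation of \cite{Shin2012} but requires its own short trace-formula argument.
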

It is clear that Theorem \ref{T:main thm} follows from Propositions \ref{P:main identity1} and \ref{P:main ide}.
We divide the proof of Proposition \ref{P:main ide} into four steps as follows:
\begin{itemize}
\item[Step 1.] Establish (\ref{E:main ide}) in Case (DS) when $\lambda_1-\lambda_2$ and $\lambda_1+\lambda_2$ are sufficiently large.
\item[Step 2.] Establish (\ref{E:main ide}) in Case (IIa) when $F=\Q_p$.
\item[Step 3.] Establish (\ref{E:main ide}) in Case (DS) for arbitrary parameter $\lambda$.
\item[Step 4.] Establish (\ref{E:main ide}) in Case (PS) and in Case (IIa) for arbitrary non-archimedean $F$.
\end{itemize}
The ingredients are Corollary \ref{C:endoscopic case}, explicit local theta correspondence (\ref{E:theta1}), (\ref{E:theta2}), (\ref{E:theta3}), and the limit multiplicity formula \cite[\S 5]{Shin2012}.

\subsection{Proof of Proposition \ref{P:main ide}}
We need the following result on the existence of automorphic representations of $\GL_2(\A_k)$ for a totally real number field $k \neq \Q$. The proposition is a simple variant of the results in \cite{Shin2012} and is well-known, but we give a proof for completeness. For simplicity, we assume $[k:\Q]$ is odd. In fact, we use the following result only for totally real cubic number fields.

\begin{prop}\label{P:existence}
Assume $k\neq \Q$ is a totally real number field of odd degree. Let $v_0$ be a real place of $k$ and $\chi$ a unitary character of $\R^{\times}$. For each open neighborhood $U$ of $0$ in $\R$, there exist infinitely many irreducible cuspidal automorphic representations $\sigma$ of $\GL_2(\A_k)$ with trivial central character satisfying the following conditions:
\begin{itemize}
\item $\sigma_v$ is unramified for all finite places $v$.
\item $\sigma_v$ is a discrete series representation for all real places $v \neq v_0$.
\item $\sigma_{v_0} = {\rm Ind}_{B(\R)}^{\GL_2(\R)}(\chi|\mbox{ }|^{\sqrt{-1}t}\boxtimes\chi^{-1}|\mbox{ }|^{-\sqrt{-1}t})$ for some $t \in U$.
\end{itemize}
\end{prop}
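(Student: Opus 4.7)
\emph{Proof plan.} The strategy is to realize $\sigma$ as the global Jacquet--Langlands transfer from an inner form of $\GL_{2/k}$ that is compact modulo center at every archimedean place except $v_0$, and then to invoke an equidistribution theorem to control the component at $v_0$.

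Since $[k:\QQ]$ is odd, the set $\Sigma$ of real places of $k$ distinct from $v_0$ has cardinality $[k:\QQ]-1$, which is even; by the classification of quaternion algebras, there exists a quaternion algebra $D$ over $k$ ramified exactly at $\Sigma$. Set $G' = D^\times$, regarded as an inner form of $\GL_{2/k}$. Then $G'(k_v) \cong \mathbb{H}^\times$ for $v \in \Sigma$, $G'(k_{v_0}) \cong \GL_2(\RR)$, and $G'(k_v) \cong \GL_2(k_v)$ at every finite place. Any irreducible representation of $G'(k_v)$ with trivial central character at $v \in \Sigma$ factors through $\mathbb{H}^\times/\RR^\times \cong \mathrm{SO}(3)$ and is therefore finite-dimensional of odd dimension.

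For each tuple $d = (d_v)_{v \in \Sigma}$ of odd integers $\ge 3$, let $\mathcal{A}(d)$ denote the set of irreducible cuspidal automorphic representations $\pi' = \bigotimes_v \pi'_v$ of $G'(\AA_k)$ with trivial central character such that $\pi'_v$ is unramified at every finite place and $\dim \pi'_v = d_v$ at every $v \in \Sigma$. A weight-aspect variant of Shin's equidistribution theorem, obtained by running the trace-formula argument of \cite[\S\,5]{Shin2012} with the characteristic function of a fixed maximal compact subgroup at the finite places and with pseudo-coefficients of the prescribed discrete series at the places in $\Sigma$, shows that, as $\min_v d_v \to \infty$, the counting measures of the $v_0$-components of members of $\mathcal{A}(d)$ converge weakly to the tempered Plancherel measure on the unitary dual of $\GL_2(\RR)$. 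Since the subset
\[
 \mathcal{U} = \left\{ \Ind_{B(\RR)}^{\GL_2(\RR)}\!\left(\chi |\cdot|^{\sqrt{-1}t} \boxtimes \chi^{-1} |\cdot|^{-\sqrt{-1}t}\right) : t \in U \right\}
\]
has positive Plancherel measure, this produces infinitely many $\pi'$ with $\pi'_{v_0} \in \mathcal{U}$.

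Applying the global Jacquet--Langlands correspondence to each such $\pi'$ finally yields an irreducible cuspidal automorphic representation $\sigma$ of $\GL_2(\AA_k)$ enjoying the three listed properties; cuspidality is guaranteed by the constraint $d_v \ge 3$ at the places in $\Sigma$. The principal technical point is the weight-aspect form of equidistribution: \cite{Shin2012} is stated in the level aspect, so one must adapt the argument by interchanging the roles of the finite and archimedean test functions in his application of the trace formula. This adaptation is essentially formal once one has Shin's machinery at hand, which is why the proposition is described as a simple variant of his results.
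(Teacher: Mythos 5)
Your proposal is correct and follows essentially the same route as the paper: choose the quaternion algebra over $k$ ramified exactly at the real places other than $v_0$ (possible because $[k:\Q]-1$ is even), run a weight-aspect trace-formula equidistribution argument with a Plancherel limit at $v_0$, and transfer back to $\GL_2$ via Jacquet--Langlands. The paper writes out precisely the trace-formula computation you sketch — taking $f_{v,n}(g)=\tr(\xi_{v,n}(g^{-1}))$ at the compact places, $\mathbb{I}_{K_v}$ at finite places, and $\phi$ at $v_0$ — reducing to Sauvageot's density theorem and then to the character-ratio estimate of Clozel \cite[Corollaire 1.12]{CC2009} to kill the non-central orbital integrals; that estimate is the substantive ingredient behind the phrase \emph{weight-aspect variant} that you describe as ``essentially formal.'' Two small points of precision: at places in $\Sigma$ the relevant local group is compact modulo center, so the natural test functions are the characters $\tr(\xi_{v,n}(g^{-1}))$ rather than pseudo-coefficients in the usual sense; and the paper works on $D^\times/k^\times$ rather than $D^\times$, which just packages the trivial-central-character condition into the group.
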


\begin{proof}
Let $S_0$ be the set of real places $v \neq v_0$.
Since $[k:\Q]$ is odd and $k \ne \Q$, there exists a unique quaternion division algebra $D$ over $k$ which is ramified precisely at the places in $S_0$.
We write $G = D^\times / k^\times$ only in this proof and regard it as an algebraic group over $k$.
Note that $G$ is anisotropic over $k$ and $G(k_v)$ is compact for all $v \in S_0$.
For each finite place $v$, we denote by $K_v = \PGL_2(\mathfrak{o}_v)$ the standard maximal compact subgroup of $G(k_v) = \PGL_2(k_v)$, and put $K_{f} = \prod_{v \nmid \infty} K_v$.
For each $v \in S_0$, fix a sequence 
{$\{ \tau_{v,n} \}_{n \ge 1}$}
of irreducible representations of $G(k_v)$, and put 
{$\tau_n = \bigotimes_{v \in S_0} \tau_{v,n}$}.
We assume that 
\[
{\lim_{n \rightarrow \infty} \dim \tau_n = \infty.}
\]
We denote by $\widehat{G(k_{v_0})}$ the unitary dual of $G(k_{v_0})$ equipped with the Fell topology and by $\mu_{\mathrm{pl}}$ the Plancherel measure on $\widehat{G(k_{v_0})}$.
We define a subset $\mathcal{U}$ of $\widehat{G(k_{v_0})}$ by 
\[
 \mathcal{U} = \left\{ \left. \Ind^{\GL_2(\R)}_{B(\R)}(\chi|\ |^{\sqrt{-1} t} \boxtimes \chi^{-1}|\ |^{-\sqrt{-1} t}) \, \right| \, t \in U \right\}.
\]
Then, by the Jacquet-Langlands correspondence, it suffices to show that for any sufficiently large $n$, there exists an irreducible automorphic representation $\pi$ of $G(\A_k)$ satisfying the following conditions:
\begin{itemize}
\item $\pi_v$ has a non-zero $K_v$-invariant vector for all finite places $v$.
\item {$\pi_v = \tau_{v,n}$}
for all $v \in S_0$.
\item $\pi_{v_0} \in \mathcal{U}$.
\end{itemize}

We now consider a sequence $\{ \mu_n \}_{n \ge 1}$ of positive Borel measures on $\widehat{G(k_{v_0})}$ given by 
\[
 \mu_n = \frac{1}{\vol(G(k) \backslash G(\A_k)) \dim {\tau_n}} \sum_{\rho} m_n(\rho) \delta_\rho,
\]
where $\rho$ runs over irreducible unitary representations of $G(k_{v_0})$, $m_n(\rho)$ is the multiplicity of 
{$\rho \otimes \tau_n$}
in the space of $K_f$-invariant automorphic forms on $G(\A_k)$, and $\delta_\rho$ is the Dirac measure at $\rho$.
Then we are reduced to showing that 
\[
 \lim_{n \rightarrow \infty} \mu_n(\mathbb{I}_{\mathcal{U}}) = \mu_{\mathrm{pl}}(\mathbb{I}_{\mathcal{U}}).
\]
By the density theorem of Sauvageot \cite[Theorem 7.3]{sauvageot1997}, we are further reduced to showing that 
\[
 \lim_{n \rightarrow \infty} \mu_n(\widehat{\phi})
 = \mu_{\mathrm{pl}}(\widehat{\phi})
\]
for all $\phi \in C_c^\infty({G(k_{v_0})})$, where $\widehat{\phi}(\rho) = \tr(\rho(\phi))$ for $\rho \in \widehat{G(k_{v_0})}$.
To prove this, we use the trace formula
\[
 I_{\mathrm{geom}}(f) = I_{\mathrm{spec}}(f).
\]
Here, for a test function $f \in C_c^\infty(G(\A_k))$, the geometric side $I_{\mathrm{geom}}(f)$ is given by 
\[
 I_{\mathrm{geom}}(f) = \sum_{\gamma} \vol(G_\gamma(k) \backslash G_\gamma(\A_k)) O_\gamma(f), 
\]
where $\gamma$ runs over conjugacy classes of $G(k)$, $G_\gamma$ is the centralizer of $\gamma$ in $G$, and $O_\gamma(f)$ is the orbital integral of $f$ at $\gamma$.
Also, the spectral side $I_{\mathrm{spec}}(f)$ is given by 
\[
 I_{\mathrm{spec}}(f) = \sum_{\pi} m(\pi) \tr(\pi(f)),
\]
where $\pi$ runs over irreducible unitary representations of $G(\A_k)$ and $m(\pi)$ is the multiplicity of $\pi$ in $L^2(G(k) \backslash G(\A_k))$.
If we take a test function 
\[
 f_n = \frac{1}{\vol(G(k) \backslash G(\A_k)) \dim {\tau_n}} \cdot \left( \bigotimes_{v \in S_0} f_{v,n} \right) \otimes \left( \bigotimes_{v \notin S_0} f_v \right)
\]
given by 
\begin{itemize}
\item $f_v = \mathbb{I}_{K_v}$ for all finite places $v$,
\item $f_{v,n}(g) = \tr({\tau_{v,n}}(g^{-1}))$ for all $v \in S_0$, 
\item $f_{v_0} = \phi$,
\end{itemize}
then
\[
 I_{\mathrm{spec}}(f_n) = \frac{1}{\vol(G(k) \backslash G(\A_k)) \dim {\tau_n}} \sum_{\rho} m_n(\rho) \tr(\rho(\phi)) = \mu_n(\widehat{\phi}).
\]
On the other hand, we have
\[
 O_\gamma(f_n) = \frac{1}{\vol(G(k) \backslash G(\A_k))} \cdot \prod_{v \in S_0} \frac{\vol(G_{\gamma,v} \backslash G_v) \tr({\tau_{v,n}}(\gamma^{-1}))}{\dim {\tau_{v,n}}} \cdot \prod_{v \notin S_0} O_\gamma(f_v).
\]
If $\gamma \ne 1$, then by \cite[Corollaire 1.12]{CC2009}, we have
\[
 \lim_{n\rightarrow \infty} \prod_{v \in S_0} \frac{\tr({\tau_{v,n}}(\gamma^{-1}))}{\dim {\tau_{v,n}}} = 0.
\]
Hence, noting that the sum in $I_{\mathrm{geom}}(f_n)$ can be taken over a finite set independent of $n$, we have
\[
 \lim_{n \rightarrow \infty} I_{\mathrm{geom}}(f_n) = \lim_{n \rightarrow \infty} \vol(G(k) \backslash G(\A_k)) f_n(1) = \phi(1) = \mu_{\mathrm{pl}}(\widehat{\phi}).
\]
This implies the assertion. 
\end{proof}

Now we begin the proof of Proposition \ref{P:main ide}.
We will use a global-to-local argument repeatedly. When $k$ is a totally real number field, and $\sigma_1$, $\sigma_2$ are irreducible unitary cuspidal automorphic representations of $\GL_2(\A_k)$ with central characters inverse to each other, we put
\[(\sigma_1\times\sigma_2)^\sharp = \bigotimes_v(\sigma_{1,v}\times\sigma_{2,v})^\sharp.\]
By \cite[Proposition 5.4]{Takeda2009}, $(\sigma_1\times\sigma_2)^\sharp$ is an irreducible cuspidal automorphic representation of $H(\A_k)$. Let $\theta((\sigma_1\times\sigma_2)^\sharp)$ be the global theta lift of $(\sigma_1\times\sigma_2)^\sharp$ to $G(\A_k)$. If $\sigma_1 \not\simeq \sigma_2^\vee$, then $\theta((\sigma_1\times\sigma_2)^\sharp)$ is an irreducible unitary endoscopic cuspidal automorphic representation of $G(\A_k)$.

{\bf Step\,1.} Assume $\pi$ is of type (DS) such that $\lambda_1+\lambda_2$ and $\lambda_1-\lambda_2$ are sufficiently large positive even integers. By the dimension formulae for the space of holomorphic elliptic cusp forms of full level, there exist irreducible cuspidal automorphic representations $\sigma_1$ and $\sigma_2$ of $\GL_2(\A_\Q)$ with trivial central character satisfying the following conditions:
\begin{itemize}
\item $\sigma_{1,p}$ and $\sigma_{2,p}$ are unramified for all primes $p$.
\item $\sigma_{1,\infty}={\rm DS}(\lambda_1-\lambda_2)$ and $\sigma_{2,\infty}={\rm DS}(\lambda_1+\lambda_2)$.
\end{itemize} 
Then $\theta((\sigma_1\times\sigma_2)^\sharp)$ is an irreducible globally generic cuspidal automorphic representation of $G(\A_\Q)$ with trivial central character satisfying the following conditions:
\begin{itemize}
\item $\theta((\sigma_1\times\sigma_2)^\sharp)_p$ is unramified for all primes $p$.
\item $\theta((\sigma_1\times\sigma_2)^\sharp)_\infty = \pi$.
\end{itemize}
Therefore, (\ref{E:main ide}) follows from Corollary \ref{C:endoscopic case}.

{\bf Step\,2.} Assume $\pi$ is of type (IIa) with $F=\Q_p$ for some prime $p$. By Corollary \ref{C:analytic family} and the identity theorem for holomorphic functions, we may assume $\pi$ is tempered, that is, ${\rm Re}(\lambda)=0$. Fix an irreducible cuspidal automorphic representation $\sigma_1$ of $\GL_2(\A_\Q)$ with trivial central character such that
\begin{itemize}
\item $\sigma_{1,\ell}$ is unramified for all primes $\ell \neq p$.
\item $\sigma_{1,p} = {\rm St}\otimes {\eta^\varepsilon}$.
\item $\sigma_{1,\infty}={\rm DS}(\kappa_1)$ for some sufficiently large $\kappa_1 \in \Z_{\geq 2}$.
\end{itemize}
Let $U$ be an open neighborhood of $0$ in $\R$. By \cite[Theorem 5.8]{Shin2012}, there exist infinitely many irreducible cuspidal automorphic representations $\sigma_2$ of $\GL_2(\A_\Q)$ with trivial central character such that
\begin{itemize}
\item $\sigma_{2,\ell}$ is unramified for all pimes $\ell \neq p$.
\item $\sigma_{2,p} = {\rm Ind}_{B(\Q_p)}^{\GL_2(\Q_p)}(|\mbox{ }|^{\lambda+\sqrt{-1}t}{\eta^\varepsilon}
\boxtimes|\mbox{ }|^{-\lambda-\sqrt{-1}t}{\eta^\varepsilon})$ for some $t \in U$.
\item $\sigma_{2,\infty}$ is a discrete series representation.
\end{itemize}
We fix one such representation $\sigma_2$ such that $\sigma_{2,\infty}$ has sufficiently large weight $\kappa_2 \in \Z_{\geq 2}$. Then $\theta((\sigma_1\times\sigma_2)^\sharp)$ is an irreducible globally generic cuspidal automorphic representation of $G(\A_\Q)$ with trivial central character satisfying the following conditions:
\begin{itemize}
\item $\theta((\sigma_1\times\sigma_2)^\sharp)_\ell$ is unramified for all primes $\ell \neq p$. 
\item $\theta((\sigma_1\times\sigma_2)^\sharp)_p$ is of type (IIa) with parameter $\lambda+t\sqrt{-1}$ for some $t \in U$ and sign $\varepsilon$.
\item $\theta((\sigma_1\times\sigma_2)^\sharp)_\infty$ is of type (DS) with parameter $((\kappa_1+\kappa_2)/2,-|\kappa_1-\kappa_2|/2)$.
\end{itemize}
By Corollary \ref{C:endoscopic case}, 
\[C(\theta((\sigma_1\times\sigma_2)^\sharp)_p)C(\theta((\sigma_1\times\sigma_2)^\sharp)_\infty) = C'(\theta((\sigma_1\times\sigma_2)^\sharp)_p)C'(\theta((\sigma_1\times\sigma_2)^\sharp)_\infty).\]
On the other hand, since $\kappa_1$ and $\kappa_2$ are sufficiently large,
\[C(\theta((\sigma_1\times\sigma_2)^\sharp)_\infty) = C'(\theta((\sigma_1\times\sigma_2)^\sharp)_\infty)\]
by {\bf Step\,1}. We conclude that
\[C(\theta((\sigma_1\times\sigma_2)^\sharp)_p) = C'(\theta((\sigma_1\times\sigma_2)^\sharp)_p).\]
Therefore, (\ref{E:main ide}) follows from Corollary \ref{C:analytic family}.

{\bf Step\,3.} Assume $\pi$ is of type (DS) for arbitrary parameter $\lambda$. Recall that $\lambda_1+\lambda_2$ and $\lambda_1-\lambda_2$ are positive even integers. By the dimension formulae for the space of holomorphic elliptic cusp forms,  there exist two distinct primes $p_1$, $p_2$, and irreducible cuspidal automorphic representations $\sigma_1$ and $\sigma_2$ of $\GL_2(\A_\Q)$ with trivial central character such that
\begin{itemize}
\item $\sigma_{1,\ell}$ is unramified for all primes $\ell \neq p_1$.
\item $\sigma_{2,\ell}$ is unramified for all primes $\ell \neq p_2$.
\item $\sigma_{1,p_1}$ and $\sigma_{2,p_2}$ are special representations of conductor $p_1\Z_{p_1}$ and $p_2\Z_{p_2}$, respectively.
\item $\sigma_{1,\infty} = {\rm DS}(\lambda_1-\lambda_2)$ and $\sigma_{2,\infty}= {\rm DS}(\lambda_1+\lambda_2)$ are discrete series representations. 
\end{itemize}
Then $\theta((\sigma_1\times\sigma_2)^\sharp)$ is an irreducible globally generic cuspidal automorphic representation of $G(\A_\Q)$ with trivial central character satisfying the following conditions:
\begin{itemize}
\item $\theta((\sigma_1\times\sigma_2)^\sharp)_\ell$ is unramified for all primes $\ell \notin \{p_1,p_2\}$.
\item $\theta((\sigma_1\times\sigma_2)^\sharp)_{p_i}$ is of type (IIa) for $i=1,2$.
\item $\theta((\sigma_1\times\sigma_2)^\sharp)_\infty = \pi$.
\end{itemize}
By Corollary \ref{C:endoscopic case}, 
\[C(\theta((\sigma_1\times\sigma_2)^\sharp)_{p_1})C(\theta((\sigma_1\times\sigma_2)^\sharp)_{p_2})C(\pi) = C'(\theta((\sigma_1\times\sigma_2)^\sharp)_{p_1})C'(\theta((\sigma_1\times\sigma_2)^\sharp)_{p_2})C'(\pi).\]
On the other hand, by {\bf Step\,2},
\[C(\theta((\sigma_1\times\sigma_2)^\sharp)_{p_1})C(\theta((\sigma_1\times\sigma_2)^\sharp)_{p_2}) = C'(\theta((\sigma_1\times\sigma_2)^\sharp)_{p_1})C'(\theta((\sigma_1\times\sigma_2)^\sharp)_{p_2}).\]
Therefore (\ref{E:main ide}) holds for $\pi$.

{\bf Step\,4.} First we assume $\pi$ is of type (IIa). There exist a totally real number field $k$ and a finite place $v_0$ of $k$ such that $k_{v_0}=F$. To prove (\ref{E:main ide}), we proceed as in {\bf Step 2}. By Corollary \ref{C:analytic family} and the identity theorem for holomorphic functions, we may assume ${\rm Re}(\lambda)=0$. By \cite[Theorem 5.8]{Shin2012} (see also \cite[Theorem 1.1]{Weinstein2009}), there exists an irreducible cuspidal automorphic representation $\sigma_1$ of $\GL_2(\A_k)$ with trivial central character satisfying the following conditions:
\begin{itemize}
\item $\sigma_{1,v}$ is unramified for all finite places $v \neq v_0$.
\item $\sigma_{1,v_0} = {\rm St}\otimes {\eta^\varepsilon}$.
\item $\sigma_{1,v}$ is a discrete series representation for all real places $v$.
\end{itemize}
Let $U$ be an open neighborhood of $0$ in $\R$. By \cite[Theorem 5.8]{Shin2012}, there exists an irreducible cuspidal automorphic representation $\sigma_2$ of $\GL_2(\A_k)$ with trivial central character such that
\begin{itemize}
\item $\sigma_{2,v}$ is unramified for all finite places $v \neq v_0$.
\item $\sigma_{2,v_0} = {\rm Ind}_{B(F)}^{\GL_2(F)}(|\mbox{ }|^{\lambda+\sqrt{-1}t}\eta^\varepsilon\boxtimes|\mbox{ }|^{-\lambda-\sqrt{-1}t}{\eta^\varepsilon})$ for some $t \in U$.
\item $\sigma_{2,v}$ is a discrete series representation for all real places $v$.
\end{itemize}
Then $\theta((\sigma_1\times\sigma_2)^\sharp)$ is an irreducible globally generic cuspidal automorphic representation of $G(\A_k)$ with trivial central character satisfying the following conditions:
\begin{itemize}
\item $\theta((\sigma_1\times\sigma_2)^\sharp)_v$ is unramified for all finite places $v \neq v_0$.
\item $\theta((\sigma_1\times\sigma_2)^\sharp)_{v_0}$ is of type (IIa) with parameter $\lambda+t\sqrt{-1}$ for some $t \in U$ and sign $\varepsilon$.
\item $\theta((\sigma_1\times\sigma_2)^\sharp)_v$ is of type (DS) for all real places $v$.
\end{itemize}
By Corollary \ref{C:endoscopic case}, 
\[C(\theta((\sigma_1\times\sigma_2)^\sharp)_{v_0})\prod_{v \mid \infty}C(\theta((\sigma_1\times\sigma_2)^\sharp)_v) = C'(\theta((\sigma_1\times\sigma_2)^\sharp)_{v_0})\prod_{v \mid \infty}C'(\theta((\sigma_1\times\sigma_2)^\sharp)_v).\]
On the other hand, by {\bf Step\,3},
\[\prod_{v \mid \infty}C(\theta((\sigma_1\times\sigma_2)^\sharp)_v) = \prod_{v \mid \infty}C'(\theta((\sigma_1\times\sigma_2)^\sharp)_v).\]
We conclude that
\[C(\theta((\sigma_1\times\sigma_2)^\sharp)_{v_0}) = C'(\theta((\sigma_1\times\sigma_2)^\sharp)_{v_0}).\]
Therefore, (\ref{E:main ide}) follows from Corollary \ref{C:analytic family}. 

Finally we assume $\pi$ is of type (PS). By Corollary \ref{C:analytic family} and the identity theorem for holomorphic functions, we may assume $\pi$ is tempered, that is, ${\rm Re}(\lambda_1)={\rm Re}(\lambda_2)=0$. Let $k$ be a totally real cubic number field. Let $\{\infty_1,\infty_2,\infty_3\}$ be the set of real places of $k$. Let $U$ be an open neighborhood of $0$ in $\R$. By Proposition \ref{P:existence}, there exist irreducible cuspidal automorphic representations $\sigma_1$ and $\sigma_2$ of $\GL_2(\A_k)$ with trivial central character satisfying the following conditions:
\begin{itemize}
\item $\sigma_{i,v}$ is unramified for all finite places $v$ and $i=1,2$.
\item $\sigma_{i,\infty_1}$ and $\sigma_{i,\infty_2}$ are discrete series representations for $i=1,2$.
\item $\sigma_{1,\infty_3} = {\rm Ind}_{B(\R)}^{\GL_2(\R)}(|\mbox{ }|^{(\lambda_1+\lambda_2)/2+\sqrt{-1}t_1}{\rm sgn}^\varepsilon\boxtimes|\mbox{ }|^{(-\lambda_1-\lambda_2)/2-\sqrt{-1}t_1}{\rm sgn}^\varepsilon)$ for some $t_1 \in U$.
\item $\sigma_{2,\infty_3} = {\rm Ind}_{B(\R)}^{\GL_2(\R)}(|\mbox{ }|^{(\lambda_1-\lambda_2)/2+\sqrt{-1}t_2}{\rm sgn}^\varepsilon\boxtimes|\mbox{ }|^{(-\lambda_1+\lambda_2)/2-\sqrt{-1}t_2}{\rm sgn}^\varepsilon)$ for some $t_2 \in U$.
\end{itemize}
Then $\theta((\sigma_1\times\sigma_2)^\sharp)$ is an irreducible globally generic cuspidal automorphic representation of $G(\A_k)$ with trivial central character satisfying the following conditions:
\begin{itemize}
\item $\theta((\sigma_1\times\sigma_2)^\sharp)_v$ is unramified for all finite places $v$.
\item $\theta((\sigma_1\times\sigma_2)^\sharp)_{\infty_i}$ is of type (DS) for $i=1,2$.
\item $\theta((\sigma_1\times\sigma_2)^\sharp)_{\infty_3}$ is of type (PS) with parameter $(\lambda_1+\sqrt{-1}(t_1+t_2), \lambda_2+\sqrt{-1}(t_1-t_2))$ and sign $\varepsilon$.
\end{itemize}
By Corollary \ref{C:endoscopic case}, 
\[\prod_{i=1}^3C(\theta((\sigma_1\times\sigma_2)^\sharp)_{\infty_i}) = \prod_{i=1}^3C'(\theta((\sigma_1\times\sigma_2)^\sharp)_{\infty_i}).\]
On the other hand, by {\bf Step\,3},
\[C(\theta((\sigma_1\times\sigma_2)^\sharp)_{\infty_1})C(\theta((\sigma_1\times\sigma_2)^\sharp)_{\infty_2}) = C'(\theta((\sigma_1\times\sigma_2)^\sharp)_{\infty_1})C'(\theta((\sigma_1\times\sigma_2)^\sharp)_{\infty_2}).\]
We conclude that
\[C(\theta((\sigma_1\times\sigma_2)^\sharp)_{\infty_3}) = C'(\theta((\sigma_1\times\sigma_2)^\sharp)_{\infty_3}).\]
Therefore, (\ref{E:main ide}) follows from Corollary \ref{C:analytic family}. This completes the proof of Proposition \ref{P:main ide}.

\section{Endoscopic lifts}\label{S:Yoshida lifts}
Let $\pi$ be an irreducible globally generic cuspidal automorphic representation of $G(\A)$ with trivial central character and paramodular conductor $\frak{n}$ satisfying the conditions in \S\,\ref{SS:main thm}. In this section, we assume $\pi$ is endoscopic. Then we state
the key ingredients (Propositions \ref{P:W(1)} and \ref{P:Rallis inner product}) of the explicit Rallis inner product formula (Proposition
\ref{P:main identity2}).

\subsection{Theta lifts}\label{SS:theta lifts}
Let $(V,Q)$ be the quadratic space over $k$ defined by $V={\rm M}_{2 , 2}$ and $Q[x]=\det(x)$. Let $\iota$ be the main involution on ${\rm M}_{2 , 2}$ defined by
\[\bp x_1&x_2\\x_3&x_4\ep^{\iota}=\bp x_4 & -x_2 \\ -x_3 & x_1\ep.\] The associated symmetric bilinear form is given by $(x,y) = {\rm tr}(xy^{\iota})$. 
Put
\begin{align*}
H&={\rm GO}(V), \quad H_1 = {\rm O}(V),\\
H^{\circ} &= {\rm GSO}(V), \quad H_1^{\circ} = {\rm SO}(V).
\end{align*}
Let $\nu : H \rightarrow \mathbb{G}_m$ be the similitude character. Let ${\bf t}$ be the involution on $\GL_2\times\GL_2$ defined by 
\[{\rm Ad}({\bf t})(h_1,h_2) = \left((h_2^{\iota})^{-1},(h_1^{\iota})^{-1}\right).\]
Put ${\pmb \mu}_2 = \<{\bf t}\>$. We have an exact sequence
\[1 \longrightarrow   {\mathbb G}_m \stackrel{\Delta}{\longrightarrow} (\GL_2\times \GL_2)\rtimes {\pmb \mu}_2 \stackrel{\rho}{\longrightarrow} H \longrightarrow 1,\]
where $\Delta(a)=(a{\bf 1}_2,a{\bf 1}_2)$, $\rho(h_1,h_2){x}=h_1{ x}h_2^{-1}$, and $\rho({\bf t})x=x^{\iota}$ for $a \in {\mathbb G}_m, h_1, h_2 \in \GL_2$, and $x \in V$. For $h_1,h_2 \in \GL_2$, we write $\rho(h_1,h_2) = [h_1,h_2]$. Note that $\nu([h_1,h_2])=\det(h_1h_2^{-1})$. We denote by ${\bf t}$ the image of $\bf t$ in $H$ by abuse of notation, and by ${\bf t}_v$ the image of $\bf t$ in $H(k_v)$ for each place $v$ of $k$.

Let $\omega=\omega_{\psi,V,2}$ be the Weil representation of $\Sp_4(\A)\times H_1(\A)$ on $\calS(V^2(\A))$ with respect to $\psi$. Let $S(V^2(\A))$ be the subspace of $\calS(V^2(\AA))$
consisting of functions which correspond to polynomials in the Fock model
at the archimedean places. We extend $\omega$ to a representation of ${\rm G}(\Sp_4 \times H_1)(\A)$ as in (\ref{Weil rep on R}).

Let $\varphi \in S(V^2(\A))$. The theta function associated to $\varphi$ is defined by
\[\Theta(g,h;\varphi) = \sum_{x \in V^2(k)}\omega(g,h)\varphi(x)\]
for $(g,h) \in {\rm G}(\Sp_4 \times H_1)(\A)$. Let $f$ be a cusp form on $H(\A)$ and let $\varphi \in S(V^2(\A))$. For $g \in G(\A)$, choose $h \in H(\A)$ such that $\nu(h)=\nu(g)$, and put
\[\theta(f,\varphi)(g) = \int_{H_1(k)\backslash H_1(\A)}f(h_1h)\Theta(g,h_1h;\varphi)\,dh_1.\]
Then $\theta(f,\varphi)$ is an automorphic form on $G(\A)$. For any irreducible cuspidal automorphic representation $\sigma^{\sharp}$ of $H(\A)$ on the space $\mathcal{V}_{\sigma^\sharp}$, we define $\theta(\sigma^{\sharp})$ as the automorphic representation of $G(\A)$ on the space $\mathcal{V}_{\theta(\sigma^{\sharp})}$ generated by $\theta(f,\varphi)$ for all $f \in \mathcal{V}_{\sigma^{\sharp}}$ and $\varphi \in S(V^2(\A))$. 

Since we assume $\pi$ is endoscopic, by \cite[Proposition 2.2]{AH2006}, there exists an irreducible unitary cuspidal automorphic representation $\sigma^{\sharp}$ of $H(\A)$ such that
\[\theta(\sigma^{\sharp}) = \pi.\]
By \cite[Lemma 2]{HST1993}, the tower property, and the cuspidality of $\pi$, we have
\begin{align}\label{E:restriction}
\mathcal{V}_{\sigma^{\sharp}}\vert_{H^\circ(\A)} = \mathcal{V}_\sigma \oplus \mathcal{V}_{\sigma\circ{\rm Ad}({\bf t})}
\end{align}
as spaces of functions on $H^\circ(\A)$ for some irreducible unitary cuspidal automorphic representation $\sigma$ of $H^\circ(\A)$ on the space $\mathcal{V}_\sigma$ with $\sigma \neq \sigma\circ{\rm Ad}({\bf t})$. Via the isomorphism
\[H^\circ \simeq \Delta\mathbb{G}_m\backslash(\GL_2\times\GL_2)\]
induced by $\rho$, we have
\[\sigma = \sigma_1 \times \sigma_2,\quad \mathcal{V}_\sigma = \mathcal{V}_{\sigma_1}\otimes \mathcal{V}_{\sigma_2}\]
for some irreducible unitary cuspidal automorphic representations $\sigma_1$ and $\sigma_2$ of $\GL_2(\A)$ on the spaces $\mathcal{V}_{\sigma_1}$ and $\mathcal{V}_{\sigma_2}$, respectively, with central characters inverse to each other. Let $\frak{n}_1$ and $\frak{n}_2$ be the conductors of $\sigma_1$ and $\sigma_2$, respectively. By our assumptions on $\pi$ in \S\,\ref{SS:main thm} and the condition $\sigma \neq \sigma\circ{\rm Ad}({\bf t})$, $\sigma_{1}$ and $\sigma_2$ satisfy the following properties:
\begin{itemize}
\item $\sigma_1 \neq \sigma_2^\vee$.
\item $\sigma_1$ and $\sigma_2$ have trivial central characters.
\item $\frak{n}_1$ and $\frak{n}_2$ are square-free and coprime, and $\frak{n}_1\frak{n}_2=\frak{n}$.
\item For $v \in S({\rm DS})$, $\sigma_{1,v}$ and $\sigma_{2,v}$ are discrete series representations.
\item For $v \in S({\rm PS})$, $\sigma_{1,v}$ and $\sigma_{2,v}$ are unitary principal series representations with non-zero ${\rm SO}(2)$-invariant vectors.
\end{itemize}
For $v \in S({\rm DS})$, let $\kappa_{1,v} \in \Z_{\geq 1}$ and $\kappa_{2,v}\in \Z_{\geq 1}$ be the minimal weights of $\sigma_{1,v}$ and $\sigma_{2,v}$, respectively. Then
\[
\lambda_{1,v} = \frac{\kappa_{1,v}+\kappa_{2,v}}{2}, \quad \lambda_{2,v} =  -\frac{|\kappa_{1,v}-\kappa_{2,v}|}{2}.
\]
For $v \in S({\rm PS})$, we have 
\[\sigma_{i,v} = {\rm Ind}_{B(k_v)}^{\GL_2(k_v)}(|\mbox{ }|_v^{\mu_{i,v}}{\rm sgn}^\varepsilon\boxtimes |\mbox{ }|_v^{-\mu_{i,v}}{\rm sgn}^\varepsilon)\]
for some $\varepsilon \in \{0,1\}$ and $\mu_{i,v}\in \C$ with $|{\rm Re}(\mu_{i,v})| < 1/2$ for $i=1,2$. Then
\[
\{ \lambda_{1,v} ,\,\lambda_{2,v}\}= \{ \mu_{1,v} + \mu_{2,v},\,\mu_{1,v} - \mu_{2,v}\},
\]
after replacing $\lambda_{i,v}$ with $-\lambda_{i,v}$ or $\mu_{i,v}$ with $-\mu_{i,v}$ if necessary.

\subsection{Automorphic forms on ${\rm GO}(V)$}

Consider non-zero automorphic forms ${\bf f}_1 \in \mathcal{V}_{\sigma_1}$ and ${\bf f}_2 \in \mathcal{V}_{\sigma_2}$ satisfying the following conditions:
\begin{itemize}
\item If $v$ is a finite place and $v \nmid \frak{n}$, then $$\sigma_{1,v}(k_1){\bf f}_1 = {\bf f}_1,\quad \sigma_{2,v}(k_2){\bf f}_2 = {\bf f}_2$$
 for $(k_1,k_2) \in \GL_2(\frak{o}_v)\times \GL_2(\frak{o}_v)$.
\item If $v \mid \frak{n}_1$, then
$$\sigma_{1,v}(k_1){\bf f}_1 = {\bf f}_1,\quad \sigma_{2,v}(k_2){\bf f}_2 = {\bf f}_2$$
for $(k_1,k_2) \in K_0(\varpi_v)\times \GL_2(\frak{o}_v)$.
\item If $v \mid \frak{n}_2$, then
$$\sigma_{1,v}(k_1){\bf f}_1 = {\bf f}_1,\quad \sigma_{2,v}(k_2){\bf f}_2 = {\bf f}_2$$
for $(k_1,k_2) \in \GL_2(\o_v)\times K_0(\varpi_v)$.
\item If $v \in S({\rm DS})$, then
$$\sigma_{1,v}(k_{\theta}){\bf f}_1 = e^{\sqrt{-1}\kappa_{1,v} \theta} {\bf f}_1,\quad \sigma_{2,v}(k_{\theta}){\bf f}_2 = e^{\sqrt{-1}\kappa_{2,v} \theta} {\bf f}_2$$
for $k_{\theta} \in {\rm SO}(2)$.
\item If $v \in S({\rm PS})$, then
$$\sigma_{1,v}(k){\bf f}_1 = {\bf f}_1,\quad \sigma_{2,v}(k){\bf f}_2 = {\bf f}_2$$
for $k \in {\rm SO}(2)$.
\end{itemize} 
The conditions above characterize ${\bf f}_1$ and ${\bf f}_2$ up to scalars. We normalize ${\bf f}_1$ and ${\bf f}_2$ so that
\[
W_{{\bf f}_i}\left( \prod_{v \nmid \infty}{\bf a}(\varpi_v^{-\c_v})\right)  = e^{-2\pi|S({\rm DS})|}\prod_{v \in S({\rm PS})}K_{\mu_{i,v}}(2\pi),
\]
where $W_{{\bf f}_i}$ is the Whittaker function of ${\bf f}_i$ defined by 
\[W_{{\bf f}_i}(g) = \int_{k \backslash \A}{\bf f}_i({\bf n}(x)g)\overline{\psi(x)}\,dx.\]
Here $dx$ is the Tamagawa measure on $\A$. Let ${\bf f}\in \mathcal{V}_{\sigma}$ be the automorphic form defined by
\[
{\bf f}(h)={\bf f}_1(h_1){\bf f}_2(h_2)
\]
for $h=[h_1,h_2] \in H^{\circ}(\A)$.

Consider a non-zero automorphic form ${\bf f}^{\sharp} \in \mathcal{V}_{\sigma^{\sharp}}$ satisfying the following conditions:
\begin{itemize}
\item If $v$ is a finite place and $v\nmid \frak{n}$, then
$$\sigma^{\sharp}_v(k'){\bf f}^{\sharp}={\bf f}^{\sharp}$$
for $k' \in H(\o_v)$. 
\item If $v \mid \frak{n}_1$, then 
$$\sigma_v^{\sharp}([k_1,k_2]){\bf f}^{\sharp}={\bf f}^{\sharp}$$ 
for $(k_1,k_2) \in K_0(\varpi_v)\times \GL_2(\frak{o}_v)$.
\item If $v \mid \frak{n}_2$, then
$$\sigma^{\sharp}_v([k_1,k_2]){\bf f}^{\sharp}={\bf f}^{\sharp}$$ for $(k_1,k_2) \in \GL_2(\o_v)\times K_0(\varpi_v)$.
\item If $v \in S({\rm DS})$, then 
$$\sigma^{\sharp}_v([k_{\theta_1},k_{\theta_2}]){\bf f}^{\sharp} = e^{\sqrt{-1}(\kappa_{1,v}\theta_1+\kappa_{2,v}\theta_2)}{\bf f}^{\sharp}$$
for $k_{\theta_1},k_{\theta_2} \in {\rm SO}(2)$.
\item If $v \in S({\rm PS})$, then
$$\sigma^{\sharp}_v([k_1,k_2]){\bf f}^{\sharp}={\bf f}^{\sharp}, \quad \sigma^{\sharp}_v({\bf t}_v){\bf f}^{\sharp}={\bf f}^{\sharp}$$
for $k_1,k_2 \in {\rm SO}(2)$.
\end{itemize}
The conditions above characterize ${\bf f}^{\sharp}$ up to scalars. We normalize ${\bf f}^\sharp$ so that
$${\bf f}^{\sharp}\vert_{H^\circ(\A)} = {\bf f}.$$

Let $L$ be the lattice of $V^2(k)$ defined by
\[L=\bp \frak{n}_2 & \frak{o} \\ \frak{n} & \frak{n}_1 \ep \oplus \bp \frak{o} & \frak{o} \\ \frak{o} & \frak{o} \ep.\]
Define $\varphi  = \bigotimes_v \varphi_v \in S(V^2(\A))$ as follows:
If $v$ is a finite place, then 
\begin{align}\label{E:non-archi Schwartz fun}
\varphi_v = \mathbb{I}_{L\otimes_\o \o_v}.
\end{align}
If $v \in S({\rm DS})$ and $\kappa_{1,v}\geq \kappa_{2,v}$, then
\begin{align}\label{E:archi Schwartz fun DS1}
\varphi_v(x,y) =(-\sqrt{-1}x_1-x_2-x_3+\sqrt{-1}x_4)^{\lambda_{1,v}}(y_1+\sqrt{-1}y_2-\sqrt{-1}y_3+y_4)^{-\lambda_{2,v}}e^{-\pi\,{\rm tr}(x{}^t \! x+y{}^t \! y)}.
\end{align}
If $v \in S({\rm DS})$ and $\kappa_{1,v}\leq \kappa_{2,v}$, then
\begin{align}\label{E:archi Schwartz fun DS2}
\varphi_v(x,y) =(-\sqrt{-1}x_1-x_2-x_3+\sqrt{-1}x_4)^{\lambda_{1,v}}(y_1-\sqrt{-1}y_2+\sqrt{-1}y_3+y_4)^{-\lambda_{2,v}}e^{-\pi\,{\rm tr}(x{}^t \! x+y{}^t \! y)}.
\end{align}
If $v \in S({\rm PS})$, then
\begin{align}\label{E:archi Schwartz fun PS}
\varphi_v(x,y) = e^{-\pi\,{\rm tr}(x{}^t \! x+y{}^t \! y)}.
\end{align}
Let $\frak{S}$ be the subset of places of $k$ defined by
\[\frak{S} = \{v \mbox{ }\vert\mbox{ }\sigma_v \not\simeq \sigma_v\circ{\rm Ad}({\bf t}_v)\}.\]
Since $\sigma \neq \sigma \circ {\rm Ad}(\bf t)$, $\frak{S}$ contains infinitely many places of $k$ by the strong multiplicity one theorem for $H^\circ$. Recall $S({\rm DS})$ is the set of real places of type (DS). Let $S(\frak{n}) = \{v \mid \frak{n}\}$. Note that $S(\frak{n})\subseteq \frak{S}$.

\begin{prop}\label{P:W(1)}
Let $f \in \pi$ be the cusp form satisfying the conditions (\ref{E:K-type2}) and (\ref{E:normalization}).
We have
\[\theta({\bf f}^{\sharp},\varphi)(g) = 2^{-|(S({\rm DS})\cup S(\frak{n}))\cap \frak{S}|}\cdot\prod_{v \in S({\rm DS})}2^{-\lambda_{1,v}- 4}\pi^{(-3\lambda_{1,v}+\lambda_{2,v}-5)/2}\cdot\prod_{v \mid \frak{n}}(1+q_v)^{-1}\cdot\frak{D}^{-3/2}\cdot{\xi(2)^{-2}}\cdot f(gg_0)\]
for $g \in G(\A)$. Here $g_0 \in G(\A)$ is defined by
\[
g_{0,v} = \begin{cases}
\diag( 1, 1, \varpi_v^{\frak{c}_v}, \varpi_v^{\frak{c}_v} ) & \mbox{ if $v \nmid \infty$},\\
1 & \mbox{ if $v \mid \infty$}.
\end{cases}
\]
\end{prop}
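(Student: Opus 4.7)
The strategy is to identify $\theta(\mathbf{f}^\sharp, \varphi)$ with the right translate $f(\cdot\, g_0)$ up to an explicit scalar, which reduces to (a) a multiplicity-one argument combined with a local $K$-type match, and then (b) a computation of Whittaker functions at a chosen base point. Since $\theta(\sigma^\sharp) = \pi$ occurs with multiplicity one in the cuspidal spectrum by \cite{JS2007}, the lift $\theta(\mathbf{f}^\sharp, \varphi)$ lies in the space of $\pi$. I would then verify, place by place, that the local vector produced by the lift satisfies the same invariance and $K$-type conditions as $\pi_v(g_{0,v})f_v$:
at each $v \nmid \infty\mathfrak{n}$, the spherical $\varphi_v = \mathbb{I}_{L\otimes \frak{o}_v}$ combined with the spherical $\mathbf{f}^\sharp_v$ yields a $G(\frak{o}_v)$-spherical vector, the right translation by $g_{0,v} = \diag(1,1,\varpi_v^{\c_v},\varpi_v^{\c_v})$ compensating for the twist of $L$ by the different; at $v \mid \mathfrak{n}$ the lattice $L$ together with the Iwahori-invariance of $\mathbf{f}_1,\mathbf{f}_2$ produces a $\mathrm{K}(\varpi_v)$-invariant vector after the same twist; at $v \in S(\mathrm{DS})$ the polynomial factors in (\ref{E:archi Schwartz fun DS1})--(\ref{E:archi Schwartz fun DS2}) realize the lowest weight vector of the minimal $\U(2)$-type through the Fock model; at $v \in S(\mathrm{PS})$ the Gaussian $\varphi_v$ produces the $(\Sp_4(\R)\cap \O(4))$-invariant vector. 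Uniqueness of the newform under the conditions \eqref{E:K-type2} then forces $\theta(\mathbf{f}^\sharp, \varphi)(g) = C\cdot f(g g_0)$ for some constant $C$.

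To pin down $C$, I would compute the Whittaker function of both sides with respect to $\psi_U$. Standard unfolding of the theta integral against $\psi_U$, together with the Fourier expansions of $\mathbf{f}_1$ and $\mathbf{f}_2$, collapses the global integral to
\[
W_{\theta(\mathbf{f}^\sharp,\varphi)}(g) = \int_{R(\A)} W_{\mathbf{f}_1}(h_1)\, W_{\mathbf{f}_2}(h_2)\, (\omega(g,[h_1,h_2])\varphi)(\xi_0)\, dh_1\, dh_2,
\]
for a distinguished $\xi_0 \in V^2(k)$ representing the $U$-orbit attached to $\psi_U$ and a suitable subgroup $R\subset H^\circ$. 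Here the restriction \eqref{E:restriction} together with the vanishing of the integrals over the non-$\sigma$ summand at $v\in \frak{S}$ is what produces the factor $2^{-|(S(\mathrm{DS})\cup S(\mathfrak{n}))\cap \frak{S}|}$. The resulting integral factorizes over places. I would then evaluate at the base point $g_1 = \prod_v g_{1,v}$ where $W(g_1)$ is prescribed by \eqref{E:normalization}, so that the proportionality constant is read off by dividing the global Whittaker integral by the normalization of $W$.

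The main obstacle is the explicit evaluation of the local archimedean integrals. In Case (DS), the polynomial-weighted Gaussian integral over $H_1(\R)$, upon passing to Iwasawa coordinates, yields the Mellin--Barnes form (\ref{E:normalization DS}) and accounts for the factor $2^{-\lambda_{1,v}-4}\pi^{(-3\lambda_{1,v}+\lambda_{2,v}-5)/2}$; the explicit Whittaker formulas of Moriyama \cite{Moriyama2004} on the $\GSp_4$ side match this against the normalization in \eqref{E:normalization DS}. In Case (PS), the Gaussian integrated against the $\SO(2)$-bi-invariant $\mathbf{f}_{i,v}$ produces the ${}_3F_2$ hypergeometric expression of (\ref{E:normalization PS}) via the Ishii formula \cite{Ishii2005}. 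At $v \mid \mathfrak{n}$ the explicit paramodular Whittaker formula of \cite{RS2007} applied to ${\bf f}_1, {\bf f}_2$ yields the factor $(1+q_v)^{-1}$, while the measure-theoretic constants $\frak{D}^{-3/2}$ and $\zeta(2)^{-2}$ emerge from comparing the Tamagawa measure on $H_1(\A)$ with the local product measures in \S\ref{SS:measures} used to normalize $\omega$. Once each local integral is matched with its place-by-place target, collecting the constants gives the claimed expression.
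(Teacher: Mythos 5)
Your plan reproduces the paper's argument: first establish the proportionality $\theta(\mathbf{f}^\sharp,\varphi)(g)=C\cdot f(gg_0)$ from the local equivariance properties (\ref{E:equivariant property of test vector case v=p not divide N_1N_2})--(\ref{E:equivariant property PS}), then determine $C$ by unfolding the Whittaker integral of the theta lift to an integral over $\Delta N(\A)\backslash H_1^\circ(\A)$ against $(\omega(\cdot)\varphi)(\mathbf{x}_0,\mathbf{y}_0)$ (Lemma \ref{L:Whittaker function}), which factorizes and is evaluated place by place (Lemmas \ref{L:global Whittaker}--\ref{L:local Whittaker archi. 2}). One small misattribution: the factor $(1+q_v)^{-1}$ at $v\mid\mathfrak{n}$ arises in Lemma \ref{L:local Whittaker finite 2} from the Haar-measure normalization on $H_1^\circ(k_v)$ and the Iwahori coset decomposition rather than from an application of the Roberts--Schmidt explicit Whittaker formulas, but this does not affect the soundness of your route.
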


\begin{proof}
The proof will be given in $\S\,\ref{S:proof 1}$ below.
\end{proof}

\begin{prop}\label{P:Rallis inner product}
We have
\begin{align*}
\left\<\theta({\bf f}^{\sharp},\varphi),\theta({\bf f}^{\sharp},\varphi)\right\> &= 2^{-2|(S({\rm DS})\cup S(\frak{n}))\cap \frak{S}|}\cdot\frak{D}^{-8}\cdot\frac{4}{\xi(2)^4}\cdot\frac{L(1,\pi,{\rm Ad})}{\Delta_{{\rm PGSp}_4}}\cdot \prod_{v\mid \frak{n}}q_v^{-3}\zeta_v(1)^{-2}\zeta_v(2)\zeta_v(4)\\
&\times \prod_{v \in S({\rm DS})} 2^{-\lambda_{1,v}-\lambda_{2,v}-3}(1+\lambda_{1,v}-\lambda_{2,v})^{-1}\cdot \prod_{v \in S({\rm PS})} 2^{-4}.
\end{align*}
\end{prop}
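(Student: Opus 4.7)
The proof plan is to apply the Rallis inner product formula to $\theta(\mathbf{f}^\sharp,\varphi)$, reduce the resulting global integral to a product of Petersson norms on $\GL_2$ and a finite collection of local integrals, and then combine the classical $\GL_2$ formula with the factorization $L(s,\pi,\Ad)=L(s,\sigma_1,\Ad)\,L(s,\sigma_2,\Ad)\,L(s,\sigma_1\times\sigma_2)$ which holds because $\pi=\theta(\sigma^\sharp)$ is the endoscopic transfer of $\sigma_1\boxplus\sigma_2$.

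First, since $\dim V=4>2=n$, the theta correspondence $(\Sp_4,\O(V))$ is in the convergent range. By the standard doubling / see-saw argument (via the pair $(\Sp_4\times\Sp_4,\,\O(V)\times\O(V))\leftrightarrow(\Sp_8,\O(V))$), one obtains
\[
 \langle\theta(\mathbf{f}^\sharp,\varphi),\theta(\mathbf{f}^\sharp,\varphi)\rangle_{\GSp_4}
 =\mathcal{C}^{\mathrm{glob}}\cdot\langle\mathbf{f}^\sharp,\mathbf{f}^\sharp\rangle_{H}\cdot\frac{L^S(1,\sigma_1\times\sigma_2)}{\zeta^S(2)\,\zeta^S(4)}\cdot\prod_{v\in S}Z_v^{\mathrm{RI}}(\varphi_v,\mathbf{f}^\sharp_v),
\]
where $S=\{v\mid\infty\mathfrak{n}\}$ and the global constant $\mathcal{C}^{\mathrm{glob}}$ comes from Tamagawa measure normalizations. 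The Petersson norm on $H(\A)$ is handled by the identity (\ref{E:restriction}): since $\sigma\not\simeq\sigma\circ\Ad(\mathbf{t})$, one has $\langle\mathbf{f}^\sharp,\mathbf{f}^\sharp\rangle_{H}=2^{|\mathfrak{S}|-\text{corr.}}\,\langle\mathbf{f}_1,\mathbf{f}_1\rangle\,\langle\mathbf{f}_2,\mathbf{f}_2\rangle$, the factor $2^{-2|(S(\DS)\cup S(\mathfrak{n}))\cap\mathfrak{S}|}$ in the stated identity being the square of this index-$2$ combinatorial factor restricted to the relevant local places.

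Second, apply the classical formula (the Lapid–Mao identity for $\GL_2$) to each $\langle\mathbf{f}_i,\mathbf{f}_i\rangle$, yielding
\[
 \langle\mathbf{f}_i,\mathbf{f}_i\rangle=\frac{L(1,\sigma_i,\Ad)}{\zeta(2)}\cdot\prod_{v}C_v(\sigma_{i,v}),
\]
with explicit local constants $C_v(\sigma_{i,v})$ determined by the normalization of $W_{\mathbf{f}_i}$ prescribed through the $K$-Bessel values $K_{\mu_{i,v}}(2\pi)$ and the exponentials $e^{-2\pi}$. Multiplying the two, the product $L(1,\sigma_1,\Ad)\,L(1,\sigma_2,\Ad)\,L(1,\sigma_1\times\sigma_2)$ equals $L(1,\pi,\Ad)$ by the endoscopic factorization recalled above, and the denominator $\zeta(2)^4\cdot\zeta(2)\zeta(4)=\zeta(2)^4\Delta_{\PGSp_4}$ reproduces the $\zeta(2)^{-4}\,\Delta_{\PGSp_4}^{-1}$ in the statement.

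Third, one computes each local integral $Z_v^{\mathrm{RI}}(\varphi_v,\mathbf{f}^\sharp_v)$ against the explicit Schwartz functions (\ref{E:non-archi Schwartz fun})--(\ref{E:archi Schwartz fun PS}). At unramified finite $v\notin S$, $\varphi_v=\mathbb{I}_{L\otimes\mathfrak{o}_v}$ reproduces the unramified $L$-factor exactly, contributing only the discriminant factor $\mathfrak{D}^{-8}$ after the global measure bookkeeping. At $v\mid\mathfrak{n}$, the Steinberg structure of $\sigma_{i,v}$ and the explicit $\mathrm{K}(\varpi_v)$-invariance of the associated paramodular vector reduce the integral to an elementary $q_v$-series summation producing $q_v^{-3}\zeta_v(1)^{-2}\zeta_v(2)\zeta_v(4)$. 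At $v\in S(\DS)$, the polynomial Schwartz function is tailored so that the Fock model pairing against the lowest $\U(2)$-weight vector of $D_{(-\lambda_{2,v},-\lambda_{1,v})}$ collapses to a single Gaussian, yielding the $2^{-\lambda_{1,v}-\lambda_{2,v}-3}(1+\lambda_{1,v}-\lambda_{2,v})^{-1}$. At $v\in S(\PS)$, the Gaussian pairs with the spherical $(\Sp_4(\R)\cap\O(4))$-invariant vector and produces the simple constant $2^{-4}$ via a direct Iwasawa integral. The main obstacle will be precisely these local archimedean and $v\mid\mathfrak{n}$ computations: one must identify the image of the map $\varphi_v\mapsto$ (theta kernel) inside the $K$-isotypic component fixed by the normalization of $\mathbf{f}_i$, then evaluate the resulting $\Gamma$- or Hecke-algebra expressions. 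Once all local factors are assembled, a final bookkeeping of powers of $\mathfrak{D}$, $\zeta_v$ and $2$ yields the stated identity.
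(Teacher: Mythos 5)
Your overall architecture matches the paper's approach: both apply the Rallis inner product formula for the theta lift from $\O_{2,2}$ to $\Sp_4$, reduce to local matrix-coefficient integrals, and use the factorization $L(s,\pi,\Ad)=L(s,\sigma,\Ad)\cdot L(s,\sigma,\mathrm{std})$ (i.e.\ $L(s,\sigma_1,\Ad)L(s,\sigma_2,\Ad)L(s,\sigma_1\times\sigma_2)$) together with Waldspurger's local-global decomposition of the $\GL_2$ Petersson pairing. This is precisely the content of the paper's Theorem \ref{T:RIF} and the subsequent lemmas.

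However, your explanation of the factor $2^{-2|(S({\rm DS})\cup S(\frak{n}))\cap \frak{S}|}$ contains a genuine error. You claim it arises from an identity of the form $\langle\mathbf{f}^\sharp,\mathbf{f}^\sharp\rangle_{H}=2^{|\mathfrak{S}|-\text{corr.}}\langle\mathbf{f}_1,\mathbf{f}_1\rangle\langle\mathbf{f}_2,\mathbf{f}_2\rangle$. This cannot be right: $\mathfrak{S}$ is an infinite set of places (as noted in \S\,\ref{SS:theta lifts}, since $\sigma\neq\sigma\circ\Ad(\mathbf{t})$ and strong multiplicity one), so $2^{|\mathfrak{S}|}$ is not a finite number. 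The comparison of the global Petersson pairings $\mathcal{B}_\sigma$ and $\mathcal{B}_{\sigma^\sharp}$ involves only the global volume discrepancy ($\vol(Z_H(\A)H^\circ(k)\backslash H^\circ(\A))=4$ vs.\ $\vol(Z_H(\A)H(k)\backslash H(\A))=2$) and the factorization (\ref{E:ratio of measures2}), which carries the $L(1,\sigma,\Ad)$ factor but no power of $2$ indexed by $\mathfrak{S}$. The actual origin of the $2^{-2}$ factors is purely local: they appear in the local integrals $Z_v$ (Lemmas \ref{L:zeta integral1} and \ref{L:archi. d.s local zeta integral}) precisely at places $v$ where the local vector $\mathbf{f}_v^\sharp=(W_v,0)$ is not $\mathbf{t}$-invariant, namely $v\in(S({\rm DS})\cup S(\frak{n}))\cap\mathfrak{S}$ (per the normalization (\ref{E:iso})). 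At such $v$, the local Hermitian pairing $\mathcal{B}_v^\sharp$ carries a factor $\tfrac12$, and $\Psi_v$ vanishes on the non-identity component $H^\circ(k_v)\mathbf{t}_v$, together producing the $2^{-2}$. At unramified finite $v\in\mathfrak{S}$ or at PS places, the vector $\mathbf{f}_v^\sharp$ is $\mathbf{t}$-invariant ($(W_v,W_v)$ or $W_v$), so no such loss occurs. Your proposal would also need to explain, rather than just assert, the cancellations in the archimedean Fock-model pairings and the Hecke-algebra summations at $v\mid\mathfrak{n}$; the paper carries these out in Lemmas \ref{L:matrix coeff. of Weil rep d.s}, \ref{L:matrix coeff. of Weil rep PS} and the sums $Z^{(1)},\ldots,Z^{(4)}$ of Lemma \ref{L:zeta integral1}, which are not automatic.
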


\begin{proof}
The proof will be given in $\S\,\ref{S:proof 2}$ below.
\end{proof}

\section{Calculation of Whittaker functions}\label{S:cal1}
We keep the notation of $\S\,\ref{S:Yoshida lifts}$. The aim of this section is to prove Proposition \ref{P:W(1)}. We prove  it by comparing the Whittaker function of $\theta({\bf f}^\sharp,\varphi)$ and the normalization of $f$ in (\ref{E:normalization}). It boils down to the calculation of certain local Whittaker functions of $\pi_v$ in terms of local integrals involving Whittaker functions of $\sigma_v$ and the Weil representation $\omega_v$ for each place $v$.

\subsection{Measures}\label{SS:measure} Let $v$ be a place of $k$. Let $K_v$ be the maximal compact subgroup of $H_1^{\circ}(k_v)$ defined by
\[
K_v = \begin{cases}
[{\bf d}(\varpi_v^\c),{\bf d}(\varpi_v^\c)]H_1^{\circ}(\o_v)[{\bf d}(\varpi_v^{-\c}),{\bf d}(\varpi_v^{-\c})]   & \mbox{ if $v$ is finite},\\
H_1^{\circ}(k_v) \cap ({\rm O}(2) \times {\rm O}(2))/\{\pm 1\} & \mbox{ if $v$ is real}.
\end{cases}
\]
We fix a Haar measure $dh_{1,v}$ on $H_1^{\circ}(k_v)$ as follows:
\begin{itemize}
\item If $v$ is finite, the measure is normalized so that 
\begin{align}\label{E:non-archi. integration1}
{\rm vol}(K_v,dh_{1,v})=1.
\end{align} For $\phi \in L^1(H_1^{\circ}(k_v))$, we have
\begin{align}\label{E:non-archi. integration2}
\begin{split}
&\int_{H_1^{\circ}(k_v)}\phi(h_{1,v})\,dh_{1,v}\\
& = q_v^{-2\c_v}\int_{k_v}\int_{k_v}\int_{k_v^{\times}}\int_{k_v^{\times}}\int_{K_v}\phi([{\bf n}(x_1),{\bf n}(x_2)][{\bf m}(y_1),{\bf m}(y_2)]k)|y_1|_{v}^{-2}|y_2|_{v}^{-2}\,dk\,d^{\times}y_1\,d^{\times}y_2\,dx_1\,dx_2\\
&+ q_v^{-2\c_v}\int_{k_v}\int_{k_v}\int_{k_v^{\times}}\int_{k_v^{\times}}\int_{K_v}\phi([{\bf a}(\varpi_v),{\bf a}(\varpi_v)][{\bf n}(x_1),{\bf n}(x_2)][{\bf m}(y_1),{\bf m}(y_2)]k)|y_1|_{v}^{-2}|y_2|_{v}^{-2}\,dk\,d^{\times}y_1\,d^{\times}y_2\,dx_1\,dx_2.
\end{split}
\end{align}
\item If $v$ is real, the measure is defined so that for all $\phi\in L^1(H_1^{\circ}(\R))$, we have
\begin{align}\label{E:archi. integration}
\begin{split}
\int_{H_1^{\circ}(\R)}\phi(h_{1,\infty})\,dh_{1,\infty}&=\int_{H_1^{\circ}(\R)^0}\phi(h_{1,\infty})dh_{1,\infty}+\int_{H_1^{\circ}(\R)^0}\phi([{\bf a}(-1),{\bf a}(-1)]h_{1,\infty})\,dh_{1,\infty},\\
\int_{H_1^{\circ}(\R)^0}\phi(h_{1,\infty})\,dh_{1,\infty} &=4\int_{\R}\int_{\R}\int_{0}^{\infty}\int_{0}^{\infty}\int_{K_{\infty}^0}\phi([{\bf n}(x_1),{\bf n}(x_2)][{\bf m}(y_1),{\bf m}(y_2)]k)y_1^{-2}y_2^{-2}\,dk\,d^{\times}y_1\,d^{\times}y_2\,dx_1\,dx_2
\\
&= 16 \pi^2 \int_{K_{\infty}^0}\int_{0}^{\infty}\int_{0}^{\infty}\int_{K_{\infty}^0}\phi(k[{\bf m}(e^{t_1}),{\bf m}(e^{t_2})]k')\sinh(2t_1)\sinh(2t_2)\,dk\,dt_1\,dt_2\,dk'.
\end{split}
\end{align} 
Here $K_{\infty}^0$ and $H_1^{\circ}(\R)^0$ are the topological identity components of $K_{\infty}$ and $H_1^{\circ}(\R)$, respectively, and ${\rm vol}(K_{\infty}^0,dk)={\rm vol}(K_{\infty}^0,dk')=1$. We refer to \cite[\S 12]{IchinoIkeda2010} for the measure with respect to the Cartan decomposition.
\end{itemize}
Let $d\epsilon_v$ be the Haar measure on ${\pmb \mu}_2(k_v)$ such that ${\rm vol}({\pmb \mu}_2(k_v),d\epsilon_v)=1$. We extend the measure on $H_1^{\circ}(k_v)$ to a measure on $H_1(k_v)$ by
\[
\int_{H_1(k_v)}\phi(h_{1,v})\,dh_{1,v}  =  \int_{{\pmb \mu}_2(k_v)}\int_{H_1^{\circ}(k_v)}\phi(h_{1,v}\epsilon_v)\,dh_{1,v}\,d\epsilon_v
\]
for $\phi \in L^1(H_1(k_v))$.

Let $dh_1$ be the Tamagawa measure on $H_1(\A)$. Note that ${\rm vol}(H_1(k)\backslash H_1(\A))=1$ and 
\begin{align}\label{E:ratio of measures}
dh_1 = \frak{D}^{-3}\cdot\xi(2)^{-2}\cdot\prod_{v}dh_{1,v}.
\end{align}
Let $d\epsilon =\prod_v d\epsilon_v$ be the Tamagawa measure on ${\pmb \mu}_2(\A)$.

\subsection{Whittaker functions}
Recall $\frak{S}$ is the subset of places of $k$ defined by
\[\frak{S} = \{v \mbox{ }\vert\mbox{ }\sigma_v \not\simeq \sigma_v\circ{\rm Ad}({\bf t}_v)\}.\]
Let $v$ be a place of $k$. Let $\mathcal{V}_{1,v}$ and $\mathcal{V}_{2,v}$ be the spaces of $\sigma_{1,v}$ and $\sigma_{2,v}$, respectively. Then $\mathcal{V}_v = \mathcal{V}_{1,v}\otimes\mathcal{V}_{2,v}$ is the space of $\sigma_v$. We define an irreducible admissible representation $\sigma_v^{\sharp}$ of $H(k_v)$ as follows:
\begin{itemize}
\item $v \notin \frak{S}$: In this case, $\sigma_{1,v} \simeq \sigma_{2,v}$. We take $\mathcal{V}_{1,v} = \mathcal{V}_{2,v}$ and define $\mathcal{V}_v^{\sharp} = \mathcal{V}_v$. The representation $\sigma_v^{\sharp}$ of $H(k_v)$ on $\mathcal{V}_v^{\sharp}$ is defined by
\begin{align*}
\begin{split}
\sigma_v^{\sharp}([h_1,h_2])(v_1\otimes v_2) &= \sigma_{1,v}(h_1)v_1\otimes \sigma_{2,v}(h_2)v_2,\\
\sigma_v^{\sharp}({\bf t}_v)(v_1\otimes v_2) & = v_2\otimes v_1
\end{split}
\end{align*}
for $h_1,h_2 \in \GL_2(k_v)$ and $v_1\otimes v_2 \in \mathcal{V}_v^{\sharp}$.
\item $v \in \frak{S}$: In this case, define $\sigma_v^{\sharp}= {\rm Ind}_{H^{\circ}(k_v)}^{H(k_v)}\sigma_v$. The induced representation is realized on $\mathcal{V}_v^{\sharp} = \mathcal{V}_v \oplus \mathcal{V}_v$, where the action is defined by
\begin{align*}
\begin{split}
\sigma_v^{\sharp}(h)(v_1, v_2) &= (\sigma_v(h)v_1 ,\sigma_v({\rm Ad}({\bf t}_v)h)v_2),\\
\sigma_v^{\sharp}({\bf t}_v)(v_1, v_2) &= (v_2, v_1)
\end{split}
\end{align*}
for $h \in H^{\circ}(k_v)$ and $(v_1, v_2) \in \mathcal{V}_v^{\sharp}.$
\end{itemize}
 
\subsubsection{Local models} 
Let $v$ be place of $k$. For $i=1,2$, let $\mathcal{V}_{i,v}=\mathcal{W}(\sigma_{i,v},\psi_v)$ be the space of Whittaker functions of $\sigma_{i,v}$ with respect to $\psi_v$. Consider a non-zero Whittaker function $W_{1,v} \in \mathcal{V}_{1,v}$ and $W_{2,v} \in \mathcal{V}_{2,v}$ satisfying the following conditions:
\begin{itemize}
\item If $v$ is a finite place and $v \nmid \frak{n}$, then $$\sigma_{1,v}(k_1){W}_{1,v} = {W}_{1,v},\quad \sigma_{2,v}(k_2){W}_{2,v} = {W}_{2,v}$$
 for $(k_1,k_2) \in \GL_2(\frak{o}_v)\times \GL_2(\frak{o}_v)$.
\item If $v \mid \frak{n}_1$, then
$$\sigma_{1,v}(k_1){W}_{1,v} = {W}_{1,v},\quad \sigma_{2,v}(k_2){W}_{2,v} = {W}_{2,v}$$
for $(k_1,k_2) \in K_0(\varpi_v)\times \GL_2(\frak{o}_v)$.
\item If $v \mid \frak{n}_2$, then
$$\sigma_{1,v}(k_1){W}_{1,v} = {W}_{1,v},\quad \sigma_{2,v}(k_2){W}_{2,v} = {W}_{2,v}$$
for $(k_1,k_2) \in \GL_2(\o_v)\times K_0(\varpi_v)$.
\item If $v \in S({\rm DS})$, then
$$\sigma_{1,v}(k_{\theta}){W}_{1,v} = e^{\sqrt{-1}\kappa_{1,v} \theta} {W}_{1,v},\quad \sigma_{2,v}(k_{\theta}){W}_{2,v} = e^{\sqrt{-1}\kappa_{2,v} \theta} {W}_{2,v}$$
for $k_{\theta} \in {\rm SO}(2)$.
\item If $v \in S({\rm PS})$, then
$$\sigma_{1,v}(k){W}_{1,v} = {W}_{1,v},\quad \sigma_{2,v}(k){W}_{2,v} = {W}_{2,v}$$
for $k \in {\rm SO}(2)$.
\end{itemize}
The conditions above characterize $W_{1,v}$ and $W_{2,v}$ up to scalars. We normalize $W_{1,v}$ and $W_{2,v}$ as follows:
\begin{itemize}
\item If $v$ is a finite place, then
$$W_{1,v}({\bf a}(\varpi_v^{-\c_v}))=W_{2,v}({\bf a}(\varpi_v^{-\c_v}))=1.$$
\item If $v \in S({\rm DS})$, then
$$W_{1,v}(1)=W_{2,v}(1)=e^{-2\pi}.$$
\item If $v \in S({\rm PS})$, then
$$W_{1,v}(1)=K_{\mu_{1,v}}(2\pi),\quad W_{2,v}(1)=K_{\mu_{2,v}}(2\pi).$$
\end{itemize}
Let $W_v$ be the Whittaker function on $H^{\circ}(k_v)$ defined by 
\begin{align}\label{E:Whittaker newform}
W_v([h_1,h_2])=W_1(h_1)W_2(h_2)
\end{align}
for $h_1,h_2 \in \GL_2(k_v)$.

Fix isomorphisms $\sigma \simeq \bigotimes_v \sigma_{v}$ and $\sigma^{\sharp}\simeq \bigotimes_v \sigma_v^{\sharp}$ such that
\begin{align}\label{E:iso}
\begin{split}
{\bf f} &\longmapsto \bigotimes_vW_v,\\
{\bf f}^{\sharp} & \longmapsto \left(\bigotimes_{\scriptstyle{v \in S({\rm DS})\cup S(\frak{n})} \atop \scriptstyle{v\in\frak{S}} }(W_v,0)\right)\otimes \left(\bigotimes_{\scriptstyle{v \notin S({\rm DS})\cup S(\frak{n})}\atop\scriptstyle{ v \in \frak{S}} }(W_v,W_v)\right) \otimes \left(\bigotimes_{v \notin \frak{S}} W_v\right).
\end{split}
\end{align}
\subsubsection{Global Whittaker functions}
If $R$ is a set of places of $k$, denote by ${\bf t}_R \in {\pmb \mu}_2(\A)$ the element such that
\[
 {\bf t}_{R,v}=
 \begin{cases}  
{\bf t}_v   & \mbox{ if $v \in R$},\\
1 & \mbox{ if $v \notin R$}.
 \end{cases} 
\]
For an automorphic form $f$ on $H(\A)$ and a set $R$ of places of $k$, let $f_R$ be the automorphic form on $H^\circ(\A)$ defined by
\[f_R(h)=f(h{\bf t}_R)\]
for $h \in H^\circ(\A)$.

Let $f$ be a cusp form on $H^\circ(\A)$ and let $\varphi \in S(V^2(\A))$. Define Whittaker functions $W_f$ and $W_{\theta(f,\varphi)}$ by
\begin{align*}
W_{f}(h) &= \int_{(k\backslash \A)^2}f([{\bf n}(x),{\bf n}(y)]h)\overline{\psi(x+y)}\,dx\,dy,\\
 W_{\theta(f, \varphi)}(g) &= \int_{U(k) \backslash U(\A)} \theta(f, \varphi)(ug)
 \overline{\psi_U(u)} \, du
\end{align*}
for $g \in G(\A)$ and $h \in H^\circ(\A)$.
Here $dx,dy,du$ are the Tamagawa measures.

Let
\[
\Delta N = \{ [{\bf n}(x),{\bf n}(-x)] \in H_1^{\circ} \mbox{ }\vert\mbox{ }x \in \mathbb{G}_a\}.
\]
In the following lemma, we establish a formula for the Whittaker functions of theta lifts. The result is a variant of \cite[(5.18)]{HPS1983}, which considers Whittaker functions of theta lifts to $\Sp_{2n}(\A)$.

\begin{lemma}\label{L:Whittaker function} Let $\varphi \in S(V^2(\A))$ and let $f$ be a cusp form on $H(\A)$. For $(g,h) \in {\rm G}(\Sp_4 \times H_1)(\A)$ with $h \in H^{\circ}(\A)$, we have
\[
 W_{\theta(f, \varphi)}(g) = 2^{-1-|T|}\sum_{R\subseteq T}
 \int_{\Delta N(\A) \backslash H_1^{\circ}(\A)} W_{f_R}(h_1\cdot{\rm Ad}({\bf t}_R)h)
 \omega(g, h_1{\bf t}_R h) \varphi ({\bf x}_0,{\bf y}_0) \, dh_1,
\]
where $T$ is a sufficiently large finite set of places of $k$, and
\[
 {\bf x}_0 =
 \begin{pmatrix}
  0 & -1 \\
  0 & 0
 \end{pmatrix},\quad {\bf y}_0={\bf a}(-1).
\]
\end{lemma}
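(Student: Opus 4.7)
The plan is to carry out a standard unfolding of the Whittaker integral for the theta lift, in the spirit of seesaw arguments between $\GSp_4$ and $\mathrm{GO}(V)$ with $V=\mathrm{M}_{2,2}$. The starting point is to substitute the definition of $\theta(f,\varphi)$ into the integral defining $W_{\theta(f,\varphi)}(g)$ and interchange the order of integration, legitimate because $f$ is cuspidal and $\Theta$ is of moderate growth. This yields
\[
W_{\theta(f,\varphi)}(g)=\int_{H_1(k)\backslash H_1(\A)} f(h_1 h)\left(\int_{U(k)\backslash U(\A)}\Theta(ug,h_1 h;\varphi)\overline{\psi_U(u)}\,du\right)dh_1.
\]
Expanding $\Theta$ as a sum over $V^2(k)$ and using the explicit action of $\omega(u,1)$ on Schwartz functions (which multiplies $\varphi(x)$ by a character of $U$ depending quadratically on $x$), the inner integral over $U(k)\backslash U(\A)$ collapses the sum to those $x\in V^2(k)$ whose associated character matches $\psi_U$.

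Next, I would partition this subset of $V^2(k)$ into $H_1(k)$-orbits. A direct matrix calculation shows that $({\bf x}_0,{\bf y}_0)$ is a representative of the unique open orbit whose stabilizer in $H_1^{\circ}$ is $\Delta N$, and that the other orbits correspond to pairs that degenerate in rank. The contribution of each non-open orbit is an integral of $f$ against a constant term along a proper parabolic of $H^{\circ}$ and therefore vanishes by cuspidality; this is the analogue of the well-known cuspidal vanishing in Rankin–Selberg unfoldings. After collapsing the $H_1(k)$-sum over the open orbit with the quotient $H_1(k)\backslash H_1(\A)$, the iterated integral rewrites as an integral over $\Delta N(\A)\backslash H_1(\A)$ in which the integrand against $f$ produces precisely $W_f$ (after identifying the characters of $\Delta N$ with $\psi\otimes\psi$ via the embedding into $H_1^\circ\simeq(\SL_2\times\SL_2)/\{\pm 1\}$).

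Finally, one has to pass from $H_1$ to $H_1^{\circ}$ and insert the involution ${\bf t}$. Since $H_1(\A)=H_1^{\circ}(\A)\sqcup {\bf t}H_1^{\circ}(\A)$ but the global element ${\bf t}$ can be replaced by ${\bf t}_R$ for any finite $R$ without changing the coset modulo $H_1^{\circ}(k)$, one writes the integral over $H_1(\A)$ as an average over ${\pmb\mu}_2(\A)$ truncated to a sufficiently large finite set $T$ of places (containing all places where $\varphi_v$, $f_v$ or $h_v$ are non-spherical), introduces the twisted forms $f_R(h)=f(h\,{\bf t}_R)$, and uses $\vol({\pmb\mu}_2(k)\backslash{\pmb\mu}_2(\A))=1$ together with $|{\pmb\mu}_2(k)|=2$ to obtain the constant $2^{-1-|T|}$ in front of $\sum_{R\subseteq T}$.

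The main obstacle, and the only genuinely delicate point, is the bookkeeping in this last step: one needs to verify that the sum-over-$R$ expression is independent of the choice of $T$ (as long as it is sufficiently large) and that the averaging argument reproduces precisely $2^{-1-|T|}$ rather than an ambiguous power of $2$. A secondary technical check is confirming that $\mathrm{Stab}_{H_1^\circ}({\bf x}_0,{\bf y}_0)=\Delta N$ and that the associated character of $\Delta N$ matches $\psi\otimes\psi$ under the isomorphism $H_1^\circ\simeq(\SL_2\times\SL_2)/\{\pm 1\}$; both are quick but essential matrix computations using the definitions of $V$, $Q$, and $\psi_U$.
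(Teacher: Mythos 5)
Your proposal is correct and follows essentially the same route as the paper: substitute the definition of the theta lift, unfold the $U(k)\backslash U(\A)$-integral using the explicit action of $\omega(u,1)$, identify the non-degenerate piece with the single $H_1^\circ(k)$-orbit through $({\bf x}_0,{\bf y}_0)$ with stabilizer $\Delta N$, and finally decompose the integral over $H_1(k)\backslash H_1(\A)$ into $H_1^\circ(k)\backslash H_1^\circ(\A)$ and an average over ${\pmb\mu}_2(\A)$, truncated to $R\subseteq T$ for $T$ sufficiently large.

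One small correction to keep in mind when you write this out in full: once the $u_1,u_2,u_3$-integrals impose $\det x=0$, $(x,y)=0$, $\det y=-1$, the only ``degenerate'' piece is $X_0=\{(0,y):\det y=-1\}$, and for $x=0$ the summand $\varphi(x,xu_0+y)$ no longer depends on $u_0$; the vanishing of that term is therefore driven by the residual oscillatory integral $\int_{k\backslash\A}\psi(u_0)\,du_0=0$ rather than by a constant-term/cuspidality argument over a parabolic of $H^\circ$. (Cuspidality of $f$ is of course used, but primarily to justify absolute convergence and the interchanges of sum and integral.) Also, with the paper's normalization $\vol({\pmb\mu}_2(k_v))=1$ at each place, one has $\vol({\pmb\mu}_2(\A))=1$ and hence $\vol({\pmb\mu}_2(k)\backslash{\pmb\mu}_2(\A))=1/2$, not $1$; combined with $|{\pmb\mu}_2(k)|=2$ this still produces the factor $2^{-1}$, and discretizing ${\pmb\mu}_2(\A)$ to the $2^{|T|}$ subsets $R\subseteq T$ gives the remaining $2^{-|T|}$.
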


\begin{proof}
Note that
\[
 \theta(f, \varphi)(g) = 2^{-1} 
 \int_{{\pmb \mu}_2(\A)}\int_{H_1^{\circ}(k) \backslash H_1^{\circ}(\A)} f(h_1\epsilon h) \Theta(g, h_1\epsilon h; \varphi) \, dh_1\,d\epsilon.
\]
Since 
\[
 \omega(u, 1) \varphi(x, y) = \varphi(x, xu_0 + y)
 \psi(\det(x) u_1 + (x, y) u_2 + \det(y) u_3)
\]
for 
\[
u = \begin{pmatrix}
 1 & 0 & u_1 & u_2 \\
 0 & 1 & u_2 & u_3 \\
 0 & 0 & 1 & 0 \\
 0 & 0 & 0 & 1
\end{pmatrix}
\begin{pmatrix}
 1 & u_0 & 0 & 0 \\
 0 & 1 & 0 & 0 \\
 0 & 0 & 1 & 0 \\
 0 & 0 & -u_0 & 1
\end{pmatrix},
\]
we have
\begin{align*}
W_{\theta(f, \varphi)}(g) &= 2^{-1} \int_{{\pmb \mu}_2(\A)}\int_{H_1^{\circ}(k) \backslash H_1^{\circ}(\A)} \int_{(k \backslash \A)^4}
 f(h_1\epsilon h ) \sum_{x, y \in V(k)} \omega(g, h_1\epsilon h) \varphi(x, x u_0 + y) \\
 & \quad\quad\quad\quad\quad\quad\quad\times \psi(\det(x) u_1 + (x, y) u_2 + (\det(y) + 1) u_3 + u_0) \, du \, dh_1\,d\epsilon.
\end{align*}
Hence, putting
\begin{align*}
 X_0 & = \left\{ (0, y) \in V^2 \, | \, \det(y) = -1 \right\}, \\
 X_1 & = \left\{ (x, y) \in V^2 \, | \, \det(x) = 0, x \ne 0, 
 (x, y) = 0, \det(y) = -1 \right\},
\end{align*}
and
\[
 I_i = \int_{{\pmb \mu}_2(\A)}\int_{H_1^{\circ}(k) \backslash H_1^{\circ}(\A)} \int_{k \backslash \A}
 f(h_1\epsilon h) \sum_{(x, y) \in X_i(k)} \omega(g, h_1\epsilon h) \varphi(x, x u_0 + y) \psi(u_0)
 \, du_0 \, dh_1\,d\epsilon,
\]
we have
\[
 W_{\theta(f, \varphi)}(g) = 2^{-1}(I_0+I_1).
\]
In fact, we have
\[
 I_0 = \int_{{\pmb \mu}_2(\A)}\int_{H_1^{\circ}(k) \backslash H_1^{\circ}(\A)} \int_{k \backslash \A}
 f(h_1\epsilon h) \sum_{y \in \SL_2(k)} \omega(g, h_1\epsilon h) \varphi(0, y{\bf a}(-1)) \psi(u_0)
 \, du_0 \, dh_1\,d\epsilon = 0
\]
since
\[
\int_{k \backslash \A}\psi(u_0)\,du_0=0.
\]
It remains to compute $I_1$.

We claim that the map $h_1 \mapsto h_1 ({\bf x}_0, {\bf y}_0)$ induces an isomorphism
\[
 \Delta N \backslash H_1^{\circ} \longrightarrow X_1.
\]
Obviously, $H_1^{\circ}$ acts on $X_1$.
We first show that this action is transitive.
Let $(x, y) \in X_1$ and write \[y = \bp y_1 & y_2 \\ y_3 & y_4\ep.\]
Since the rank of $x$ is $1$, there exists $h_1 \in H_1^{\circ}$ such that 
$h_1x = {\bf x}_0$.
Hence we may assume that $x = {\bf x}_0$.
Then $(x, y) = y_3 = 0$.
Thus we have
\[
 y =
 \begin{pmatrix}
  y_1 & y_2 \\
  0 & -y_1^{-1}
 \end{pmatrix}.
\]
Put 
\[
 h_1 = \left[
 \begin{pmatrix}
  1 &  y_1 y_2 \\
  0 & -y_1
 \end{pmatrix},
 \begin{pmatrix}
  -y_1 & 0 \\
  0 & 1
 \end{pmatrix}
 \right].
\]
Then we have $h_1({\bf x}_0, y) = ({\bf x}_0,{\bf y}_0)$.
We next show that the stabilizer of this action is 
$ \Delta N$ in $H_1^{\circ}$.
Obviously, $\Delta N$ fixes $({\bf x}_0,{\bf y}_0)$.
Assume that $h_1({\bf x}_0,{\bf y}_0) = ({\bf x}_0,{\bf y}_0)$ with $h_1 = [g_1, g_2] \in H_1^{\circ}$.
Then $(g_1 {\bf x}_0 g_2^{-1}, g_1 {\bf y}_0g_2^{-1}) = ({\bf x}_0,{\bf y}_0)$.
Write
\[
 g_1 =
 \begin{pmatrix}
  a & b \\
  c & d
 \end{pmatrix}.
\]
It follows from $g_1 \mathbf{y}_0 = \mathbf{y}_0 g_2$ that \[g_2 = \begin{pmatrix} a & -b \\ -c & d \end{pmatrix},\] which combined with $g_1 \mathbf{x}_0 = \mathbf{x}_0 g_2$ implies that $a=d$, $c=0$.
Hence $h_1 \in \Delta N$.
This completes the proof of the claim.

If follows from the claim that
\[
 I_1 = \int_{{\pmb \mu}_2(\A)}\int_{\Delta N(\A) \backslash H_1^{\circ}(\A)}
 \int_{\Delta N(k) \backslash \Delta N(\A)} \int_{k \backslash \A}
 f(n h_1\epsilon h) \omega(g, h_1\epsilon h) \varphi({\bf x}_0, {\bf x}_0 u_0 + {\bf y}_0) \psi(u_0)
 \, du_0 \, dn \, dh_1\,d\epsilon. 
\]
Since 
\[
 {\bf x}_0 u_0 + {\bf y}_0 = 
 {\bf n}(-u_0){\bf y}_0,
\]
we have
\begin{align*}
 & \int_{\Delta N(k) \backslash \Delta N(\A)} \int_{k \backslash \A}
 f(n h_1\epsilon h) \omega(g, h_1\epsilon h) \varphi({\bf x}_0, {\bf x}_0 u_0 + {\bf y}_0) \psi(u_0)
 \, du_0 \, dn \\
 & = \int_{(k \backslash \A)^2}
 f([{\bf n}(x), {\bf n}(-x)] h_1\epsilon h) \omega(g, h_1\epsilon h) \varphi({\bf x}_0, {\bf n}(-u_0){\bf y}_0) \psi(u_0) \, du_0 \, dx.
\end{align*}
Note that
\[
 \omega(g, h_1\epsilon h) \varphi({\bf x}_0, {\bf n}(-u_0){\bf y}_0)
 = \omega(g, [{\bf n}(u_0), 1] h_1\epsilon h) \varphi({\bf x}_0,{\bf y}_0).
\]
Hence $I_1$ is equal to 
\begin{align*}
 & \int_{{\pmb \mu}_2(\A)}\int_{\Delta N(\A) \backslash H_1^{\circ}(\A)} \int_{(k \backslash \A)^2}
 f([{\bf n}(x-u_0), {\bf n}(-x)] h_1\epsilon h) \omega(g, h_1\epsilon h) \varphi({\bf x}_0,{\bf y}_0) \psi(u_0)
 \, du_0 \, dx  \, dh_1\,d\epsilon \\
 &= \int_{{\pmb \mu}_2(\A)}\int_{\Delta N(\A) \backslash H_1^{\circ}(\A)} \int_{(k \backslash \A)^2}
 f([{\bf n}(-u_0), {\bf n}(-x)] h_1\epsilon h) \omega(g, h_1\epsilon h) \varphi({\bf x}_0,{\bf y}_0) \psi(u_0 + x)
 \, du_0 \, dx  \, dh_1\,d\epsilon \\
 &= \int_{{\pmb \mu}_2(\A)}\int_{\Delta N(\A) \backslash H_1^{\circ}(\A)} \int_{(k\backslash \A)^2}f([{\bf n}(x),{\bf n}(y)]h_1\epsilon h)\overline{\psi(x+y)} \omega(g, h_1\epsilon h) \varphi({\bf x}_0,{\bf y}_0) 
 \,dx\,dy\, dh_1\, d\epsilon.
\end{align*}
This completes the proof.
\end{proof}

\begin{lemma}\label{L:global Whittaker}
Let $\varphi=\bigotimes_v\varphi_v \in S(V^2(\A))$ be the Schwartz function defined in (\ref{E:non-archi Schwartz fun})-(\ref{E:archi Schwartz fun PS}). Let $g \in \Sp_4(\A)$. We have
\[
W_{\theta({\bf f}^{\sharp},\varphi)}(g) = 2^{-1-|(S({\rm DS})\cup S(\frak{n}))\cap \frak{S}|}\cdot \frak{D}^{-5/2}\cdot \xi(2)^{-2}\cdot\left[\prod_v \mathcal{W}_v^+(g_v) + \prod_v \mathcal{W}_v^-(g_v) \right],
\]
where  
\begin{align}\label{E:local Whi.}
\begin{split}
\mathcal{W}_v^+(g_v) &= \int_{\Delta N(k_v)\backslash H_1^{\circ}(k_v)}W_v(h_{1,v})\omega_v(g_v,h_{1,v})\varphi_v({\bf x}_0,{\bf y}_0)\,d\bar{h}_{1,v},\\
 \mathcal{W}_v^-(g_v) &= \int_{\Delta N(k_v)\backslash H_1^{\circ}(k_v)}W_v({\rm Ad}({\bf t}_v)h_{1,v})\omega_v(g_v,h_{1,v}{\bf t}_{v})\varphi_v({\bf x}_0,{\bf y}_0)\,d\bar{h}_{1,v}.
\end{split}
\end{align}
Here $d\bar{h}_{1,v}$ is the quotient measure defined by the measures on $\Delta N(k_v)$ and $H_1^\circ(k_v)$ in \S\,\ref{SS:measures} and \S\,\ref{SS:measure}, respectively.
\end{lemma}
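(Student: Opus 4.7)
The plan is to apply Lemma \ref{L:Whittaker function} to $f = {\bf f}^{\sharp}$ with the choice $h = 1 \in H_1^{\circ}(\A)$, which is permissible since $g \in \Sp_4(\A)$ forces $\nu(h) = \nu(g) = 1$. This yields
\[
W_{\theta({\bf f}^{\sharp},\varphi)}(g) = 2^{-1-|T|} \sum_{R \subseteq T} \int_{\Delta N(\A) \backslash H_1^{\circ}(\A)} W_{f_R}(h_1)\,[\omega(g, h_1 {\bf t}_R) \varphi]({\bf x}_0, {\bf y}_0)\,dh_1
\]
for $T$ sufficiently large. The remaining task is to simplify the sum over $R$ in terms of the local data provided by (\ref{E:iso}).

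I first show that $W_{f_R}$ vanishes unless $R \cap A = \emptyset$, where $A = (S(\mathrm{DS}) \cup S(\frak{n})) \cap \frak{S}$. At each $v \in A$, the local vector ${\bf f}^{\sharp}_v = (W_v, 0)$ sits in the induced model $\sigma_v^{\sharp} = \mathrm{Ind}_{H^{\circ}(k_v)}^{H(k_v)}\sigma_v$; the action of ${\bf t}_v$ swaps the components to $(0, W_v)$, whose restriction to $H^{\circ}(k_v)$ is identically zero. Conversely, at $v \notin A$ the local vector is ${\bf t}_v$-invariant: immediately from the symmetric form $(W_v, W_v)$ when $v \in \frak{S} \setminus A$, and from the equality $W_{1,v} = W_{2,v}$ (forced by $\sigma_{1,v} \simeq \sigma_{2,v}$ together with matching normalizations) when $v \notin \frak{S}$. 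Hence $f_R|_{H^{\circ}} = {\bf f}$ and $W_{f_R} = W$ for every $R \subseteq T \setminus A$.

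Next, a case-by-case inspection of (\ref{E:non-archi Schwartz fun})--(\ref{E:archi Schwartz fun PS}) shows $L({\bf t}_v) \varphi_v = \varphi_v$ at every $v \notin A$: the characteristic function $\mathbb{I}_{\M_2(\o_v)^2}$ and the archimedean Gaussian are manifestly $\iota$-invariant, and at $v \in S(\mathrm{DS}) \setminus A$ one has $\lambda_{2,v} = 0$ and the polynomial transforms by $(-1)^{\lambda_{1,v}} = 1$, since the trivial central character forces $\lambda_{1,v}$ to be even. Consequently the integrand at each $v \notin A$ is independent of whether $v \in R$, so the sum over $R \subseteq T \setminus A$ yields a multiplicity $2^{|T \setminus A|}$ that combines with the prefactor $2^{-1-|T|}$ to produce $2^{-1-|A|}$. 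Factorizing the remaining adelic integral into local integrals via (\ref{E:ratio of measures}) together with the Tamagawa measure on $\Delta N(\A) \simeq \mathbb{G}_a(\A)$ contributes the constant $\frak{D}^{-5/2}\zeta(2)^{-2}$.

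Finally, the two-term structure $\prod_v \mathcal{W}_v^+ + \prod_v \mathcal{W}_v^-$ reflects the decomposition (\ref{E:restriction}) of $\sigma^{\sharp}|_{H^{\circ}(\A)}$ into $\mathcal{V}_{\sigma} \oplus \mathcal{V}_{\sigma \circ \mathrm{Ad}({\bf t})}$: the adelic integration over $H_1(\A)$ splits along the two cosets $H_1^{\circ}(\A)$ and $H_1^{\circ}(\A) \cdot {\bf t}$ modulo $\mu_2(k)$, and each coset yields one of the products $\prod_v \mathcal{W}_v^{\pm}$ after local factorization. The main obstacle is the delicate measure-theoretic bookkeeping needed to correctly pair the two global cosets with the explicit local definitions of $\mathcal{W}_v^{+}$ and $\mathcal{W}_v^{-}$, and to verify the absence of spurious cross-terms between them.
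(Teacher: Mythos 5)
Your application of Lemma \ref{L:Whittaker function} with $h = 1$ is fine, and your treatment of the measure constant $\frak{D}^{-5/2}\zeta(2)^{-2}$ and of the equivariance $\omega_v(1,{\bf t}_v)\varphi_v = \varphi_v$ at $v \notin A$ (including the parity check $(-1)^{\lambda_{1,v}}=1$ in Case~(DS)) are correct. But the central vanishing claim is wrong, and this is where the argument breaks.

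You assert that $W_{{\bf f}^\sharp_R}$ vanishes unless $R \cap A = \varnothing$, arguing that for $v \in R\cap A$ the translated local vector $(0,W_v)$ "restricts to zero on $H^\circ(k_v)$." That local observation is true in the function model of $\Ind_{H^\circ(k_v)}^{H(k_v)}\sigma_v$, but it only controls the projection of the restricted global form onto $\mathcal{V}_\sigma$. By (\ref{E:restriction}) one has $\mathcal{V}_{\sigma^\sharp}|_{H^\circ(\A)} = \mathcal{V}_\sigma \oplus \mathcal{V}_{\sigma\circ\mathrm{Ad}({\bf t})}$, and via (\ref{E:iso}) the $\mathcal{V}_\sigma$-component of ${\bf f}^\sharp_R|_{H^\circ(\A)}$ is the tensor of first coordinates while the $\mathcal{V}_{\sigma\circ\mathrm{Ad}({\bf t})}$-component is the tensor of second coordinates. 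The first vanishes iff $R\cap A \neq \varnothing$; the second vanishes iff $A \not\subseteq R$. Hence the correct trichotomy for $W_{{\bf f}^\sharp_R}$, matching (\ref{E:restriction of Whi.}), is: $\prod_v W_v$ when $A\cap R = \varnothing$; $\prod_v W_v\circ\mathrm{Ad}({\bf t}_v)$ when $A\subseteq R$; and $0$ only in the remaining mixed cases. Both nonvanishing classes have cardinality $2^{|T\setminus A|}$, which is what produces both $\prod_v \mathcal{W}_v^+$ and $\prod_v \mathcal{W}_v^-$ with the common prefactor $2^{-1-|A|}$. Your closing appeal to a "split of the adelic integration over $H_1(\A)$ along two cosets" cannot rescue the second term: after Lemma \ref{L:Whittaker function} the $h_1$-integral already ranges over $\Delta N(\A)\backslash H_1^\circ(\A)$, and the coset structure of $H_1(\A)/H_1^\circ(\A)$ has already been absorbed into the sum over $R\subseteq T$. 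As written your argument yields only $\prod_v \mathcal{W}_v^+$, not the required sum.
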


\begin{proof}
Put $S' = (S({\rm DS})\cup S(\frak{n}))\cap \frak{S}$. Let $R$ be a set of places of $k$. By (\ref{E:restriction}) and the normalization of isomorphisms in (\ref{E:iso}), 
\[
{\bf f}_R^\sharp = \begin{cases}
{\bf f} & \mbox{ if $S' \cap R= \varnothing$},\\
{\bf f}\circ{\rm Ad}({\bf t}) & \mbox{ if $S' \subseteq R$},\\
0 & \mbox{ otherwise}.
\end{cases}
\]
It follows that
\begin{align}\label{E:restriction of Whi.}
W_{{\bf f}_R^\sharp} = 
\begin{cases}
\displaystyle{\prod_v W_v} & \mbox{ if $S' \cap R =   \varnothing$},\\
\displaystyle{\prod_v W_v\circ{\rm Ad}({\bf t}_v)} & \mbox{ if $S' \subseteq R$},\\
0 & \mbox{ otherwise}.
\end{cases}
\end{align}
Note that $\omega_v(1,{\bf t}_v)\varphi_v = \varphi_v$ for $v \notin S'$. Hence we conclude from (\ref{E:restriction of Whi.}) that for $g \in \Sp_4(\A)$, we have
\[
\int_{\Delta N(\A) \backslash H_1^\circ(\A)}W_{{\bf f}_R^\sharp}(h_1)\omega(g,h_1{\bf t}_R)\varphi({\bf x}_0,{\bf y}_0)\,dh_1 = \frak{D}^{-5/2}\xi(2)^{-2}\cdot\begin{cases}
\displaystyle{\prod_v \mathcal{W}_v^+(g_v)}  & \mbox{ if $S'\cap R = \varnothing$},\\
\displaystyle{\prod_v \mathcal{W}_v^-(g_v)} & \mbox{ if $S' \subseteq R$},\\
0 & \mbox{ otherwise}.
\end{cases} 
\]
Here the extra factor $\frak{D}^{-5/2}\xi(2)^{-2}$ is due to the ratio of the Tamagawa measures on $\Delta N(\A)$ and $H_1^\circ(\A)$ to the corresponding standard measures defined in \S\,\ref{SS:measures} and \S\,\ref{SS:measure}, respectively.
The assertion then follows from Lemma \ref{L:Whittaker function}. This completes the proof.
\end{proof}

\subsection{Calculation of local Whittaker functions}

\subsubsection{Non-archimedean cases}
Let $v$ be a finite place of $k$ and $\varphi_v \in S(V^2(k_v))$ the Schwartz function defined in (\ref{E:non-archi Schwartz fun}). 
If $v \nmid \frak{n}$, then 
\begin{align}\label{E:equivariant property of test vector case v=p not divide N_1N_2}
\omega_v(k',[k_1,k_2])\varphi_v=\varphi_v
\end{align}
for $k' \in {\rm diag}(1,1,\varpi_v^{\c_v},\varpi_v^{\c_v})G(\o_v){\rm diag}(1,1,\varpi_v^{-\c_v},\varpi_v^{-\c_v})$ and $(k_1,k_2) \in \GL_2(\frak{o}_v)\times \GL_2(\frak{o}_v)$ such that $\nu(k')=\det(k_1k_2^{-1})$.
If $v \mid \frak{n}_1$, then
\begin{align}\label{E:equivariant property of test vector case v=p divide N_1}
\omega_v(k',[k_1,k_2])\varphi_v = \varphi_v
\end{align}
for $k' \in {\rm diag}(1,1,\varpi_v^{\c_v},\varpi_v^{\c_v}){\rm K}(\varpi_v){\rm diag}(1,1,\varpi_v^{-\c_v},\varpi_v^{-\c_v})$ and $(k_1,k_2) \in K_0(\varpi_v)\times \GL_2(\frak{o}_v)$ such that $\nu(k')=\det(k_1k_2^{-1})$. 
If $v\mid \frak{n}_2$, then 
\begin{align}\label{E:equivariant property of test vector case v=p divide N_2}
\omega_v(k',[k_1,k_2])\varphi_v = \varphi_v
\end{align}
for $k' \in {\rm diag}(1,1,\varpi_v^{\c_v},\varpi_v^{\c_v}){\rm K}(\varpi_v){\rm diag}(1,1,\varpi_v^{-\c_v},\varpi_v^{-\c_v})$ and $(k_1,k_2) \in \GL_2(\o_v)\times K_0(\varpi_v)$ such that $\nu(k')=\det(k_1k_2^{-1})$.

Let $\mathcal{W}^{\pm}_v$ be the local Whittaker functions defined in (\ref{E:local Whi.}).
\begin{lemma}\label{L:local Whittaker finite 1}
Assume $v$ is a finite place of $k$ and $ v \nmid \frak{n}$. We have
\[\mathcal{W}_v^+(\diag( \varpi_v^{-\frak{c}_v}, 1, \varpi_v^{\frak{c}_v}, 1 ))=\mathcal{W}_v^-(\diag( \varpi_v^{-\frak{c}_v}, 1, \varpi_v^{\frak{c}_v}, 1 ))=q_v^{\c_v}.\]
\end{lemma}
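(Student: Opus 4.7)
The plan is to evaluate $\mathcal{W}_v^\pm$ by unfolding the Weil representation, exploiting the fact that every datum is spherical at a place $v\nmid\frak{n}$. First I would observe that $g_v=\diag(\varpi_v^{-\c_v},1,\varpi_v^{\c_v},1)$ lies in the Siegel Levi of $\Sp_4(k_v)$ with upper-left block $A=\diag(\varpi_v^{-\c_v},1)$, so that the standard formula for the Weil representation on the Siegel parabolic yields
\[
\omega_v(g_v,[h_1,h_2])\varphi_v({\bf x}_0,{\bf y}_0)=q_v^{2\c_v}\,\varphi_v\bigl(\varpi_v^{-\c_v}h_1^{-1}{\bf x}_0h_2,\ h_1^{-1}{\bf y}_0h_2\bigr),
\]
which is $q_v^{2\c_v}$ times the indicator of the integrality conditions $\varpi_v^{-\c_v}h_1^{-1}{\bf x}_0h_2\in M_{2,2}(\o_v)$ and $h_1^{-1}{\bf y}_0h_2\in M_{2,2}(\o_v)$.

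Next, using the Iwasawa decomposition $\GL_2(k_v)=N(k_v)T(k_v)\GL_2(\o_v)$ on each of $h_1,h_2$ together with the measure formula (\ref{E:non-archi. integration2}), I would trivialize the $\GL_2(\o_v)\times\GL_2(\o_v)$-integration by invoking the right $\GL_2(\o_v)\times\GL_2(\o_v)$-invariance of $W_v=W_{1,v}W_{2,v}$ from (\ref{E:Whittaker newform}) and the spherical equivariance of $\varphi_v$ from (\ref{E:equivariant property of test vector case v=p not divide N_1N_2}). The quotient by $\Delta N(k_v)$ absorbs one of the two unipotent variables, while the other becomes a unipotent integral against $\psi_v$; together with the explicit spherical values of $W_{1,v}$ and $W_{2,v}$ on the torus and the support conditions imposed by $\varphi_v$, this collapses the integral to a finite sum over pairs of diagonal exponents.

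The computation for $\mathcal{W}_v^-$ goes along the same lines: since $\varphi_v$ is the characteristic function of a lattice stable under the main involution $\iota$, one has $\omega_v(1,{\bf t}_v)\varphi_v=\varphi_v$, and the change of variable $h_{1,v}\mapsto\Ad({\bf t}_v)h_{1,v}$ — combined with the identity $(h^\iota)^{-1}=\det(h)^{-1}h$ and the triviality of the central characters of $\sigma_{1,v}$ and $\sigma_{2,v}$ — transforms $\mathcal{W}_v^-$ into the same shape as $\mathcal{W}_v^+$ with $W_{1,v}$ and $W_{2,v}$ interchanged. Since the final answer turns out to be symmetric in these two spherical Whittaker data (indeed, it is independent of $\sigma_v$), this gives $\mathcal{W}_v^-=\mathcal{W}_v^+$.

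The main technical obstacle will be the bookkeeping of the several measure normalizations — the prefactor $q_v^{-2\c_v}$ in (\ref{E:non-archi. integration2}), the summation over the two Iwasawa cells of $H_1^\circ(k_v)$ appearing there, the induced quotient measure on $\Delta N(k_v)\backslash H_1^\circ(k_v)$, and the $q_v^{2\c_v}$ from the Weil representation — together with verifying that the geometric-series evaluation on the torus cancels all $L$-factor-like contributions from $W_{1,v}$ and $W_{2,v}$ against the support conditions coming from $\varphi_v$, leaving precisely $q_v^{\c_v}$.
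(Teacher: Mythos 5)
Your proposal follows the paper's route: the two-cell measure formula (\ref{E:non-archi. integration2}), the Siegel-Levi action $\omega_v(g_v,h_1)\varphi_v({\bf x}_0,{\bf y}_0)=q_v^{2\c_v}\varphi_v(h_1^{-1}\cdot(\varpi_v^{-\c_v}{\bf x}_0,{\bf y}_0))$, and the resulting support conditions, with $\mathcal{W}_v^-$ handled via $\Ad({\bf t}_v)$ and the $\iota$-invariance and evenness of $\varphi_v$. One caution about the anticipated ``main technical obstacle'': there is no geometric series to sum and no $L$-factor-like contribution to cancel. The support of $\omega_v(g_v,\cdot)\varphi_v({\bf x}_0,{\bf y}_0)$, together with the vanishing of $W_{i,v}({\bf a}(y))$ outside $\varpi_v^{-\c_v}\o_v$, forces $y_1,y_2\in\o_v^\times$ and $x\in\varpi_v^{-\c_v}\o_v$, the entire second cell of (\ref{E:non-archi. integration2}) contributes zero, and the surviving Whittaker value is exactly $1$ by the normalization $W_{i,v}({\bf a}(\varpi_v^{-\c_v}))=1$; the integral therefore localizes, giving $q_v^{-2\c_v}\cdot q_v^{2\c_v}\cdot\vol(\varpi_v^{-\c_v}\o_v)=q_v^{\c_v}$ directly, and this localization (rather than any symmetry or cancellation in a series) is why the answer is independent of $\sigma_v$.
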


\begin{proof}
We drop the subscript $v$ for brevity. Let 
\[g = \diag( \varpi^{-\frak{c}}, 1, \varpi^{\frak{c}}, 1 ) \in \Sp_4(k_v),\quad h = [{\bf d}(\varpi^{\c}),{\bf d}(\varpi^{\c})] \in H_1^\circ(k_v).\]
By (\ref{E:equivariant property of test vector case v=p not divide N_1N_2}) and our normalization of the Haar measure on $H_1^{\circ}(k_v)$ in (\ref{E:non-archi. integration2}), we have
\[
\mathcal{W}^+(g) = q^{-2\c}\left(Z^{(1)}+ q\cdot Z^{(2)}\right),
\]
where
\begin{align*}
Z^{(1)} &= \int_{k_v}\int_{k_v^\times}\int_{k_v^\times}W([1,{\bf n}(x)][{\bf m}(y_1),{\bf m}(y_2)]h)\\
&\times\omega(g,[1,{\bf n}(x)][{\bf m}(y_1),{\bf m}(y_2)]h)\varphi({\bf x}_0,{\bf y}_0)|y_1|^{-2}|y_2|^{-2}\,d^{\times}y_1\,d^\times y_2\,dx,\\
Z^{(2)} &= \int_{k_v}\int_{k_v^\times}\int_{k_v^\times}W([{\bf a}(\varpi),{\bf a}(\varpi)][1,{\bf n}(x)][{\bf m}(y_1),{\bf m}(y_2)]h)\\
&\times\omega(g,[{\bf a}(\varpi),{\bf a}(\varpi)][1,{\bf n}(x)][{\bf m}(y_1),{\bf m}(y_2)]h)\varphi({\bf x}_0,{\bf y}_0)|y_1|^{-2}|y_2|^{-2}\,d^{\times}y_1\,d^\times y_2\,dx.\\
\end{align*} 
For $h_1 \in H_1(k_v)$, we have
\[
\omega(g,h_1)\varphi({\bf x}_0,{\bf y}_0) = q^{2\frak{c}}\cdot\varphi(h_1^{-1}\cdot (\varpi^{-\frak{c}} {\bf x}_0,{\bf y}_0)).
\]
Let $h_1 = [1,{\bf n}(x)][{\bf m}(y_1),{\bf m}(y_2)]h$ with $x \in k_v$, $y_1,y_2 \in k_v^{\times}$. Then
\begin{align*}
\omega(g,h_1)\varphi({\bf x}_0,{\bf y}_0)&=q^{2\frak{c}}\cdot\varphi\left(\bp 0 & -y_1^{-1}y_2^{-1}\\ 0 & 0 \ep, \bp -y_1^{-1}y_2 & -y_1^{-1}y_2^{-1}x\varpi^{\c} \\ 0 & y_1y_2^{-1}\ep \right),\\
\omega(g,[{\bf a}(\varpi),{\bf a}(\varpi)]h_1)\varphi({\bf x}_0,{\bf y}_0)&=q^{2\frak{c}}\cdot\varphi\left(\bp 0 & -y_1^{-1}y_2^{-1}\varpi^{-1}\\ 0 & 0 \ep, \bp -y_1^{-1}y_2 & -y_1^{-1}y_2^{-1}x\varpi^{\c} \\ 0 & y_1y_2^{-1}\ep \right).
\end{align*}
Therefore, 
\begin{align*}
\omega(g,h_1)\varphi({\bf x}_0,{\bf y}_0) \neq 0 & \mbox{ if and only if }y_1y_2^{-1} \in \frak{o}^{\times}, y_1^{-1} \in \frak{o} ,\mbox{ and }x \in y_1y_2\varpi^{-\frak{c}}\frak{o},\\
\omega(g,[{\bf a}(\varpi),{\bf a}(\varpi)]h_1)\varphi({\bf x}_0,{\bf y}_0) \neq 0 &\mbox{ if and only if }y_1y_2^{-1} \in \o^{\times},
 y_1^{-1} \in \varpi\o,\mbox{ and }x \in y_1y_2\varpi^{-\c}\o.
\end{align*}
On the other hand, the functions $W_{1}({\bf a}(y))$ and $W_{2}({\bf a}(y))$ are both supported in $\varpi^{-\c}\o$. We conclude that 
\begin{align*}
&W(h_1)\omega(g,h_1)\varphi({\bf x}_0,{\bf y}_0) \neq 0 \mbox{ if and only if }y_1,y_2\in\o^{\times}\mbox{ and }x\in \varpi^{-\c}\o,\\
&W([{\bf a}(\varpi),{\bf a}(\varpi)]h_1)\omega(g,[{\bf a}(\varpi),{\bf a}(\varpi)]h_1)\varphi({\bf x}_0,{\bf y}_0)=0.
\end{align*}
Hence 
\begin{align*}
Z^{(1)} &= q^{2\c}\cdot W([{\bf a}(\varpi^{-\c}),{\bf a}(\varpi^{-\c})])\cdot{\rm vol}(\varpi^{-\c}\o){\rm vol}(\o^\times)^2 = q^{3\c},\\ Z^{(2)} &=0.
\end{align*}
A similar calculation shows that $\mathcal{W}^-(g)=\mathcal{W}^+(g).$ This completes the proof.
\end{proof}

\begin{lemma}\label{L:local Whittaker finite 2}
Assume $v \mid \frak{n}$. We have 
\[\mathcal{W}_v^+(\diag( \varpi_v^{-\frak{c}_v}, 1, \varpi_v^{\frak{c}_v}, 1 ))=\mathcal{W}_v^-(\diag( \varpi_v^{-\frak{c}_v}, 1, \varpi_v^{\frak{c}_v}, 1 ))=q_v^{\c_v}(1+q_v)^{-1}.\]
\end{lemma}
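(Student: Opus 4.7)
The plan is to adapt the argument in the proof of Lemma \ref{L:local Whittaker finite 1} to the ramified setting. The cases $v \mid \frak{n}_1$ and $v \mid \frak{n}_2$ are symmetric, so I treat only the former; write $q = q_v$ and $\c = \c_v$. Combining the integration formula (\ref{E:non-archi. integration2}) with the reduction $[{\bf n}(x_1), {\bf n}(x_2)] \equiv [1, {\bf n}(x_1 + x_2)] \pmod{\Delta N(k_v)}$ as in Lemma \ref{L:local Whittaker finite 1}, I obtain $\mathcal{W}_v^+(g) = q^{-2\c}(\tilde{Z}^{(1)} + q\tilde{Z}^{(2)})$ with each $\tilde{Z}^{(i)}$ an integral over $k_v \times (k_v^\times)^2 \times K_v$. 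The new difficulty is that the integrand is right-invariant only under the smaller subgroup $\tilde{K}_v := [K_0(\varpi_v), \GL_2(\o_v)] \cap H_1^\circ(k_v) \subset K_v$, rather than under all of $K_v$ as in the unramified case.

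The key step is to decompose each $K_v$-integral as a sum over the right cosets in $K_v/\tilde{K}_v$. Via the identification $K_v/\tilde{K}_v \cong \SL_2(\o_v)/\Gamma_0(\varpi_v) \cong \P^1(\mathbb{F}_{q_v})$, there are exactly $q+1$ right cosets, which produces the factor $1/(q+1)$ in the final answer. Take representatives $[\tilde{k}_j, 1]$ with $\tilde{k}_0 = 1$ and $\tilde{k}_j = w{\bf n}(j)$ for $j$ running through $\o_v/\varpi_v\o_v$. The trivial coset reproduces the unramified computation of Lemma \ref{L:local Whittaker finite 1}: the Iwahori-shape lattice $L_{1,v} = \bp \o_v & \o_v \\ \varpi_v\o_v & \varpi_v\o_v \ep$ is still preserved when $y_1, y_2 \in \o_v^\times$, and combining the twisted Steinberg formula $W_{1,v}({\bf a}(a)) = \xi_v^{\varepsilon_v}(a)|a|_v\,\mathbb{I}_{\o_v}(a)$ with the Casselman--Shalika formula for $W_{2,v}$ gives contribution $q^{3\c}$ to $\tilde{Z}^{(1)}$ and $0$ to $\tilde{Z}^{(2)}$.

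For each non-trivial coset $\tilde{k}_j = w{\bf n}(j)$, I use the identity ${\bf m}(y_1) w{\bf n}(j) = w{\bf n}(y_1^{-2}j){\bf m}(y_1^{-1})$ together with the Atkin--Lehner relation for the Steinberg-twisted new vector to evaluate $W_{1,v}$ on the big Bruhat cell. This gives a closed-form expression whose support forces $v(y_1) \geq 0$ in $\tilde{Z}^{(1)}$ (respectively $v(y_1) \geq -1$ in $\tilde{Z}^{(2)}$), whereas the lattice requirement $h_1^{-1}\cdot(\varpi^{-\c}{\bf x}_0, {\bf y}_0) \in L_v$, combined with the support constraint $v(y_2) \geq 0$ from $W_{2,v}$, forces $v(y_1) \leq -1$ (respectively $v(y_1) \leq -2$). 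These two constraints are disjoint, so the non-trivial cosets contribute nothing. Putting everything together yields $\mathcal{W}_v^+(g) = q^\c/(q+1)$; the parallel computation, with $\omega_v(1,{\bf t}_v)\varphi_v$ replacing $\varphi_v$, gives $\mathcal{W}_v^-(g) = q^\c/(q+1)$. The main obstacle will be the explicit evaluation of $W_{1,v}$ on the big Bruhat cell via the Atkin--Lehner involution for the twisted Steinberg; once that formula is in hand, the vanishing of the non-trivial cosets reduces to a direct support comparison.
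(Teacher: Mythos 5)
Your proposal is correct and follows essentially the same route as the paper. Both arguments decompose the $K_v$-integral into a sum over the $q_v+1$ cosets of the Iwahori-shaped invariance subgroup $\mathcal{U}$ (producing the factor $(1+q_v)^{-1}=\vol(\mathcal{U})$), and both show by a support comparison between the Weil (lattice) factor and the Whittaker factor on the big Bruhat cell that only the trivial coset survives, which then reproduces the unramified calculation of Lemma \ref{L:local Whittaker finite 1}. The two places where you diverge from the paper are cosmetic: you take coset representatives $\{1\} \cup \{w{\bf n}(j)\}$ in $\P^1(\mathbb{F}_{q_v})$ where the paper uses $\{[k(a),1]\} \cup \{[w,1]\}$, and you derive the shifted support of $W_{1,v}$ on the big cell via the Atkin--Lehner relation whereas the paper simply records that $W_{1,v}({\bf a}(y)k(a))$ and $W_{1,v}({\bf a}(y)w)$ are supported in $\varpi_v^{-\c_v-1}\o_v$. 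One small bookkeeping point: the Steinberg formula you quote, $W_{1,v}({\bf a}(a))=\xi_v^{\varepsilon_v}(a)|a|_v\mathbb{I}_{\o_v}(a)$, is normalized for an additive character of conductor $\o_v$, while the paper's $\psi_v$ has conductor $\varpi_v^{-\c_v}\o_v$ and $W_{1,v}$ is normalized by $W_{1,v}({\bf a}(\varpi_v^{-\c_v}))=1$; your valuation inequalities for the vanishing of the non-trivial cosets should be tracked through this shift. This does not affect the conclusion, since the two constraint ranges remain disjoint.
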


\begin{proof}
The calculations for $v \mid \frak{n}_1$ and $v\mid \frak{n}_2$ are similar and we assume $v \mid \frak{n}_1$. We drop the subscript $v$ for brevity. Let $\mathcal{U}$ be the open compact subgroup of $H_1^{\circ}(k_v)$ defined by
\[\mathcal{U}=H_1^{\circ}(k_v)\cap (K_0(\varpi)\times \GL_2(\o))/\o^{\times}.\]
Note that
\[
\mathcal{C}=\left\{[k(a),1],[w,1] \mbox{ }\vert\mbox{ }a \in \o / \varpi\o           \right\}
\]
is a complete set of coset representatives for $H_1^\circ(\o)/\mathcal{U}$,
where
\[k(a) = \bp 1 & 0 \\ a & 1 \ep.\]
Let
\[g = \diag( \varpi^{-\frak{c}}, 1, \varpi^{\frak{c}}, 1 ) \in \Sp_4(k_v),\quad h = [{\bf d}(\varpi^{\frak{c}}),{\bf d}(\varpi^{\frak{c}})] \in H_1^\circ(k_v).\]
By (\ref{E:equivariant property of test vector case v=p divide N_1}) and our normalization of the Haar measure on $H_1^{\circ}(k_v)$ in (\ref{E:non-archi. integration2}), we have
\[
\mathcal{W}^+(g) = q^{-2\c}(1+q)^{-1}\left( Z^{(1)}+ q\cdot Z^{(2)}\right),
\]
where
\begin{align*}
Z^{(1)} &= \sum_{k \in \mathcal{C}}\int_{k_v}\int_{k_v^\times}\int_{k_v^\times}W([1,{\bf n}(x)][{\bf m}(y_1),{\bf m}(y_2)]hk)\\
&\times\omega(g,[1,{\bf n}(x)][{\bf m}(y_1),{\bf m}(y_2)]hk)\varphi({\bf x}_0,{\bf y}_0)|y_1|^{-2}|y_2|^{-2}\,d^{\times}y_1\,d^\times y_2\,dx,\\
Z^{(2)} &= \sum_{k \in \mathcal{C}}\int_{k_v}\int_{k_v^\times}\int_{k_v^\times}W([{\bf a}(\varpi),{\bf a}(\varpi)][1,{\bf n}(x)][{\bf m}(y_1),{\bf m}(y_2)]hk)\\
&\times\omega(g,[{\bf a}(\varpi),{\bf a}(\varpi)][1,{\bf n}(x)][{\bf m}(y_1),{\bf m}(y_2)]hk)\varphi({\bf x}_0,{\bf y}_0)|y_1|^{-2}|y_2|^{-2}\,d^{\times}y_1\,d^\times y_2\,dx.\\
\end{align*}
For $h_1 \in H_1(k_v)$, we have
\[
\omega(g,h_1)\varphi({\bf x}_0,{\bf y}_0) = q^{2\frak{c}}\cdot\varphi(h_1^{-1}\cdot (\varpi^{-\frak{c}} {\bf x}_0,{\bf y}_0)).
\]
Let $h_1 = [1,{\bf n}(x)][{\bf m}(y_1),{\bf m}(y_2)]hk$ with $x \in k_v$, $y_1,y_2 \in k_v^{\times}$, and $k \in \mathcal{C}$. If $k=[k(a),1]$, then
\begin{align*}
\omega(g,h_1)\varphi({\bf x}_0,{\bf y}_0)&=q^{2\frak{c}}\cdot\varphi\left(\bp 0 & -y_1^{-1}y_2^{-1}\\ 0 & ay_1^{-1}y_2^{-1} \ep, \bp -y_1^{-1}y_2 & -y_1^{-1}y_2^{-1}x\varpi^\c \\ ay_1^{-1}y_2 & y_1y_2^{-1}+ay_1^{-1}y_2^{-1}x\varpi^\c\ep \right),\\
\omega(g,[{\bf a}(\varpi),{\bf a}(\varpi)]h_1)\varphi({\bf x}_0,{\bf y}_0)&=q^{2\frak{c}}\cdot\varphi\left(\bp 0 & -y_1^{-1}y_2^{-1}\varpi^{-1}\\ 0 & ay_1^{-1}y_2^{-1}\varpi^{-1} \ep, \bp -y_1^{-1}y_2 & -y_1^{-1}y_2^{-1}x\varpi^\c \\ ay_1^{-1}y_2 & y_1y_2^{-1}+ay_1^{-1}y_2^{-1}x\varpi^\c\ep \right).
\end{align*}
If $k=[w,1]$, then
\begin{align*}
\omega(g,h_1)\varphi({\bf x}_0,{\bf y}_0) &= q^{2\c}\cdot\varphi \left( \bp 0 & 0 \\ 0 & -y_1^{-1}y_2^{-1} \ep,\bp 0 & -y_1y_2^{-1} \\ -y_1^{-1}y_2 & -y_1^{-1}y_2^{-1}x\varpi^\c  \ep \right),\\
 \omega(g,[{\bf a}(\varpi),{\bf a}(\varpi)]h_1)\varphi({\bf x}_0,{\bf y}_0) &= q^{2\c}\cdot\varphi \left( \bp 0 & 0 \\ 0 & -y_1^{-1}y_2^{-1}\varpi^{-1} \ep,\bp 0 & -y_1y_2^{-1} \\ -y_1^{-1}y_2 & -y_1^{-1}y_2^{-1}x\varpi^\c  \ep \right).
\end{align*}
Therefore, if $k=1$, then
\begin{align*}
\omega(g,h_1)\varphi({\bf x}_0,{\bf y}_0) \neq 0 & \mbox{ if and only if }y_1y_2^{-1} \in \frak{o}^{\times}, y_1^{-1} \in \frak{o} ,\mbox{ and }x \in y_1y_2\varpi^{-\frak{c}}\frak{o},\\
\omega(g,[{\bf a}(\varpi),{\bf a}(\varpi)]h_1)\varphi({\bf x}_0,{\bf y}_0) \neq 0 &\mbox{ if and only if }y_1y_2^{-1} \in \o^{\times},
 y_1^{-1} \in \varpi\o,\mbox{ and }x \in y_1y_2\varpi^{-\c}\o,
\end{align*}
and if $k \neq 1$, then
\begin{align*}
\omega(g,h_1)\varphi({\bf x}_0,{\bf y}_0) \neq 0 & \mbox{ if and only if }y_1y_2^{-1} \in \frak{o}^{\times}, y_1^{-1} \in \varpi\frak{o} ,\mbox{ and }x \in y_1y_2\varpi^{-\frak{c}}\frak{o},\\
\omega(g,[{\bf a}(\varpi),{\bf a}(\varpi)]h_1)\varphi({\bf x}_0,{\bf y}_0) \neq 0 &\mbox{ if and only if }y_1y_2^{-1} \in \o^{\times},
 y_1^{-1} \in \varpi\o,\mbox{ and }x \in y_1y_2\varpi^{-\c}\o.
\end{align*}
On the other hand, the functions $W_{1}({\bf a}(y))$ and $W_{2}({\bf a}(y))$ are both supported in $\varpi^{-\c}\o$, whereas the functions $W_{1}({\bf a}(y)k(a))$ with $a \in \o^\times$ and $W_{1}({\bf a}(y)w)$ are both supported in $\varpi^{-\c-1}\o$. We conclude that if $k=1$, then
\begin{align*}
&W(h_1)\omega(g,h_1)\varphi({\bf x}_0,{\bf y}_0) \neq 0 \mbox{ if and only if }y_1,y_2\in\o^{\times}\mbox{ and }x\in \varpi^{-\c}\o,\\
&W([{\bf a}(\varpi),{\bf a}(\varpi)]h_1)\omega(g,[{\bf a}(\varpi),{\bf a}(\varpi)]h_1)\varphi({\bf x}_0,{\bf y}_0)=0,
\end{align*}
and if $k \neq 1$, then
\begin{align*}
W(h_1)\omega(g,h_1)\varphi({\bf x}_0,{\bf y}_0) &= 0 ,\\
W([{\bf a}(\varpi),{\bf a}(\varpi)]h_1)\omega(g,[{\bf a}(\varpi),{\bf a}(\varpi)]h_1)\varphi({\bf x}_0,{\bf y}_0)&=0.
\end{align*}
Hence 
\begin{align*}
Z^{(1)} &= q^{2\c}\cdot W([{\bf a}(\varpi^{-\c}),{\bf a}(\varpi^{-\c})])\cdot{\rm vol}(\varpi^{-\c}\o){\rm vol}(\o^\times)^2 = q^{3\c},\\ Z^{(2)} &=0.
\end{align*}
A similar calculation shows that $\mathcal{W}^-(g)=\mathcal{W}^+(g).$ This completes the proof.
\end{proof}

\subsubsection{Archimedean cases}
Let $v$ be a real place of $k$. We identify $k_v$ with $\R$. Let $\varphi_v \in S(V^2(\R))$ be the Schwartz function defined in (\ref{E:archi Schwartz fun DS1})-(\ref{E:archi Schwartz fun PS}). 
If $v \in S({\rm DS})$, then
\begin{align}\label{E:equivariant property d.s.}
\omega_{v}(1,[k_{\theta_1},k_{\theta_2}])\varphi_{v} = e^{-\sqrt{-1}(\kappa_{1,v}\theta_1+\kappa_{2,v}\theta_2)}\varphi_{v}
\end{align}
for $k_{\theta_1},k_{\theta_2} \in {\rm SO}(2)$, and
\begin{align}\label{E:lowest weight condition}
Z\cdot \varphi_{v} = -\lambda_1\cdot\varphi_{v},\quad Z'\cdot \varphi_{v} = -\lambda_2\cdot\varphi_{v},\quad N_{-}\cdot\varphi_{v} =0. 
\end{align}
Here $Z, Z', N_-$ are elements in $\frak{sp}_4(\R)\otimes_\R\C$ defined by
\[
Z=-\sqrt{-1} \bp 0&0&1&0 \\ 0&0&0&0 \\ -1&0&0&0\\ 0&0&0&0 \ep,\quad Z' = -\sqrt{-1} \bp 0&0&0&0 \\ 0&0&0&1 \\ 0&0&0&0\\ 0&-1&0&0 \ep,\quad N_- = \frac{1}{2} \bp 0&1&0&\sqrt{-1} \\ -1&0&\sqrt{-1}&0 \\ 0&-\sqrt{-1}&0&1 \\ -\sqrt{-1}& 0 & -1 & 0\ep.
\]
If $v \in S({\rm PS})$, then
\begin{align}\label{E:equivariant property PS}
\omega_v(k,[k_1,k_2])\varphi_v = \varphi_v
\end{align}
for $k \in G(\R)\cap {\rm O}(4)$ and $k_1,k_2 \in {\rm O}(2)$ with $\nu(k) = \det(k_1k_2^{-1})$.

For $n \in \Z_{\geq 0}$, let
\[H_n(x) = (-1)^n e^{x^2}\frac{d^n}{dx^n}\left(e^{-x^2}\right)\]
denote the Hermite polynomial.

\begin{lemma}\label{L:4.5}
Let $r_1,r_2,r_3 \in \R$ with $r_1>0$ and $r_2^2+r_3>0$. For $n \in \Z_{\geq 0}$, put
\[J_n(r_1,r_2,r_3) = \int_{0}^{\infty}y^{n-2}H_n(\sqrt{\pi}(r_1y+r_2y^{-1}))e^{-\pi(r_1y+r_2y^{-1})^2-\pi r_3y^{-2}}\,dy.\]
Then
\[
J_n(r_1,r_2,r_3) = 2^{n-1}\pi^{n/2}\left((r_2^2+r_3)^{1/2}+r_2\right)^n(r_2^2+r_3)^{-1/2}e^{-2\pi r_1\left((r_2^2+r_3)^{1/2}+r_2\right)}.
\]
\end{lemma}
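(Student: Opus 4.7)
\medskip

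\noindent\textbf{Proof proposal.}
The plan is to compute the generating function
\[
 G(t;r_1,r_2,r_3) := \sum_{n=0}^{\infty} J_n(r_1,r_2,r_3)\, \frac{t^n}{n!}
\]
in closed form via the classical identity
\[
 \sum_{n=0}^{\infty} H_n(x)\,\frac{s^n}{n!} = e^{2xs - s^{2}},
\]
and then read off $J_n$ by expanding the result as a power series in $t$.

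First, taking $x = \sqrt{\pi}(r_1 y + r_2 y^{-1})$ and $s = t y$ in the generating identity and interchanging summation with integration (justified for all $t \in \RR$ since the integrand is dominated by a Gaussian in $y$ uniformly on compact subsets of $t$), I obtain
\[
 G(t;r_1,r_2,r_3) = \int_0^\infty y^{-2}\, \exp\!\left\{ -\pi\bigl[\bigl((r_1-t/\sqrt{\pi})y + r_2 y^{-1}\bigr)^2 + r_3 y^{-2}\bigr]\right\} dy.
\]
Setting $\tilde r_1 = r_1 - t/\sqrt{\pi}$ and expanding the square, the exponent becomes
\[
 -\pi\bigl[\tilde r_1^{\,2}\, y^2 + 2 \tilde r_1 r_2 + (r_2^2 + r_3) y^{-2}\bigr],
\]
so that
\[
 G(t;r_1,r_2,r_3) = e^{-2\pi \tilde r_1 r_2} \int_0^\infty y^{-2}\, e^{-\pi[\tilde r_1^{\,2} y^2 + (r_2^2+r_3) y^{-2}]}\,dy.
\]

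Next I evaluate the remaining integral via the substitution $y = (b/a)^{1/2} u^{1/2}$ with $a = \tilde r_1$, $b = \sqrt{r_2^2+r_3}$, so that $a^2 y^2 + b^2 y^{-2} = ab(u + u^{-1})$; this brings the integral into the form $\frac{1}{2b}\cdot 2 K_{-1/2}(2\pi a b)$, and the elementary closed form $K_{1/2}(z) = K_{-1/2}(z) = \sqrt{\pi/2z}\, e^{-z}$ yields
\[
 \int_0^\infty y^{-2} e^{-\pi[a^2 y^2 + b^2 y^{-2}]}\,dy = \frac{1}{2 b}\, e^{-2\pi a b}.
\]
(A priori this is valid when $\tilde r_1 > 0$, i.e.\ for $t < r_1\sqrt{\pi}$, which is a nontrivial neighborhood of $0$; by analytic continuation in $t$ the identity persists at the level of formal power series.) Consequently
\[
 G(t;r_1,r_2,r_3) = \frac{1}{2\sqrt{r_2^2+r_3}}\, e^{-2\pi \tilde r_1\bigl(r_2 + \sqrt{r_2^2+r_3}\bigr)}
 = \frac{e^{-2\pi r_1(r_2 + \sqrt{r_2^2+r_3})}}{2\sqrt{r_2^2+r_3}}\cdot e^{2\sqrt{\pi}\,t\,(r_2 + \sqrt{r_2^2+r_3})}.
\]

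Finally, expanding the last exponential as a Taylor series in $t$ and matching the coefficient of $t^n/n!$ gives
\[
 J_n(r_1,r_2,r_3) = \frac{1}{2\sqrt{r_2^2+r_3}}\,e^{-2\pi r_1(r_2+\sqrt{r_2^2+r_3})}\cdot \bigl(2\sqrt{\pi}\,(r_2 + \sqrt{r_2^2+r_3})\bigr)^n,
\]
which is exactly the claimed formula. The only obstacles are purely bookkeeping ones: justifying the interchange of sum and integral (standard by Fubini and the Gaussian decay) and taking care with the range of $t$ for which the intermediate integral converges (handled by analytic/formal continuation, which is legitimate because both sides are entire in $t$).
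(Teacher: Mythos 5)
Your proof is correct and is essentially the same as the paper's. The paper also computes the generating function $\sum_n J_n\,\frac{(-\sqrt{\pi}x)^n}{n!} = \int_0^\infty y^{-2}\exp\{-\pi[(xy+r_1y+r_2y^{-1})^2+r_3y^{-2}]\}\,dy$ (your $t$ is $-\sqrt{\pi}x$), evaluates the Gaussian integral in $y$ to obtain $2^{-1}(r_2^2+r_3)^{-1/2}e^{-2\pi(r_2+\sqrt{r_2^2+r_3})(x+r_1)}$, and reads off $J_n$ from the Taylor expansion; your version merely spells out the Hermite generating identity and the substitution behind the Gaussian integral a little more explicitly.
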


\begin{proof}
The assertion is a variant of \cite[Lemma 7.5]{Ichino2005}. In a small enough neighborhood of $x=0$, we have
\begin{align*}
\sum_{n=0}^{\infty}\frac{1}{n!}(-\sqrt{\pi}x)^nJ_n(r_1,r_2,r_3) &= \int_{0}^{\infty}y^{-2}e^{-\pi(xy+r_1y+r_2y^{-1})^2-\pi r_3y^{-2}}\,dy\\
&=2^{-1}(r_2^2+r_3)^{-1/2}e^{-2\pi r_2(x+r_1)}e^{-2\pi (r_2^2+r_3)^{1/2}|x+r_1|}\\
&=2^{-1}(r_2^2+r_3)^{-1/2}e^{-2\pi \left((r_2^2+r_3)^{1/2}+r_2\right)(x+r_1)}.
\end{align*}
This completes the proof.
\end{proof}

\begin{lemma}\label{L:Mellin transform}
Let $n \in \Z_{\geq0}$. Put
\begin{align*}
h_{n}(a_1,a_2) &= a_1a_2^{n+1}\int_{0}^{\infty}y^{-n-2}\left((a_1^2y^{-2}+a_2^2y^2)^{1/2}-a_2y\right)^{n}(a_1^2y^{-2}+a_2^2y^2)^{-1/2}\\
&\quad\quad\quad\quad\quad\quad\quad\quad\quad\quad\quad\times e^{-2\pi\left(y^2-a_2^2+(a_2y^{-1}+a_2^{-1}y)(a_1^2y^{-2}+a_2^2y^2)^{1/2}\right)}\,dy
\end{align*}
for $a_1,a_2>0.$ Let $s_1,s_2 \in \C$ satisfy
${\rm Re}(s_1+s_2+1)>0$ and ${\rm Re}(s_1)>0>{\rm Re}(s_2).$
Then 
\begin{align*}
&\int_{0}^{\infty}\int_{0}^{\infty}x_1^{s_1-1}x_2^{s_2-1}h_n(x_1,x_2)\,dx_1\,dx_2 \\
& = 2^{-n-4}\pi^{-n-1}(4\pi^{3})^{-s_1/2}(4\pi)^{-s_2/2}\Gamma\left(\frac{s_1+s_2+2n+1}{2}\right)\Gamma\left(\frac{s_1+s_2+1}{2}\right)\Gamma\left(\frac{s_1}{2}\right)\Gamma\left(\frac{-s_2}{2}\right).
\end{align*}
\end{lemma}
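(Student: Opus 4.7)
The plan is to compute the double Mellin transform by first normalizing the inner integral defining $h_n(a_1,a_2)$ so that the $a_i$-dependence factors out of the exponential, then evaluating the $a_1$ and $a_2$ integrals as elementary Gamma integrals, and finally reducing the residual one-dimensional integral over $t$ to a Beta integral by a chain of explicit changes of variable. Legendre's duplication formula then rearranges the resulting Gamma factors into the asserted form.

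The first step is to substitute $y = (a_1/a_2)^{1/2} t$ in the defining integral of $h_n$. Writing $\sigma(t) = (t^{-2}+t^2)^{1/2}+t$ and $\rho_-(t) = (t^{-2}+t^2)^{1/2}-t$, this change of variable symmetrizes the exponent and produces
\[
h_n(a_1,a_2) = a_2^{2n+1}\int_0^{\infty} t^{-n-2}\rho_-(t)^n (t^{-2}+t^2)^{-1/2} e^{-2\pi(a_2^2 t^{-3}/\sigma(t) + (a_1/a_2)t\sigma(t))}\,dt.
\]
Under the hypotheses ${\rm Re}(s_1)>0>{\rm Re}(s_2)$ and ${\rm Re}(s_1+s_2+1)>0$, Fubini justifies swapping all three integrals. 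Integrating $a_1$ out first gives $\Gamma(s_1)(2\pi t\sigma(t))^{-s_1} a_2^{s_1}$, and then the Gaussian $a_2$-integral gives $\tfrac{1}{2}(2\pi t^{-3}/\sigma(t))^{-(s_1+s_2+2n+1)/2}\Gamma((s_1+s_2+2n+1)/2)$; the latter requires ${\rm Re}(s_1+s_2+2n+1)>0$, which follows from the hypotheses since $n \ge 0$.

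Next, using the algebraic identity $\sigma(t)\rho_-(t) = t^{-2}$ (so $\rho_-(t)^n = t^{-2n}\sigma(t)^{-n}$), the residual integral collapses to
\[
\int_0^{\infty} t^{(s_1+3s_2-1)/2}\sigma(t)^{(s_2-s_1+1)/2}(t^{-2}+t^2)^{-1/2}\,dt.
\]
I would evaluate this via the successive substitutions $v = t^2$ and $v = \sinh\phi$, using the identities $(t^{-2}+t^2)^{1/2} = \cosh\phi/\sinh^{1/2}\phi$ and $\sigma(t) = e^{\phi}/\sinh^{1/2}\phi$ to obtain $\tfrac{1}{2}\int_0^{\infty}\sinh(\phi)^{(s_1+s_2-1)/2} e^{s_2\phi}\,d\phi$. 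Writing $\sinh(\phi) = e^\phi(1-e^{-2\phi})/2$ and then setting $\xi = e^{-2\phi}$ converts this into a Beta integral; its convergence at $\xi=0$ and $\xi=1$ is exactly the hypotheses ${\rm Re}(s_2)<0$ and ${\rm Re}(s_1+s_2+1)>0$, and its value is $\tfrac{1}{4}\cdot 2^{-(s_1+s_2-1)/2}\Gamma(-s_2/2)\Gamma((s_1+s_2+1)/2)/\Gamma((s_1+1)/2)$.

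The final assembly combines the Gamma factors into $\Gamma(s_1)\Gamma(-s_2/2)\Gamma((s_1+s_2+1)/2)\Gamma((s_1+s_2+2n+1)/2)/\Gamma((s_1+1)/2)$, and Legendre's duplication formula $\Gamma(s_1) = \pi^{-1/2}2^{s_1-1}\Gamma(s_1/2)\Gamma((s_1+1)/2)$ converts the ratio $\Gamma(s_1)/\Gamma((s_1+1)/2)$ into $\pi^{-1/2}2^{s_1-1}\Gamma(s_1/2)$, giving the four Gamma factors in the target formula. Careful bookkeeping of the powers of $2$ and $\pi$ accumulated through the substitutions then yields the constant $2^{-n-4}\pi^{-n-1}(4\pi^3)^{-s_1/2}(4\pi)^{-s_2/2}$. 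The main obstacle will be executing the chain $t \mapsto v = t^2 \mapsto \phi = \operatorname{arcsinh}(v) \mapsto \xi = e^{-2\phi}$ without sign or exponent errors; each individual step is mechanical, but tracking the fractional exponents demands care.
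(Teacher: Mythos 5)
Your proof follows the same strategy as the paper's: pull a $\Gamma$-factor out of each of the two radial integrals, reduce the residual one-dimensional integral to a Beta integral, and finish with Legendre's duplication formula. The paper organizes the computation via the substitution $x_2 \mapsto x_1 y^{-2} x_2$ rather than your $y \mapsto (a_1/a_2)^{1/2}t$, but after integrating out $x_1$ and $y$ it reaches the same residual integral you do (your $t$-integral becomes the paper's $x_2$-integral under $x_2 = t^2$); the paper then quotes \cite[p.\,311, (28)]{BE1954}, whereas you derive the Beta form explicitly via $v = t^2$, $v = \sinh\phi$, $\xi = e^{-2\phi}$, which is a nice self-contained alternative.

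One slip in the write-up: after the substitutions $v = t^2$ and $v = \sinh\phi$, the residual integral is $\tfrac{1}{2}\int_0^\infty \sinh(\phi)^{(s_1+s_2-1)/2}\, e^{(s_2 - s_1 + 1)\phi/2}\,d\phi$, not $\tfrac{1}{2}\int_0^\infty \sinh(\phi)^{(s_1+s_2-1)/2}\, e^{s_2\phi}\,d\phi$. The exponent collapses to $s_2\phi$ only \emph{after} you substitute $\sinh\phi = e^\phi(1-e^{-2\phi})/2$ and absorb the resulting $e^{(s_1+s_2-1)\phi/2}$ into the exponential. Since the convergence conditions $\Re(s_2)<0$, $\Re(s_1+s_2+1)>0$ and the Beta value you then state are exactly those coming from the corrected exponent, this is a transcription slip and the final answer is unaffected, but as written the intermediate step does not follow from the substitutions you cite.
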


\begin{proof}
We make a change of variable from $x_2$ to $x_1y^{-2}x_2$. Then
\begin{align*}
&\int_{0}^{\infty}\int_{0}^{\infty}x_1^{s_1-1}x_2^{s_2-1}h_{n}(x_1,x_2)\,dx_1\,dx_2 \\
&=\int_{0}^{\infty}\int_{0}^{\infty}\int_{0}^{\infty}x_1^{s_1+s_2+2n}x_2^{s_2+n}y^{-2s_2-4n-3}\left((1+x_2^2)^{1/2}-x_2\right)^{n}(1+x_2^2)^{-1/2}\\
&\quad\quad\quad\quad\quad\quad\quad\times e^{-2\pi x_1^2x_2y^{-4}\left((1+x_2^2)^{1/2}-x_2\right)}e^{-2\pi y^2x_2^{-1}\left((1+x_2^2)^{1/2}+x_2\right)}\,dx_1\,dy\,dx_2\\
&=2^{-1}(2\pi)^{-(s_1+s_2+2n+1)/2}\Gamma\left(\frac{s_1+s_2+2n+1}{2}\right)\\
&\times\int_{0}^{\infty}\int_{0}^{\infty}x_2^{(-s_1+s_2-1)/2}y^{2s_1-1}\left((1+x_2^2)^{1/2}-x_2\right)^{-(s_1+s_2+1)/2}(1+x_2^2)^{-1/2}\\
&\quad\quad\quad\quad\quad\quad\quad\quad\quad\quad\quad\quad\quad\quad\quad\quad\quad\quad\quad\times e^{-2\pi y^2x_2^{-1}\left((1+x_2^2)^{1/2}+x_2\right)}\,dy\,dx_2\\
&=2^{-2}(2\pi)^{(-3s_1-s_2-2n-1)/2}\Gamma\left(\frac{s_1+s_2+2n+1}{2}\right)\Gamma\left(s_1\right)\\
&\times \int_{0}^{\infty}x_2^{(s_1+s_2-1)/2}\left((1+x_2^2)^{1/2}+x_2\right)^{(-s_1+s_2+1)/2}(1+x_2^2)^{-1/2}\,dx_2\\
&=2^{-n-4}\pi^{-n-1}(4\pi^{3})^{-s_1/2}(4\pi)^{-s_2/2}\Gamma\left(\frac{s_1+s_2+2n+1}{2}\right)\Gamma\left(\frac{s_1+s_2+1}{2}\right)\Gamma\left(\frac{s_1}{2}\right)\Gamma\left(\frac{-s_2}{2}\right).
\end{align*}
Here the last equality follows from \cite[p.\,311, (28)]{BE1954} and the duplication formula
\[\Gamma\left(\frac{s_1}{2}\right)\Gamma\left(\frac{s_1+1}{2}\right)= 2^{1-s_1}\sqrt{\pi}\,\Gamma\left(s_1\right).\] This completes the proof.
\end{proof}

\begin{lemma}\label{L:local Whittaker archi. 1}
Let $v \in S({\rm DS})$. For $a_1,a_2>0$, we have 
\begin{align*}
&\mathcal{W}_{v}^+({\rm diag}(a_1,a_2,a_1^{-1},a_2^{-1}))\\
 &= \mathcal{W}_{v}^-({\rm diag}(a_1,a_2,a_1^{-1},a_2^{-1})) \\
&= 2^{-\lambda_{1,v}- 4}\pi^{(-3\lambda_{1,v}+\lambda_{2,v}-5)/2}\\
&\times e^{-2\pi a_2^2}\int_{c_1-\sqrt{-1}\infty}^{c_1+\sqrt{-1}\infty}\frac{ds_1}{2\pi \sqrt{-1}}\,\int_{c_2-\sqrt{-1}\infty}^{c_2+\sqrt{-1}\infty}\frac{ds_2}{2\pi \sqrt{-1}}\,(4\pi^{3}a_1^2)^{(-s_1+\lambda_{1,v}+1)/2}(4\pi a_2^2)^{(-s_2+\lambda_{2,v})/2}\\
&\quad\quad\quad\quad\quad\quad\quad\quad\quad\quad\quad\quad\times \Gamma\left(\frac{s_1+s_2-2\lambda_{2,v}+1}{2}\right)\Gamma\left(\frac{s_1+s_2+1}{2}\right)\Gamma\left(\frac{s_1}{2}\right)\Gamma\left(\frac{-s_2}{2}\right).
\end{align*}
Here $c_1,c_2 \in \R$ satisfy 
\[
c_1+c_2+1>0,\quad c_1>0>c_2.
\]
\end{lemma}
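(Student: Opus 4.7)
The plan is to reduce the integral defining $\mathcal{W}_v^+$ to a classical oscillatory integral of Hermite-type, apply Lemma \ref{L:4.5} to evaluate it, and then convert to the Mellin--Barnes form via Lemma \ref{L:Mellin transform}. Throughout I identify $k_v=\R$ and drop the subscript $v$. By the left $\Delta N$-invariance of the integrand and the Iwasawa decomposition of $H_1^\circ(\R)$, a set of coset representatives for $\Delta N \backslash H_1^\circ(\R)$ is given by $[{\bf n}(u),1][{\bf m}(y_1),{\bf m}(y_2)]k$ with $u\in\R$, $y_1,y_2>0$, $k\in K_\infty^0$, together with the component $[{\bf a}(-1),{\bf a}(-1)]$ of $K_\infty$ outside of $K_\infty^0$. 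The equivariance (\ref{E:equivariant property d.s.}) together with the choice of $W_v=W_{1,v}\otimes W_{2,v}$ lets us integrate out the $K_\infty^0$-variable against the ${\rm SO}(2)\times {\rm SO}(2)$-type of the two Whittaker functions, after which we are reduced to a triple integral in $(u,y_1,y_2)$ with the standard Mellin measure $y_i^{-2}\,d^\times y_i$ (plus a sum over signs coming from $[{\bf a}(-1),{\bf a}(-1)]$, which by the parity of $\kappa_{1,v}+\kappa_{2,v}$ collapses to the identity term).

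Next I compute $\omega_v(g,h_1)\varphi_v({\bf x}_0,{\bf y}_0)$ explicitly for $g={\rm diag}(a_1,a_2,a_1^{-1},a_2^{-1})$ and $h_1=[{\bf n}(u),1][{\bf m}(y_1),{\bf m}(y_2)]$. The action of the $\GL_2\times\GL_2$-part on the Schrödinger model $S(V^2(\R))$ is translation by a left--right multiplication on $2\times2$ matrices, so $[{\bf m}(y_1),{\bf m}(y_2)]$ rescales the matrix entries by $(y_1 y_2^{\pm 1})$, and $[{\bf n}(u),1]$ introduces the off-diagonal shear; meanwhile the diagonal Siegel element $g$ contributes the overall similitude factor $|a_1 a_2|^2$ and rescales $({\bf x}_0,{\bf y}_0)$ by $(a_1,a_2)$. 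Plugging into (\ref{E:archi Schwartz fun DS1}) and evaluating at the rank-one/invertible pair $({\bf x}_0,{\bf y}_0)$, the polynomial prefactor collapses to a monomial in $y_1,y_2$ of degree $(\lambda_{1,v},-\lambda_{2,v})$, and the Gaussian produces $\exp(-\pi(\text{quadratic form in }y_1,y_2,u)) e^{-2\pi a_2^2}$. Meanwhile, $W_{1,v}$ and $W_{2,v}$ at ${\bf m}(y_i)$ are the classical discrete-series Whittaker functions $y_i^{\kappa_{i,v}/2} e^{-2\pi y_i^2}$ times a character of $u$ arising from ${\bf n}(u)$.

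Performing the $u$-integral gives a Gaussian in $u$ that integrates explicitly and leaves an integrand in $(y_1,y_2)$ whose $y_1$-integral (after a rescaling $y_1 \mapsto y_1 y_2$) is precisely of the form $J_n(r_1,r_2,r_3)$ in Lemma \ref{L:4.5} with $n$ depending on $\lambda_{1,v}$. Applying Lemma \ref{L:4.5} collapses this to a closed-form expression of the shape $a_1 a_2^{n+1}$ times the integrand $h_n(a_1,a_2)$ of Lemma \ref{L:Mellin transform}, up to the explicit power of $2\pi$ and the Gaussian $e^{-2\pi a_2^2}$ asserted in the statement. Finally, rewriting $h_n(a_1,a_2)$ by double Mellin inversion via Lemma \ref{L:Mellin transform} produces the Mellin--Barnes integral on the right-hand side of the lemma, where the shift in the first gamma factor by $-2\lambda_{2,v}+1$ comes from $n=-\lambda_{2,v}$ and the power of $(4\pi^3)^{(-s_1+\lambda_{1,v}+1)/2}(4\pi)^{(-s_2+\lambda_{2,v})/2}$ absorbs the explicit $a_1^{\lambda_{1,v}+1} a_2^{\lambda_{2,v}}$ factor together with the prefactor from Lemma \ref{L:Mellin transform}. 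The equality $\mathcal{W}_v^-=\mathcal{W}_v^+$ follows from the observation that conjugation by ${\bf t}_v$ simultaneously swaps the two $\GL_2$-factors on the Whittaker side and on the Schwartz function $\varphi_v$; since the resulting integrand is symmetric in $(\kappa_{1,v},\kappa_{2,v})$ up to a permutation absorbed by the $y_1\leftrightarrow y_2$ change of variables, the same computation applies verbatim.

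The main obstacle will be bookkeeping: tracking the Gaussian, the monomial prefactor from (\ref{E:archi Schwartz fun DS1}), and the character from ${\bf n}(u)$ through the Weil representation formulas, and then matching the resulting exponent and polynomial degree to the precise parameters $(r_1,r_2,r_3,n)$ that feed into Lemma \ref{L:4.5}. The algebraic identity (\ref{E:lowest weight condition}) serves as a useful consistency check, since both sides of the assertion are lowest weight vectors for the minimal $\U(2)$-type, so the computation only needs to be carried out on the diagonal torus.
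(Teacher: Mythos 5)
Your overall scaffolding is right — reduce to a triple integral in $(u,y_1,y_2)$ via the Iwasawa decomposition, use the equivariance (\ref{E:equivariant property d.s.}) to eliminate the $K_\infty^0$-variable, then feed the result into Lemma \ref{L:4.5} and Lemma \ref{L:Mellin transform} — but there is a concrete error in the treatment of the $u$-integral that would derail the computation. You assert that after plugging $h_1^{-1}\cdot(a_1{\bf x}_0,a_2{\bf y}_0)$ into (\ref{E:archi Schwartz fun DS1}) ``the polynomial prefactor collapses to a monomial in $y_1,y_2$,'' so that the $u$-integral is a pure Gaussian. That is only true for the first factor $(\cdot)^{\lambda_{1,v}}$ (since the transformed ${\bf x}_0$ has a single nonzero entry). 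The second factor $(\cdot)^{-\lambda_{2,v}}$ does \emph{not} lose its $u$-dependence: the shear by ${\bf n}(u)$ sends ${\bf y}_0$ to $\begin{pmatrix}-y_1^{-1}y_2 & -y_1^{-1}y_2^{-1}u\\ 0 & y_1y_2^{-1}\end{pmatrix}$, so the second factor becomes (after the relevant scaling) $(y_1y_2^{-1}-y_1^{-1}y_2-\sqrt{-1}u)^{-\lambda_{2,v}}$, a genuine polynomial in $u$.

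Because of this, the $u$-integral is not a Gaussian but a polynomial-times-Gaussian integral, and its evaluation is a real intermediate step: by \cite[Lemma 7.4]{Ichino2005} it produces a Hermite polynomial $H_{-\lambda_{2,v}}$ against a Gaussian. This Hermite polynomial is indispensable: Lemma \ref{L:4.5} defines $J_n$ with an $H_n$-factor in the integrand, so without it the remaining $(y_1,y_2)$-integral simply does not take the form $J_n$ and cannot be matched to $h_n$ of Lemma \ref{L:Mellin transform}. The paper's proof is otherwise exactly in the spirit of yours — it does the $u$-integral via the Ichino lemma, reduces the residual double $y$-integral (after a change of variables) to a single integral of the form $J_{-\lambda_{2,v}}$, evaluates $J_{-\lambda_{2,v}}$ by Lemma \ref{L:4.5}, recognizes the result as $2a_1^{\lambda_{1,v}+1}a_2^{\lambda_{2,v}}e^{-2\pi a_2^2}h_{-\lambda_{2,v}}(a_1,a_2)$, and finishes with Lemma \ref{L:Mellin transform} and Mellin inversion. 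As a smaller remark: the observation that the lift of $W_v$ to $H_1^\circ(\R)$ is supported in $H_1^\circ(\R)^0$ (because the discrete-series Whittaker functions on $\GL_2(\R)$ vanish on negative diagonal entries) gives a cleaner way to dispose of the $[{\bf a}(-1),{\bf a}(-1)]$-term than the parity argument you sketch, although both work.
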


\begin{proof}
We identify $k_v=\R$ and drop the subscript $v$ for brevity. Without loss of generality, we assume $\kappa_1 \geq \kappa_2$. Let $g = {\rm diag}(a_1,a_2,a_1^{-1},a_2^{-1})$. Note that $W\vert_{H_1^\circ(\R)}$ is supported in $H_1^{\circ}(\R)^0$ and 
\[W([{\bf a}(y_1),{\bf a}(y_2)]) = y_1^{\kappa_1/2}y_2^{\kappa_2/2}e^{-2\pi y_1-2\pi y_2}\]
for $y_1,y_2 >0$. By (\ref{E:archi. integration}) and (\ref{E:equivariant property d.s.}), we have
\begin{align*}
&\mathcal{W}^+(g)= 4a_1^{\lambda_1+2}a_2^{-\lambda_2+2}\int_{0}^{\infty}\int_{0}^{\infty} y_1^{-\lambda_2-2}y_2^{\lambda_2-2}e^{-\pi(a_1^2y_1^{-2}y_2^{-2}+a_2^2y_1^{-2}y_2^2+a_2^{2}y_1^2y_2^{-2}+2y_1^2+2y_2^2)}\\
&\quad\quad\quad\quad\quad\quad\quad\quad\quad\times \int_{\R}(y_1y_2^{-1}-y_1^{-1}y_2-\sqrt{-1}x)^{-\lambda_2}e^{-\pi a_2^2 x^2}e^{2\pi \sqrt{-1}xy_1y_2}\,dx\,dy_1\,dy_2.
\end{align*}
By \cite[Lemma 7.4]{Ichino2005}, we have
\begin{align*}
&\int_{\R}(y_1y_2^{-1}-y_1^{-1}y_2-\sqrt{-1}x)^{-\lambda_2}e^{-\pi a_2^2 x^2}e^{2\pi \sqrt{-1}xy_1y_2}\,dx\\
&=(2\sqrt{\pi})^{\lambda_2}a_2^{\lambda_2-1}H_{-\lambda_2}(\sqrt{\pi} ( a_2 y_1 y_2^{-1}-a_2 y_1^{-1}y_2+a_2^{-1}y_1y_2))e^{-\pi a_2^{-2}y_1^2y_2^2}.
\end{align*}
Therefore, by Lemma \ref{L:4.5},
\begin{align*}
\mathcal{W}^+(g)&= 4(2\sqrt{\pi})^{\lambda_2}a_1^{\lambda_1+2}a_2\int_{0}^{\infty}\int_{0}^{\infty}y_1^{-\lambda_2-2}y_2^{\lambda_2-2}e^{-\pi(a_1^2y_1^{-2}y_2^{-2}+a_2^2y_1^{-2}y_2^2+a_2^{2}y_1^2y_2^{-2}+a_2^{-2}y_1^2y_2^2+2y_1^2+2y_2^2)}\\
&\quad\quad\quad\quad\quad\quad\quad\quad\quad\quad\quad\quad\quad\quad\quad\quad\times H_{-\lambda_2}(\sqrt{\pi} ( a_2 y_1 y_2^{-1}-a_2 y_1^{-1}y_2+a_2^{-1}y_1y_2)) \,dy_1\,dy_2\\
&=4(2\sqrt{\pi})^{\lambda_2}a_1^{\lambda_1+2}a_2\int_{0}^{\infty}y^{\lambda_2-2}e^{-4\pi y^2-2\pi a_2^2}J_{-\lambda_2}(a_2y^{-1}+a_2^{-1}y,\,-a_2y,\,a_1^2y^{-2})\,dy\\
&= 2a_1^{\lambda_1+1}a_2^{\lambda_2}e^{-2\pi a_2^2}h_{-\lambda_2}(a_1,a_2).
\end{align*}
A similar calculation shows that $\mathcal{W}^-(g)=\mathcal{W}^+(g).$ The assertion then follows from Lemma \ref{L:Mellin transform} and the Mellin inversion formula. This completes the proof.
\end{proof}

\begin{lemma}\label{L:local Whittaker archi. 2}
Let $v \in S({\rm PS})$. For $a_1,a_2>0$, we have
\begin{align*}
&\mathcal{W}_v^+({\rm diag}(a_1,a_2,a_1^{-1},a_2^{-1})) \\
&= \mathcal{W}_v^-({\rm diag}(a_1,a_2,a_1^{-1},a_2^{-1})) \\&= 2^{-4}a_1^2a_2\\
&\times\int_{c_1-\sqrt{-1}\infty}^{c_1+\sqrt{-1}\infty}\frac{ds_1}{2\pi \sqrt{-1}}\,\int_{c_2-\sqrt{-1}\infty}^{c_2+\sqrt{-1}\infty}\frac{ds_2}{2\pi \sqrt{-1}}\,\left(\pi a_1a_2^{-1}\right)^{-s_1}\left(\pi a_2^{2}\right)^{-s_2}\\
&\times\Gamma\left(\frac{s_1+\lambda_{1,v}}{2}\right)\Gamma\left(\frac{s_1-\lambda_{1,v}}{2}\right)\Gamma\left(\frac{s_1+\lambda_{2,v}}{2}\right)\Gamma\left(\frac{s_1-\lambda_{2,v}}{2}\right)\\
&\times\Gamma\left(\frac{s_2}{2}+\frac{\lambda_{1,v}+\lambda_{2,v}}{4}\right)\Gamma\left(\frac{s_2}{2}-\frac{\lambda_{1,v}+\lambda_{2,v}}{4}\right)\Gamma\left(\frac{s_2}{2}+\frac{\lambda_{1,v}-\lambda_{2,v}}{4}\right)\Gamma\left(\frac{s_2}{2}-\frac{\lambda_{1,v}-\lambda_{2,v}}{4}\right)\\
&\times\Gamma\left(\frac{s_1+s_2}{2}+\frac{\lambda_{1,v}+\lambda_{2,v}}{4}\right)^{-1}\Gamma\left(\frac{s_1+s_2}{2}-\frac{\lambda_{1,v}+\lambda_{2,v}}{4}\right)^{-1}\\
&\times {}_3F_2\left(\frac{s_1}{2},\frac{s_2}{2}+\frac{\lambda_{1,v}-\lambda_{2,v}}{4}, \frac{s_2}{2}-\frac{\lambda_{1,v}-\lambda_{2,v}}{4} ;\, \frac{s_1+s_2}{2}+\frac{\lambda_{1,v}+\lambda_{2,v}}{4},\frac{s_1+s_2}{2}-\frac{\lambda_{1,v}+\lambda_{2,v}}{4};\,1           \right).
\end{align*}
Here $c_1,c_2 \in \R$ satisfy 
\[
c_1>\max\left\{ |{\rm Re}(\lambda_{1,v})|,\,|{\rm Re}(\lambda_{2,v})|\right\},\quad c_2>\max\left\{\left\vert{\rm Re}\left(\frac{\lambda_{1,v}+\lambda_{2,v}}{2}\right)\right\vert,\,\left\vert{\rm Re}\left(\frac{\lambda_{1,v}-\lambda_{2,v}}{2}\right)\right\vert\right\}.
\]
\end{lemma}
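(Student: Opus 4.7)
The plan mirrors the strategy of Lemma \ref{L:local Whittaker archi. 1}, but with the Cartan decomposition of $H_1^\circ(\R)$ playing the role of the Iwasawa decomposition used in the discrete series case. Identifying $k_v=\R$ and suppressing the subscript $v$, we first exploit the $(\Sp_4(\R)\cap\O(4))$-invariance of $\varphi$ together with (\ref{E:equivariant property PS}) to reduce the integral over $\Delta N(\R)\backslash H_1^\circ(\R)$ to an integral over the Cartan torus. Concretely, using the second presentation in (\ref{E:archi. integration}) and the bi-$\SO(2)$-invariance of $W_i$, the spherical test vector $\varphi$ pulled back by $\omega(g,\,\cdot\,)$ becomes a Gaussian in the radial variables $t_1,t_2$, and $\mathcal{W}^+$ reduces to a double integral in $(t_1,t_2)\in\R_{>0}^2$ of the product $W_1({\bf m}(e^{t_1}))W_2({\bf m}(e^{t_2}))$ against an explicit Gaussian kernel built from $a_1,a_2$. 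A short computation shows that $\mathcal{W}^-=\mathcal{W}^+$, exactly as in the DS case.

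Next, one substitutes the Mellin--Barnes representation of the $K$-Bessel function (as recorded in Remark~\ref{R:normalization}) for each of $W_1$ and $W_2$, so that each $W_i({\bf m}(e^{t_i}))$ becomes a one-dimensional contour integral in a complex variable $u_i$ whose Mellin data involves $\Gamma(\tfrac{u_i\pm\mu_{i,v}}{2})$. After switching the order of integration, the inner $(t_1,t_2)$-integral separates into two Gaussian--power integrals, each of which is a classical beta-type integral evaluating to a product of Gamma functions. This step converts the integrand into a $\Gamma$-factor times a residual Mellin--Barnes integral in $(u_1,u_2)$ whose integrand is a quotient of products of $\Gamma$-functions.

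The final step is the identification with the ${}_3F_2$ expression. After the change of variables $u_1=s_1$, $u_1+u_2=s_1+s_2$ (and using $\mu_{1,v}+\mu_{2,v}$, $\mu_{1,v}-\mu_{2,v}$ as natural parameters consistent with the identification of $\{\lambda_{1,v},\lambda_{2,v}\}$ with $\{\mu_{1,v}\pm\mu_{2,v}\}$ from \S\,\ref{SS:theta lifts}), the $s_1$-contour contains an internal integral of Barnes type that is exactly Barnes's first lemma for ${}_3F_2(\cdots;1)$. Evaluating this inner contour yields the balanced ${}_3F_2$ factor, and collecting all $\Gamma$-factors together with the overall normalization $2^{-4}a_1^2a_2$ produces the stated formula. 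The symmetry of the answer under the Weyl group (cf.\ \cite[Theorem 3.2 and Proposition 3.5]{Ishii2005}) serves as an internal consistency check.

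The main obstacle is organizational rather than conceptual: carefully tracking the Gamma denominators arising from Barnes's lemma so that the result lines up with the generalized hypergeometric series, and verifying that the chosen contours $c_1,c_2$ in the statement dominate all relevant poles (which is governed by the unitarity bound $|\Re(\lambda_{i,v})|<1$ built into $\mathcal{D}$). A secondary technical point is justifying the Fubini interchange; absolute convergence of the Gaussian--Mellin integral on the shifted contours is standard, but has to be checked in the strip where the $_3F_2$-integral converges. Once these analytic points are in place, the computation is a direct reduction to standard Mellin--Barnes identities.
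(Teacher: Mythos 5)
Your opening move is where the proposal goes wrong. You propose to ``reduce the integral over $\Delta N(\R)\backslash H_1^\circ(\R)$ to an integral over the Cartan torus'' using the second (Cartan, $K A K$) presentation in (\ref{E:archi. integration}). But the Cartan decomposition formula is applicable when the integrand is controlled under two-sided $K$-translations. Here the integrand contains the Whittaker functions $W_i$, which are \emph{not} bi-$\SO(2)$-invariant: on the left they transform by the additive character $\psi$ under $N$ (this left $N$-equivariance is precisely what makes the $\Delta N$-quotient meaningful), and they are only $\SO(2)$-invariant on the right. The quotient $\Delta N \backslash H_1^\circ$ is naturally adapted to the Iwasawa ($NAK$) form, which is exactly what the paper uses — the same first presentation in (\ref{E:archi. integration}) that underlies Lemma \ref{L:local Whittaker archi. 1} in the DS case. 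Your framing suggests the DS lemma used Iwasawa while the PS lemma should switch to Cartan, but there is no reason for such a switch and the Cartan coordinates simply do not parametrize the coset. This is a genuine gap: the first reduction step fails, and the ``explicit Gaussian kernel in $(t_1,t_2)$'' you describe does not materialize.

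If one corrects this and uses the Iwasawa presentation as in the paper, the unipotent $x$-variable contributes a one-dimensional Gaussian, and one lands on the double $K$-Bessel integral
\[
8a_1^2a_2\int_{0}^{\infty}\int_{0}^{\infty}y_1^{-1}y_2^{-1}K_{\mu_1}(2\pi y_1^2)K_{\mu_2}(2\pi y_2^2)e^{-\pi(a_1^2y_1^{-2}y_2^{-2}+a_2^2y_1^{-2}y_2^2+a_2^2y_1^2y_2^{-2}+a_2^{-2}y_1^2y_2^2)}\,dy_1\,dy_2.
\]
From there, your Mellin--Barnes and Barnes-lemma computation is a legitimate way to derive the $_3F_2$ expression, but it amounts to rederiving Ishii's formula. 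The paper instead recognizes this double integral as the class-one Whittaker function $W_{(\lambda_1,\lambda_2)}^{(0,0)}((a_1a_2^{-1},a_2^2))$ of \cite[Theorem 3.2]{Ishii2005} and then quotes its Mellin--Barnes expression from \cite[Proposition 3.5]{Ishii2005}, bypassing the Barnes-lemma bookkeeping entirely. So even after repairing the decomposition, your route is considerably more work than necessary and re-proves a cited result.
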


\begin{proof}
We identify $k_v=\R$ and drop the subscript $v$ for brevity. Let $g = {\rm diag}(a_1,a_2,a_1^{-1},a_2^{-1})$. Note that 
\[W([{\bf a}(y_1),{\bf a}(y_2)]) = {\rm sgn}^\varepsilon(y_1y_2)|y_1y_2|^{1/2}K_{\mu_1}(2\pi |y_1|)K_{\mu_2}(2\pi |y_2|)\]
for $y_1,y_2 \in \R^{\times}$. By (\ref{E:archi. integration}) and (\ref{E:equivariant property PS}), we have
\begin{align*}
\mathcal{W}^+(g)& =8 a_1^2a_2^2 \int_{0}^{\infty}\int_{0}^{\infty}y_1^{-2}y_2^{-2}K_{\mu_1}(2\pi y_1^2)K_{\mu_2}(2\pi y_2^2)e^{-\pi(a_1^2y_1^{-2}y_2^{-2}+a_2^2y_1^{-2}y_2^2+a_2^2y_1^2y_2^{-2})}\\
&\quad\quad\quad\quad\quad\quad\quad\quad\quad\quad\quad\quad\quad\quad\quad\quad\quad\times\int_{\R}e^{-\pi a_2^2y_1^{-2}y_2^{-2}x^2}e^{2\pi \sqrt{-1}x}\,dx\,dy_1\,dy_2\\
&=8a_1^2a_2\int_{0}^{\infty}\int_{0}^{\infty}y_1^{-1}y_2^{-1}K_{\mu_1}(2\pi y_1^2)K_{\mu_2}(2\pi y_2^2)e^{-\pi(a_1^2y_1^{-2}y_2^{-2}+a_2^2y_1^{-2}y_2^2+a_2^2y_1^2y_2^{-2}+a_2^{-2}y_1^2y_2^2)}\,dy_1\,dy_2.
\end{align*}
Therefore, in the notation of \cite[Theorem 3.2]{Ishii2005}, we have
\[
\mathcal{W}^+(g)=W_{(\lambda_1,\lambda_2)}^{(0,0)}((a_1a_2^{-1},a_2^2)).
\]
A similar calculation shows that $\mathcal{W}^-(g)=\mathcal{W}^+(g).$ The assertion then follows from \cite[Proposition 3.5]{Ishii2005}.
\end{proof}

\subsection{Proof of Proposition \ref{P:W(1)}}\label{S:proof 1}
Let $g_0, g_1 \in G(\A)$ be the elements defined by
\begin{align*}
g_{0,v} &= \begin{cases}
\diag( 1, 1, \varpi_v^{\frak{c}_v}, \varpi_v^{\frak{c}_v} ) & \mbox{ if $v \nmid \infty$},\\
1 & \mbox{ if $v \mid \infty$},
\end{cases}\\
g_{1,v} &= \begin{cases}
\diag( \varpi_v^{-\frak{c}_v}, 1, \varpi_v^{\frak{c}_v}, 1 ) & \mbox{ if $v \nmid \infty$},\\
1 & \mbox{ if $v \mid \infty$}.
\end{cases}
\end{align*}
Let $\varphi = \bigotimes_v \varphi_v \in S(V^2(\A))$ be the Schwartz function defined in (\ref{E:non-archi Schwartz fun})-(\ref{E:archi Schwartz fun PS}). By (\ref{E:equivariant property of test vector case v=p not divide N_1N_2})-(\ref{E:equivariant property PS}), there exists a constant $C$ such that
\[\theta({\bf f}^\sharp,\varphi)(g) = C\cdot f(gg_0)\]
for $g \in G(\A)$.
To determine $C$, by the normalization of $f$ in (\ref{E:normalization}), it suffices to compute $W_{\theta({\bf f}^\sharp,\varphi)}(g_1).$ The assertion then follows from Lemmas \ref{L:global Whittaker}-\ref{L:local Whittaker finite 2}, \ref{L:local Whittaker archi. 1} and \ref{L:local Whittaker archi. 2}. This completes the proof of Proposition \ref{P:W(1)}.

\section{Explicit Rallis inner product formula}\label{S:cal2}
We keep the notation of $\S\,\ref{S:Yoshida lifts}$ and $\S\,\ref{S:cal1}$. The aim of this section is to prove Proposition \ref{P:Rallis inner product}. We prove it by specializing the Rallis inner product formula in \cite{GQT2014} to the theta lift $\theta({\bf f}^\sharp,\varphi)$. It boils down to the calculation of certain doubling local zeta integrals involving matrix coefficients of $\sigma_v^\sharp$ and the Weil representation $\omega_v$ for each place $v$.

\subsection{Matrix coefficients}

\subsubsection{Hermitian pairings on $\sigma_v$ and $\sigma_v^\sharp$}\label{SS:local pairing}

Let $v$ be a place of $k$. Let $\<\mbox{ },\mbox{ }\>_{i,v} : \mathcal{V}_{i,v}\otimes\overline{\mathcal{V}}_{i,v} \rightarrow \C$ be the $\GL_2(k_v)$-invariant Hermitian pairing defined by
\[
\<W_1,W_2\>_{i,v}=\int_{k_v^{\times}}W_1({\bf a}(t))\overline{W_2({\bf a}(t))}\,d^{\times}t
\]
for $W_1,W_2 \in \mathcal{V}_{i,v}$. 
Let $\mathcal{B}_v : \mathcal{V}_v\otimes \overline{\mathcal{V}}_v\rightarrow \C$ be the $H^{\circ}(k_v)$-invariant Hermitian pairing defined by
\[
\mathcal{B}_v(W_1\otimes W_2,W_3\otimes W_4) = \frac{\zeta_{v}(2)^2}{\zeta_{v}(1)^2}\cdot\frac{\<W_1,W_3\>_{1,v}\<W_2,W_4\>_{2,v}}{L(1,\sigma_{1,v},{\rm Ad})L(1,\sigma_{2,v},{\rm Ad})}
\]
for $W_1,W_3 \in \mathcal{V}_{1,v}$ and $W_2,W_4 \in \mathcal{V}_{2,v}$.
Let $\mathcal{B}_v^{\sharp} : \mathcal{V}_v^{\sharp}\otimes\overline{\mathcal{V}}_v^{\sharp}\rightarrow \C$ be the $H(k_v)$-invariant Hermitian pairing defined as follows:
\begin{itemize}
\item If $v \notin \frak{S}$, then $\mathcal{B}_v^{\sharp}=\mathcal{B}_v$.
\item If $v \in \frak{S}$, then
$$\mathcal{B}_v^{\sharp}((W_1,W_2),(W_3,W_4))=\frac{1}{2}\left[\mathcal{B}_v(W_1,W_3)+\mathcal{B}_v(W_2,W_4)\right]$$
for $(W_1,W_2),\,(W_3,W_4) \in \mathcal{V}_{v}^{\sharp}$.
\end{itemize}

\subsubsection{Matrix coefficients of $\sigma_v$}
Let $v$ be a place of $k$.
Let $W_v$ be the Whittaker function on $H^{\circ}(k_v)$ defined in (\ref{E:Whittaker newform}).

\begin{lemma}\label{E:unramified matrix coefficient}
Let $v \nmid \infty \frak{n}$. We have
\[
\mathcal{B}_v(W_v,W_v)=1.
\]
\end{lemma}

\begin{proof}
The calculation was carried out in \cite[Lemma 6.9]{IP2018}.
\end{proof}

\begin{lemma}\label{E:matrix coefficients}
Let $v \mid \frak{n}_1$. Write
\[\sigma_{1,v} = {\rm St}\otimes {\eta},\quad \sigma_{2,v}  = {\rm Ind}_{B(k_v)}^{\GL_2(k_v)}(\chi\boxtimes\chi^{-1}),\]
where ${\rm St}$ is the Steinberg representation of $\GL_2(k_v)$, and $\chi$  and {$\eta$} are unramified characters of $k_v^{\times}$ such that $\chi= |\mbox{ }|_v^s $ with $|{\rm Re}(s)|<1/2$ and ${\eta^2}=1$. For
\[h_{n,m} = [{\bf a}(\varpi_v^n){\bf d}(\varpi_v^m),{\bf a}(\varpi_v^{n+m})],\quad h_{n,m}' = [w{\bf a}(\varpi_v^n){\bf d}(\varpi_v^m),{\bf a}(\varpi_v^{n+m})]\]
with $m,n \in \Z$ such that $m+n \geq 0$, we have
\begin{align*}
\mathcal{B}_{ v}(\sigma_{ v}(h_{n,m})W_{v },W_{ v}) &= \frac{\zeta_v(2)}{\zeta_v(1)}\cdot\varepsilon^{n-m}\cdot\frac{q_v^{-|n-m|-(n+m)/2}}{1+q_v^{-1}}\cdot\left(\alpha^{n+m}\cdot\frac{1-\alpha^{-2}q_v^{-1}}{1-\alpha^{-2}}+\alpha^{-(n+m)}\cdot\frac{1-\alpha^2q_v^{-1}}{1-\alpha^2}\right),\\
\mathcal{B}_{ v}(\sigma_{v }(h_{n,m}')W_{v },W_{ v}) &= -\frac{\zeta_v(2)}{\zeta_v(1)}\cdot\varepsilon^{n-m}\cdot\frac{q_v^{-|n-m-1|-(n+m)/2}}{1+q_v^{-1}}\cdot\left(\alpha^{n+m}\cdot\frac{1-\alpha^{-2}q_v^{-1}}{1-\alpha^{-2}}+\alpha^{-(n+m)}\cdot\frac{1-\alpha^2q_v^{-1}}{1-\alpha^2}\right),
\end{align*}
where
\begin{align}\label{E:symbols}
\alpha = \chi(\varpi_v),\quad \varepsilon = {\eta}(\varpi_v).\end{align}
\end{lemma}

\begin{proof}
By \cite[Lemmas 6.9 and 6.11]{IP2018},
\[\mathcal{B}_{ v}(W_{v },W_{ v}) = \frac{\zeta_v(2)}{\zeta_v(1)}.\]
The assertion then follows from the formulae for matrix coefficients in \cite{Macdonald1971} and \cite[\S 7]{GJ1972}.
\end{proof}

\begin{lemma}\label{E:coefficient}
Let $v \in S({\rm DS})$. For $h = [k_{\theta_1},k_{\theta_2}][{\bf m}(e^{t_1}),{\bf m}(e^{t_2})][k_{\theta_3},k_{\theta_4}] \in H_1^\circ(\R)^0$ with $k_{\theta_1},k_{\theta_2},k_{\theta_3},k_{\theta_4} \in {\rm SO}(2)$ and $t_1, t_2 \in \R$, we have
\[
\mathcal{B}_{ v}(\sigma_{v }(h)W_{ v},W_{v })  = 2^{-2\lambda_{1,v}-2}e^{\sqrt{-1}(\kappa_{1,v}\theta_1+\kappa_{1,v}\theta_3+\kappa_{2,v}\theta_2+\kappa_{2,v}\theta_4)}\cosh(t_1)^{-\kappa_{1,v}}\cosh(t_2)^{-\kappa_{2,v}}.
\]
For $h \in H^\circ(\R) \setminus H^\circ(\R)^0$, we have $\mathcal{B}_{ v}(\sigma_{v }(h)W_{ v},W_{v }) =0$.
\end{lemma}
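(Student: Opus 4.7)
The proof is a direct computation exploiting the tensor structure of $\mathcal{V}_v$, $W_v$ and $\mathcal{B}_v$. Since $\sigma_v$ acts on $\mathcal{V}_v = \mathcal{V}_{1,v}\otimes\mathcal{V}_{2,v}$ via $[h_1,h_2]\cdot(v_1\otimes v_2) = \sigma_{1,v}(h_1)v_1\otimes \sigma_{2,v}(h_2)v_2$, the definition of $\mathcal{B}_v$ factors as
\[
\mathcal{B}_v(\sigma_v(h)W_v,W_v) = \frac{\zeta_v(2)^2}{\zeta_v(1)^2\,L(1,\sigma_{1,v},\Ad)L(1,\sigma_{2,v},\Ad)}\prod_{i=1,2}\langle \sigma_{i,v}(h_i)W_{i,v},W_{i,v}\rangle_{i,v},
\]
where $h_1 = k_{\theta_1}{\bf m}(e^{t_1})k_{\theta_3}$ and $h_2 = k_{\theta_2}{\bf m}(e^{t_2})k_{\theta_4}$ lie in $\GL_2(\R)^0$. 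The $\SO(2)$-equivariance $\sigma_{i,v}(k_\theta)W_{i,v} = e^{\sqrt{-1}\kappa_{i,v}\theta}W_{i,v}$ together with the $\GL_2(\R)$-invariance of $\langle\,,\,\rangle_{i,v}$ allow one to pull the rotations outside, producing the phases $e^{\sqrt{-1}\kappa_{1,v}(\theta_1+\theta_3)}$ and $e^{\sqrt{-1}\kappa_{2,v}(\theta_2+\theta_4)}$ and reducing each factor to $\langle \sigma_{i,v}({\bf m}(e^{t_i}))W_{i,v},W_{i,v}\rangle_{i,v}$.

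The key computation is this remaining integral. Using the explicit formula $W_{i,v}({\bf a}(t)) = t^{\kappa_{i,v}/2}e^{-2\pi t}$ for $t > 0$ (and $0$ for $t < 0$), which matches the normalization $W_{i,v}(1) = e^{-2\pi}$, together with the identity ${\bf a}(t){\bf m}(e^{t_i}) = e^{-t_i}{\bf a}(te^{2t_i})$ (the central scalar acting trivially by the trivial-central-character assumption), one finds
\[
\langle \sigma_{i,v}({\bf m}(e^{t_i}))W_{i,v},W_{i,v}\rangle_{i,v} = e^{\kappa_{i,v}t_i}\int_0^\infty t^{\kappa_{i,v}-1}e^{-2\pi t(1+e^{2t_i})}\,dt = (4\pi)^{-\kappa_{i,v}}\Gamma(\kappa_{i,v})\cosh(t_i)^{-\kappa_{i,v}},
\]
using $1+e^{2t_i} = 2e^{t_i}\cosh(t_i)$.

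To assemble the constant, one computes the adjoint $L$-factor: the Langlands parameter of the (limit of) discrete series of weight $\kappa$ with trivial central character has $\Ad\phi\cong\operatorname{sgn}\oplus\Ind_{\C^\times}^{W_\R}((z/\bar z)^{\kappa-1})$, giving $L(s,\sigma_{i,v},\Ad) = \Gamma_\R(s+1)\Gamma_\C(s+\kappa_{i,v}-1)$ and hence $L(1,\sigma_{i,v},\Ad) = 2\pi^{-1}(2\pi)^{-\kappa_{i,v}}\Gamma(\kappa_{i,v})$. Substituting this together with $\zeta_v(1)=1$ and $\zeta_v(2)=\pi^{-1}$ into the factored expression above, the Gamma functions and powers of $\pi$ cancel exactly, producing the constant $2^{-2-\kappa_{1,v}-\kappa_{2,v}} = 2^{-2\lambda_{1,v}-2}$ (via $\kappa_{1,v}+\kappa_{2,v} = 2\lambda_{1,v}$), which is the asserted formula.

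Finally, for $h\in H^\circ(\R)\setminus H^\circ(\R)^0$: writing $h=[h_1,h_2]$, the four connected components of $H^\circ(\R)$ are detected by the pair $(\operatorname{sgn}\det h_1,\operatorname{sgn}\det h_2)$, and being outside the identity component forces $\det h_i<0$ for at least one $i$. The restriction $\sigma_{i,v}|_{\SL_2(\R)\cdot Z}$ decomposes as $D_{\kappa_{i,v}}^+\oplus D_{\kappa_{i,v}}^-$ into inequivalent irreducibles swapped by $\sigma=\diag(1,-1)$, and the normalization places $W_{i,v}\in D_{\kappa_{i,v}}^+$; when $\det h_i<0$, one has $\sigma_{i,v}(h_i)W_{i,v}\in D_{\kappa_{i,v}}^-$, so Schur's lemma applied to the $\SL_2(\R)\cdot Z$-invariant pairing forces $\langle \sigma_{i,v}(h_i)W_{i,v},W_{i,v}\rangle_{i,v} = 0$, and therefore the product vanishes. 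The only substantive input is the Gamma integral in the second paragraph; the $L$-factor tabulation and the Schur-style vanishing argument are standard.
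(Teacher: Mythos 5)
Your proof is correct and follows essentially the same route as the paper: the paper simply cites the explicit Whittaker formula $W_{i,v}({\bf a}(y)k_\theta)=e^{\sqrt{-1}\kappa_{i,v}\theta}y^{\kappa_{i,v}/2}e^{-2\pi y}$ (supported on $y>0$) and leaves the ensuing Gamma-integral, $L$-factor, and cancellation computation to the reader, which is precisely what you carry out. One small remark: for the vanishing on the non-identity components you invoke Schur's lemma for $D^+\oplus D^-$, whereas the intended argument is even more elementary and uses only the cited Whittaker formula together with trivial central character, namely ${\bf a}(t)\diag(1,-1)=(-1_2)\,{\bf a}(-t)$, so that $W_{i,v}({\bf a}(t)\diag(1,-1))=W_{i,v}({\bf a}(-t))=0$ for $t>0$ and the integrand in $\langle\,,\,\rangle_{i,v}$ vanishes identically; both arguments are valid.
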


\begin{proof}
The assertion follows from the formulae for the Whittaker functions that
\[
W_{i,v}({\bf a}(y)k_{\theta})=\begin{cases} e^{\sqrt{-1}\kappa_{i,v}\theta}y^{\kappa_{i,v}/2}e^{-2\pi y}&\mbox{ if }y>0,\\
0&\mbox{ if }y<0,\end{cases}
\]
for $i=1,2$, $y \in \R^{\times}$ and $ k_{\theta} \in {\rm SO}(2)$. 
\end{proof}

\subsubsection{Matrix coefficients of the Weil representations}
Let $v$ be a place of $k$ and $\varphi_v \in S(V^2(k_v))$ the Schwartz function defined in (\ref{E:non-archi Schwartz fun})-(\ref{E:archi Schwartz fun PS}). Let $\Phi_v$ be the matrix coefficient of the Weil representation $\omega_v$ defined by
\[
\Phi_v(h_{1,v}) = \int_{V^2(k_v)}\omega_v(1,h_{1,v})\varphi_v(x_v)\overline{\varphi_v(x_v)}\,dx_v
\]
for $h_{1,v} \in H_1(k_v)$. 

\begin{lemma}\label{L:matrix coefficient archimedean case}
Let $v \in S({\rm PS})$. We have
\[
\mathcal{B}_v(W_v,W_v) = 2^{-4}.
\]
\end{lemma}

\begin{proof}
By \cite[6.576.4]{Table2000}, we have
\[
\<W_{i,v}, W_{i,v}\>_{i,v} = \int_\R K_{\mu_{i,v}}(2 \pi |t|)^2 \, dt = 2^{-2} \Gamma \left( \frac{1}{2} + \mu_{i,v} \right) \Gamma \left( \frac{1}{2} - \mu_{i,v} \right)
\]
for $i=1,2$.
Therefore
\[
\mathcal{B}_v(W_v,W_v) = \frac{\zeta_v(2)^2}{\zeta_v(1)^2}\cdot\frac{\<W_{1,v}, W_{1,v}\>_{1,v}\<W_{2,v}, W_{2,v}\>_{2,v}}{L(1,\sigma_{1,v},{\rm Ad})L(1,\sigma_{2,v},{\rm Ad})} = 2^{-4}.
\]

\end{proof}

\begin{lemma}\label{L:matrix coefficient nonarchimedean case}
Let $v$ be a finite place of $k$. Let $n,m \in \Z$. If $v \nmid \frak{n}$, then 
$$\Phi_{v}([{\bf a}(\varpi_v^n){\bf d}(\varpi_v^m),{\bf a}(\varpi_v^{n+m})])= q_v^{-2|n|-2|m|}.$$
If $v \mid \frak{n}_1$, then
\begin{align*}
\Phi_{v}([{\bf a}(\varpi_v^n){\bf d}(\varpi_v^m),{\bf a}(\varpi_v^{n+m})]) & =  q_v^{-2|n|-2|m|-2},\\
\Phi_{v}([w{\bf a}(\varpi_v^n){\bf d}(\varpi_v^m),{\bf a}(\varpi_v^{n+m})]) & =  q_v^{-|n|-|n-1|-|m|-|m+1|-2}.
\end{align*}
If $v\mid \frak{n}_2$, then
\begin{align*}
\Phi_{v}([{\bf a}(\varpi_v^{n+m}),{\bf a}(\varpi_v^n){\bf d}(\varpi_v^m)]) & =  q_v^{-2|n|-2|m|-2},\\
\Phi_{v}([{\bf a}(\varpi_v^{n+m}),w{\bf a}(\varpi_v^n){\bf d}(\varpi_v^m)]) & = q_v^{-|n|-|n-1|-|m|-|m+1|-2}.
\end{align*}
\end{lemma}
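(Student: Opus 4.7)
The plan is to exploit the fact that $\varphi_v = \mathbb{I}_{L\otimes_{\mathfrak{o}}\mathfrak{o}_v}$ is the characteristic function of a lattice, so that the matrix coefficient $\Phi_v$ reduces to a lattice intersection volume. Writing $L_{1,v} = \bigl(\begin{smallmatrix}\mathfrak{n}_{2,v} & \mathfrak{o}_v \\ \mathfrak{n}_v & \mathfrak{n}_{1,v}\end{smallmatrix}\bigr)$ and $L_{2,v} = \M_{2,2}(\mathfrak{o}_v)$, the Weil representation acts on indicator functions by translation, so $\omega_v(1,[h_1,h_2])\varphi_v = \mathbb{I}_{h_1L_{1,v}h_2^{-1}\oplus h_1L_{2,v}h_2^{-1}}$, giving
\[
\Phi_v([h_1,h_2]) = \vol\bigl(h_1L_{1,v}h_2^{-1}\cap L_{1,v}\bigr)\cdot \vol\bigl(h_1L_{2,v}h_2^{-1}\cap L_{2,v}\bigr),
\]
with the Haar measure on $V^2(k_v)$ normalized so that $\vol(\M_{2,2}(\mathfrak{o}_v))=1$ on each summand.

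For diagonal $h_1=\diag(\varpi_v^n,\varpi_v^m)$ with $h_2=\diag(\varpi_v^{n+m},1)$, conjugation acts on the matrix entries by multiplication by $\varpi_v^{-m},\varpi_v^n,\varpi_v^{-n},\varpi_v^m$ in positions $(1,1),(1,2),(2,1),(2,2)$ respectively. Intersecting the resulting fractional-ideal slots with the four slots of $L_{i,v}$ via the elementary identity $\varpi_v^a\mathfrak{o}_v\cap\varpi_v^b\mathfrak{o}_v=\varpi_v^{\max(a,b)}\mathfrak{o}_v$ and multiplying the volumes gives $q_v^{-|m|-|n|}$ from each factor in the unramified case. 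When $v\mid\mathfrak{n}_1$ the lower row of $L_{1,v}$ has valuation shifted by $\varpi_v$, contributing an extra $q_v^{-2}$ to the $L_{1,v}$-factor while leaving the $L_{2,v}$-factor unchanged, which produces the formula $q_v^{-2|n|-2|m|-2}$; the case $v\mid\mathfrak{n}_2$ is parallel after interchanging the roles of $h_1$ and $h_2$ and of the rows and columns of $L_{1,v}$.

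The Weyl-translated case $h_1=w\diag(\varpi_v^n,\varpi_v^m)$ requires a little more care: conjugation now permutes the rows of $x$, producing the entries $\varpi_v^{-n}x_{21},\varpi_v^m x_{22},-\varpi_v^{-m}x_{11},-\varpi_v^n x_{12}$, so the intersection exponents in the second row of $L_{1,v}$ at $v\mid\mathfrak{n}_1$ become $\max(1-n,0)+\max(n,1)=|n-1|+1$ and $\max(m+1,0)+\max(-m,1)=|m+1|+1$. Combined with the $L_{2,v}$-factor $q_v^{-|n|-|m|}$, this yields the exponent $-|n|-|n-1|-|m|-|m+1|-2$ in the claim, and the corresponding $v\mid\mathfrak{n}_2$ formula follows by the same symmetry.

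The argument presents no conceptual obstacle; it is entirely a direct bookkeeping of lattice intersections. The only delicate point is tracking how the Weyl element interacts with the asymmetric lattice $L_{1,v}$ at ramified places, which is precisely the source of the shifts $|n-1|$ and $|m+1|$ in the formula.
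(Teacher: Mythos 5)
Your computation is correct, and it is exactly the direct lattice-volume bookkeeping that the paper has in mind when it states that ``the verification is straightforward and we leave it to the readers.'' The only slip is cosmetic: the identities $\max(1-n,0)+\max(n,1)=|n-1|+1$ and $\max(m+1,0)+\max(-m,1)=|m+1|+1$ group the $(1,1)$ with $(2,2)$ and the $(1,2)$ with $(2,1)$ exponents of the intersected lattice (the slots depending on $n$, respectively on $m$), not the two second-row entries as your wording suggests, but the arithmetic and the resulting formula are right.
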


\begin{proof}
The verification is straightforward and we leave it to the readers.
\end{proof}
 
\begin{lemma}\label{L:matrix coeff. of Weil rep d.s}
Let $v \in S({\rm DS})$. For $a,b>0$, we have
\begin{align*}
\Phi_{v}([{\bf m}(a),{\bf m}(b)]) &= \pi^{-\lambda_{1,v}+\lambda_{2,v}}\Gamma(\lambda_{1,v}+1)\Gamma(-\lambda_{2,v}+1)(ab+a^{-1}b^{-1})^{-2}(ab^{-1}+a^{-1}b)^{-2}\\
&\times\left[(ab+a^{-1}b^{-1})^{-1}+(ab^{-1}+a^{-1}b)^{-1}\right]^{\lambda_{1,v}-\lambda_{2,v}}.
\end{align*}
\end{lemma}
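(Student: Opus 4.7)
The plan is to reduce the calculation of $\Phi_v([{\bf m}(a),{\bf m}(b)])$ to a purely Gaussian integral via a generating function trick. Since $\nu([{\bf m}(a),{\bf m}(b)]) = 1$, the definition of $\omega_v$ together with the identification $\rho(h_1,h_2)x = h_1 x h_2^{-1}$ gives
\[
 \omega_v(1, [{\bf m}(a),{\bf m}(b)])\varphi_v(x,y) = \varphi_v\bigl({\bf m}(a^{-1})x\,{\bf m}(b),\, {\bf m}(a^{-1})y\,{\bf m}(b)\bigr),
\]
with no determinant factor. In view of the explicit shape of $\varphi_v$ in (\ref{E:archi Schwartz fun DS1}) (and symmetrically (\ref{E:archi Schwartz fun DS2})), the integral $\Phi_v$ factors over $V\times V$ into a product $I(a,b;\lambda_{1,v})\cdot J(a,b;-\lambda_{2,v})$, where each factor is of the form $\int_{\R^4} R(x)^k\overline{R(x)}^k\,e^{-\pi\sum_i \mu_i x_i^2}\,dx$ with $R$ the relevant linear form ($P(x) = -\sqrt{-1}x_1 - x_2 - x_3 + \sqrt{-1}x_4$ for $I$, $Q(y) = y_1 + \sqrt{-1}y_2 - \sqrt{-1}y_3 + y_4$ for $J$) and with $(\mu_1,\mu_2,\mu_3,\mu_4) = (1+a^{-2}b^{2},\,1+a^{-2}b^{-2},\,1+a^{2}b^{2},\,1+a^{2}b^{-2})$ coming from the sum of the exponents $-\pi\|x\|^2 - \pi\|{\bf m}(a^{-1})x{\bf m}(b)\|^2$.

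Next I would introduce the generating function
\[
 \mathcal{G}(\alpha,\beta) = \int_{\R^4} e^{\alpha P({\bf m}(a^{-1})x{\bf m}(b))+\beta\overline{P(x)}}\,e^{-\pi\sum_i\mu_i x_i^2}\,dx,
\]
so that $(\lambda_{1,v}!)^{-2}\cdot I(a,b;\lambda_{1,v})$ is the coefficient of $\alpha^{\lambda_{1,v}}\beta^{\lambda_{1,v}}$. The standard Gaussian formula gives
\[
 \mathcal{G}(\alpha,\beta) = \Bigl(\prod_i \mu_i\Bigr)^{-1/2}\exp\!\Bigl(\tfrac{1}{4\pi}\sum_i(\alpha L_i+\beta M_i)^2/\mu_i\Bigr),
\]
where $(L_i)$ and $(M_i)$ are the coefficients of $P({\bf m}(a^{-1})x{\bf m}(b))$ and $\overline{P(x)}$ respectively.

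The key structural input is that the vector $\xi\in V\otimes\C$ representing the linear form $P$ via the pairing $x\mapsto\tr(\xi x)$ satisfies $\det(\xi)=0$, i.e.\ $\xi$ is isotropic for $(V,\det)$; the same is true for its complex conjugate. Because the quadratic form has the diagonal expression $\sum\mu_i^{-1}L_i^2$ and $\sum\mu_i^{-1}M_i^2$ (up to the weights $\mu_i^{-1}$ coming from the rescaling) arising from isotropic vectors, a direct algebraic check gives $\sum L_i^2/\mu_i = 0 = \sum M_i^2/\mu_i$. A short computation shows the cross-term simplifies as
\[
 \sum_i L_iM_i/\mu_i = \frac{2ab}{a^2+b^2}+\frac{2ab}{1+a^2b^2} = \frac{2}{ab^{-1}+a^{-1}b}+\frac{2}{ab+a^{-1}b^{-1}},
\]
while $\prod_i\mu_i = [(ab+a^{-1}b^{-1})(ab^{-1}+a^{-1}b)]^{2}$. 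Hence $\mathcal{G}(\alpha,\beta) = [(ab+a^{-1}b^{-1})(ab^{-1}+a^{-1}b)]^{-1}\cdot\exp\!\bigl(\tfrac{\alpha\beta}{\pi}\bigl[(ab+a^{-1}b^{-1})^{-1}+(ab^{-1}+a^{-1}b)^{-1}\bigr]\bigr)$. Extracting the $\alpha^{\lambda_{1,v}}\beta^{\lambda_{1,v}}$ coefficient yields
\[
 I(a,b;\lambda_{1,v}) = \lambda_{1,v}!\,\pi^{-\lambda_{1,v}}\,(ab+a^{-1}b^{-1})^{-1}(ab^{-1}+a^{-1}b)^{-1}\bigl[(ab+a^{-1}b^{-1})^{-1}+(ab^{-1}+a^{-1}b)^{-1}\bigr]^{\lambda_{1,v}}.
\]

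Finally, the identical scheme applies to $J(a,b;-\lambda_{2,v})$: the vector representing $Q$ is also $\det$-isotropic (and so is its conjugate), and one finds the same values for $\sum L_iM_i/\mu_i$ and $\prod\mu_i$. One obtains an analogous closed form for $J$, and multiplying $I\cdot J$ produces precisely the claimed formula with Gamma factors $\Gamma(\lambda_{1,v}+1)\Gamma(-\lambda_{2,v}+1)$. The $\kappa_{1,v}\le\kappa_{2,v}$ case using (\ref{E:archi Schwartz fun DS2}) is handled by the same argument; the relevant vector differs only by a sign of $\sqrt{-1}$ on two coordinates, which does not affect isotropy. The main obstacle is the bookkeeping of the eight coefficients $L_i,M_i$ and the verification of the vanishing $\sum L_i^2/\mu_i=0$; this is where the isotropy of $\xi$ (rather than just its definition) is essential, and it is what collapses the generating function from a general quadratic exponential in $(\alpha,\beta)$ to a pure $\alpha\beta$-exponential whose coefficient extraction is elementary.
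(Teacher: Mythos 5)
Your proof is correct, and it is at bottom the same Gaussian computation the paper performs, repackaged. The paper's proof defines $f_n(z) = \nabla^n f_0(z)$ and then runs through an explicit, somewhat labyrinthine chain of linear substitutions $(z_i)\to(u_i)\to(Z,\overline Z,Z',\overline Z')\to(u,u',v,v')$ whose effect is precisely to diagonalize the quadratic form in the exponent and identify $\nabla$ with $(2\pi\sqrt{-1})^{-2}\partial^2/\partial u\,\partial v$; the conclusion is that $f_n(0)$ depends only on the $uv$-coefficient of the exponent. Your generating-function formulation $\mathcal{G}(\alpha,\beta)=(\prod\mu_i)^{-1/2}\exp\bigl(\tfrac{1}{4\pi}\sum(\alpha L_i+\beta M_i)^2/\mu_i\bigr)$ expresses exactly the same fact, and your key observation $\sum L_i^2/\mu_i = \sum M_i^2/\mu_i = 0$ is the invariant content of the paper's change of variables: it says the quadratic form in $(\alpha,\beta)$ has no $\alpha^2$ or $\beta^2$ term, so the coefficient extraction reduces to a pure $\alpha\beta$-exponential. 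I have checked your computations of $\sum L_i^2/\mu_i$, $\sum M_i^2/\mu_i$, $\sum L_iM_i/\mu_i$, and $\prod\mu_i$ (for both the $x$- and $y$-factors, and in both cases (\ref{E:archi Schwartz fun DS1}) and (\ref{E:archi Schwartz fun DS2})), and they all come out as you claim; the resulting closed forms for $I$ and $J$ multiply out to the statement of the lemma. Your approach has the merit of avoiding the nested substitutions entirely and making the cancellation transparent.

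One small caveat: the causal role you ascribe to isotropy of $\xi$ is overstated. The verified identity $\sum L_i^2/\mu_i=0$ does not follow from $\det(\xi)=0$ alone. What actually makes it go through is a combination of (i) $\sum_i P_i^2 = 0$ (the ``complexified Euclidean'' norm, not the determinant form, vanishes on $\xi$), (ii) the multiplicative pairing $c_1c_4 = c_2c_3 = 1$ of the rescaling weights coming from $\det({\bf m}(a^{-1}))\det({\bf m}(b))=1$, and (iii) the equalities $P_1^2 = P_4^2$, $P_2^2 = P_3^2$. Isotropy of $\xi$ for $(V,\det)$ holds as well, but it is neither the only input nor the one driving the cancellation. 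Since you do carry out the ``direct algebraic check'' anyway, this is a matter of exposition rather than a gap, but you should not present isotropy as the structural reason for the collapse without isolating which identity is actually used.
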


\begin{proof}
We identify $k_v=\R$ and drop the subscript $v$ for brevity. Without loss of generality, we assume $\kappa_1 \geq \kappa_2$. For $n \in \Z_{\geq 0}$ and $z=(z_1,z_2,z_3,z_4) \in \C^4$, let 
\begin{align*}
f_n(z) &= \int_{\R^4}(a^{-1}x_1-a^{-1}\sqrt{-1}x_2-a\sqrt{-1}x_3-ax_4)^n(b^{-1}x_1+b\sqrt{-1}x_2+b^{-1}\sqrt{-1}x_3-bx_4)^{n}\\
&\quad\times e^{-\pi\left[ (a^{-2}+b^{-2})x_1^2+ (a^{-2}+b^{2})x_2^2+(a^{2}+b^{-2})x_3^2+(a^{2}+b^{2})x_4^2        \right]}e^{2\pi \sqrt{-1}(z_1x_1+z_2x_2+z_3x_3+z_4x_4)}\,dx_1\,dx_2\,dx_3\,dx_4.
\end{align*}
Then \[f_{0}(z) = (ab+a^{-1}b^{-1})^{-1}(ab^{-1}+a^{-1}b)^{-1}e^{-\pi\left[ (a^{-2}+b^{-2})^{-1}z_1^2+ (a^{-2}+b^{2})^{-1}z_2^2+(a^{2}+b^{-2})^{-1}z_3^2+(a^{2}+b^{2})^{-1}z_4^2        \right]}\]
and $f_n(z) = \nabla^n f_{0}(z)$, where  
\begin{align*}
\nabla = \frac{1}{(2\pi\sqrt{-1})^2}&\left(a^{-1}\frac{\partial}{\partial z_1}-a^{-1}\sqrt{-1}\frac{\partial}{\partial z_2}-a\sqrt{-1}\frac{\partial}{\partial z_3}-a\frac{\partial}{\partial z_4}\right)\\
\times&\left(b^{-1}\frac{\partial}{\partial z_1}+b\sqrt{-1}\frac{\partial}{\partial z_2}+b^{-1}\sqrt{-1}\frac{\partial}{\partial z_3}-b\frac{\partial}{\partial z_4}\right).
\end{align*}
We claim that
\begin{align}\label{E:1}
f_n(0) = \pi^{-n}\Gamma(n+1)(ab+a^{-1}b^{-1})^{-1}(ab^{-1}+a^{-1}b)^{-1}\left[(ab+a^{-1}b^{-1})^{-1}+(ab^{-1}+a^{-1}b)^{-1}\right]^n.
\end{align}

Assume the claim holds. We have
\[
\Phi ([{\bf m}(a),{\bf m}(b)])= \int_{V^2(\R)}\varphi\left({\bf m}(a)^{-1}x,{\bf m}(a)^{-1}y\right)\overline{\varphi(x{\bf m}(b)^{-1},y{\bf m}(b)^{-1})}\,dx\,dy.
\]
Making a change of variables from $(y_1,y_3)$ to $(-y_1,-y_3)$, we see that
\begin{align*}
\Phi([{\bf m}(a),{\bf m}(b)]) &= f_{\lambda_1}(0)f_{-\lambda_2}(0)\\
&=\pi^{-\kappa_1}\Gamma(\lambda_1+1)\Gamma(-\lambda_2+1)(ab+a^{-1}b^{-1})^{-2}(ab^{-1}+a^{-1}b)^{-2}\\
&\times\left[(ab+a^{-1}b^{-1})^{-1}+(ab^{-1}+a^{-1}b)^{-1}\right]^{\kappa_1}.
\end{align*}
Here the last equality follows from (\ref{E:1}).

It remains to prove the claim (\ref{E:1}). Let
\[
D= \{(a,b) \in \C^2 \mbox{ }\vert\mbox{ }ab(ab+a^{-1}b^{-1})(ab^{-1}+a^{-1}b) \neq 0 \}.
\]
Let $(a,b) \in D$ be such that $(1+ab^{-1})(1+a^{-1}b^{-1})(1-a^{-1}b)(1-ab) \neq 0$.
We make the following change of variables:
\begin{align*}
&\begin{cases}
z_1 = a^{-1}u_1+b^{-1}u_4,\\
z_4 = -bu_1+au_4,
\end{cases}
\quad 
\begin{cases}
z_2 = a^{-1}u_2+bu_3,\\
z_3 =  -b^{-1}u_2+au_3,
\end{cases}\\
&\begin{cases}
Z = (1+ab^{-1})^{-1}u_1 + (1+a^{-1}b^{-1})^{-1}\sqrt{-1}u_3 ,\\
\overline{Z} = (1+ab^{-1})^{-1}u_1 - (1+a^{-1}b^{-1})^{-1}\sqrt{-1}u_3,
\end{cases}
\quad
\begin{cases}
Z' = (1-a^{-1}b)^{-1}u_4 + (1-ab)^{-1}\sqrt{-1}u_2,\\
\overline{Z'} = (1-a^{-1}b)^{-1}u_4 - (1-ab)^{-1}\sqrt{-1}u_2,
\end{cases}\\
&\begin{cases}
Z = u+ u',\\
\overline{Z'} = -u+u',
\end{cases}
\quad
\begin{cases}
\overline{Z} = v+v',\\
Z' = v-v'.
\end{cases}
\end{align*}
Then
\begin{align*}
f_0(z) &= (ab+a^{-1}b^{-1})^{-1}(ab^{-1}+a^{-1}b)^{-1} e^{-4\pi\left[(ab+a^{-1}b^{-1})^{-1}+(ab^{-1}+a^{-1}b)^{-1}\right](uv+u'v')}\\
&\times e^{-4\pi\left[(ab^{-1}+a^{-1}b)^{-1}-(ab+a^{-1}b^{-1})^{-1}\right](uu'+vv') - 4\pi(uv'+u'v)},\\
\nabla &= \frac{1}{(2\pi \sqrt{-1})^2}\cdot\frac{\partial^2}{\partial u \partial v}.
\end{align*}
It follows that 
\begin{align*}
f_n(0) &= (-4\pi^2)^{-n}(n!)^2(ab+a^{-1}b^{-1})^{-1}(ab^{-1}+a^{-1}b)^{-1}\\
&\times \left(\mbox{the coefficient of $u^nv^n$ in the Taylor expansion of $e^{-4\pi\left[(ab+a^{-1}b^{-1})^{-1}+(ab^{-1}+a^{-1}b)^{-1}\right]uv}$ at (0,0)}\right)\\
&=\pi^{-n}\Gamma(n+1)(ab+a^{-1}b^{-1})^{-1}(ab^{-1}+a^{-1}b)^{-1}\left[(ab+a^{-1}b^{-1})^{-1}+(ab^{-1}+a^{-1}b)^{-1}\right]^n.
\end{align*}
Therefore (\ref{E:1}) holds in this case.
Since both sides of (\ref{E:1}) are holomorphic functions on $D$, we conclude that they are equal on $D$. In particular, (\ref{E:1}) holds for $a,b>0$. This completes the proof.
\end{proof}

\begin{lemma}\label{L:matrix coeff. of Weil rep PS}
Let $v \in S({\rm PS})$. For $a,b>0$, we have
\[
\Phi_v([{\bf m}(a),{\bf m}(b)]) = (ab+a^{-1}b^{-1})^{-2}(ab^{-1}+a^{-1}b)^{-2}.
\]
\end{lemma}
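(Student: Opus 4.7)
The plan is to reduce the integral defining $\Phi_v$ to an elementary Gaussian computation, using that the test vector in the PS case has no polynomial factor. Explicitly, for $h = [{\bf m}(a),{\bf m}(b)]$ one has $\nu(h) = \det({\bf m}(a){\bf m}(b)^{-1}) = 1$, so by the definition of the extended Weil representation in (\ref{Weil rep on R}),
\[
\omega_v(1,h)\varphi_v(x,y) = \varphi_v(h^{-1}\cdot x,\,h^{-1}\cdot y),
\]
where $h^{-1}\cdot z = {\bf m}(a)^{-1} z\, {\bf m}(b)$ for $z \in V$. Since $\varphi_v(x,y) = e^{-\pi\mathrm{tr}(x{}^tx)}e^{-\pi\mathrm{tr}(y{}^ty)}$ factors as a product, we obtain
\[
\Phi_v([{\bf m}(a),{\bf m}(b)]) = \Bigl(\int_V \varphi_v^0(h^{-1}\cdot z)\,\overline{\varphi_v^0(z)}\,dz\Bigr)^2,
\]
with $\varphi_v^0(z) = e^{-\pi\mathrm{tr}(z{}^tz)}$.

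Writing $z = \begin{pmatrix} z_1 & z_2 \\ z_3 & z_4 \end{pmatrix}$, so that $h^{-1}\cdot z$ has entries $(a^{-1}b z_1, a^{-1}b^{-1}z_2, ab z_3, ab^{-1} z_4)$, the exponent in $\varphi_v^0(h^{-1}\cdot z)\overline{\varphi_v^0(z)}$ is
\[
-\pi\bigl[(1+a^{-2}b^2)z_1^2 + (1+a^{-2}b^{-2})z_2^2 + (1+a^2b^2)z_3^2 + (1+a^2b^{-2})z_4^2\bigr].
\]
A standard Gaussian evaluation gives
\[
\int_V \varphi_v^0(h^{-1}\cdot z)\overline{\varphi_v^0(z)}\,dz = \bigl[(1+a^{-2}b^2)(1+a^{-2}b^{-2})(1+a^{2}b^{2})(1+a^{2}b^{-2})\bigr]^{-1/2}.
\]

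The remaining step is the purely algebraic identity pairing up the four factors: one verifies
\[
(1+a^{-2}b^2)(1+a^{2}b^{-2}) = 2 + a^{-2}b^2 + a^{2}b^{-2} = (ab^{-1}+a^{-1}b)^2,
\]
\[
(1+a^{-2}b^{-2})(1+a^{2}b^{2}) = 2 + a^{-2}b^{-2} + a^{2}b^{2} = (ab+a^{-1}b^{-1})^2.
\]
Substituting and squaring yields the claimed formula. There is no real obstacle here; the lemma is the $\lambda_{1,v} = \lambda_{2,v} = 0$ specialization of Lemma \ref{L:matrix coeff. of Weil rep d.s}, and the above outline is essentially the specialization of that argument with the polynomial factor $f_n$ replaced by $f_0$, so the manipulation is just the Gaussian computation plus the factorization trick.
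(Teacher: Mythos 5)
Your proof is correct and follows essentially the same route the paper intends: the paper's own proof defers to the DS computation in Lemma \ref{L:matrix coeff. of Weil rep d.s}, and your argument is precisely the $n=0$ specialization of that Gaussian calculation, where the absence of the polynomial factor makes the evaluation a one-line Gaussian integral followed by the factorization trick. (The paper's DS proof applies a preliminary change of variables $x\mapsto x{\bf m}(b)^{-1}$ to split the $a$- and $b$-dependence between $\varphi$ and $\overline{\varphi}$ before integrating, whereas you compute the exponent directly from $h^{-1}\cdot z={\bf m}(a)^{-1}z{\bf m}(b)$; the resulting quartic products are algebraically equal, so this is a cosmetic difference only.)
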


\begin{proof}
The calculation is similar to the one in Lemma \ref{L:matrix coeff. of Weil rep d.s}. We leave the details to the readers.
\end{proof}

\subsection{Rallis inner product formula}
Let $\mathcal{B}_{\sigma} : \mathcal{V}_{\sigma}\otimes \overline{\mathcal{V}}_{\sigma} \rightarrow \C$ and $\mathcal{B}_{\sigma^{\sharp}} : \mathcal{V}_{\sigma^{\sharp}}\otimes \overline{\mathcal V}_{\sigma^{\sharp}} \rightarrow \C$ be the Petersson pairings defined by
\begin{align*}
\mathcal{B}_{\sigma}(f_1,f_2) &= \int_{Z_{H}(\A)H^{\circ}(k)\backslash H^{\circ}(\A)}f_1(h_0)\overline{f_2(h_0)}\,dh_0,\\
\mathcal{B}_{\sigma^{\sharp}}(f_3,f_4) &= \int_{Z_{H}(\A)H(k)\backslash H(\A)}f_3(h)\overline{f_4(h)}\,dh
\end{align*}
for $f_1,f_2 \in \mathcal{V}_{\sigma}$ and $f_3,f_4 \in \mathcal{V}_{\sigma^{\sharp}}$. Here $Z_H$ is the center of $H$, and $dh_0$ and $dh$ are the Tamagawa measures on $Z_H(\A)\backslash H^{\circ}(\A)$ and $Z_H(\A)\backslash H(\A)$, respectively. Note that ${\rm vol}(Z_H(\A)H^{\circ}(k)\backslash H^{\circ}(\A))=4$ and ${\rm vol}(Z_H(\A)H(k)\backslash H(\A))=2$. 
By \cite[Proposition 6]{Wald1985} and \cite[Lemma 2.3]{GI2011}, we have
\begin{align}
\mathcal{B}_{\sigma} &= \frak{D}^{-1}\cdot\frac{4}{\xi(2)^2}\cdot L(1,\sigma,{\rm Ad})\cdot \prod_v\mathcal{B}_v,\\
\mathcal{B}_{\sigma^{\sharp}} & = \frak{D}^{-1}\cdot\frac{4}{\xi(2)^2}\cdot L(1,\sigma,{\rm Ad})\cdot \prod_v\mathcal{B}_v^{\sharp}\label{E:ratio of measures2}.
\end{align}
Here $\mathcal{B}_v$ and $\mathcal{B}_v^{\sharp}$ are the local Hermitian pairings defined in \S\,\ref{SS:local pairing}. 

Let $\mathcal{B}_{\omega} : \mathcal{S}(V^2(\A))\otimes \overline{\mathcal{S}(V^2(\A))} \rightarrow \C$ be the canonical pairing defined by
\[\mathcal{B}_{\omega}(\varphi_1,\varphi_2) = \int_{V^2(\A)}\varphi_1(x)\overline{\varphi_2(x)}\,dx,\]
where $dx$ is the Tamagawa measure on $V^2(\A)$. For each place $v$ of $k$, let $\mathcal{B}_{\omega_v} : \mathcal{S}(V^2(k_v))\otimes \overline{\mathcal{S}(V^2(k_v))} \rightarrow \C$ be the canonical pairing defined by
\[\mathcal{B}_{\omega_v}(\varphi_1,\varphi_2) = \int_{V^2(k_v)}\varphi_1(x_v)\overline{\varphi_2(x_v)}\,dx_v,\]
where $dx_v$ is defined by the Haar measure on $k_v$ in \S\,\ref{SS:measures}. Then we have
\begin{align}\label{E:ratio of measures3}
\mathcal{B}_\omega(\varphi_1,\varphi_2) = \frak{D}^{-4}\cdot \prod_v \mathcal{B}_{\omega_v}(\varphi_{1,v},\varphi_{2,v})
\end{align}
for $\varphi_1= \bigotimes_v \varphi_{1,v}$, $\varphi_2= \bigotimes_v \varphi_{2,v} \in \mathcal{S}(V^2(\A))$.

\begin{thm}[Rallis inner product formula]\label{T:RIF}
Let $f = \bigotimes_v f_v \in \sigma^\sharp$ and $\varphi=\bigotimes_v\varphi_v \in \calS(V^2(\A))$. We have
\[
\left\<\theta({f},\varphi),\theta({f},\varphi) \right\> = \frak{D}^{-8}\cdot\frac{4}{\xi(2)^4}\cdot\frac{L(1,\pi,{\rm Ad})}{\Delta_{\PGSp_4}}\cdot \prod_{v}Z_v(f_v,\varphi_v),
\]
where 
\[
Z_v(f_v,\varphi_v) = \frac{\zeta_{v}(2)\zeta_{v}(4)}{L(1,\sigma_v,{\rm std})}\cdot \int_{H_1(k_v)}\mathcal{B}_{\omega_v}(\omega_v(1,h_{1,v})\varphi_v,\varphi_v)\mathcal{B}_v^{\sharp}(\sigma_v^{\sharp}(h_{1,v}){f}_v,{f}_v)\,dh_{1,v}.
\]
\end{thm}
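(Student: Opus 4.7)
The proof is a specialization of the standard Rallis inner product formula to the similitude theta lift from $H = {\rm GO}(V)$ (with $V = {\rm M}_{2,2}$ split $4$-dimensional, so $H^\circ = \GSO(V) \cong (\GL_2 \times \GL_2)/\GG_m$) to $G = \GSp_4$. We sketch the approach.

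Substituting the definition of $\theta(f,\varphi)$ twice into $\langle \theta(f,\varphi), \theta(f,\varphi)\rangle$ and exchanging the order of integration (handling the similitude extension of $\omega$ to ${\rm G}(\Sp_4 \times H_1)$ as in \S\ref{SS:theta lifts}) expresses the inner product as
\[
 \int\int f(h_1)\,\overline{f(h_2)}\,\biggl[\int_{[\Sp_4]} \Theta(g,h_1;\varphi)\,\overline{\Theta(g,h_2;\varphi)}\,dg\biggr] dh_1\,dh_2.
\]
The bracketed inner integral is a theta integral for the dual pair $(\Sp_4,\,{\rm O}(V\oplus V^-))$ on the $8$-dimensional split quadratic space. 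Since $\dim(V\oplus V^-)=8 > 2\cdot 2+2$, this lies in the convergent range of the Kudla--Rallis Siegel--Weil formula, and equals the restriction to $(h_1,h_2)\in H_1\times H_1\subset {\rm O}(V\oplus V^-)$ of a Siegel Eisenstein series on ${\rm O}_{4,4}$ induced from a character of the Siegel parabolic with Levi $\GL_4$.

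Unfolding this Eisenstein series against $f(h_1)\overline{f(h_2)}$ via the Piatetski-Shapiro--Rallis doubling method for orthogonal groups yields the global $L$-value $L(1,\sigma,{\rm std}) = L(1,\sigma_1\times\sigma_2)$ (using the isogeny ${\rm SO}_{2,2}\sim(\SL_2\times\SL_2)/\mu_2$, under which the standard $4$-dimensional representation of $\SO(V)$ corresponds to $\sigma_1\otimes\sigma_2$), times an Eulerian product of local zeta integrals matching those appearing in $Z_v(f_v,\varphi_v)$ after absorbing the local normalizing factors from the Eisenstein series. To identify this with $L(1,\pi,\Ad)$, we use the factorization
\[
 L(s,\pi,\Ad) = L(s,\Pi,\Sym^2) = L(s,\sigma_1,\Ad)\,L(s,\sigma_2,\Ad)\,L(s,\sigma_1\times\sigma_2) = L(s,\sigma,\Ad)\,L(s,\sigma_1\times\sigma_2),
\]
which follows from $\Pi=\sigma_1\boxplus\sigma_2$ together with $\Sym^2\sigma_i=\Ad\,\sigma_i$ for trivial central character; hence $L(1,\sigma_1\times\sigma_2)=L(1,\pi,\Ad)/L(1,\sigma,\Ad)$, and the $L(1,\sigma,\Ad)$ cancels against the Petersson factorization \eqref{E:ratio of measures2}.

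The remaining constants are assembled as follows: the $\frak{D}^{-8}$ combines the discriminant contributions from \eqref{E:ratio of measures2} (squared) and \eqref{E:ratio of measures3}; the $4/\zeta(2)^4$ arises as $(4/\zeta(2)^2)^2$ from the squared Petersson factor; and $\Delta_{\PGSp_4}=\zeta(2)\zeta(4)$ emerges from the global normalizing zeta factors of the Eisenstein series at the critical value, whose local counterparts reassemble into the $\zeta_v(2)\zeta_v(4)/L(1,\sigma_v,{\rm std})$ appearing in $Z_v$. The main obstacle is the consistent similitude adaptation of Siegel--Weil, requiring careful tracking of $\nu$-character contributions and measure normalizations on $\AA^\times/(\AA^\times)^2 k^\times$; a subsidiary check is the holomorphy of the Eisenstein series at the critical value, which follows from $\sigma_1 \not\simeq \sigma_2^\vee$ (itself a consequence of $\sigma \ne \sigma\circ{\rm Ad}(\mathbf{t})$, our endoscopic assumption).
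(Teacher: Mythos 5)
Your proposal is a sketch of how one would \emph{prove} the Rallis inner product formula in this setting from scratch: express $\langle\theta(f,\varphi),\theta(f,\varphi)\rangle$ as a double integral over $[H_1]\times[H_1]$ against $\int_{[\Sp_4]}\Theta(g,h_1;\varphi)\overline{\Theta(g,h_2;\varphi)}\,dg$, identify the inner integral via Siegel--Weil with a Siegel Eisenstein series on $\mathrm{O}_{4,4}$, and unfold by the doubling method. That is indeed the underlying mechanism, and your factorization
$L(s,\pi,\Ad)=L(s,\sigma,\Ad)\,L(s,\sigma_1\times\sigma_2)=L(s,\sigma,\Ad)\,L(s,\sigma,\mathrm{std})$
is correct and is the same identity the paper records (in the form $L(s,\pi,\Ad)=L(s,\sigma,\mathrm{std})\cdot L(s,\sigma,\Ad)$). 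However, the paper does not re-derive any of this: the proof of Theorem \ref{T:RIF} is a single citation of Gan--Qiu--Takeda \cite[Theorem 11.3]{GQT2014} (and \cite[\S 7]{GI2011} for the similitude adaptation), followed by tracking the discrepancies between the Tamagawa measures entering \eqref{E:ratio of measures}, \eqref{E:ratio of measures2}, \eqref{E:ratio of measures3} and the local measures used in $Z_v$. The entire content of the paper's proof is precisely the measure and pairing bookkeeping that produces the prefactor $4\mathfrak{D}^{-8}\zeta(2)^{-4}L(1,\sigma,\Ad)$.

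This means that the item you flag as the ``main obstacle'' — the similitude adaptation of Siegel--Weil, the treatment of the $\nu$-character, and the choice of measures on $\AA^\times/(\AA^\times)^2k^\times$ — is not something left for later in the paper's argument; it is the \emph{entire} argument, and it is dispatched by invoking GQT, where these issues are resolved once and for all. Your sketch therefore re-proves a theorem the paper cites, while leaving unaddressed exactly the step the paper actually carries out. Your sketch also contains a minor imprecision worth flagging: the Siegel--Weil formula you need is the one relating $\int_{[\Sp_4]}\Theta^{\square}\,dg$ to an Eisenstein series on $\mathrm{O}_{4,4}$ at a critical point; whether this lands in Weil's convergent range, the first-term range, or the second-term range depends on how one tracks the shift $s_0$, and should be read off from GQT's Theorem 11.3 directly rather than from the heuristic inequality $8>2n+2$, which is the criterion for the opposite direction $\int_{[\mathrm{O}]}$. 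Likewise, what the hypothesis $\sigma_1\not\simeq\sigma_2^\vee$ (equivalently $\sigma\neq\sigma\circ\Ad(\mathbf{t})$) actually buys in GQT's framework is the cuspidality of $\theta(\sigma^\sharp)$ and the non-vanishing/finiteness of the relevant $L$-value, not simply holomorphy of the Eisenstein series; conflating the two obscures why the formula holds without extra regularization here.
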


\begin{proof}
Since 
\[
 L(s, \pi, \Ad) = L(s, \sigma, {\rm std}) \cdot L(s, \sigma, {\rm Ad}), 
\]
the assertion is a special case of the Rallis inner product formula proved in \cite[Theorem 11.3]{GQT2014} (see also \cite[\S 7]{GI2011}).
Here the extra factor $4 \mathfrak{D}^{-8} \xi(2)^{-4} L(1, \sigma, {\rm Ad})$ is due to the ratio of the Haar measures in (\ref{E:ratio of measures}), (\ref{E:ratio of measures2}), and (\ref{E:ratio of measures3}).
\end{proof}

\subsection{Calculation of local integrals}
Let $v$ be a place of $k$.
Let ${\bf f}_v^\sharp \in \mathcal{V}_v^\sharp$ be the local component of ${\bf f}^\sharp$ with respect to the isomorphism in (\ref{E:iso}), and 
$\varphi_v \in S(V^2(k_v))$ the Schwartz function defined in (\ref{E:non-archi Schwartz fun})-(\ref{E:archi Schwartz fun PS}).
Let $\Psi_v$ and $\Phi_v$ be the matrix coefficients of $\sigma_v^{\sharp}$ and $\omega_v$, respectively, defined by
\begin{align*}
\Psi_v(h_v) &= \mathcal{B}_v^{\sharp}(\sigma_v^{\sharp}(h_v){\bf f}_v^{\sharp},{\bf f}_v^{\sharp}),\\
\Phi_v(h_{1,v}) &= \mathcal{B}_{\omega_v}(\omega_v(1,h_{1,v})\varphi_v,\varphi_v)
\end{align*}
for $h_v \in H(k_v)$ and $h_{1,v} \in H_1(k_v)$.  Let $Z_v$ be the local integral defined by
\[
Z_v = \frac{\zeta_{v}(2)\zeta_{v}(4)}{L(1,\sigma_v,{\rm std})}\cdot \int_{H_1(k_v)}\Phi_v(h_{1,v})\Psi_v(h_{1,v})\,dh_{1,v}.
\]

\begin{lemma}\label{L:zeta integral1}
Let $v$ be a finite place of $k$. If $v \nmid \frak{n}$, then
\[Z_v = 1.\]
If $v \mid \frak{n}$, then 
\[Z_v  = 2^{-2}\cdot q_v^{-3}\frac{\zeta_v(2)\zeta_v(4)}{\zeta_v(1)^2}.\]
\end{lemma}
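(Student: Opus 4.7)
The plan is to evaluate $Z_v$ directly by using the decomposition $H_1(k_v) = H_1^\circ(k_v) \sqcup H_1^\circ(k_v){\bf t}_v$ together with the Iwasawa-type presentation of the Haar measure on $H_1^\circ(k_v)$ in (\ref{E:non-archi. integration2}), and then inserting the explicit matrix coefficient formulas of Lemmas \ref{E:matrix coefficients unram}, \ref{E:matrix coefficients}, and \ref{L:matrix coefficient nonarchimedean case}. For the $H_1^\circ(k_v){\bf t}_v$-piece I would use the $H(k_v)$-equivariance of $\mathcal{B}_v^\sharp$ to rewrite $\Psi_v(h{\bf t}_v) = \mathcal{B}_v^\sharp(\sigma_v^\sharp(h){\bf f}_v^\sharp, \sigma_v^\sharp({\bf t}_v){\bf f}_v^\sharp)$, and similarly for $\Phi_v$.

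\medskip

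\emph{Unramified case.} When $v \nmid \frak{n}$, both $\Phi_v$ and $\Psi_v$ are bi-${\bf K}_v$-invariant, so the Iwasawa integral collapses to a sum over the Cartan double cosets $[{\bf a}(\varpi_v^n){\bf d}(\varpi_v^m), {\bf a}(\varpi_v^{n+m})]$ with $n+m \ge 0$. The contribution $q_v^{-2|n|-2|m|}$ from $\Phi_v$ is read off from Lemma \ref{L:matrix coefficient nonarchimedean case}, while the contribution from $\Psi_v$ is supplied by the Macdonald formula applied to the unramified principal series $\sigma_{1,v}$ and $\sigma_{2,v}$, normalized via Lemma \ref{E:matrix coefficients unram}. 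The resulting double geometric series in the Satake parameters of $\sigma_v$ evaluates, up to the prefactor $\zeta_v(2)\zeta_v(4)/L(1,\sigma_v,{\rm std})$, to $1$. Equivalently, this is a consequence of the unramified doubling calculation of Piatetski-Shapiro and Rallis (already used in Lemma \ref{L:unram PSR}) transported through the theta correspondence between $\sigma_v^\sharp$ and $\pi_v$.

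\medskip

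\emph{Ramified case.} Suppose $v \mid \frak{n}_1$; the case $v \mid \frak{n}_2$ is symmetric. Since $v \in \frak{S}$ and ${\bf f}_v^\sharp = (W_v,0)$ by (\ref{E:iso}), the transformation law of $\sigma_v^\sharp({\bf t}_v)$ forces $\Psi_v(h{\bf t}_v) = 0$ for all $h \in H_1^\circ(k_v)$, so only the $H_1^\circ(k_v)$-piece contributes to $Z_v$. This brings out a factor $1/2$ from ${\rm vol}({\pmb\mu}_2(k_v)) = 1$ and another $1/2$ from the averaged definition of $\mathcal{B}_v^\sharp$ on the induced representation, giving the overall $2^{-2}$. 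Substituting Lemma \ref{E:matrix coefficients} for $\Psi_v$ on the Iwasawa cells $h_{n,m}$ and $h'_{n,m}$ and Lemma \ref{L:matrix coefficient nonarchimedean case} for $\Phi_v$, one obtains a finite combination of double geometric series in $\alpha = \chi(\varpi_v)$ and $q_v^{-1}$, which sum to a rational function. Using the explicit form of $L(1,\sigma_v,{\rm std})$ for the Steinberg-times-principal-series case, this simplifies to the stated value $2^{-2} q_v^{-3}\zeta_v(2)\zeta_v(4)/\zeta_v(1)^2$.

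\medskip

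\emph{Main obstacle.} The ramified evaluation carries the real technical content: one must combine the two branches of Lemma \ref{E:matrix coefficients} (the $h_{n,m}$ and $h'_{n,m}$ terms, differing by a sign and by the shift $|n-m| \leftrightarrow |n-m-1|$), enforce the support cutoffs coming from $\Phi_v$ at Iwahori level, and evaluate a double series whose cancellation pattern depends delicately on $\alpha$. Checking that all dependence on the Satake parameter $\alpha$ and on the sign $\varepsilon = \mu(\varpi_v)$ cancels out cleanly, leaving only the stated product of zeta factors, is the step demanding the most care; I would first pin down the numerical prefactor $2^{-2}$ by a sanity check at $\c_v = 0$, and then handle the geometric series bookkeeping in full generality.
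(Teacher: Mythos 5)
Your plan follows the paper's proof quite closely, both structurally and in the choice of lemmas. For $v\mid\frak{n}$ you correctly identify that $\Psi_v$ vanishes off the identity component, pull out the $2^{-2}$ from the measure normalization on ${\pmb\mu}_2(k_v)$ and the averaging in the definition of $\mathcal{B}_v^\sharp$, and reduce to the double series built from Lemmas \ref{E:matrix coefficients} and \ref{L:matrix coefficient nonarchimedean case}. Two small clarifications worth making:

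\begin{itemize}
\item In the ramified case, the paper does not use the Iwasawa-type presentation (\ref{E:non-archi. integration2}) for the remaining integral over $H_1^\circ(k_v)$; instead it passes to the Cartan double coset decomposition $\mathcal{U}\backslash H_1^\circ(k_v)/\mathcal{U}$ with respect to the Iwahori subgroup $\mathcal{U}=H_1^\circ(k_v)\cap\big(K_0(\varpi_v)\times\GL_2(\o_v)\big)/\o_v^\times$, using the double-coset cardinality formulas (\ref{E:cardinalities for double cosets}). Your $h_{n,m}$ and $h'_{n,m}$ are therefore \emph{Cartan} representatives, not ``Iwasawa cells''; the terminology slip does not affect the argument, but you should be aware that the measure normalization ${\rm vol}(\mathcal{U},dh_1)=(1+q_v)^{-1}$ (from (\ref{E:non-archi. integration1})) is an input you will need when reassembling the series.
\item For $v\nmid\frak{n}$, the paper goes straight to the unramified doubling formula \cite[Proposition~6.2]{LNM1254} (see also \cite[Proposition~3]{LR2005}, \cite[Proposition~6.3]{HN2017}) for $\sigma_v^\sharp$ on ${\rm O}_{2,2}$, which immediately gives $\int_{H_1(k_v)}\Phi_v\Psi_v\,dh_{1,v}=L(1,\sigma_v,{\rm std})/\zeta_v(2)\zeta_v(4)$; ``transporting through the theta correspondence'' as you suggest is more roundabout and in any case would presuppose the local Rallis inner product identity, which is part of what this whole section is establishing. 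The Macdonald-formula-plus-geometric-series calculation you propose as the primary route is what underlies those cited references and is a perfectly valid substitute.
\end{itemize}

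You correctly flag the ramified series evaluation as the real content; the paper's explicit form of the four sums $Z^{(1)},\dots,Z^{(4)}$ shows that the $\alpha$- and $\varepsilon$-dependence is collected into the factor $(1-\varepsilon\alpha q_v^{-3/2})^{-1}(1-\varepsilon\alpha^{-1}q_v^{-3/2})^{-1}=L(1,\sigma_v,{\rm std})$, which then cancels against the prefactor, exactly as you anticipate.
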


\begin{proof}
We drop the subscript $v$ for brevity. First we assume $v\nmid \frak{n}$. 
By Lemmas \ref{E:unramified matrix coefficient}, \ref{L:matrix coefficient nonarchimedean case}, (\ref{E:equivariant property of test vector case v=p not divide N_1N_2}), and \cite[Proposition 6.2]{LNM1254} (see also \cite[Proposition 3]{LR2005}, \cite[Proposition 6.3]{HN2017}), we have
\[
\int_{H_1(k_v)}\Phi(h_{1})\Psi(h_{1})\,dh_{1}=\frac{L(1,\sigma,{\rm std})}{\zeta(2)\zeta(4)}.
\]

Now we assume $v \mid \frak{n}$. The calculations for $v\mid \frak{n}_1$ and $v\mid \frak{n}_2$ are similar and we assume $v \mid \frak{n}_1$. Since $v \in \frak{S}$ and ${\bf f}^\sharp = (W,0) \in \mathcal{V}^{\sharp}$, we have \begin{align*}
\Psi(h) &= 2^{-1}\mathcal{B}(\sigma(h)W,W),\\
\Psi_{ }(h{\bf t}_{ }) &= \mathcal{B}_{ }^{\sharp}((0,\sigma_{ }({\rm Ad}({\bf t}_{ })h)W_{ }),(W_{ },0))=0
\end{align*}
for $h \in H^{\circ}(k_v)$. Therefore
\[
Z = 2^{-2}\cdot\frac{\zeta(2)\zeta(4)}{L(1,\sigma,{\rm std})}\cdot\int_{H_1^\circ(k_v)}\Phi(h_1)\mathcal{B}(\sigma(h_1)W,W)\,dh_1.
\]
Let $\mathcal{U}$ be the open compact subgroup of $H_1^{\circ}(k_v)$ defined by
\[\mathcal{U}=H_1^{\circ}(k_v)\cap (K_0(\varpi)\times \GL_2(\o))/\o^{\times}.\]
For $n,m \in \Z$, let 
\[
h_{n,m} = [{\bf a}(\varpi^n){\bf d}(\varpi^m),{\bf a}(\varpi^{n+m})],\quad h_{n,m}' = [w{\bf a}(\varpi^n){\bf d}(\varpi^m),{\bf a}(\varpi^{n+m})]
\] 
with the notation of \S\,\ref{S:notation}.
Note that
\[\left\{h_{n,m},h_{n,m}' \mbox{ }\vert\mbox{ }n,m \in \Z,\, n+m \geq 0\right\}\]
is a complete set of coset representatives for $\mathcal{U} \backslash H_1^{\circ}(k_v) / \mathcal{U}$, and we have
\begin{align}\label{E:cardinalities for double cosets}
\begin{split}
|\,\mathcal{U}h_{n,m}\mathcal{U} / \mathcal{U}\,| &=  \begin{cases} q^{|n-m|} & \mbox{ if }n+m=0,\\  q^{n+m+|n-m|}(1+q^{-1}) & \mbox{ if }n+m \geq 1,   \end{cases}\\
|\,\mathcal{U}h_{n,m}'\mathcal{U} / \mathcal{U}\,| &=\begin{cases} q^{|n-m-1|} & \mbox{ if }n+m=0,\\  q^{n+m+|n-m-1|}(1+q^{-1}) & \mbox{ if }n+m \geq 1.   \end{cases}
\end{split}
\end{align}
By Lemmas \ref{E:matrix coefficients}, \ref{L:matrix coefficient nonarchimedean case}, (\ref{E:equivariant property of test vector case v=p divide N_1}), and (\ref{E:cardinalities for double cosets}), we have
\begin{align*}
{\rm vol}(\mathcal{U},dh_1)^{-1} \int_{H_1^{\circ}(k_v)}\Phi_{ }(h_{1})\mathcal{B}_{ }(\sigma_{ }(h_{1})W_{ },W_{ })\,dh_{1} & = \sum_{n,m \in \Z, \, n+m\geq 0}\Phi(h_{n,m})\mathcal{B}_{ }(\sigma_{ }(h_{n,m})W_{ },W_{ }) \cdot|\,\mathcal{U}h_{n,m}\mathcal{U} / \mathcal{U}\,|\\
& + \sum_{n,m \in \Z, \, n+m\geq 0}\Phi(h_{n,m}')\mathcal{B}_{ }(\sigma_{ }(h_{n,m}')W_{ },W_{ }) \cdot|\,\mathcal{U}h_{n,m}'\mathcal{U} / \mathcal{U}\,|\\
&=\frac{\zeta(2)}{\zeta(1)}\cdot(Z^{(1)}+Z^{(2)}+Z^{(3)}+Z^{(4)}),
\end{align*}
where
\begin{align*}
Z^{(1)} &= \sum_{n\in\Z}\left(q^{-4|n|-2}-q^{-2|n|-2|n-1|-2}\right),\\
Z^{(2)} &= (q^{-2}-q^{-4})\sum_{m=1}^{\infty}\varepsilon^mq^{-3m/2}\left(\alpha^{m}\cdot\frac{1-\alpha^{-2}q^{-1}}{1-\alpha^{-2}}+\alpha^{-m}\cdot\frac{1-\alpha^2q^{-1}}{1-\alpha^2}\right),\\
Z^{(3)}& = ({q^{-2}-q^{-4}})\sum_{n=1}^{\infty}\sum_{m=n+1}^{\infty}\varepsilon^{-n-m}q^{-5n/2-3m/2}\left(\alpha^{-n+m}\cdot\frac{1-\alpha^{-2}q^{-1}}{1-\alpha^{-2}}+\alpha^{n-m}\cdot\frac{1-\alpha^2q^{-1}}{1-\alpha^2}\right),\\
Z^{(4)}& = (q^{-2}-1)\sum_{m=1}^{\infty}\sum_{n=m+1}^{\infty}\varepsilon^{n+m}q^{-3n/2-5m/2}\left(\alpha^{n-m}\cdot\frac{1-\alpha^{-2}q^{-1}}{1-\alpha^{-2}}+\alpha^{-n+m}\cdot\frac{1-\alpha^2q^{-1}}{1-\alpha^2}\right),
\end{align*}
with $\alpha$ and $\varepsilon$ as in (\ref{E:symbols}).
By direct calculations, we have
\begin{align*}
Z^{(1)} & = \frac{(q-q^{-1})^2}{q^4-1},\\
Z^{(2)} & = (q^{-2}-q^{-4})\left[\varepsilon q^{-3/2}(\alpha+\alpha^{-1})-q^{-3}-q^{-4}\right](1-\varepsilon\alpha q^{-3/2})^{-1}(1-\varepsilon\alpha^{-1} q^{-3/2})^{-1},\\
Z^{(3)} & = \frac{q^{-2}-q^{-4}}{q^4-1}\cdot\left[\varepsilon q^{-3/2}(\alpha+\alpha^{-1})-q^{-3}-q^{-4}\right](1-\varepsilon\alpha q^{-3/2})^{-1}(1-\varepsilon\alpha^{-1} q^{-3/2})^{-1},\\
Z^{(4)} & = \frac{q^{-2}-1}{q^4-1}\cdot\left[\varepsilon q^{-3/2}(\alpha+\alpha^{-1})-q^{-3}-q^{-4}\right](1-\varepsilon\alpha q^{-3/2})^{-1}(1-\varepsilon\alpha^{-1} q^{-3/2})^{-1}.
\end{align*}
Therefore, noting that ${\rm vol}(\mathcal{U},dh_1) = (1+q)^{-1}$ by the normalization (\ref{E:non-archi. integration1}) and that
\[
 L(s,\sigma,{\rm std}) = (1 - \varepsilon \alpha q^{-s-1/2})^{-1} (1 - \varepsilon \alpha^{-1} q^{-s-1/2})^{-1},
\]
we have
\begin{align*}
\int_{H_1^{\circ}(k_v)}\Phi_{ }(h_{1})\mathcal{B}_{ }(\sigma_{ }(h_{1})W_{ },W_{ })dh_{1} & = q^{-3} (1-q^{-1})^2  (1 - \varepsilon \alpha q^{-3/2})^{-1} (1 - \varepsilon \alpha^{-1} q^{-3/2})^{-1}\\
& = \frac{L(1,\sigma,{\rm std})}{\zeta(2)\zeta(4)}\cdot q^{-3}\frac{\zeta(2)\zeta(4)}{\zeta(1)^2} .
\end{align*}
This completes the proof.
\end{proof}

\begin{lemma}\label{L:archi. d.s local zeta integral}
Let $v \in S({\rm DS})$. We have
\[Z_{v} = 2^{-\lambda_{1,v}-\lambda_{2,v}-3}(1+\lambda_{1,v}-\lambda_{2,v})^{-1}\cdot\begin{cases} 1 & \mbox{ if }v \notin \frak{S},
\\2^{-2} & \mbox{ if }v \in \frak{S}. \end{cases}\]
\end{lemma}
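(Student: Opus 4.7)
The plan is to reduce the integral defining $Z_v$ to an elementary double integral by exploiting the $K_\infty^0$-equivariance of $\varphi_v$ and the explicit formula for the matrix coefficient of $\sigma_v$ on the Cartan.

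First I would decompose $H_1(k_v) = H_1^\circ(k_v) \sqcup H_1^\circ(k_v){\bf t}_v$, and further use that $H_1^\circ(\R) \setminus H_1^\circ(\R)^0$ contributes nothing since Lemma \ref{E:coefficient} gives $\mathcal{B}_v(\sigma_v(h)W_v,W_v) = 0$ there. Then I would handle the ${\bf t}_v$-component exactly as in the proof of Lemma \ref{L:zeta integral1}: when $v \in \frak{S}$, since ${\bf f}_v^\sharp = (W_v,0)$, a direct unwinding of $\mathcal{B}_v^\sharp$ gives $\Psi_v(h_1 {\bf t}_v) = 0$; when $v \notin \frak{S}$, we have $\kappa_{1,v} = \kappa_{2,v}$ (an even integer by the trivial central character condition), and a direct check using the explicit form (\ref{E:archi Schwartz fun DS1})--(\ref{E:archi Schwartz fun DS2}) shows $\omega_v(1,{\bf t}_v)\varphi_v = \varphi_v$, while $\Psi_v(h_1{\bf t}_v) = \mathcal{B}_v(\sigma_v(h_1)W_v,W_v)$ by the symmetry $W_{1,v} = W_{2,v}$. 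Thus the ${\bf t}_v$-component duplicates the $H_1^\circ(\R)^0$-component, and altogether
\[
\int_{H_1(k_v)} \Phi_v(h_{1,v}) \Psi_v(h_{1,v}) \, dh_{1,v} = \varepsilon_v \cdot \int_{H_1^\circ(\R)^0} \Phi_v(h_1)\mathcal{B}_v(\sigma_v(h_1)W_v,W_v) \, dh_1,
\]
where $\varepsilon_v = 2$ if $v \notin \frak{S}$ and $\varepsilon_v = 1/2$ if $v \in \frak{S}$, the latter accounting for the factor $\tfrac{1}{2}$ built into $\mathcal{B}_v^\sharp$ for $v \in \frak{S}$.

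Next I would apply the Cartan decomposition (\ref{E:archi. integration}). The key observation is that the $K_\infty^0$-equivariance (\ref{E:equivariant property d.s.}) of $\varphi_v$ forces
\[
\Phi_v(k h_1 k') = e^{-\sqrt{-1}(\kappa_{1,v}(\theta_1+\theta_3) + \kappa_{2,v}(\theta_2+\theta_4))} \Phi_v(h_1),
\]
while Lemma \ref{E:coefficient} gives exactly the conjugate factor in $\mathcal{B}_v(\sigma_v(\cdot)W_v,W_v)$. The angular factors cancel, leaving a double integral
\[
16\pi^2 \cdot 2^{-2\lambda_{1,v}-2} \int_0^\infty\int_0^\infty \Phi_v([{\bf m}(e^{t_1}),{\bf m}(e^{t_2})]) \cosh(t_1)^{-\kappa_{1,v}}\cosh(t_2)^{-\kappa_{2,v}} \sinh(2t_1)\sinh(2t_2) \, dt_1\,dt_2.
\]

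Plugging in the formula from Lemma \ref{L:matrix coeff. of Weil rep d.s} with $a = e^{t_1}$, $b = e^{t_2}$, using the identities $ab+a^{-1}b^{-1} = 2\cosh(t_1+t_2)$, $ab^{-1}+a^{-1}b = 2\cosh(t_1-t_2)$, and $\cosh(t_1+t_2)\cosh(t_1-t_2) = \cosh^2 t_1 + \sinh^2 t_2$, the integrand simplifies dramatically: the factors $(c+d)^{\lambda_{1,v}-\lambda_{2,v}}$ and $\cosh^{-\kappa_{1,v}}(t_1)\cosh^{-\kappa_{2,v}}(t_2)$ combine into $\cosh^{\kappa_{1,v}-\kappa_{2,v}}(t_2)$ (after using $\lambda_{1,v} - \lambda_{2,v} = \kappa_{1,v}$). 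The substitutions $u = \sinh^2 t_2$, $w = \cosh^2 t_1$ then reduce the problem to the elementary integral
\[
\int_1^\infty\int_0^\infty (w+u)^{-\kappa_{1,v}-2}(1+u)^{(\kappa_{1,v}-\kappa_{2,v})/2}\,dw\,du = \frac{2}{(\kappa_{1,v}+1)(\kappa_{1,v}+\kappa_{2,v})}.
\]

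Finally, combining with
\[
\frac{\zeta_v(2)\zeta_v(4)}{L(1,\sigma_v,\mathrm{std})} = \frac{\pi^{-3}}{\Gamma_\C(\lambda_{1,v})\Gamma_\C(1-\lambda_{2,v})},
\]
a bookkeeping of powers of $2$ and $\pi$, together with the factor $\varepsilon_v$ above, produces the stated formula. The main obstacle is purely computational: carefully tracking the constants through the Cartan decomposition, the equivariance cancellation, and the archimedean $L$-factor, where the distinction between the $v \in \frak{S}$ and $v \notin \frak{S}$ cases manifests precisely as the factor $2^{-2}$.
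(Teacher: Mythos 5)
Your proof follows the same strategy as the paper's, and the core steps (vanishing of $\Psi_v(h\mathbf{t}_v)$ when $v\in\frak{S}$, $\sigma_v^\sharp(\mathbf{t}_v)$-invariance when $v\notin\frak{S}$, cancellation of the angular factors, and the elementary integral via $u=\sinh^2 t_2$, $w=\cosh^2 t_1$) are all correct. Your substitution is a slightly cleaner route than the paper's step-by-step integration; both give $\int_1^\infty\int_0^\infty(w+u)^{-\kappa_1-2}(1+u)^{(\kappa_1-\kappa_2)/2}\,dw\,du = \tfrac{2}{(\kappa_1+1)(\kappa_1+\kappa_2)}$.

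However, there is a factor-of-$2$ error in your reduction to $H_1^\circ(\R)^0$. Under the paper's measure conventions (\S\ref{SS:measure}), the measure on $H_1(k_v)$ is built from the measure on $H_1^\circ(k_v)$ by integrating over ${\pmb\mu}_2(k_v)$ with $\vol({\pmb\mu}_2(k_v))=1$, so each coset of $H_1^\circ$ carries mass $\tfrac{1}{2}$. Thus
\[
\int_{H_1(k_v)}\Phi_v\Psi_v\,dh_{1,v}
= \frac{1}{2}\int_{H_1^\circ(k_v)}\Phi_v(h_1)\Psi_v(h_1)\,dh_1
+\frac{1}{2}\int_{H_1^\circ(k_v)}\Phi_v(h_1\mathbf{t}_v)\Psi_v(h_1\mathbf{t}_v)\,dh_1.
\]
When $v\notin\frak{S}$ the two pieces are equal and each equals $\int_{H_1^\circ(\R)^0}\Phi_v\mathcal{B}_v(\sigma_v(\cdot)W_v,W_v)$, giving $\varepsilon_v=1$, not $2$. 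When $v\in\frak{S}$ the second piece vanishes and $\Psi_v=2^{-1}\mathcal{B}_v$, giving $\varepsilon_v=\tfrac12\cdot\tfrac12=2^{-2}$, not $2^{-1}$. With your values of $\varepsilon_v$ the computation would produce $2\cdot Z_v$ in both cases rather than the stated answer; the rest of your bookkeeping (the $L$-factor identity, the powers of $2$ and $\pi$) is consistent, so this missing factor of $\tfrac12$ from $\vol({\pmb\mu}_2)=1$ is the only gap.

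Also, your parenthetical ``$\kappa_{1,v}=\kappa_{2,v}$ (an even integer by the trivial central character condition)'' is the right observation but should be stated more precisely: trivial central character forces $\lambda_1-\lambda_2=\kappa_1$ to be even, which is exactly what makes $(-1)^{\lambda_1}=1$ when $v\notin\frak{S}$ (so $\lambda_2=0$ and $\lambda_1=\kappa_1$ is even), and hence $\omega_v(1,\mathbf{t}_v)\varphi_v=\varphi_v$.
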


\begin{proof}
We identify $k_v=\R$ and drop the subscript $v$ for brevity. Without loss of generality, we assume $\kappa_1 \ge \kappa_2$. 
If $v \notin \frak{S}$, then $\kappa_1=\kappa_2$ and we have  
$\omega_{ }(1,{\bf t}_{ })\varphi_{ }=(-1)^{\lambda_1}\varphi_{ }=\varphi_{ }$, $\sigma_{ }^{\sharp}({\bf t}_{ })W=W$, and $\Psi(h) = \mathcal{B}(\sigma(h)W, W)$ for $h \in H^\circ(\R)$. If $v \in \frak{S}$, then  
\begin{align*}
\Psi(h) &= 2^{-1}\mathcal{B}(\sigma(h)W,W),\\
\Psi_{ }(h{\bf t}_{ }) &= \mathcal{B}_{ }^{\sharp}((0,\sigma_{ }({\rm Ad}({\bf t}_{ })h)W_{ }),(W_{ },0))=0
\end{align*}
for $h \in H^{\circ}(\R)$. Therefore, we have
\[
Z_{ } = \frac{\zeta(2)\zeta(4)}{L(1,\sigma,{\rm std})}\cdot\int_{H_1^{\circ}(\R)}\Phi_{ }(h_{1})\mathcal{B}_{ }(\sigma_{ }(h_{1})W_{ },W_{ })\,dh_{1}\cdot \begin{cases} 1 & \mbox{ if }v \notin \frak{S},
\\2^{-2} & \mbox{ if }v \in \frak{S}. \end{cases}
\]
By (\ref{E:archi. integration}), (\ref{E:equivariant property d.s.}), and Lemmas \ref{E:coefficient}, \ref{L:matrix coeff. of Weil rep d.s}, we have
\begin{align*}
&\int_{H_1^{\circ}(\R)}\Phi_{ }(h_{1})\mathcal{B}_{ }(\sigma_{ }(h_{1})W_{ },W_{ })\,dh_{1} \\
&= 2^{-2-2\kappa_1-\kappa_2}\pi^{2-\kappa_1}\Gamma(\lambda_1+1)\Gamma(-\lambda_2+1)\\
&\times \int_{0}^{ \infty}\int_{0}^{\infty } \cosh(t_1+t_2)^{-2}\cosh(t_1-t_2)^{-2}\left(\cosh(t_1+t_2)^{-1}+\cosh(t_1-t_2)^{-1}\right)^{\kappa_1}\\
&\quad\quad\quad\quad\quad\quad\quad\quad\quad\quad\quad\quad\times\cosh(t_1)^{-\kappa_1}\cosh(t_2)^{-\kappa_2}\sinh(2t_1)\sinh(2t_2)\,dt_1\,dt_2\\
&=2^{-\kappa_2+1}\pi^{2-\kappa_1}\Gamma(\lambda_1+1)\Gamma(-\lambda_2+1)\\
&\times \int_{0}^{\infty }\int_{0}^{\infty } \frac{\cosh(t_2)^{-2\lambda_2+1}\sinh(2t_1)\sinh(t_2)}{(\cosh(2t_1)+\cosh(2t_2))^{2+\kappa_1}}\,dt_1\,dt_2.
\end{align*}
Here the last equality follows from the formulae
\begin{align*}
\cosh(t_1+t_2)\cosh(t_1-t_2) & = 2^{-1}(\cosh(2t_1)+\cosh(2t_2)),\\
\cosh(t_1+t_2)+\cosh(t_1-t_2) & = 2\cosh(t_1)\cosh(t_2),\\
\sinh(2 t_2) &= 2 \sinh(t_2) \cosh(t_2).
\end{align*}
Moreover, the above integral is equal to 
\begin{align*}
&\int_{0}^{\infty }\cosh(t_2)^{-2\lambda_2+1}\sinh(t_2)\left(\int_{0}^{ \infty} \frac{\sinh(2t_1)}{(\cosh(2t_1)+\cosh(2t_2))^{2+\kappa_1}}\,dt_1\right)\,dt_2\\
&=2^{-1}(1+\kappa_1)^{-1}\int_{0}^{\infty }\frac{\cosh(t_2)^{-2\lambda_2+1}\sinh(t_2)}{(1+\cosh(2t_2))^{1+\kappa_1}}\,dt_2\\
&=2^{-2-\kappa_1}(1+\kappa_1)^{-1}\int_{0}^{ \infty}\cosh(t_2)^{-2\lambda_1-1}\sinh(t_2)\,dt_2\\
&=2^{-3-\kappa_1}\lambda_1^{-1}(1+\kappa_1)^{-1}.
\end{align*}
Noting that 
\[
 L(s,\sigma,{\rm std}) = 2^2 (2 \pi)^{-2s-\lambda_1+\lambda_2+1} \Gamma(s+\lambda_1-1) \Gamma(s-\lambda_2), 
\]
we conclude that
\begin{align*}
\int_{H_1^{\circ}(\R)}\Phi_{ }(h_{1})\mathcal{B}_{ }(\sigma_{ }(h_{1})W_{ },W_{ })\,dh_{1} &= 2^{-2-2\lambda_1}(1+\kappa_1)^{-1}\pi^{2-\kappa_1}\Gamma(\lambda_1)\Gamma(-\lambda_2+1)\\
&=\frac{L(1,\,\sigma,\,{\rm std})}{\zeta_{ }(2)\zeta_{ }(4)}\cdot 2^{-\lambda_1-\lambda_2-3}(1+\kappa_1)^{-1}.
\end{align*}
This completes the proof.
\end{proof}

\begin{lemma}\label{L:archi. spherical local zeta integral}
Let $v \in S({\rm PS})$. We have
\begin{align*}
Z_v &= 2^{-4}.
\end{align*}
\end{lemma}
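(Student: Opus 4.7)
The plan is to mirror the strategy used in Lemma~\ref{L:archi. d.s local zeta integral}, adapted to the spherical principal series setting. I identify $k_v = \R$ and drop the subscript $v$. The first step is to observe that the Gaussian Schwartz function $\varphi_v$ in~(\ref{E:archi Schwartz fun PS}) is invariant under the involution $x \mapsto x^\iota$ on $V$, since $\mathrm{tr}(x^\iota\,{}^t\! x^\iota) = \mathrm{tr}(x\,{}^t\! x)$. Consequently $\omega_v(1, \mathbf{t}_v)\varphi_v = \varphi_v$, so $\Phi_v$ is both right $\mathbf{t}_v$-invariant and $\mathbf{t}_v$-conjugation invariant. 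Combined with the symmetry $\sigma_v^\sharp(\mathbf{t}_v)(W_v, W_v) = (W_v, W_v)$ of the local test vector in (\ref{E:iso}) together with the averaging formula defining $\mathcal{B}_v^\sharp$, the change of variable $h \mapsto \mathrm{Ad}(\mathbf{t}_v^{-1})h$ identifies the two terms in $\Psi_v$, and the integral over $H_1(\R) = H_1^\circ(\R) \sqcup H_1^\circ(\R)\mathbf{t}_v$ collapses to
\[
Z_v = \frac{\zeta_v(2)\zeta_v(4)}{L(1,\sigma_v,\mathrm{std})} \cdot 2 \int_{H_1^\circ(\R)} \Phi_v(h_1)\, \mathcal{B}_v(\sigma_v(h_1)W_v, W_v)\, dh_1.
\]

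Next I would apply the Cartan decomposition formula~(\ref{E:archi. integration}). Because $W_{i,v}$ is $\mathrm{SO}(2)$-invariant and the Gaussian $\varphi_v$ is $\mathrm{O}(4)$-invariant, both $\mathcal{B}_v(\sigma_v(\cdot)W_v, W_v)$ and $\Phi_v$ are bi-$K_\infty^0$-invariant, so the $K_\infty^0$-integrations are trivial, leaving a double integral over the positive split torus. Lemma~\ref{L:matrix coeff. of Weil rep PS} yields
\[
\Phi_v([\mathbf{m}(e^{t_1}), \mathbf{m}(e^{t_2})]) = 2^{-4} \cosh(t_1+t_2)^{-2}\cosh(t_1-t_2)^{-2},
\]
and for the matrix coefficient of $\sigma_v$, the decomposition of $\mathcal{B}_v$ through the pairings $\langle\cdot,\cdot\rangle_{i,v}$ together with the normalization $W_{i,v}(\mathbf{a}(y)) = \mathrm{sgn}^\varepsilon(y)|y|^{1/2}K_{\mu_{i,v}}(2\pi|y|)$ gives
\[
\langle \sigma_{i,v}(\mathbf{m}(e^{t}))W_{i,v}, W_{i,v}\rangle_{i,v} = 2 e^{t} \int_0^\infty K_{\mu_{i,v}}(2\pi e^{2t} y)\, K_{\mu_{i,v}}(2\pi y)\, dy,
\]
after using the fact that the center acts trivially.

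The main obstacle will be the closed-form evaluation of the resulting double integral in $(t_1, t_2)$ and the verification that the cumulative $\Gamma$-factors match $L(1,\sigma_v,\mathrm{std})/\zeta_v(2)\zeta_v(4)$ up to $2^{-4}$. Two routes look promising: either invoke the Mellin--Barnes representation of the Bessel kernel (as already used in the proof of Lemma~\ref{L:local Whittaker archi. 2}) to convert the integral into a contour integral of products of Gamma functions and shift contours, or apply the classical formula expressing $\int_0^\infty K_\mu(ax)K_\mu(bx)x^{s-1}\,dx$ in terms of a ${}_2F_1$ to reduce the problem to integrating hyperbolic functions over $(0,\infty)^2$ in closed form. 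In either case, using $L(1,\sigma_v,\mathrm{std}) = L(1,\sigma_{1,v}\times\sigma_{2,v})$ (a product of four $\Gamma_\R$-factors in the parameters $\mu_{1,v}\pm\mu_{2,v}$) and the duplication formula for $\Gamma$, the archimedean $L$-factors should cancel against the Gamma contributions from the torus integral, leaving the clean answer $2^{-4}$. As a sanity check, the analogous structural identity with $\sigma_{i,v}$ replaced by a discrete series (Lemma~\ref{L:archi. d.s local zeta integral}) produces $2^{-\lambda_1-\lambda_2-3}(1+\kappa_1)^{-1}$, a power of $2$ times a Gamma-type correction, and the PS case is the analytic continuation in which the Gamma-type correction is absorbed into the Rankin--Selberg $L$-value.
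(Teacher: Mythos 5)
Your reduction of the $H_1(\R)$-integral to $2\int_{H_1^\circ(\R)}\Phi_v\cdot\mathcal{B}_v(\sigma_v(\cdot)W_v,W_v)$ is sound: for $v\in S(\mathrm{PS})$ the local test vector in (\ref{E:iso}) is $(W_v,W_v)$ when $v\in\frak{S}$ (not $(W_v,0)$ as in the DS case), so $\sigma_v^\sharp(\mathbf{t}_v)$ fixes it, and the $\mathbf{t}_v$-invariance of the Gaussian $\varphi_v$ does indeed make the two cosets of $H_1^\circ$ contribute equally, in both $\frak{S}$-cases. This is the same reduction the paper performs implicitly. However, the proposal then stalls exactly where the work begins. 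After feeding in (\ref{E:archi. integration}) and Lemma \ref{L:matrix coeff. of Weil rep PS}, you are left with a double integral over $(t_1,t_2)\in(0,\infty)^2$ of the product $\cosh(t_1+t_2)^{-2}\cosh(t_1-t_2)^{-2}$ against two Bessel-convolution factors $\int_0^\infty K_{\mu_i}(2\pi e^{2t_i}y)K_{\mu_i}(2\pi y)\,dy$, weighted by $\sinh(2t_1)\sinh(2t_2)$, and this must reproduce precisely $L(1,\sigma_v,\mathrm{std})\zeta_v(1)^2L(1,\sigma_{1,v},\mathrm{Ad})L(1,\sigma_{2,v},\mathrm{Ad})/\zeta_v(2)^3\zeta_v(4)$ up to elementary factors. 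You name two plausible routes (Mellin--Barnes or the ${}_2F_1$ formula for $\int K_\mu(ax)K_\mu(bx)x^{s-1}\,dx$) but do not execute either, and there is no reason the cancellations are automatic --- in the analogous DS computation of Lemma \ref{L:archi. d.s local zeta integral} the torus integral has to be evaluated in closed form by hand before the Gamma-factors match. ``Should cancel'' is not a proof here.

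The paper avoids this entirely by a structural observation you do not make: by \cite[Proposition 6.4]{LNM1254} the matrix coefficient of the Weil representation satisfies the identity $\Phi_v(h)=2^{-4}\,f^o(\delta(h,1),\tfrac12)$, where $f^o$ is the spherical doubling section of ${\rm Ind}_{P(\R)}^{{\rm O}_{4,4}(\R)}(\delta_P^{s/3})$. This converts $\int_{H_1}\Phi_v\Psi_v$ into $2^{-4}$ times a Piatetski-Shapiro--Rallis local doubling integral $\calZ(1/2)$ for $\sigma_v^\sharp$ on ${\rm O}_{2,2}(\R)$. Because $\sigma_v^\sharp|_{{\rm O}_{2,2}(\R)}$ is induced from the Siegel parabolic $Q$ with inducing data a $\GL_2(\R)$ principal series $\tau$, collapsing along $Q\backslash{\rm O}_{2,2}$ reduces the computation to a pair of Gaussian $\GL_2(\R)$ integrals against the zonal spherical function $\omega_\tau$; these are evaluated as in \cite[Lemma 6.10]{GJ1972}, giving $\calZ(s)=2^{2s-1}L(s+\tfrac12,\sigma,\mathrm{std})/\zeta(2s+1)\zeta(2s+3)$ with no Bessel functions in sight. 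Plugging in $s=1/2$ gives the clean $Z_v=2^{-4}$. If you want to salvage the direct Cartan--Bessel route you would need to actually carry out the double $K$-Bessel integral and verify the Gamma-factor bookkeeping; that is the content of the lemma and cannot be deferred.
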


\begin{proof}
We identify $k_v=\R$ and drop the subscript $v$ for brevity. We follow the doubling method in \cite[\S 6]{LNM1254} and \cite{LR2005}. 
Let
$${\rm O}_{2,2} = \left\{ h \in \GL_4 \mbox{ }\left\vert\mbox{ } h \bp 0 & {\bf 1}_2 \\ {\bf 1}_2 & 0\ep  {}^t \! h = \bp 0 & {\bf 1}_2 \\ {\bf 1}_2 & 0\ep \right. \right\} .$$
We identify $H_1(\R)$ with ${\rm O}_{2,2}(\R)$ with respect to the following basis of $V$:
\begin{align*}
	\left\{\bp 0 & 1 \\ 0 & 0 \ep,\, \bp 1 & 0 \\ 0 & 0 \ep,\,  \bp 0 & 0 \\ -1 & 0 \ep,\, \bp 0 & 0 \\ 0 & 1\ep\right\}.
\end{align*}
Let $Q$ be the standard Siegel parabolic subgroup of ${\rm O}_{2,2}$, $N$ its unipotent radical, and $K_{2,2} = {\rm O}_{2,2}(\R)\cap{\rm O}(4)$ a maximal compact subgroup of ${\rm O}_{2,2}(\R)$. We regard $\GL_2$ as a Levi subgroup of $Q$ via the embedding 
\begin{align*}
\GL_2 \longrightarrow Q,\quad
g  \longmapsto \underline{g}=\bp g& 0 \\ 0 &{}^t \! g^{-1}\ep.
\end{align*}
Then we have
$$\sigma^{\sharp}\vert_{{\rm O}_{2,2}(\R)} = {\rm Ind}_{Q(\R)}^{{\rm O}_{2,2}(\R)}(\tau),$$
where
\[\tau = {\rm Ind}_{B(\R)}^{\GL_2(\R)}(|\mbox{ }|^{\mu_1+\mu_2}\boxtimes |\mbox{ }|^{\mu_1-\mu_2}).\]
Denote by $\omega_\tau$ and $\omega_{\tau^{\vee}}$ the bi-${\rm O}(2)$-invariant matrix coefficients of $\tau$ and $\tau^{\vee}$, respectively, normalized so that $\omega_\tau(1)=\omega_{\tau^{\vee}}(1)=1$.
We identify ${\rm O}_{2,2}\times{\rm O}_{2,2}$ with its image under the embedding 
\[
{\rm O}_{2,2} \times{\rm O}_{2,2}\longrightarrow {\rm O}_{4,4},\quad	\left(\bp a & b \\ c & d \ep , \bp  a' & b' \\ c' & d' \ep \right) \longrightarrow \bp a & 0 & b & 0 \\ 0 & a' & 0 & -b' \\ c & 0 & d & 0 \\ 0 & -c' & 0 & d'  \ep.
\]
Let $P$ be the standard Siegel parabolic subgroup of ${\rm O}_{4,4}$ and $K_{4,4} = {\rm O}_{4,4}(\R) \cap {\rm O}(8)$ a maximal compact subgroup of ${\rm O}_{4,4}(\R)$. We regard $\GL_4$ as a Levi subgroup of $P$ via the embedding 
\[
\GL_4 \longrightarrow P,\quad
g  \longmapsto \underline{g}=\bp g& 0 \\ 0 &{}^t \! g^{-1}\ep.
\]
Let $f^o$ be the right $K_{4,4}$-invariant section of the degenerate principal series representation ${\rm Ind}_{P(\R)}^{{\rm O}_{4,4}(\R)}(\delta_P^{s/3})$ normalized so that $f^o(\delta,s)=1$, where
\[
 \delta =
 \begin{pmatrix}
  0 & 0 & \frac{1}{2} \1_2 & \frac{1}{2} \1_2 \\
  \frac{1}{2} \1_2 & -\frac{1}{2} \1_2 & 0 & 0 \\
  \1_2 & \1_2 & 0 & 0 \\
  0 & 0 & \1_2 & -\1_2
 \end{pmatrix}.
\]

Consider the integral
\[\calZ(s) = \int_{{\rm O}_{2,2}(\R)}f^o(\delta(h,1),s)\Psi(h)\,dh.\]
By \cite[Lemma 7.7]{GI2011}, the integral $\calZ(s)$ is absolutely convergent at $s=1/2$.
By Lemma \ref{L:matrix coeff. of Weil rep PS} and \cite[Proposition 6.4]{LNM1254}, we have
\[\Phi(h) = 2^{-4}\cdot f^o\left(\delta(h,1),\frac{1}{2}\right)\]
for $h \in {\rm O}_{2,2}(\R)$. Therefore
\[Z = 2^{-4}\cdot\frac{\zeta(2)\zeta(4)}{L(1,\sigma,{\rm std})}\cdot\calZ\left(\frac{1}{2}\right).\]
Let $\<\mbox{ },\mbox{ }\>_{\tau} : \tau \times \tau^\vee \rightarrow \C$ be a non-zero invariant pairing. Let
$\phi \in {\rm Ind}_{Q(\R)}^{{\rm O}_{2,2}(\R)}(\tau)$ and $\phi^\vee \in {\rm Ind}_{Q(\R)}^{{\rm O}_{2,2}(\R)}(\tau^\vee)$ be non-zero $K_{2,2}$-invariant sections. Then
\[
\Psi(h) = \Psi(1)\cdot\int_{K_{2,2}}\frac{\<\phi(kh),\phi^\vee(k)\>_{\tau}}{\<\phi(1),\phi^\vee(1)\>_\tau}\,dk
\]
for $h \in {\rm O}_{2,2}(\R)$. Note that $\Psi(1)=2^{-4}$ by Lemma \ref{L:matrix coefficient archimedean case}. Therefore, we have
\begin{align*}
\calZ(s) & = 2^{-4}\int_{{\rm O}_{2,2}(\R)}f^o(\delta(h,1),s)\int_{K_{2,2}} \frac{\<\phi(kh),\phi^\vee(k)\>_{\tau}}{\<\phi(1),\phi^\vee(1)\>_\tau} \,dk\,dh\\
& = 2^{-4}\int_{K_{2,2}}\int_{{\rm O}_{2,2}(\R)}f^o(\delta(h,k),s) \frac{\<\phi(h),\phi^\vee(k)\>_{\tau}}{\<\phi(1),\phi^\vee(1)\>_\tau} \,dh\,dk \\
& = 2^{-4}\int_{K_{2,2}\times K_{2,2}} \int_{\GL_2(\R)} \int_{N(\R)} \delta_Q(\underline{g})^{-1/2} f^o(\delta(u\underline{g}k_1,k_2),s)\omega_{\tau}(g) \,du\,dg\,dk_1\,dk_2\\
& = 2^{-4}\int_{\GL_2(\R)} \int_{N(\R)} \delta_Q(\underline{g})^{-1/2} f^o(\delta(u\underline{g},1),s)\omega_{\tau}(g) \,du\,dg.
\end{align*}
Let 
\[A = \bp 0 & {\bf 1}_2 \\ {\bf 1}_2 & {\bf 1}_2\ep.\]
Then the  function 
\[F_1^o(g,s) = |\det(g)|^{-1/2}\int_{N(\R)}f^o(\delta (u,1)\underline{A}^{-1}\underline{g},s) \,du\]
for $g \in \GL_4(\R)$ defines an ${\rm O}(4)$-invariant section of ${\rm Ind}_{P'(\R)}^{\GL_4(\R)}(\delta_{P'}^{s/2})$, where
\[P' = \left\{\left. \bp a & * \\ 0 & d \ep\mbox{ }\right\vert \mbox{ }a,d \in \GL_2 \right\}.\]
On the other hand, an ${\rm O}(4)$-invariant section of ${\rm Ind}_{P'(\R)}^{\GL_4(\R)}(\delta_{P'}^{s/2})$ is also given by
\[F_2^o(g,s) = |\det(g)|^{s+1}\int_{\GL_2(\R)}\Phi^o((0,x)g)|\det(x)|^{2s+2}\,dx\]
for $g \in \GL_4(\R)$, where $\Phi^o \in \mathcal{S}({\rm M}_{2,4}(\R))$ is defined by
\[\Phi^o(x) = e^{-\pi\,{\rm tr}(x{}^t \! x)}.\]
Therefore, 
\[F_1^o = \frac{F_1^o({A},s)}{F_2^o({A},s)}\cdot F_2^o.\]
By direct calculations, we have
\[F_1^o({A},s) = 2\cdot\frac{\zeta(2s+2)}{\zeta(2s+3)},\quad F_2^o({A},s) = 2^{-2s-2}\zeta(2s+1)\zeta(2s+2).\]
Hence
\[
\frac{F_1^o({A},s)}{F_2^o({A},s)} = 2^{2s+3}\zeta(2s+1)^{-1}\zeta(2s+3)^{-1}.\]
We conclude that
\begin{align*}
& \int_{\GL_2(\R)} \int_{N(\R)} \delta_Q(\underline{g})^{-1/2} f^o(\delta(u\underline{g},1),s)\omega_{\tau}(g) \,du\,dg\\
& = \int_{\GL_2(\R)} F_1^o\left({A}\bp g & 0 \\ 0 & {\bf 1}_2\ep,s \right)\omega_{\tau}(g) \,dg\\
& = 2^{2s+3}\zeta(2s+1)^{-1}\zeta(2s+3)^{-1}\int_{\GL_2(\R)}|\det(g_1)|^{s+1}\left(\int_{\GL_2(\R)}\Phi^o(g_2g_1,g_2)|\det(g_2)|^{2s+2}\,dg_2\right)\omega_{\tau}(g_1)\,dg_1.
\end{align*}
Noting that $\omega_\tau$ is a zonal spherical function on $\GL_2(\R)$ with respect to ${\rm O}(2)$ 
and following the argument in the proof of \cite[Proposition (1.2.5)]{Macdonald1971}, we have
\[
\int_{{\rm O}(2)}\omega_{\tau}(g_1kg_2)\,dk = \omega_{\tau}(g_1)\omega_{\tau}(g_2)
\]
for $g_1,g_2 \in \GL_2(\R)$. Therefore, proceeding as in the proof of \cite[Proposition 6.1]{LNM1254}, we have
\begin{align*}
&\int_{\GL_2(\R)}|\det(g_1)|^{s+1}\left(\int_{\GL_2(\R)}\Phi^o(g_2g_1,g_2)|\det(g_2)|^{2s+2}\,dg_2\right)\omega_{\tau}(g_1)\,dg_1\\
&=\int_{\GL_2(\R)}e^{-\pi\,{\rm tr}(g{}^t \! g)}\omega_\tau(g)|\det(g)|^{s+1}\,dg \int_{\GL_2(\R)}e^{-\pi\,{\rm tr}(g{}^t \! g)}\omega_{\tau^{\vee}}(g)|\det(g)|^{s+1}\,dg\\
&=L\left(s+\frac{1}{2},\sigma,{\rm std}\right).
\end{align*}
Here the last equality follows from a calculation analogous to \cite[Lemma 6.10]{GJ1972}.
It follows that
\[
\calZ(s) = 2^{2s-1}\cdot\frac{L(s+1/2,\sigma,{\rm std})}{\zeta(2s+1)\zeta(2s+3)}.
\]
This completes the proof.
\end{proof}

\subsection{Proof of Proposition \ref{P:Rallis inner product}}\label{S:proof 2}
The assertion follows from Theorem \ref{T:RIF} and Lemmas \ref{L:zeta integral1}-\ref{L:archi. spherical local zeta integral}.

\section{Convergence lemmas}

We keep the notation of \S\,\ref{SS:analytic family}. In this section, we study the convergence of the doubling local zeta integrals and the Rankin-Selberg local zeta integrals defined in (\ref{E:local zeta1}) and (\ref{E:local zeta2}), respectively. 

Except in Lemma \ref{L:local zeta Ur}, let $\pi$ be a representation of $G(F)$ in one of the three types in \S\,\ref{SS:analytic family}. In Case (IIa) and Case (PS), we fix a sign of representations and write $\pi=\pi_\lambda$ for the representation with parameter $\lambda$. Let $\Omega$ be the domain defined by excluding the points of reducibility of the induced representations (\ref{E:induced repre IIa}) and (\ref{E:induced repre PS}) in Case (IIa) and Case (PS), respectively. Recall the domain 
\[
\mathcal{D} = 
\begin{cases}
\left\{\lambda \in \C \mbox{ }\vert\mbox{ } |{\rm Re}(\lambda) |<1/2 \right\} & \mbox{ in Case (IIa)},\\
\left\{\lambda=(\lambda_1,\lambda_2) \in \C^2 \mbox{ }\vert\mbox{ }|{\rm Re}(\lambda_1)|+|{\rm Re}(\lambda_2)|<1 \right\} & \mbox{ in Case (PS)},
\end{cases}
\]
defined in (\ref{E:domain}).
Let 
\[
K= \begin{cases}
G(\o) & \mbox{ if $F$ is non-archimedean},\\
G(\R) \cap {\rm O}(4) & \mbox{ if $F=\R$}.
\end{cases}
\]

\subsection{Doubling local zeta integrals}\label{SS:Doubling local zeta integrals}
Recall $P=P_{4,4}$ and $I(s)=I_{4,4}(s)$ in the notation of \S\,\ref{ss:eisenstein}. Denote by $\mathcal{C}(\pi)$ the space of matrix coefficients of $\pi$. If $\phi \in \mathcal{C}(\pi)$  and $F\in I(s)$ is a holomorphic section, let $Z(s,\phi,F)$ be the local zeta integral defined as in (\ref{E:local zeta1}). 

\begin{lemma}\label{L:local zeta Ur}
Assume $\pi$ is an irreducible generic admissible unramified representation of $G(F)$ with trivial central character. Write 
\begin{align*}
\pi \vert_{\Sp_4(F)} = {\rm Ind}_{\Sp_4(F)\cap {\bf B}(F)}^{\Sp_4(F)}(|\mbox{ }|^{\lambda_{1}}\boxtimes|\mbox{ }|^{\lambda_{2}})
\end{align*}
for some $\lambda_1$, $\lambda_2 \in \C$. 
Let $\phi\in\mathcal{C}(\pi)$ and let $F \in I(s)$ be a holomorphic section. 
The integral $Z(s,\phi,F)$ is absolutely convergent for ${\rm Re}(s)>-1/2+\max\{|{\rm Re}(\lambda_1)|,|{\rm Re}(\lambda_2)|\}$.
\end{lemma}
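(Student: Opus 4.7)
The strategy is the standard convergence analysis for doubling integrals, going back to Piatetski-Shapiro and Rallis. The plan is to reduce everything to an integral over the split torus via the Cartan decomposition and then combine three ingredients: a bound for the matrix coefficient, a bound for the Eisenstein section restricted to the diagonal, and the Jacobian.

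First, I would use the Cartan decomposition $\Sp_4(F) = K A^+ K$, where $A^+ = \{ a = \diag(a_1,a_2,a_1^{-1},a_2^{-1}) \}$ is the positive Weyl chamber, so that the Haar measure factors as $dg = dk_1 \, \mu(a) \, da \, dk_2$ with an explicit Jacobian $\mu(a)$ comparable (up to bounded factors) to $\delta_B(a)$. Since $F$ and $\phi$ are $K$-finite (in fact $K$-invariant under our unramified hypothesis), the convergence of $Z(s,\phi,F)$ is equivalent to the convergence of the torus integral
\[
 \int_{A^+} |F(\delta(a,1),s)| \cdot |\phi(a)| \cdot \mu(a)\, da.
\]

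Next, I would invoke standard estimates on the two factors. For $\phi$, Macdonald's formula (or more generally Casselman's estimate for $K$-finite matrix coefficients of unramified principal series) gives
\[
 |\phi(a)| \le C_\phi \cdot \delta_B(a)^{1/2} \cdot \prod_{i=1}^{2} \bigl(|a_i|^{|\Re(\lambda_i)|} + |a_i|^{-|\Re(\lambda_i)|}\bigr).
\]
For $F$, using the Iwasawa decomposition $\Sp_8(F) = P(F) \cdot (\Sp_8(F) \cap \mathbf{K})$ with the Siegel parabolic $P = P_{4,4}$, one has $|F(\delta(a,1),s)| \le C_F \cdot \delta_P(p(a))^{\Re(s)/5 + 1/2}$, where $p(a)$ is the $P$-component of $\delta(a,1)$. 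A direct matrix computation identifies $\delta_P(p(a))^{1/5}$ with a monomial in $|a_1|,|a_2|$, so that, combined with $\delta_B(a) = |a_1|^4 |a_2|^2$, the full integrand on $A^+$ has an explicit exponential-polynomial form.

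Finally, I would check convergence by reducing to a product of one-variable integrals over $F^\times$, each of the type $\int |a_i|^{\alpha} \, d^\times a_i$ restricted to $|a_i| \ge 1$, whose convergence requires $\Re(s) + 1/2 > |\Re(\lambda_i)|$ for $i = 1, 2$. This yields the stated range $\Re(s) > -1/2 + \max\{|\Re(\lambda_1)|,|\Re(\lambda_2)|\}$. The main obstacle in this outline is the bookkeeping in the third paragraph: computing $\delta_P(p(a))$ precisely under the doubling embedding $g \mapsto \delta(g,1)$, since $\delta$ is not adapted to the Siegel parabolic and one must use its explicit block form to track the contribution of each $|a_i|$. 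Everything else reduces to standard estimates and a majorant argument, which in the archimedean case is handled identically once Harish-Chandra's $\Xi$-function bound replaces Macdonald's formula.
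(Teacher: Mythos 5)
Your proposal follows essentially the same route as the paper's proof: Cartan decomposition of $\Sp_4(F)$, Macdonald's estimate for the spherical matrix coefficient, a bound for $F(\delta(a,1),s)$ along the torus, and a majorization of the resulting sum. The only cosmetic differences are that the paper reads off the exact value $F(\delta(a,1),s) = q^{-(s+5/2)(n_1+n_2)}$ from \cite[Proposition~6.4]{LNM1254} instead of recomputing the Iwasawa projection by hand, and it treats this lemma as purely non-archimedean, so your closing remark about the $\Xi$-function is out of scope (the real-place estimates are handled separately in Lemmas~\ref{L:local zeta DS} and~\ref{L:ab1}).
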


\begin{proof}
Let $s \in \RR$.
Let $K_1 = \Sp_4(\frko)$ and
\[
 A^{+} =
 \{ \diag( \varpi^{n_1}, \varpi^{n_2}, \varpi^{-n_1}, \varpi^{-n_2} )
 \, | \, n_1 \ge n_2 \ge 0 \}.
\]
Then $\Sp_4(F) = K_1 A^{+} K_1$.
We may assume that $\phi$ is bi-$G(\frak{o})$-invariant,  $F$ is $\GSp_8(\frko)$-invariant, and $F(1,s) = 1$.
In particular,
\[
 F(\delta (k_1 g k_2, 1), s) = F(\delta (g, 1), s)
\]
for all $k_1, k_2 \in K_1$.
Let
\[
 a = 
 \diag( \varpi^{n_1}, \varpi^{n_2}, \varpi^{-n_1}, \varpi^{-n_2} )
 \in A^{+}.
\]
By \cite[p.~241]{Wald2003},
there exists a constant $C > 0$ such that
\[
 \vol(K_1aK_1) \le C q^{4n_1 + 2n_2}.
\]
By \cite[Proposition 6.4]{LNM1254},
\[
 F(\delta (a, 1), s) = q^{- (s + {5}/{2})(n_1 + n_2)}.
\]
By Macdonald's formula \cite{Macdonald1971}, \cite{Casselman1980},
there exists a polynomial $\Psi$ such that
\[
 |\phi(a)| \le \Psi(n_1) q^{-2n_1 - n_2 + \eta(n_1 + n_2)},
\]
where $\eta = \max\{|{\rm Re}(\lambda_{1})|,|{\rm Re}(\lambda_{2})|\}$. 
Hence $|Z(s, \phi, F)|$ is majorized by
\begin{align*}
 & \sum_{a \in A^{+}} \vol(K_1aK_1) F(\delta (a,1), s) |\phi(a)| \\
 & \le C \sum_{n_2 = 0}^{\infty}\sum_{n_1 = n_2}^{\infty}
 \Psi(n_1)
 q^{- (s + {1}/{2} - \eta) n_1} q^{- (s + {3}/{2} - \eta) n_2}.
\end{align*}
This completes the proof.
\end{proof}

\begin{lemma}\label{L:local zeta DS}
Assume $\pi$ is of type (DS). Let $\phi\in\mathcal{C}(\pi)$  and let $F \in I(s)$ be a holomorphic section.  The integral $Z(s, \phi, F)$ is 
absolutely convergent for $\Re(s) > -{1}/{2}$.
\end{lemma}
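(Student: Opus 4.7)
The plan is to reduce the integral to one over the positive Weyl chamber via the Cartan decomposition, then combine the temperedness of $\pi$ with the Iwasawa transformation rule for $F$ to produce an integrable majorant. First, recall that $\pi|_{\Sp_4(\RR)}$ is a (sum of) (limit of) discrete series, hence tempered. Write $\Sp_4(\RR) = K_1 A^{+} K_1$, where $K_1 = \Sp_4(\RR)\cap\O(4)$ and
\[
 A^{+} = \{a(t_1,t_2) := \diag(e^{t_1},e^{t_2},e^{-t_1},e^{-t_2}) : t_1 \ge t_2 \ge 0\}.
\]
The Haar measure decomposes as $C \cdot J(t_1,t_2)\,dk_1\,dt_1\,dt_2\,dk_2$ with Jacobian $J(t_1,t_2) = \prod_{\alpha>0} \sinh\alpha(\log a(t_1,t_2))$; since the positive roots of $\sp_4$ are $e_1\pm e_2,\,2e_1,\,2e_2$, one has $J(t_1,t_2) \le C' e^{4t_1+2t_2}$ on $A^{+}$.

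Second, bound the matrix coefficient. Since $\phi$ is a matrix coefficient of a tempered representation, the Harish-Chandra--Cowling estimate yields
\[
 |\phi(k_1 a(t_1,t_2) k_2)| \le C_\phi (1+t_1+t_2)^{N}\,\Xi(a(t_1,t_2))
\]
with $\Xi$ the zonal spherical function on $\Sp_4(\RR)$, and the standard bound $\Xi(a) \le C(1+\|\log a\|)^{N'}e^{-\langle\rho,\log a\rangle}$ together with $\rho = 2e_1 + e_2$ gives
\[
 \Xi(a(t_1,t_2)) \le C''(1+t_1+t_2)^{N'}e^{-2t_1-t_2}.
\]

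Third, bound the doubling section. Using the transformation rule $F(pg,s) = \delta_P(p)^{s/5+1/2} F(g,s)$ for $p\in P(\RR)$ and Iwasawa-decomposing $\delta(a(t_1,t_2),1) = p(a)\,k(a)$ inside $\GSp_8(\RR)$, the factor $F(k(a),s)$ is uniformly bounded on compacts in $s$ (by continuity on $\mathbf{K}$), so the claim reduces to computing the Levi component $p(a)$. A direct calculation (equivalently, matching with Piatetski-Shapiro--Rallis's unramified evaluation $F^o(\delta(a,1),s) = q^{-(s+5/2)(n_1+n_2)}$) yields $\delta_P(p(a(t_1,t_2))) \le C\,e^{-5(t_1+t_2)}$, and hence
\[
 |F(\delta(a(t_1,t_2),1),s)| \le C_{F,s}(1+t_1+t_2)^{M}\exp\bigl(-(\Re(s)+\tfrac{5}{2})(t_1+t_2)\bigr),
\]
locally uniformly in $s$.

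Combining, the integrand of $Z(s,\phi,F)$ on $A^{+}$ is majorized (up to a polynomial factor) by
\[
 e^{4t_1+2t_2} \cdot e^{-2t_1-t_2} \cdot e^{-(\Re(s)+5/2)(t_1+t_2)} = \exp\bigl(-(\Re(s)+\tfrac{1}{2})t_1 - (\Re(s)+\tfrac{3}{2})t_2\bigr),
\]
which, together with any polynomial prefactor, is integrable on $\{t_1\ge t_2\ge 0\}$ precisely when both exponential rates are strictly negative; the tighter constraint is $\Re(s) > -\tfrac{1}{2}$. The main obstacle is the third step: establishing the exponent $5(t_1+t_2)$ for $\delta_P(p(a))$ requires either an explicit Iwasawa computation in $\GSp_8(\RR)$ or an appeal to the general theory of doubling sections (e.g.\ \cite{KR1994}, \cite{LR2005}); all the other pieces are routine temperedness estimates and Cartan-measure bookkeeping.
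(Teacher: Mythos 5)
Your proof is correct and arrives at the same range $\Re(s) > -\tfrac12$, but by a genuinely different route from the paper. The paper applies H\"older's inequality at the outset to remove $\phi$ from the analysis entirely: since $\phi$ is a $K$-finite matrix coefficient of a (limit of) discrete series representation, hence tempered, it lies in $L^{2+\epsilon'}(\Sp_4(\R))$ for every $\epsilon'>0$, and so the problem reduces to showing $\int_{\Sp_4(\R)} |F(\delta(g,1),s)|^{2(1-\epsilon)}\,dg < \infty$ for some small $\epsilon>0$; the Cartan decomposition and the exact Piatetski-Shapiro--Rallis evaluation $F(\delta(\exp X,1),s)=\bigl(\prod_i\sqrt{(1+a_i^2)(1+a_i^{-2})}\bigr)^{-s-5/2}\le(a_1a_2)^{-s-5/2}$ then close the argument with no further representation-theoretic input. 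You instead majorize the integrand pointwise, invoking the Harish-Chandra weak inequality (tempered $\Rightarrow |\phi|\lesssim(1+\|\log a\|)^N\Xi$) and the standard $\Xi$-function asymptotic to control $\phi$, and the same $F$-bound for the section; combining with the Jacobian $e^{4t_1+2t_2}$ you get the exponential rates $-(\Re s+\tfrac12)t_1-(\Re s+\tfrac32)t_2$, whose negativity for $\Re s>-\tfrac12$ is exactly the paper's conclusion. Both arguments draw on temperedness, but the paper's H\"older trick spares you the $\Xi$-function machinery and the explicit Cowling--Harish-Chandra estimate on $\phi$, while your version makes the mechanism of cancellation between $\rho$, the Jacobian, and the section exponent completely transparent. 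One minor remark: the final step you flag as the ``main obstacle''---getting $\delta_P(p(a))=e^{-5(t_1+t_2)}$---is handled in the paper simply by quoting \cite[Proposition~6.4]{LNM1254}, which gives the closed form for the spherical section and from which your asymptotic bound for a general holomorphic section follows by Iwasawa plus boundedness of $F$ on ${\bf K}$, exactly as you say; so there is no actual gap there.
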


\begin{proof}
Fix $s > - {1}/{2}$.
By H\"older's inequality,
it suffices to show that the integral
\begin{equation} \label{eq:doubling_infty}
 \int_{\Sp_4(\R)} |F(\delta (g,1), s)|^{2(1-\epsilon)} \, dg  
\end{equation}
is convergent for some $\epsilon>0$.
Let $K_1 = \Sp_4(\R) \cap \O(4)$ and
\[
 \frka^{+} = \{ \diag(X_1, X_2, -X_1, -X_2) \, | \, X_1 \ge X_2 \ge 0 \}.
\]
Then $\Sp_4(\R) = K_1 \exp(\frka^{+}) K_1$.
We may assume that $F$ is $(\GSp_8(\R) \cap \O(8))$-invariant and
$F(1,s) = 1$.
In particular,
\[
 F(\delta (k_1 g k_2, 1), s) = F(\delta (g, 1), s)
\]
for all $k_1, k_2 \in K_1$.
Let $\Delta^{+}$ be the set of positive roots
determined by the chamber $\frka^{+}$.
Let
\[
 X = \diag(X_1, X_2, -X_1, -X_2) \in \frka^{+}
\]
and $a_i = \exp(X_i)$.
By \cite[Proposition 6.4]{LNM1254},
\[
 F(\delta (\exp(X), 1), s) =
 \left( \prod_{i=1}^2 \sqrt{(1+a_i^2)(1+a_i^{-2})} \right)^{-s -{5}/{2}}
 \le (a_1 a_2)^{-s-{5}/{2}}.
\]
Obviously,
\[
 \left| \prod_{\alpha \in \Delta^{+}} \sinh(\alpha(X)) \right|
 \le a_1^4 a_2^2.
\]
Hence the integral \eqref{eq:doubling_infty} is majorized by
\begin{align*}
 & \int_{\frka^{+}} \int_{K_1 \times K_1}
 \left\vert F(\delta (k_1 \exp(X) k_2, 1), s)\right\vert^{2(1-\epsilon)}
 \left| \prod_{\alpha \in \Delta^{+}} \sinh(\alpha(X)) \right| 
 \, dk_1 \, dk_2 \, dX \\
 & \le \int_{1}^{\infty}\int_{a_2}^{\infty} 
 a_1^{-2s-1+2\epsilon s +5 \epsilon} 
 a_2^{-2s-3+2\epsilon s +5 \epsilon}
 \, d^{\times} a_1 \, d^{\times} a_2.
\end{align*}
This completes the proof.
\end{proof}

Assume $\pi=\pi_\lambda$ is of type (IIa) or (PS) with parameter $\lambda$. Recall that $P_{2,2}$ is the standard Siegel parabolic subgroup of $G$. We may write
\[
\pi_\lambda = \begin{cases}
\displaystyle{{\rm Ind}_{P_{2,2}(F)}^{G(F)}(\tau_\lambda \boxtimes {\eta^\varepsilon}|\mbox{ }|^{-\lambda})} & \mbox{ in Case (IIa)},\\
\displaystyle{{\rm Ind}_{P_{2,2}(\R)}^{G(\R)}(\tau_\lambda \boxtimes {\rm sgn}^\varepsilon|\mbox{ }|^{(-\lambda_1-\lambda_2)/2})} & \mbox{ in Case (PS)},
\end{cases}
\]
where $\tau_\lambda$ is the representation of $\GL_2(F)$ defined by
\[
\tau_\lambda = \begin{cases}
\displaystyle{{\rm St}\otimes |\mbox{ }|^\lambda} & \mbox{ in Case (IIa)},\\
\displaystyle{{\rm Ind}_{B(\R)}^{\GL_2(\R)}(|\mbox{ }|^{\lambda_1}\boxtimes|\mbox{ }|^{\lambda_2})} & \mbox{ in Case (PS)}.
\end{cases}
\]
Let
\[
{\bf K} = \begin{cases}
\GL_2(\o) & \mbox{ if $\pi$ is of type (IIa)},\\
{\rm O}(2) & \mbox{ if $\pi$ is of type (PS)}.
\end{cases}
\]
Note that the restrictions of $\pi_\lambda$ and $\tau_\lambda$ to $K$ and ${\bf K}$, respectively, do not depend on $\lambda$. We realize the representation $\tau_\lambda$ (resp.~$\tau_\lambda^\vee$) on a space $\mathcal{V}$ (resp.~$\mathcal{V}^\vee$) which does not depend on $\lambda$ and on which the action of $\tau_\lambda \vert_{\bf K}$ (resp.~$\tau_\lambda^\vee \vert_{\bf K}$) does not depend on $\lambda$, and fix a bilinear pairing $\<\mbox{ },\mbox{ }\> : \mathcal{V} \times \mathcal{V}^\vee \rightarrow \C$ which is invariant with respect to the actions of $\tau_\lambda$ and $\tau_\lambda^\vee$ for all $\lambda$.
We call a map
\[
\Omega \longrightarrow C^\infty(G(F)),\quad \lambda \longmapsto \phi_\lambda
\]
a $K$-finite analytic family of matrix coefficients if it satisfies the following conditions:
\begin{itemize}
\item The map $(\lambda,g) \mapsto \phi_\lambda(g)$ is continuous.
\item For each $g \in G(F)$, the map $\lambda \mapsto \phi_\lambda(g)$ is analytic.
\item For each $\lambda \in \Omega$, the function $g \mapsto \phi_\lambda(g)$ belongs to $\mathcal{C}(\pi_\lambda)$.
\item There exist finitely many irreducible representations $\rho_i$ of $K\times K$ such that $\phi_\lambda$ is contained in the direct sum of the $\rho_i$-isotypic subspaces of $\mathcal{C}(\pi_\lambda)$ for all $\lambda \in \Omega$.
\end{itemize}
We call a map 
\[
\Omega \times G(F)  \longrightarrow \mathcal{V},\quad (\lambda,g) \longmapsto h_\lambda(g)
\]
a $K$-finite analytic section of $\pi_\lambda$ if it 
satisfies the following conditions:
\begin{itemize}
\item For each $v \in \mathcal{V}^\vee$, the map $(\lambda,g) \mapsto \<h_\lambda(g),v^\vee\>$ is continuous.
\item For each $g \in G(F)$ and $v \in \mathcal{V}^\vee$, the map $\lambda \mapsto \<h_\lambda(g),v^\vee\>$ is analytic.
\item For each $\lambda \in \Omega$, the map $g \mapsto h_\lambda(g)$ belongs to $\pi_\lambda$.
\item There exist finitely many irreducible representations $\rho_i$ of $K$ such that $h_\lambda$ is contained in the direct sum of the $\rho_i$-isotypic subspaces of $\pi_\lambda$ for all $\lambda \in \Omega$.
\end{itemize}
Similarly, we define the notion of $K$-finite analytic sections for $\pi^\vee_\lambda$. Given $K$-finite analytic sections $h_\lambda$ and $h_\lambda^\vee$ of $\pi_\lambda$ and $\pi^\vee_\lambda$, respectively, it is easy to show that the map
\begin{align}\label{E:explicit analytic family for mc}
\lambda \longmapsto \left[g \mapsto \int_{K\cap \Sp_4(F)}\<h_\lambda(kg),h_\lambda^\vee(k)\>\,dk \right]
\end{align}
defines a $K$-finite analytic family of matrix coefficients. Moreover, for any fixed $\lambda_0 \in \Omega$, any $K$-finite analytic family of matrix coefficients can be written as a linear combination, with analytic functions of $\lambda$ as coefficients, of $K$-finite analytic families of the form (\ref{E:explicit analytic family for mc}) in a neighborhood of $\lambda_0$.

\begin{lemma}\label{L:ab1}
Let $\phi_\lambda$ be a $K$-finite analytic family of matrix coefficients and $F \in I(s)$ a holomorphic section.  The integral $Z(s,\phi_\lambda,F)$ is absolutely convergent for 
\begin{align*}
\begin{cases}
{\rm Re}(s)>-1/2+\max\{0,|{\rm Re}(\lambda)|-1/2\} & \mbox{ in Case (IIa)},\\ 
{\rm Re}(s)>-1/2+\max\{|{\rm Re}(\lambda_{1})|,|{\rm Re}(\lambda_{2})|\} & \mbox{ in Case (PS)}, 
\end{cases}
\end{align*}
uniformly for $\lambda$ varying in a compact set. In particular, the integral is absolutely convergent for ${\rm Re}(s) \geq 1/2$ if $\lambda \in \mathcal D$.
\end{lemma}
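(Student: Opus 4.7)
The plan is to reduce to matrix coefficients of the inducing data $\tau_\lambda$ on $\GL_2(F)$ and then integrate over $\Sp_4(F)$ using the Cartan decomposition, following the pattern of Lemmas \ref{L:local zeta Ur} and \ref{L:local zeta DS}. By the discussion surrounding (\ref{E:explicit analytic family for mc}), we may assume $\phi_\lambda$ has the form
\[
 \phi_\lambda(g) = \int_{K_1} \<h_\lambda(kg), h_\lambda^\vee(k)\>\,dk, \qquad K_1 = K \cap \Sp_4(F),
\]
for $K$-finite analytic sections $h_\lambda, h_\lambda^\vee$ of $\pi_\lambda, \pi_\lambda^\vee$; we may further assume $F$ is $\GSp_8(\frko)$-fixed (resp.\ $(\GSp_8(\R) \cap \O(8))$-invariant) with $F(1,s) = 1$.

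The key input is a pointwise bound obtained by inserting the Iwasawa decomposition $\Sp_4(F) = (P_{2,2}(F) \cap \Sp_4(F))\,K_1$ into the defining formula for $h_\lambda$,
\[
 |\phi_\lambda(g)| \le C \sup_{k_1, k_2 \in K_1} \delta_{P_{2,2} \cap \Sp_4}^{1/2}\bigl(m(k_1, g k_2)\bigr) \, \bigl|\phi_{\tau_\lambda}\bigl(A(k_1, g k_2)\bigr)\bigr|,
\]
where $m = \diag(A, {}^t\!A^{-1})$ is the Levi component and $\phi_{\tau_\lambda}$ runs over a finite collection of ${\bf K}$-finite matrix coefficients of $\tau_\lambda$; the similitude character from the second inducing factor is trivial on $\Sp_4(F)$ and so does not contribute. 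Combined with a standard Macdonald-type estimate
\[
 \bigl|\phi_{\tau_\lambda}(\diag(\alpha, 1))\bigr| \le C'\,\Psi(v(\alpha))\, |\alpha|^{\eta(\lambda)-1/2},
\]
in which $\Psi$ is polynomial in the valuation $v(\alpha)$ (or $\log|\alpha|$ in the archimedean case) and
\[
 \eta(\lambda) = \begin{cases} \max\{0,\, |\Re(\lambda)| - 1/2\} & \text{in Case (IIa)}, \\ \max\{|\Re(\lambda_1)|,\, |\Re(\lambda_2)|\} & \text{in Case (PS)}, \end{cases}
\]
with constants locally uniform in $\lambda$, this reduces the problem to Cartan cell estimates of exactly the type already carried out in Lemmas \ref{L:local zeta Ur} and \ref{L:local zeta DS}.

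More precisely, for $a = \diag(\varpi^{n_1}, \varpi^{n_2}, \varpi^{-n_1}, \varpi^{-n_2})$ in the dominant chamber (non-archimedean case), $\vol(K_1 a K_1) \le C\, q^{4n_1 + 2n_2}$, $|F(\delta(a,1), s)| \le q^{-(\Re(s)+5/2)(n_1+n_2)}$, and the contribution of $\delta_{P_{2,2} \cap \Sp_4}^{1/2}\cdot \phi_{\tau_\lambda}$ at $a$ is of order $\Psi(n_1)\, q^{-2n_1 - n_2 + \eta(\lambda)(n_1+n_2)}$, by tracking the Iwasawa Levi component under multiplication by $K_1$ on both sides. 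Hence $|Z(s,\phi_\lambda, F)|$ is majorized by a double geometric series that converges whenever $\Re(s) > -1/2 + \eta(\lambda)$. The archimedean Case (PS) is handled in parallel, using the polar decomposition with $\sinh$-Jacobians replacing the double-coset volumes. Local uniformity in $\lambda$ is inherited from the Macdonald bound, and specializing to $\lambda \in \mathcal{D}$ gives $\eta(\lambda) < 1$, consistent with $\Re(s) \ge 1/2$.

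The main obstacle is producing the Macdonald-type bound on $\phi_{\tau_\lambda}$ with constants that depend analytically and locally uniformly on $\lambda$. In Case (PS) this is Harish-Chandra/Macdonald's formula applied to the principal series $\tau_\lambda$; in Case (IIa) one writes $\tau_\lambda = \mathrm{St} \otimes |\cdot|^\lambda$ and uses the explicit matrix coefficient expansion of the Steinberg representation, together with the $K$-finiteness hypothesis on $h_\lambda$ and $h_\lambda^\vee$, to control the finitely many $\bf K$-types uniformly in $\lambda$.
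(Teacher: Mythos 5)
Your approach differs from the paper's. The paper does not prove a pointwise Cartan-decomposition estimate for $\phi_\lambda$ on $\Sp_4(F)$ and then sum over the chamber; instead, it first unfolds the $K_1$-integral in $\phi_\lambda$ into the $\Sp_4(F)$-integral in $Z(s,\phi_\lambda,F)$, then applies the Iwasawa decomposition $\Sp_4(F) = N_Q(F)\,\GL_2(F)\,K_1$ (with $Q = P_{2,2}\cap\Sp_4$) \emph{inside the integral}, which collapses $Z(s,\phi_\lambda,F)$ to an explicit $\GL_2(F)$-zeta integral of the section $\Psi(\cdot,s)\in\Ind_{P'}^{\GL_4}(\delta_{P'}^{s/2})$ against a matrix coefficient of $\tau_\lambda$. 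It then bounds the $\tau_\lambda$-coefficient directly and uses a Cartan/polar decomposition of $\GL_2(F)$ (not $\Sp_4(F)$) to get the explicit dominating series/integrals $I^{(1)}_\lambda,\ldots,I^{(3)}_\lambda$ and $I_{\lambda,\epsilon}$. This reduction-then-Cartan-on-$\GL_2$ order is what makes the constants transparently locally uniform in $\lambda$.

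The main gap in your proposal is the pointwise bound you take as ``the key input.'' The inequality
\[
 |\phi_\lambda(g)| \le C \sup_{k_1, k_2 \in K_1} \delta_{Q}^{1/2}\bigl(m(k_1 g k_2)\bigr)\,\bigl|\phi_{\tau_\lambda}\bigl(A(k_1 g k_2)\bigr)\bigr|
\]
is too crude: as $k_1, k_2$ range over $K_1$, the Iwasawa--Levi part $m(k_1 a k_2)$ of a dominant Cartan element $a$ sweeps out a large range, and the supremum over this range does not recover the Macdonald exponent $-2n_1 - n_2 + \eta(\lambda)(n_1+n_2)$ you claim. The correct bound on $\phi_\lambda(a)$ would come from the integral over $K_1$, not a supremum, and extracting the asymptotics of that integral is precisely the Harish-Chandra/Casselman asymptotic theory for $K$-finite matrix coefficients of parabolically induced representations. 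This is available in principle, but it is not a one-line consequence of a $\GL_2$-Macdonald estimate plus ``tracking the Levi component,'' and establishing it with constants locally uniform in $\lambda$ (which is essential for Corollary~\ref{C:analytic family}) would require substantially more work than you indicate. You would also need to argue that for a given fixed $\lambda_0$, the analytic family near $\lambda_0$ can be written as a linear combination of families of the special form \eqref{E:explicit analytic family for mc} with analytic coefficients --- the paper uses this reduction, and your sketch should as well, since an arbitrary $K$-finite analytic family is not manifestly of the form you start from.

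A secondary issue: in Lemma~\ref{L:local zeta Ur} (which you cite as the model), the bound $|\phi(a)| \le \Psi(n_1)\,q^{-2n_1-n_2+\eta(n_1+n_2)}$ comes from Macdonald's \emph{explicit} formula for the \emph{spherical} function of an unramified principal series of $\GSp_4$, not from induction through $P_{2,2}$. Neither Case~(IIa) nor Case~(PS) is spherical on $\GSp_4$ in the relevant sense (the paramodular vector in (IIa) is not $G(\o)$-fixed; in (PS) the matrix coefficient is merely $K$-finite, not bi-$K$-invariant), so you cannot directly import that lemma's estimate.
Your outline can likely be completed, but filling the gap would essentially reproduce the intertwining-integral reduction of the paper, or else require a separate proof of the locally uniform Casselman--Harish-Chandra asymptotic expansion for these families.
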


\begin{proof}
Since we only consider the convergence for $\lambda$ varying in a compact set, we may assume $\phi_\lambda$ is of the form (\ref{E:explicit analytic family for mc}) for some $K$-finite analytic sections $h_\lambda$ and $h^\vee_\lambda$ for $\pi_\lambda$ and $\pi^\vee_\lambda$, respectively.
Let $Q$ be the standard Siegel parabolic subgroup of $\Sp_4$ and $N$ its unipotent radical. We identify $\GL_2$ with a Levi subgroup of $Q$ via the embedding 
\[
\GL_2 \longrightarrow Q,\quad g \longmapsto \underline{g} = \bp g & 0 \\ 0 & {}^t \! g^{-1}\ep.
\]
Then
\begin{align*}
Z(s,\phi_\lambda,F) & = \int_{\Sp_4(F)}F(\delta(g,1),s)\int_{K\cap \Sp_4(F)} \<h_\lambda(kg),h_\lambda^\vee(k)\> \,dk\,dg\\
& = \int_{K\cap \Sp_4(F)}\int_{\Sp_4(F)}F(\delta(g,k),s) \<h_\lambda(g),h_\lambda^\vee(k)\> \,dg\,dk \\
& = \int_{(K\cap \Sp_4(F))^2} \int_{\GL_2(F)} \int_{N(F)} \delta_Q(\underline{g})^{-1/2} F(\delta(u\underline{g}k_1,k_2),s)\<\tau_\lambda(g)h_\lambda(k_1),h_\lambda^\vee(k_2)\> \,du\,dg\,dk_1\,dk_2.
\end{align*}
Recall that $P\cap \Sp_8$ is the standard Siegel parabolic subgroup of $\Sp_8$. We identify $\GL_4$ with a Levi subgroup of $P\cap \Sp_8$ via the embedding
\[\GL_4 \longrightarrow P\cap \Sp_8,\quad a \longmapsto \underline{a} = \bp a & 0 \\ 0 & {}^t \! a^{-1}\ep.\] Let $P'$ be the parabolic subgroup of $\GL_4$ defined by
\[P' = \left\{\left. \bp a & * \\ 0 & d \ep\mbox{ }\right\vert \mbox{ }a,d \in \GL_2 \right\}.\]
For $g \in \Sp_8(F)$, define $\Psi(g,s) \in {\rm Ind}_{P'(F)}^{\GL_4(F)}(\delta_{P'}^{s/2})$ by
\[\Psi(g,s)(a) = |\det(a)|^{-3/2}\int_{N(F)}F(\delta (u,1)\underline{A}^{-1}\underline{a}\,g,s) \,du\]
for $a \in \GL_4(F).$ Here 
\[A= \bp 0 & {\bf 1}_2 \\ {\bf 1}_2 & -{\bf 1}_2\ep. \]
Note that $\Psi(g,s)$ is an intertwining integral (cf.\,\cite[Lemma 6.2]{LNM1254} and \cite[Proposition 1]{LR2005}) and absolutely convergent for ${\rm Re}(s)>-1/2$.
Then
\[
Z(s,\phi_\lambda,F)  = \int_{(K\cap \Sp_4(F))^2} \int_{\GL_2(F)} \Psi((k_1,k_2),s)\left(A \bp g & 0 \\ 0 & {\bf 1}_2  \ep  \right)\<\tau_\lambda(g)h_\lambda(k_1),h_\lambda^\vee(k_2)\> \,dg\,dk_1\,dk_2.
\]

Let $\eta = \max\{|{\rm Re}(\lambda_{1})|,|{\rm Re}(\lambda_{2})|\}$  and $\mu = \min\{{\rm Re}(\lambda_1),{\rm Re}(\lambda_2)\}$ in Case (PS). Fix $s\in \R$ such that
\[
\begin{cases}
s > -1/2 + \max\{0,|{\rm Re}(\lambda)|-1/2\}  & \mbox{ in Case (IIa)},\\
s > -1/2 + \eta & \mbox{ in Case (PS)}.
\end{cases}
\]
It suffices to show that the integral
\begin{align}\label{eq:doubling_PS}
\int_{(K\cap \Sp_4(F))^2} \int_{\GL_2(F)} \left\vert\Psi((k_1,k_2),s)\left(A \bp g & 0 \\ 0 & {\bf 1}_2  \ep  \right)\<\tau_\lambda(g)h_\lambda(k_1),h_\lambda^\vee(k_2)\> \right\vert \,dg\,dk_1\,dk_2
\end{align}
is uniformly convergent for $\lambda$ varying in a compact set. We may assume the section $F$ is $\GSp_8(\o)$-invariant (resp.~$(\GSp_8(\R)\cap {\rm O}(8))$-invariant) in Case (IIa) (resp.~Case (PS)). Let $f^o \in {\rm Ind}_{P'(F)}^{\GL_4(F)}(\delta_{P'}^{s/2})$ be the $\GL_4(\o)$-invariant section (resp.~${\rm O}(4)$-invariant section) in Case (IIa) (resp.~Case (PS)) defined by
\[f^o(a,s) = |\det(a)|^{s+1}\int_{\GL_2(F)}\Phi^o((0,x)a)|\det(x)|^{2s+2}\,dx\]
for $a \in \GL_4(F)$. Here $\Phi^o \in \mathcal{S}({\rm M}_{2,4}(F))$ is given by
\[
\Phi^o(x) = \begin{cases}
{\mathbb I}_{{\rm M}_{2,4}(\o)}(x) & \mbox{ in Case (IIa)},\\
e^{-\pi\,{\rm tr}(x{}^t \! x)} & \mbox{ in Case (PS)}.
\end{cases}
\]
Note that the above integral is absolutely convergent for ${\rm Re}(s)>-1/2$. Then
\[\Psi(1,s) = \frac{\Psi(1,s)({A})}{f^o({A},s)}\cdot f^o.\]
Put $F^+ = \{ \nu \in F^{\times}\mbox{ }\vert\mbox{ }|\nu| \leq 1\}$. 
There is a function {$\kappa$}
 on $F^+$ such that $0 \leq {\kappa(\nu)} \leq C\cdot |\nu|^{-1}$ for some constant $C$ and such that 
\[\int_{\GL_2(F)}f(g)\,dg = \int_{F^{\times}}\int_{F^+}\int_{{\bf K}\times {\bf K}}f(tk_1{\bf a}(\nu)k_2){\kappa(\nu)}\,dk_1\,dk_2\,d^{\times}\nu\,d^{\times}t \]
for all $f \in L^1(\GL_2(F))$.
Let $0<\epsilon<s+1/2-\eta$ in Case (PS). 
There exists a constant $C_{\lambda}>0$ in Case (IIa) (resp.~$C_{\lambda,\epsilon}>0$ in Case (PS)) bounded uniformly as $\lambda$ varies in a compact set such that
\[
\left\vert\<\tau_\lambda(k_1 {\bf a}(\nu) k_2)h_\lambda(k_3),h_\lambda^\vee(k_4)\>\right\vert \leq  \begin{cases}
C_{\lambda} \cdot |\nu|^{{\rm Re}(\lambda)+1} & \mbox{ in Case (IIa)},\\
C_{\lambda,\epsilon}\cdot |\nu|^{\mu+1/2-\epsilon} & \mbox{ in Case (PS)},
\end{cases}
\]
for all $\nu \in F^+$, $k_1,k_2 \in {\bf K}$, and $k_3,k_4 \in K$. 
In Case (IIa), we have
\[
f^o\left(A\bp \varpi^m{\bf a}(\varpi^n) & 0 \\ 0 & {\bf 1}_2\ep \right) = \zeta(2s+1)\zeta(2s+2)q^{(-n-2m)(s+1)}\cdot\begin{cases}
1 & \mbox{ if $m \geq 0$},\\
q^{m(2s+2)} & \mbox{ if $0 \geq m \geq -n$},\\
q^{(n+2m)(2s+2)} & \mbox{ if $-n\geq m$},
\end{cases}
\]
for $m \in \Z$ and  $n \in \Z_{\geq 0}$.
In Case (PS), we have
\[
f^o\left({A}\bp {t{\bf a}(\nu)} & 0 \\ 0 & {\bf 1}_2 \ep \right) = \zeta(2s+1)\zeta(2s+2)|\nu t^2|^{s+1}(1+\nu^2 t^2)^{-s-1}(1+t^2)^{-s-1}
\]
for $t \in \R^{\times}$ and $\nu \in \R^+$.
By \cite[Lemma 6.1]{LNM1254},  
\[f^o\left({A}\bp {k_1gk_2} & 0 \\ 0  & {\bf 1}_2 \ep\right) = f^o\left(A \bp g & 0 \\ 0 & {\bf 1}_2\ep \right)\]
for all $k_1, k_2 \in {\bf K}$ and $g \in \GL_2(F)$.
Therefore, the integral (\ref{eq:doubling_PS}) is majorized by
\[
\frac{\Psi(1,s)({A})}{f^o({A},s)}\cdot\zeta(2s+1)\zeta(2s+2)C\cdot \begin{cases}
C_{\lambda}\cdot(I_{\lambda}^{(1)}+I_{\lambda}^{(2)}+I_{\lambda}^{(3)}) & \mbox{ in Case (IIa)},\\
C_{\lambda,\epsilon}\cdot I_{\lambda,\epsilon} & \mbox{ in Case (PS)},
\end{cases}
\]
where
\begin{align*}
I_{\lambda}^{(1)} &= \sum_{n=0}^{\infty}\sum_{m=0}^\infty q^{-n(s+{\rm Re}(\lambda)+1)}q^{-m(2s+2+2{\rm Re}(\lambda))},\\
I_{\lambda}^{(2)} &= \sum_{n=0}^{\infty}\sum_{m=1}^nq^{-n(s+{\rm Re}(\lambda)+1)}q^{2m{\rm Re}(\lambda)},\\
I_{\lambda}^{(3)} &= \sum_{n=0}^{\infty}\sum_{m=n+1}^\infty q^{-n(-s+{\rm Re}(\lambda)-1)}q^{-m(2s+2-2{\rm Re}(\lambda))},
\end{align*}
in Case (IIa), and
\[
I_{\lambda,\epsilon} =  \int_{\R^\times}\left(\frac{t^2}{1+t^2}\right)^{s+1}|t|^{-s+{\rm Re}(\lambda_{1})+{\rm Re}(\lambda_{2})-\mu-1/2+\epsilon}\int_{-|t|}^{|t|}\frac{|\nu|^{s+\mu+1/2-\epsilon}}{(1+\nu^2)^{s+1}} \,d^{\times}\nu\,d^\times t
\]
in Case (PS).
This shows the absolute convergence of $Z(s, \phi_\lambda,F)$. It is clear that the integrals $I_{\lambda}^{(1)},I_{\lambda}^{(2)},I_{\lambda}^{(3)},I_{\lambda,\epsilon}$ are bounded uniformly as $\lambda$ varies in a compact set. This completes the proof. 
\end{proof}

\subsection{Local zeta integrals for $\GSp_4 \times \GSp_4$}\label{SS:local zeta for Rankin-Selberg}
Recall $\mathcal{P}=P_{4,3}$ and $\calI(s)=I_{4,3}(s)$ in the notation of \S\,\ref{ss:eisenstein}. Denote by $C^{\infty}(U(F)\backslash G(F),\psi_U)$ the space of smooth Whittaker functions on $G(F)$ with respect to $\psi_U$ and by $\mathcal{W}(\pi,\psi_U)$ the space of smooth Whittaker functions of $\pi$ with respect to $\psi_U$.  If $W \in \mathcal{W}(\pi,\psi_U)$ and $\calF \in \calI(s)$ is a holomorphic section, let $\calZ(s,W,\calF)$ be the local zeta integral defined as in (\ref{E:local zeta2}) with $\overline{W}$ replaced by the left translation of $W$ by $\diag(-1,1,1,-1)$ when $\pi$ is of type (IIa) or (PS). 

We follow the notation in \S\,\ref{SS:theta lifts}. In particular, $V = {\rm M}_{2,2}$ is a quadratic space over $F$ with the norm form.
Let $\omega$ be the Weil representation of $\Sp_4(F)\times H_1(F)$ on $\mathcal{S}(V^2(F))$.
We extend $\omega$ to a representation of ${\rm G}(\Sp_4 \times H_1)(F)$ as in (\ref{Weil rep on R}).  
Let $\sigma_1$ and $\sigma_2$ be the irreducible admissible representations of $\GL_2(F)$ defined as follows:
\begin{itemize}
\item If $\pi$ is of type (IIa), then
$$\sigma_1= {\rm Ind}_{B(F)}^{\GL_2(F)}(|\mbox{ }|^{\lambda}{\eta^\varepsilon}
\boxtimes|\mbox{ }|^{-\lambda}{\eta^\varepsilon}),\quad \sigma_2={\rm St}\otimes {\eta^\varepsilon}.$$
\item If $\pi$ is of type (DS), then
$$\sigma_1 = {\rm DS}(\lambda_1-\lambda_2),\quad \sigma_2 = {\rm DS}(\lambda_1+\lambda_2).$$
\item If $\pi$ is of type (PS), then 
$$\sigma_1 = {\rm Ind}_{B(\R)}^{\GL_2(\R)}(|\mbox{ }|^{(\lambda_1+\lambda_2)/2}{\rm sgn}^\varepsilon\boxtimes|\mbox{ }|^{(-\lambda_1-\lambda_2)/2}{\rm sgn}^\varepsilon),\quad \sigma_2 =  {\rm Ind}_{B(\R)}^{\GL_2(\R)}(|\mbox{ }|^{(\lambda_1-\lambda_2)/2}{\rm sgn}^\varepsilon\boxtimes |\mbox{ }|^{(-\lambda_1+\lambda_2)/2}{\rm sgn}^\varepsilon).$$
\end{itemize}
Via the isomorphism
\[H^\circ \simeq \Delta\mathbb{G}_m\backslash(\GL_2\times\GL_2),\]
we have a representation $\sigma = \sigma_1 \times \sigma_2$ of $H^\circ(F)$. Let $N^\square$ be the unipotent subgroup of $H^\circ$ defined by
\[
N^\square = \{ [{\bf n}(x),{\bf n}(y)] \mbox{ }\vert\mbox{ }x,y \in \mathbb{G}_a \},
\]
and $\psi_{N^\square}$ the additive character of $N^\square(F)$ defined by 
\[
\psi_{N^\square}([{\bf n}(x),{\bf n}(y)]) = \psi(x+y).
\]
Denote by $\mathcal{W}(\sigma,\psi_{N^\square})$ the space of smooth Whittaker functions of $\sigma$ with respect to $\psi_{N^\square}$. We have a surjective equivariant map
\[
S(V^2(F)) \otimes  \mathcal{W}(\sigma,\psi_{N^\square}) \longrightarrow \mathcal{W}(\pi,\psi_U), \quad \varphi \otimes W \longmapsto \mathcal{W}(\varphi,W),
\]
where
\begin{align}\label{E:explicit Whittaker}
\mathcal{W}(\varphi,W)(g) = \int_{\Delta N(F)\backslash H_1^{\circ}(F)}W(h_{1}h)\omega(g,h_{1}h)\varphi({\bf x}_0,{\bf y}_0)\,d\bar{h}_{1}
\end{align}
for $(g,h) \in {\rm G}(\Sp_4 \times H_1)(F)$. 
Here 
\[
{\bf x}_0 = \bp 0 & -1 \\ 0 & 0\ep,\quad {\bf y}_0 = {\bf a}(-1).
\]

In Case (IIa) and Case (PS), we write $\pi=\pi_\lambda$ for the representation of $G(F)$ with parameter $\lambda$ and $\sigma=\sigma_\lambda$ for the representation $H^\circ(F)$ defined as above with respect to $\lambda$. 
We call a map
\[
\Omega \longrightarrow C^{\infty}(U(F)\backslash G(F),\psi_U),\quad \lambda \longmapsto {W}_\lambda
\]
a $K$-finite analytic family of Whittaker functions if it satisfies the following conditions:
\begin{itemize}
\item The map $(\lambda,g) \mapsto {W}_\lambda(g)$ is continuous.
\item For each $g \in G(F)$, the map $\lambda \mapsto {W}_\lambda(g)$ is analytic.
\item For each $\lambda \in \Omega$, the function $g \mapsto {W}_\lambda(g)$ belongs to $\mathcal{W}(\pi_\lambda,\psi_U)$.
\item There exist finitely many irreducible representations $\rho_i$ of $K$ such that ${W}_\lambda$ is contained in the direct sum of the $\rho_i$-isotypic subspaces of $\mathcal{W}(\pi_\lambda,\psi_U)$ for all $\lambda \in \Omega$.
\end{itemize}
Let 
\[
{\bf K}^\square = \begin{cases}
H^\circ(\o) & \mbox{ if $F$ is non-archimedean},\\
({\rm O}(2) \times {\rm O}(2))/\{\pm 1\} & \mbox{ if $F=\R$}.
\end{cases}
\]
Similarly, we define the notion of ${\bf K}^\square$-finite analytic families valued in $\mathcal{W}(\sigma_\lambda,\psi_{N^\square})$.

Let $\epsilon>0$. If $\pi$ is of type (IIa), let $\frak{X}(\pi,\epsilon)$ be the set consisting of the following characters of ${\bf T}(F)$:
\begin{align}\label{E:charaIIa}
\begin{split}
{\rm diag}(a,b,ca^{-1},cb^{-1})&\longmapsto 1,\\
{\rm diag}(a,b,ca^{-1},cb^{-1})&\longmapsto |a|^{1/2}\cdot |b|^{1/2}\cdot |c|^{-1/2},\\
{\rm diag}(a,b,ca^{-1},cb^{-1})&\longmapsto |a|^{-|{\rm Re}(\lambda)|-\epsilon}\cdot |b|^{-|{\rm Re}(\lambda)|-\epsilon}\cdot|c|^{|{\rm Re}(\lambda)|+\epsilon},\\
{\rm diag}(a,b,ca^{-1},cb^{-1})&\longmapsto |a|^{-|{\rm Re}(\lambda)|-\epsilon+1/2}\cdot |b|^{-|{\rm Re}(\lambda)|-\epsilon+1/2} \cdot |c|^{|{\rm Re}(\lambda)|+\epsilon-1/2}.
\end{split}
\end{align} 
If $\pi$ is of type (DS), let $\frak{X}(\pi,\epsilon)$ be the set consisting of the following character of ${\bf T}(\R)$:
\begin{align}\label{E:charaDS}
{\rm diag}(a,b,ca^{-1},cb^{-1})&\longmapsto |a|^{-\epsilon}|b|^{-\epsilon}|c|^\epsilon.
\end{align}
 If $\pi$ is of type (PS), let $\frak{X}(\pi,\epsilon)$ be the set consisting of the following character of ${\bf T}(\R)$:
\begin{align}\label{E:charaPS}
{\rm diag}(a,b,ca^{-1},cb^{-1})&\longmapsto |a|^{-|{\rm Re}(\lambda_1)|-|{\rm Re}(\lambda_2)|-2\epsilon} |c|^{(|{\rm Re}(\lambda_1)|+|{\rm Re}(\lambda_2)|)/2+\epsilon}.
\end{align}

We have the following uniform asymptotic estimate for Whittaker functions and an explicit construction of $K$-finite analytic families by (\ref{E:explicit Whittaker}).
\begin{lemma}\label{L:Whittaker fun}
(1) Let ${\bf W}_\lambda$ be a ${\bf K}^\square$-finite analytic family valued in $\mathcal{W}(\sigma_\lambda,\psi_{N^\square})$ and let $\varphi \in S(V^2(F))$. Then the map 
\begin{align}\label{E:explicit analytic family}
\lambda \longmapsto \mathcal{W}(\varphi,{\bf W}_\lambda)
\end{align}
defines a $K$-finite analytic family of Whittaker functions.
 
(2) Let $W \in \mathcal{W}(\pi,\psi_U)$. For any $\epsilon>0$, there exist a non-negative function $\phi_\epsilon$ on $(F^\times)^2$ and a constant $C_\epsilon>0$ which satisfy the following conditions: 
\begin{itemize}
\item
\[
\left\vert W(utk) \right\vert \leq  \delta_{\bf B}(t)^{1/2}\phi_{\epsilon}(b^2c^{-1},ab^{-1})\sum_{\chi \in \frak{X}(\pi,\epsilon)}\chi(t)
\]
for $t={\rm diag}(a,b,ca^{-1},cb^{-1}) \in {\bf T}(F)$, $u \in U(F)$ and $k \in K$.
\item When $\pi$ is of type (IIa), we can extend $\phi_{\epsilon}$ to a Schwartz function in $\mathcal{S}(F^2)$ of the form
\[
\phi_\epsilon = C_{\epsilon}\cdot \mathbb{I}_{\varpi^{-n}\o}\otimes\mathbb{I}_{\varpi^{-n}\o}
\]
for some $n \in \Z$.
\item When $\pi$ is of type (DS) or (PS), we have
\[\phi_\epsilon(a,b) \leq C_\epsilon\cdot e^{-\pi(|a|+|b|)/4}\]
for $a,b \in \R^\times$. 
\end{itemize}

(3) Let $W_\lambda$ be a $K$-finite analytic family of Whittaker functions. Then in (2), we can choose a function
$\phi_\epsilon = \phi_{\lambda,\epsilon}$ for $W_\lambda$ so that the constant $C_\epsilon = C_{\lambda,\epsilon}$ is bounded uniformly as $\lambda$ varies in a compact set and the integer $n$ is independent of $\lambda$ in Case (IIa).
\end{lemma}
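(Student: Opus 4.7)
For part (1), I would start from the explicit integral formula \eqref{E:explicit Whittaker} defining $\mathcal{W}(\varphi,\mathbf{W}_\lambda)$. The integrand $\mathbf{W}_\lambda(h_1h)\cdot\omega(g,h_1h)\varphi(\mathbf{x}_0,\mathbf{y}_0)$ is analytic in $\lambda$ for each fixed $(g,h_1)$ by definition of an analytic family, and the Weil representation factor provides Schwartz-type decay on $\Delta N(F)\backslash H_1^\circ(F)$ that is locally uniform in $(\lambda,g)$. Thus dominated convergence gives continuity and analyticity of the resulting map $\lambda \mapsto \mathcal{W}(\varphi,\mathbf{W}_\lambda)(g)$, while joint continuity in $(\lambda,g)$ follows similarly. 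The $K$-finiteness follows from the identity $\omega(kg,h_1h)\varphi=\omega(g,h_1h)\,\omega(k,1)\varphi$: because $\varphi \in S(V^2(F))$ (polynomial in the Fock model at archimedean places), the $K$-span $\omega(K,1)\varphi$ is finite-dimensional, and combined with the ${\bf K}^\square$-finiteness of $\mathbf{W}_\lambda$ this produces a finite set of $K$-types independent of $\lambda$.

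For part (2), I would use the surjection $S(V^2(F))\otimes \mathcal{W}(\sigma,\psi_{N^\square}) \twoheadrightarrow \mathcal{W}(\pi,\psi_U)$ stated just above the lemma, reducing to Whittaker functions of the form $\mathcal{W}(\varphi,\mathbf W)$. Using the Iwasawa decomposition of $\Delta N(F)\backslash H_1^\circ(F)$ and switching the $GL_2$-variables to their ${\bf a}(y_1),{\bf a}(y_2)$ parametrizations (as in the proofs of Lemmas \ref{L:local Whittaker finite 1}, \ref{L:local Whittaker archi. 1}, \ref{L:local Whittaker archi. 2}), the integral for $\mathcal{W}(\varphi,\mathbf W)(tk)$ becomes a convolution of $GL_2$-Whittaker functions with a function built from $\omega(t,1)\varphi$. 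The $GL_2$-factors contribute the classical asymptotic ingredients (power-and-Schwartz on $y_1,y_2$ in the non-archimedean case; rapidly decreasing Bessel-type or discrete-series factors in the archimedean case), and applying these to the parametrizations in \eqref{E:charaIIa}--\eqref{E:charaPS} yields the claimed bound $\delta_{\bf B}(t)^{1/2}\phi_\epsilon(b^2c^{-1},ab^{-1})\sum_{\chi}\chi(t)$. The apparently strange coordinates $(b^2c^{-1},ab^{-1})$ arise naturally because these are the entries of the $GL_2$-diagonal after writing $t$ in the Levi of $P_{2,2}$; the Steinberg/Kirillov support (IIa), the discrete-series factor $e^{-2\pi y}$ (DS), and the $K$-Bessel asymptotics (PS) then force precisely the decay profile asserted, with the $\epsilon$ accounting for the unitarity margin in \eqref{eq:satake}.

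Part (3) is obtained by applying the procedure of part (2) to a family. By part (1), any $K$-finite analytic family $W_\lambda$ can be written as a finite $\C$-linear combination (with analytic coefficients, locally in $\lambda$) of families $\mathcal{W}(\varphi_i,\mathbf{W}_{i,\lambda})$ with $\mathbf{W}_{i,\lambda}$ a ${\bf K}^\square$-finite analytic family in $\mathcal{W}(\sigma_\lambda,\psi_{N^\square})$ --- the latter being explicit in terms of Jacquet integrals against induced sections (or discrete-series matrix coefficients) whose $\lambda$-dependence is uniform on compacta. Substituting these into the estimate derived in part (2) and tracking constants gives a bound in which the constant $C_{\lambda,\epsilon}$ is bounded uniformly on compact subsets of $\Omega$, while in case (IIa) the support exponent $n$ is determined by the finite set of $K$-types (independent of $\lambda$), proving the claim.

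The main obstacle will be the asymptotic analysis in part (2): one must verify that the four characters in \eqref{E:charaIIa} for type (IIa), and the single exponential character in the archimedean cases, are exactly what emerges from the integral \eqref{E:explicit Whittaker} uniformly in the chamber $|a|,|b|,|c|\to \infty$. This essentially requires a Jacquet-module style analysis of $\pi = \operatorname{Ind}_{P_{2,2}}^G(\tau_\lambda\boxtimes\chi)$, controlling the crossover regions between the two $GL_2$ Whittaker asymptotics and the Weil-representation damping; doing this carefully --- rather than just extracting the leading exponents --- is where the care has to go.
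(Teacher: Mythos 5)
Your proposal correctly identifies the paper's strategy: reduce to $W=\mathcal{W}(\varphi,\mathbf{W})$ via the surjection preceding the lemma, parametrize the integral over $\Delta N(F)\backslash H_1^\circ(F)$ via the Iwasawa decomposition, exploit the $\GL_2$-Whittaker asymptotics together with the Weil-representation decay, and for part (3) write an arbitrary $K$-finite analytic family locally as a linear combination of families of the form \eqref{E:explicit analytic family}. The paper proves (1) and (2) simultaneously in Cases (IIa) and (PS) because the uniform bounds of (2) are exactly what justify continuity and analyticity in (1), whereas you treat them as separable steps; this is a matter of organization rather than a gap. The substance the paper supplies and your sketch defers is the explicit extraction of the characters in \eqref{E:charaIIa}--\eqref{E:charaPS} and the damping $e^{-\pi(|a|+|b|)/4}$: in Case (IIa) this comes from writing $\omega(k,k_1h_k)\varphi$ in tensor form, recognizing the inner $x$-integral as a Fourier transform, and reducing to an elementary lattice-counting estimate; in the archimedean cases it comes from Gaussian integration followed by the Bessel integral representation \eqref{E:K Bessel} and the estimate \eqref{E:estimate K Bessel} --- not the Jacquet-module analysis you gesture toward at the end, although the outcome is of course consistent with what Jacquet-module heuristics predict.
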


\begin{proof}
To prove (2) for Case (DS), we may assume $W = \mathcal{W}(\varphi,{\bf W})$ for some ${\bf W} \in \mathcal{W}(\sigma,\psi_{N^\square})$ and $\varphi \in S(V^2(F))$. For Case (IIa) and Case (PS), we prove (1) and (2) simultaneously. Let ${\bf W}_\lambda$ be a ${\bf K}^\square$-finite analytic family valued in $\mathcal{W}(\sigma_\lambda,\psi_{N^\square})$ and let $\varphi \in S(V^2(F))$. We write $W_\lambda = \mathcal{W}(\varphi,{\bf W}_\lambda)$.

First we consider Case (IIa). Let $t={\rm diag}(ab,a,b^{-1},1) \in {\bf T}(F)$ and $k \in K$. Choose $h_k \in H^\circ(\o)$ such that $\nu(h_k)=\nu(k)$. Then
\[
W_\lambda(tk) = \int_{H_1^\circ(\o)}Z_\lambda^{(1)}(k_1;t,k)\,dk_1 + \int_{H_1^\circ(\o)}Z^{(2)}_\lambda(k_1;t,k)\,dk_1,
\]
where
\begin{align*}
Z_\lambda^{(1)}(k_1;t,k) & = |a|\int_F\int_{F^\times}\int_{F^\times} {\bf W}_\lambda([{\bf d}(a){\bf m}(y_1),{\bf u}(x){\bf m}(y_2)]k_1h_k) \\
& \quad\quad\times\omega(tk,[{\bf d}(a){\bf m}(y_1),{\bf u}(x){\bf m}(y_2)]k_1h_k)\varphi({\bf x}_0,{\bf y}_0)|y_1|^{-2}|y_2|^{-2}\,d^\times y_1 \, d^\times y_2\,dx,\\
Z_\lambda^{(2)}(k_1;t,k) & = |a\varpi^{-1}|\int_F\int_{F^\times}\int_{F^\times}{\bf W}_\lambda([{\bf a}(\varpi),{\bf a}(\varpi)][{\bf d}(a){\bf m}(y_1),{\bf u}(x){\bf m}(y_2)]k_1h_k) \\
& \quad\quad\quad\quad\times\omega(tk,[{\bf a}(\varpi),{\bf a}(\varpi)][{\bf d}(a){\bf m}(y_1),{\bf u}(x){\bf m}(y_2)]k_1h_k)\varphi({\bf x}_0,{\bf y}_0)|y_1|^{-2}|y_2|^{-2}\,d^\times y_1 \, d^\times y_2\,dx.
\end{align*}
Let $k_1 \in H_1^\circ(\o)$. We have
\begin{align*}
Z_\lambda^{(1)}(k_1;t,k) & = |a^3b^2|\int_{F^\times}\int_{F^\times}    {\bf W}_\lambda([{\bf d}(a){\bf m}(y_1),{\bf m}(y_2)]k_1h_k) |y_1|^{-2}|y_2|^{-2}\\
&\quad\quad\quad\quad\times \int_F\psi(x)\omega(k,k_1h_k)\varphi\left( \bp 0 & -aby_1^{-1}y_2^{-1} \\ 0 & 0 \ep , \bp -ay_1^{-1}y_2 & -ay_1^{-1}y_2^{-1}x\\ 0 & y_1y_2^{-1} \ep \right) \,dx\,d^\times y_1 \, d^\times y_2.
\end{align*}
Write 
\[
\omega(k,k_1h_k)\varphi(x,y) = \sum_{i}\varphi_0^{(i)}(x)\varphi_{11}^{(i)}(y_{11})\varphi_{12}^{(i)}(y_{12})\varphi_{21}^{(i)}(y_{21})\varphi_{22}^{(i)}(y_{22})
\]
for some $\varphi_0^{(i)} \in \mathcal{S}(V(F))$, $\varphi_{11}^{(i)}, \varphi_{12}^{(i)}, \varphi_{21}^{(i)},\varphi_{22}^{(i)} \in \mathcal{S}(F)$.
Then 
\begin{align*}
Z_\lambda^{(1)}(k_1;t,k) & = |a^2b^2|\sum_i \varphi_{21}^{(i)}(0)\int_{F^\times}\int_{F^\times} {\bf W}_\lambda([{\bf d}(a){\bf m}(y_1),{\bf m}(y_2)]k_1h_k) |y_1|^{-1}|y_2|^{-1}\\
&\quad\quad\quad\quad\quad\times \varphi_0^{(i)}\left( \bp 0 & -aby_1^{-1}y_2^{-1} \\ 0 & 0 \ep \right) \varphi_{11}^{(i)}(-ay_1^{-1}y_2) \hat{\varphi}_{12}^{(i)}(-a^{-1}y_1y_2) \varphi_{22}^{(i)}(y_1y_2^{-1}) \,d^\times y_1 \, d^\times y_2,
\end{align*}
where $\hat{\varphi}_{12}^{(i)}$ is the Fourier transform of $\varphi_{12}^{(i)}$ with respect to $\psi$. Choose a constant $C>0$ and a sufficiently large integer $n$ independent of $k, k_1$, and $\lambda$ such that $\sum_i|\varphi_{21}^{(i)}(0)|  \leq C$ and 
\[
| \varphi_0^{(i)} |\leq  C\cdot \mathbb{I}_{V(\varpi^{-n}\o)},\quad |\varphi_{11}^{(i)}|\leq C\cdot \mathbb{I}_{\varpi^{-n}\o},\quad |\hat{\varphi}_{12}^{(i)}| \leq C\cdot \mathbb{I}_{\varpi^{-n}\o},\quad|\varphi_{22}^{(i)}|\leq C\cdot \mathbb{I}_{\varpi^{-n}\o}
\]
for all $i$.
Then
\begin{align*}
|Z_\lambda^{(1)}(k_1;t,k)| &\leq C^5\cdot\delta_{\bf B}(t)^{1/2} \int_{F^\times} \int_{F^\times} |{\bf W}_\lambda([{\bf d}(a){\bf m}(y_1),{\bf m}(y_2)]k_1h_k)| |a|^{1/2}|y_1|^{-1}|y_2|^{-1}\\
&\quad\quad\quad\quad\quad\quad\quad\quad\times\mathbb{I}_{\varpi^{-n}\o}(aby_1^{-1}y_2^{-1})\mathbb{I}_{\varpi^{-n}\o}(ay_1^{-1}y_2)\mathbb{I}_{\varpi^{-n}\o}(a^{-1}y_1y_2)\mathbb{I}_{\varpi^{-n}\o}(y_1y_2^{-1})\, d^\times y_1 \, d^\times y_2.
\end{align*}
Let $\epsilon>0$. There exists $\Phi_{\lambda,\epsilon} \in \mathcal{S}(F^2)$ such that
\[
|{\bf W}_\lambda([{\bf a}(t_1),{\bf a}(t_2)]k')| \leq |t_1|^{-|{\rm Re}(\lambda)|+1/2-\epsilon}|t_2|^{1/2}\Phi_{\lambda,\epsilon}(t_1,t_2)
\]
for $t_1,t_2 \in F^\times$ and $k' \in H^\circ(\o)$. Moreover, we can find such $\Phi_{\lambda,\epsilon}$ which is bounded uniformly as $\lambda$ varies in a compact set and whose support is contained in $\varpi^{-2n}\o \times \varpi^{-2n}\o$ for some integer $n$ independent of $\lambda$. Hence it suffices to consider the integral
\begin{align}\label{E:9.9}
\begin{split}
&\int_{F^\times}\int_{F^\times} \chi_{\lambda,\epsilon}(a^{-1}y_1^2) |y_2|\mathbb{I}_{\varpi^{-2n}\o}(a^{-1}y_1^2)\mathbb{I}_{\varpi^{-n}\o}(y_2)\\
&\times\mathbb{I}_{\varpi^{-n}\o}(aby_1^{-1}y_2^{-1})\mathbb{I}_{\varpi^{-n}\o}(ay_1^{-1}y_2)\mathbb{I}_{\varpi^{-n}\o}(a^{-1}y_1y_2)\mathbb{I}_{\varpi^{-n}\o}(y_1y_2^{-1})\, d^\times y_1 \, d^\times y_2,
\end{split}
\end{align}
where $\chi_{\lambda,\epsilon} = |\mbox{ }|^{-|{\rm Re}(\lambda)|-\epsilon}$. The integral is majorized by
\[
\left [C_{\lambda,\epsilon}^{(1)}  +C_{\lambda,\epsilon}^{(2)}\cdot |ab|^{1/2} +  C_{\lambda,\epsilon}^{(3)}\cdot \chi_{\lambda,\epsilon}(ab)+ C_{\lambda,\epsilon}^{(4)}\cdot \chi_{\lambda,\epsilon}(ab)|ab|^{1/2}\right] \cdot \mathbb{I}_{\varpi^{-2n}\o}(a)\mathbb{I}_{\varpi^{-2n}\o}(b)
\]
for some constants $C_{\lambda,\epsilon}^{(1)}, C_{\lambda,\epsilon}^{(2)}, C_{\lambda,\epsilon}^{(3)}, C_{\lambda,\epsilon}^{(4)}$ which are bounded uniformly as $\lambda$ varies in a compact set. Indeed, it is clear that (\ref{E:9.9}) is majorized by 
\begin{align*}
&\int_{F^\times}\chi_{\lambda,\epsilon}(a^{-1}y_1^2)\mathbb{I}_{\varpi^{-2n}\o}(a^{-1}y_1^2)\mathbb{I}_{\varpi^{-2n}\o}(a^2by_1^{-2})\,d^\times y_1 \cdot \int_{F^\times}|y_2|\mathbb{I}_{\varpi^{-2n}\o}(y_2^2)\mathbb{I}_{\varpi^{-2n}\o}(aby_2^{-2})\,d^\times y_2\\
&\times \mathbb{I}_{\varpi^{-2n}\o}(a)\mathbb{I}_{\varpi^{-2n}\o}(b).
\end{align*}
Assume that both $\log_q|a|$ an $\log_q|b|$ are even integers. The other cases can be treated in a similar way. Then the above integral is equal to 
\[
(\chi_{\lambda,\epsilon}(\varpi^2)-1)^{-1}(\chi_{\lambda,\epsilon}(ab\varpi^{2n+2})-\chi_{\lambda,\epsilon}(\varpi^{-2n}))\cdot (q^{-1}-1)^{-1}(q^{-n-1}|ab|^{1/2}-q^n)\cdot \mathbb{I}_{\varpi^{-2n}\o}(a)\mathbb{I}_{\varpi^{-2n}\o}(b).
\]
Therefore we obtain a uniform estimate for $Z_\lambda^{(1)}(k_1;t,k)$. We have a similar estimate for $Z_\lambda^{(2)}(k_1;t,k)$. Moreover, it follows from the above estimate that the map $\lambda \mapsto W_\lambda(g)$ is analytic for each $g \in G(F)$. The continuity of the map $(\lambda,g)\mapsto W_\lambda(g)$ can be proved in a similar way. Therefore the map $\lambda \mapsto W_\lambda$ defines a $K$-finite analytic family. This completes the proof of (1) and (2) for Case (IIa).

Next we consider Case (PS). Let $t={\rm diag}(ab,a,b^{-1},1) \in {\bf T}(\R)$ and $k \in K$. Choose $h_k \in {\bf K}^\square$ such that $\nu(h_k) = \nu(k)$. Then
\[
W_{\lambda}(tk) = \int_{ {\bf K}^\square \cap H_1^\circ(\R)}Z_{\lambda}(k_1;t,k)\,dk_1,
\]
where
\begin{align*}
Z_{\lambda}(k_1;t,k) &= |a|\int_\R\int_{0}^\infty\int_{0}^\infty {\bf W}_\lambda([{\bf d}(a){\bf m}(y_1),{\bf u}(x){\bf m}(y_2)]k_1h_k) \\
& \quad\quad\times\omega(tk,[{\bf d}(a){\bf m}(y_1),{\bf u}(x){\bf m}(y_2)]k_1h_k)\varphi({\bf x}_0,{\bf y}_0)y_1^{-2}y_2^{-2}\,d^\times y_1 \, d^\times y_2\,dx\\
&= \delta_{\bf B}(t)^{1/2}\int_{0}^\infty\int_{0}^\infty  {\bf W}_\lambda([{\bf d}(a){\bf m}(y_1),{\bf m}(y_2)]k_1h_k) |a|^{1/2}y_1^{-1}y_2^{-1}\\
&\quad\quad\quad\quad\times \int_\R\psi(-a^{-1}y_1y_2x)\omega(k,k_1h_k)\varphi\left( \bp 0 & -aby_1^{-1}y_2^{-1} \\ 0 & 0 \ep , \bp -ay_1^{-1}y_2 & -x\\ 0 & y_1y_2^{-1} \ep \right) \,dx\,d^\times y_1 \, d^\times y_2.
\end{align*}
The inner integral over $\R$ is equal to the product of $e^{-\pi(y_1^2y_2^{-2}+a^2y_1^{-2}y_2^2+a^2b^2y_1^{-2}y_2^{-2}+a^{-2}y_1^2y_2^2)}$ and a polynomial in four variables $y_1y_2^{-1}$, $ay_1^{-1}y_2$, $ab y_1^{-1}y_2^{-1}$, and $a^{-1}y_1y_2$. Note that this polynomial is independent of $\lambda$. We conclude that
\begin{align*}
|Z_\lambda(k_1;t,k)| &\ll_{k,k_1} \delta_{\bf B}(t)^{1/2} \int_0^\infty\int_0^\infty |{\bf W}_\lambda([{\bf d}(a){\bf m}(y_1),{\bf m}(y_2)]k_1h_k)| |a|^{1/2}y_1^{-1}y_2^{-1}\\
&\quad\quad\quad\quad\quad\quad\quad\times  e^{-\pi(y_1^2y_2^{-2}+a^2y_1^{-2}y_2^2+a^2b^2y_1^{-2}y_2^{-2}+a^{-2}y_1^2y_2^2)/2}\,d^\times y_1\,d^\times y_2.
\end{align*} 
For $\epsilon_1,\epsilon_2>0$, we have
\[
|{\bf W}_\lambda([{\bf a}(t_1),{\bf a}(t_2)]k')| \ll_{\lambda_1,\lambda_2,\epsilon_1,\epsilon_2} |t_1|^{-\mu_1+1/2-\epsilon_1}|t_2|^{-\mu_2+1/2-\epsilon_2}e^{-\pi(|t_1|+|t_2|)}
\]
for $t_1,t_2 \in \R^\times$ and $k' \in {\bf K}^\square$.
Here
\[
\mu_1 = \frac{|{\rm Re}(\lambda_1)|+|{\rm Re}(\lambda_2)|}{2},\quad \mu_2 = \frac{||{\rm Re}(\lambda_1)|-|{\rm Re}(\lambda_2)||}{2}.
\]
Assume $\mu_1 \geq \mu_2$. The case $\mu_2 \leq \mu_2$ can be proved in a similar way and we omit it. Let $\epsilon_1=2\epsilon_2=\epsilon>0$.
Then 
\begin{align*}
&\int_0^\infty\int_0^\infty |{\bf W}_\lambda([{\bf d}(a){\bf m}(y_1),{\bf m}(y_2)]k_1h_k)| |a|^{1/2}y_1^{-1}y_2^{-1}e^{-\pi(y_1^2y_2^{-2}+a^2y_1^{-2}y_2^2+a^2b^2y_1^{-2}y_2^{-2}+a^{-2}y_1^2y_2^2)/2}\,d^\times y_1\,d^\times y_2\\
& \ll_{\lambda_1,\lambda_2,\epsilon} |a|^{\mu_1+\epsilon} \int_{0}^{\infty}\int_{0}^{\infty} y_1^{-2\mu_1-2\epsilon}y_2^{-2\mu_2-\epsilon}e^{-\pi y_1^2(a^{-2}y_2^2+y_2^{-2})/2 -\pi y_1^{-2}(a^2y_2^2+a^2b^2y_2^{-2})/2}\,d^\times y_1\, d^\times y_2 \\
&\ll_{\lambda_1,\lambda_2,\epsilon}  |a|^{\mu_1+\epsilon} \int_{0}^{\infty} y^{-2\mu_2-\epsilon}\left(a^{-2}y^2+y^{-2}\right)^{(\mu_1+\epsilon)/2}\left(a^2y^2+a^2b^2y^{-2}\right)^{-(\mu_1+\epsilon)/2}\\
&\quad\quad\quad\quad\quad\quad\quad\quad\quad\quad\times K_{\mu_1+\epsilon}\left(\pi (a^{-2}y^2+y^{-2})^{1/2}(a^2y^2+a^2b^2y^{-2})^{1/2}\right)\,d^\times y\\
&\ll_{\lambda_1,\lambda_2,\epsilon} |a|^{\mu_1+\epsilon} \int_{0}^{\infty} y^{-2\mu_2-\epsilon}\left(a^2y^2+a^2b^2y^{-2}\right)^{-(\mu_1+\epsilon)} e^{-\pi(a^{-2}y^2+y^{-2})^{1/2}(a^2y^2+a^2b^2y^{-2})^{1/2}/2}\,d^\times y.
\end{align*}
Here, in the last two inequalities, we use the integral representation (\ref{E:K Bessel}) of $K_{\mu_1+\epsilon}$ and the estimate 
\begin{align}\label{E:K Bessel estimate}
|K_{{\alpha_1}}(y)| \ll_{{\alpha_1,\alpha_2}} 
 y^{-{\alpha_2}}e^{-y/2}
\end{align}
for $0<{\alpha_1 \leq \alpha_2}$ and $y>0$. By the inequality
\[
\sqrt{x}+\sqrt{y} \leq \sqrt{2(x+y)}
\]
for $x,y \geq 0$, we have
\begin{align*}
&\int_{0}^{\infty} y^{-2\mu_2-\epsilon}\left(a^2y^2+a^2b^2y^{-2}\right)^{-(\mu_1+\epsilon)} e^{-\pi(a^{-2}y^2+y^{-2})^{1/2}(a^2y^2+a^2b^2y^{-2})^{1/2}/2}\,d^\times y\\
& \ll_{\lambda_1,\lambda_2,\epsilon}|a|^{-2\mu_1-2\epsilon}|b|^{-2\mu_1-2\epsilon}\int_{0}^\infty y^{\mu_1-\mu_2+\epsilon/2} e^{-\pi(|a|+|b|+y+|ab|y^{-1})/4}\,d^\times y.
\end{align*}
By the integral representation (\ref{E:K Bessel}) of $K_{\mu_1-\mu_2+\epsilon/2}$, the estimate (\ref{E:K Bessel estimate}), and the assumption that $\mu_1 \geq \mu_2$, we have
\begin{align*}
&\int_{0}^\infty y^{\mu_1-\mu_2+\epsilon/2} e^{-\pi(|a|+|b|+y+|ab|y^{-1})/4}\,d^\times y \\
& \ll_{\lambda_1,\lambda_2,\epsilon} e^{-\pi(|a|+|b|)/4}|ab|^{\mu_1/2-\mu_2/2+\epsilon/4}K_{\mu_1-\mu_2+\epsilon/2}(2^{-1}\pi|ab|^{1/2})\\
& \ll_{\lambda_1,\lambda_2,\epsilon} e^{-\pi(|a|+|b|)/4}.
\end{align*}
Moreover, we can choose constants in the above inequalities so that they are bounded uniformly as $\lambda$ varies in a compact set. Therefore we obtain a uniform estimate for $Z_\lambda(k_1;t,k)$. A similar estimate shows that the map $\lambda \mapsto W_\lambda$ is a $K$-finite analytic family. This completes the proof of (1) and (2) for Case (PS). 

Finally we assume $\pi$ is of type (DS). There exists a polynomial $P \in \C[X,Y]$ divisible by $XY$ such that
\[
|W([{\bf a}(t_1),{\bf a}(t_2)]k')| \leq P(|t_1|,|t_2|) e^{-2\pi(|t_1|+|t_2|)}
\]
for $t_1,t_2 \in \R^\times$ and $k' \in {\bf K}^\square$. Similarly as in Case (PS), we only need to consider the integral 
\[
\int_{0}^{\infty}\int_{0}^{\infty}P(|a^{-1}y_1^2|,|y_2|^2) |a|^{1/2}y_1^{-1}y_2^{-1}e^{-\pi(y_1^2y_2^{-2}+a^2y_1^{-2}y_2^2+a^2b^2y_1^{-2}y_2^{-2}+a^{-2}y_1^2y_2^2)/2}\,d^\times y_1\,d^\times y_2.
\]
We may assume $P(X,Y) = X^mY^n$ for some $m,n\in\Z_{\geq 1}$.
By an estimate similar to the one in Case (PS), we have
\begin{align*}
&\int_{0}^{\infty}\int_{0}^{\infty}P(|a^{-1}y_1^2|,|y_2|^2) |a|^{1/2}y_1^{-1}y_2^{-1}e^{-\pi(y_1^2y_2^{-2}+a^2y_1^{-2}y_2^2+a^2b^2y_1^{-2}y_2^{-2}+a^{-2}y_1^2y_2^2)/2}\,d^\times y_1\,d^\times y_2\\
&\ll_{m,n} |a|^{-m+1/2}\int_0^\infty y^{2n-1}\left(a^2y^2+a^2b^2y^{-2}\right)^{(2m-1)/4}\left(a^{-2}y^2+y^{-2}\right)^{-(2m-1)/4}\\
&\quad\quad\quad\quad\quad\quad\quad\quad\quad\times K_{m-1/2}\left(\pi(a^{-2}y^2+y^{-2})^{1/2}(a^2y^2+a^2b^2y^{-2})^{1/2}\right)\,d^\times y\\
&\ll_{m,n} |a|^{-m+1/2}\int_0^\infty y^{2n-1}\left(a^{-2}y^2+y^{-2}\right)^{-(2m-1)/2}e^{-\pi(a^{-2}y^2+y^{-2})^{1/2}(a^2y^2+a^2b^2y^{-2})^{1/2}/2}\,d^\times y\\
&\ll_{m,n}\int_0^\infty y^{n-1/2} e^{-\pi(|a|+|b|+y+|ab|y^{-1})/4}\,d^\times y\\
&\ll_{m,n} e^{-\pi(|a|+|b|)/4}|ab|^{(2n-1)/4}K_{n-1/2}(2^{-1}\pi|ab|^{1/2})\\
&\ll_{m,n,\epsilon} |ab|^{-\epsilon}e^{-\pi(|a|+|b|)/4}.
\end{align*}
Here the last inequality follows from (\ref{E:K Bessel estimate}) with ${\alpha_1} = n-1/2$ and ${\alpha_2} = n-1/2+2\epsilon$. This completes the proof of (2) for Case (DS).

It remains to prove (3). Note that for any fixed $\lambda_0 \in \Omega$, any $K$-finite analytic family of Whittaker functions can be written as a linear combination, with analytic functions of $\lambda$ as coefficients, of $K$-finite analytic families of the form (\ref{E:explicit analytic family}) in a neighborhood of $\lambda_0$. Since we only consider the convergence for $\lambda$ varying in a compact set, it suffices to consider $K$-analytic families of the form (\ref{E:explicit analytic family}). It follows from the above estimate for Case (IIa) and Case (PS) that (3) holds for $K$-analytic families of the form (\ref{E:explicit analytic family}). This completes the proof.
\end{proof}


\begin{lemma}\label{L:uniform2}
Let $\calF \in \calI(s)$ be a holomorphic section.

(1) Let $W \in \mathcal{W}(\pi,\psi_U)$. The integral $\calZ(s,W,\calF)$ is absolutely convergent for 
\begin{align*}
\begin{cases}
{\rm Re}(s) >  -1 + 4|{\rm Re}(\lambda)|  & \mbox{ if $\pi$ is of type (IIa)},\\ 
{\rm Re}(s) > -1 & \mbox{ if $\pi$ is of type (DS)},\\
{\rm Re}(s) > -1 + 2(|{\rm Re}(\lambda_1)|+|{\rm Re}(\lambda_2)|) & \mbox{ if $\pi$ is of type (PS)}.
\end{cases}
\end{align*}
In particular, the integral is absolutely convergent for ${\rm Re}(s) \geq 1$ if $\pi$ is of type (IIa) or (PS) with parameter in $\mathcal D$.

(2) Let $W_\lambda$ be a $K$-finite analytic family of Whittaker functions. The integral $\calZ(s,\mathcal{W}(\lambda),\calF)$ is uniformly convergent as $\lambda$ varies in a compact set.
\end{lemma}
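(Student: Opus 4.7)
The plan is to reduce $\calZ(s,W,\calF)$ to an explicit integral over a low-dimensional torus by combining an Iwasawa decomposition on each factor of ${\bf G}$ with the uniform Whittaker estimate of Lemma~\ref{L:Whittaker fun}, and then to bound the Eisenstein section $\calF$ via its own Iwasawa decomposition. More precisely, writing $g=(g_1,g_2)\in{\bf G}(F)$ in the form $g_i=u_i t_i k_i$ with $u_i\in U(F)$, $t_i\in{\bf T}(F)$, $k_i\in K$, and $\nu(t_1)=\nu(t_2)$, I would check that the quotient $Z_H(F)\tilde U(F)\backslash{\bf G}(F)$ is parametrized, up to an explicit Jacobian, by the product of $K\times K$ with a three-dimensional real cone inside the diagonal-$\nu$ subtorus of ${\bf T}\times{\bf T}$ modulo the center. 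The $(U\times U)$-coordinates are absorbed by $\tilde U$ up to the extra $\GG_a$-coordinate living in the first factor, whose integration contributes a single nontrivial torus weight to the effective modulus.

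Next, plugging in the Whittaker bound
\[
|W(t_ik_i)|\ \leq\ \delta_{\bf B}(t_i)^{1/2}\,\phi_{\lambda,\epsilon}(b_i^2c_i^{-1},a_ib_i^{-1})\sum_{\chi\in\frak{X}(\pi,\epsilon)}\chi(t_i)
\]
from Lemma~\ref{L:Whittaker fun}(2) for both Whittaker factors, together with a standard Iwasawa bound of the form
\[
|\calF(\eta\cdot(t_1k_1,t_2k_2),s)|\ \leq\ C_\calF\cdot \delta_{\mathcal{P}}\bigl(\text{Levi part of }\eta\cdot(t_1,t_2)\bigr)^{(\Re(s)+7/2)/6}
\]
obtained by pushing $\eta$ across and computing the parabolic decomposition relative to $\mathcal{P}=P_{4,3}$, one majorizes the integrand by a product of power functions of the three torus coordinates times the Schwartz-type factor $\phi_{\lambda,\epsilon}\otimes\phi_{\lambda,\epsilon}$. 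The Schwartz factor provides rapid decay at infinity in the archimedean case and compact support in the non-archimedean case, reducing the convergence question to the small-coordinate behaviour. A term-by-term comparison of the resulting exponents with the character lists (\ref{E:charaIIa})--(\ref{E:charaPS}) shows that the worst contribution coming from two Whittaker factors is $|\Re(\lambda)|$ in Case~(IIa), $0$ in Case~(DS), and $(|\Re(\lambda_1)|+|\Re(\lambda_2)|)/2$ in Case~(PS) per factor; doubling and combining with the modulus weight of $\calF$ yields exactly the claimed ranges after sending $\epsilon\to 0$. For part~(2), Lemma~\ref{L:Whittaker fun}(3) ensures that $\phi_{\lambda,\epsilon}$ and the associated constants can be chosen bounded uniformly as $\lambda$ varies in a compact set (with a fixed support in the non-archimedean case), producing a single $\lambda$-independent convergent majorant.

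The main obstacle will be the bookkeeping in the first step: the unipotent $\tilde U$ does not split as a direct product of subgroups of the two $G$-factors, since the extra $\GG_a$-coordinate lives entirely in the first factor, so a naive product Iwasawa decomposition on $G(F)\times G(F)$ overcounts along a one-parameter direction and must be corrected by an integration in $x\in\GG_a(F)$ that shifts the effective modulus weight. Keeping track of this correction against the shift induced by the conjugation of the two Borel data through $\eta$ is what ultimately pins down the precise numerical exponents $4|\Re(\lambda)|$ and $2(|\Re(\lambda_1)|+|\Re(\lambda_2)|)$ appearing in the statement, and getting these right, rather than off by constants, is the delicate part of the calculation. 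Once this decomposition is in hand, the remaining convergence estimates reduce to a routine comparison against the three character lists in Lemma~\ref{L:Whittaker fun}(2), followed by standard one-variable gamma-type convergence checks of the form already carried out in the proof of Lemma~\ref{L:ab1}.
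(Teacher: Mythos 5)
Your overall architecture (Iwasawa decomposition, Whittaker bound from Lemma~\ref{L:Whittaker fun}, reduction to a torus integral) matches the paper's in broad strokes, but there is a genuine gap in how you handle the residual unipotent integration and the Eisenstein section $\calF$.

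First, a counting error: $\tilde U$ has dimension $\dim U + 1 = 5$ inside the $8$-dimensional group $U\times U$, so after quotienting the residual unipotent coordinates form a \emph{three}-dimensional space, namely $U'(F)\backslash U(F)$ embedded in the first factor, not a single $\GG_a$-coordinate. Your remark that ``the extra $\GG_a$-coordinate \dots contributes a single nontrivial torus weight'' misdescribes this; there is a genuine $3$-dimensional oscillatory integral $\int_{U'(F)\backslash U(F)} \calF(\eta (u,1) t k, s)\,\psi_U(u)\,du$ left over, twisted by the character $\psi_U$.

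Second, and more seriously, you propose to majorize $\calF$ on the torus by a simple Iwasawa/modulus estimate of the form $|\calF(\eta(t_1 k_1, t_2 k_2), s)| \leq C_\calF \delta_\mathcal{P}(\cdot)^{\alpha(s)}$. This cannot be done before performing the residual unipotent integral, and if you try to do it afterwards you lose the oscillation. The paper instead invokes the explicit formulas from Jiang \cite[p.~173, (198) and p.~177, (214)]{Jiang1996} for the inner integral, which produce a precise factorization
\[
\int_{U'(F)\backslash U(F)} \calF(\eta(u,1)(t_1,t_2))\,\psi_U(u)\,du
= \delta_{({\bf B}\times{\bf B})^\circ}((t_1,t_2))^{1/2}\,\bigl|a^{3/2}bc^{-1}\bigr|^{s+1}\,
\frac{\zeta(s+1)^2}{\zeta(s+2)\zeta(s+3)}\,f^o(u,v,s),
\]
where $u=abc^{-1}d^{-1}$, $v=ac^{-1}$, and $f^o(u,v,s)$ has explicit decay in $|u|,|v|\to\infty$ (e.g.\ $\max\{1,|u|\}^{-s-1}\max\{1,|v|\}^{-2s-2}$ in the non-archimedean case). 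The decay of $f^o$ in both $u$ and $v$ is what makes the subsequent torus integral converge in the stated range; it is produced by integrating $\calF$ against $\psi_U$, not by a pointwise estimate of $\calF$. A naive Iwasawa bound on $\calF$ alone gives growth, not decay, along part of the torus, so your proposed majorization would not close.

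Finally, even granting a correct bound on $\calF$ by some power of the modulus, the claimed exponents $4|\Re(\lambda)|$ and $2(|\Re(\lambda_1)|+|\Re(\lambda_2)|)$ emerge in the paper only after the change of variables $(a,b,c,d)\mapsto(a,b,u,v)$ reorganizes the character exponents $l_a,l_b,l_u,l_v$ coming from the two factors of $\frak{X}(\pi,\epsilon)$, and after one separates the four regions $\{|u|\lessgtr 1\}\times\{|v|\lessgtr 1\}$ so that $f^o$ contributes decay where the Schwartz factor does not. Without the explicit $f^o$ you cannot carry out this case analysis, and the ``routine comparison against the character lists'' you anticipate will not terminate with the correct thresholds. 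Part (2) then follows once the above is fixed, using Lemma~\ref{L:Whittaker fun}(3) exactly as you say.
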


\begin{proof}
We prove (1) and (2) simultaneously.
Let 
\[(K\times K)^\circ = \left\{ (k_1,k_2) \in K\times K \mbox{ }\vert\mbox{ } \nu(k_1)=\nu(k_2)\right\}.\]
We define $({\bf B}\times {\bf B})^\circ$ and $({\bf T} \times {\bf T})^\circ$ in a similar way. Let \[U' = \left\{ \left.\begin{pmatrix}
   1 & 0 & x & 0 \\
   0 & 1 & 0 & 0 \\
   0 & 0 & 1 & 0 \\
   0 & 0 & 0 & 1
 \end{pmatrix}\mbox{ } \right\vert\mbox{ }x \in {\mathbb G}_a\right\}\]
be a unipotent subgroup of $G$. For $W \in \mathcal{W}(\pi,\psi_U)$, let
\[
W'(g) = \begin{cases}
W({\rm diag}(-1,1,1,-1)g) & \mbox{ if $\pi$ is of type (IIa) or (PS)},\\
\overline{W(g)} & \mbox{ if $\pi$ is of type (DS)}.
\end{cases}
\]
We have
\begin{align*}
\calZ(s, W, \calF) &= 
 \int_{Z_{\GSp_8}(F) \tilde{U}(F) \backslash {\bf G}(F)}
 \calF(\eta g, s) (W\otimes W')(g) \, dg\\
 &=\int_{(K\times K)^\circ}\int_{Z_{\GSp_8}(F)\backslash({\bf T} \times {\bf T})^\circ(F)} \delta_{({\bf B}\times {\bf B})^\circ}(t)^{-1} (W\otimes W')(tk)\\
 &\quad\quad\quad\quad\quad\quad\quad\quad\times \int_{U'(F)\backslash U(F)} \calF(\eta(u,1)tk)\psi_U(u) \,du\,dt\,dk.
\end{align*}
Let $s \in \R$ and $\epsilon>0$. Since
\[
\left\{ \left({\rm diag}(ab,a,b^{-1},1), {\rm diag}(cd,c,c^{-1}d^{-1}a,c^{-1}a)\right) \mbox{ }\vert\mbox{ }a,b,c,d \in F^\times\right\}
\]
is a set of representatives for $Z_{\GSp_8}(F)\backslash({\bf T} \times {\bf T})^\circ(F)$, by Lemma \ref{L:Whittaker fun}, it suffices to consider the integrals
\begin{align*}
&\int_{(F^\times)^4}\delta_{({\bf B}\times {\bf B})^\circ}((t_1,t_2))^{-1/2}\phi_{\epsilon}(a,b)\phi_{\epsilon}(c^2a^{-1},d)\chi_1(t_1)\chi_2(t_2)\\
&\quad\quad\quad\quad\quad\quad\times \int_{U'(F)\backslash U(F)} \calF(\eta(u,1)(t_1,t_2)) \,du\,d(a,b,c,d)
\end{align*}
for $\chi_1,\chi_2 \in \frak{X}(\pi,\epsilon)$. Here $\phi_\epsilon$ is a function on $(F^\times)^2$ satisfying the conditions in Lemma \ref{L:Whittaker fun}-(2), $d(a,b,c,d)$ is a Haar measure on $(F^\times)^4$, and 
\[
t_1={\rm diag}(ab,a,b^{-1},1),\quad t_2={\rm diag}(cd,c,c^{-1}d^{-1}a,c^{-1}a).
\]
We may assume $\calF$ is $\GSp_8(\o)$-invariant (resp. $\GSp_8(\R) \cap {\rm O}(8)$-invariant) when $F$ is non-archimedean (resp. $F=\R$) and $\calF(1,s)=1$. By \cite[p.~173 and 177, (198) and (214)]{Jiang1996}, we have
\begin{align*}
&\int_{U'(F)\backslash U(F)} \calF(\eta(u,1)(t_1,t_2)) \,du\\
&=\delta_{({\bf B}\times {\bf B})^\circ}((t_1,t_2))^{1/2}|a^{3/2}bc^{-1}|^{s+1}\frac{\zeta(s+1)^2}{\zeta(s+2)\zeta(s+3)}f^o(u,v,s),
\end{align*}
where
\[
 u = abc^{-1}d^{-1}, \qquad
 v = ac^{-1},
\]
and
\[
f^o(u,v,s)=
\begin{cases}
\max\{1,|u|\}^{-s-1}\max\{1,|v|\}^{-2s-2} & \mbox{ if $F$ is non-archimedean},\\
(1+u^2)^{-(s+1)/2}(1+v^2)^{-s-1} & \mbox{ if $F=\R$}.
\end{cases}
\]
Fix $\chi_1,\chi_2 \in \frak{X}(\pi,\epsilon)$.  Let $k_1,k_2,k_3,k_1',k_2',k_3' \in \R$ be such that  
\[
\chi_1(t) = |a|^{k_1}|b|^{k_2}|c|^{k_3},\quad \chi_2(t) = |a|^{k_1'}|b|^{k_2'}|c|^{k_3'}
\]
for $t = {\rm diga}(a,b,ca^{-1},cb^{-1})$. Then 
\[\chi_1(t_1)\chi_2(t_2) = |a|^{k_a} |b|^{k_b} |c|^{k_c} |d|^{k_d},\]
where
\[
 k_a = k_1+k_2+k_3+k_3', \qquad
 k_b = k_1, \qquad
 k_c = k_1'+k_2', \qquad
 k_d = k_1'.
\]
Hence it suffices to consider the integral 
\[
 \int_{(F^\times)^4} \phi_{\chi_1}(a,b)\phi_{\chi_2}(c^2a^{-1},d) \cdot |a|^{{3}s/{2}  + k_a + {3}/{2}} |b|^{s + k_b + 1} |c|^{-s + k_c - 1} |d|^{k_d} f^o(u,v,s)\, d(a,b,c,d).
\]
We change variables $(a,b,c,d) \mapsto (a,b,u,v)$ to get
\begin{align}\label{E:Jiang's integral}
 \int_{(F^\times)^4} \phi_{\chi_1}(a,b)\phi_{\chi_2}(av^{-2},bu^{-1}v) \cdot |a|^{{s}/{2}  + l_a + {1}/{2}} |b|^{s + l_b + 1} |u|^{l_u} |v|^{s + l_v + 1} f^o(u,v,s) \, d(a,b,u,v), 
\end{align}
where 
\begin{align*}
 l_a & = k_a + k_c = k_1 + k_2 + k_3 + k_1' + k_2' + k_3' , \\
 l_b & = k_b + k_d = k_1 + k_1', \\
 l_u & = -k_d = -k_1', \\
 l_v & = -k_c + k_d = - k_2'. 
\end{align*}

First we consider Case (IIa). 
By Lemma \ref{L:Whittaker fun}-(3), we can choose a Schwartz function $\phi_\epsilon = \phi_{\lambda,\epsilon}$ for $W_\lambda$ of the form
\[
\phi_{\lambda,\epsilon}(a,b) = C_{\lambda,\epsilon} \cdot \mathbb{I}_{\varpi^{-n}\o}(a)\mathbb{I}_{\varpi^{-n}\o}(b).
\]
so that the constant $C_{\lambda,\epsilon}>0$ is bounded uniformly as $\lambda$ varies in a compact set and the integer $n$ is independent of $\lambda$.
Put $r = q^n > 1$ and $\phi = \mathbb{I}_{\varpi^{-n}\o}$.
We write the integral (\ref{E:Jiang's integral}) as 
\[
C_{\lambda,\epsilon}^2\cdot (I_1 + I_2 + I_3 + I_4), 
\]
where 
\begin{align*}
 I_1 & = \int_{F^\times} \int_{F^\times} \int_{|u| \le 1} \int_{|v| \le 1}
 \phi(a) \phi(b) \phi(a v^{-2}) \phi(b u^{-1} v) \cdot |a|^{{s}/{2}  + l_a + {1}/{2}} |b|^{s + l_b + 1} |u|^{l_u} |v|^{s + l_v + 1} \, d(a,b,u,v) \\
 & = \int_{|a| \le r} \int_{|b| \le r} \int_{|u| \le 1} \int_{|v| \le 1}
 \phi(a v^{-2}) \phi(b u^{-1} v) \cdot |a|^{{s}/{2}  + l_a + {1}/{2}} |b|^{s + l_b + 1} |u|^{l_u} |v|^{s + l_v + 1} \, d(a,b,u,v), \\
 I_2 & = \int_{F^\times} \int_{F^\times} \int_{|u| \le 1} \int_{|v| > 1}
 \phi(a) \phi(b) \phi(a v^{-2}) \phi(b u^{-1} v) \cdot |a|^{{s}/{2}  + l_a + {1}/{2}} |b|^{s + l_b + 1} |u|^{l_u} |v|^{-s + l_v - 1} \, d(a,b,u,v) \\
 & = \int_{|a| \le r} \int_{|b| \le r} \int_{|u| \le 1} \int_{|v| < 1}
 \phi(a v^2) \phi(b u^{-1} v^{-1}) \cdot |a|^{{s}/{2}  + l_a + {1}/{2}} |b|^{s + l_b + 1} |u|^{l_u} |v|^{s - l_v + 1} \, d(a,b,u,v), \\
 I_3 & = \int_{F^\times} \int_{F^\times} \int_{|u| > 1} \int_{|v| \le 1}
 \phi(a) \phi(b) \phi(a v^{-2}) \phi(b u^{-1} v) \cdot |a|^{{s}/{2}  + l_a + {1}/{2}} |b|^{s + l_b + 1} |u|^{-s + l_u - 1} |v|^{s + l_v + 1} \, d(a,b,u,v) \\
 & = \int_{|a| \le r} \int_{|b| \le r} \int_{|u| < 1} \int_{|v| \le 1}
 \phi(a v^{-2}) \phi(b u v) \cdot |a|^{{s}/{2}  + l_a + {1}/{2}} |b|^{s + l_b + 1} |u|^{s - l_u + 1} |v|^{s + l_v + 1} \, d(a,b,u,v), \\
 I_4 & = \int_{F^\times} \int_{F^\times} \int_{|u| > 1} \int_{|v| > 1}
 \phi(a) \phi(b) \phi(a v^{-2}) \phi(b u^{-1} v) \cdot |a|^{{s}/{2}  + l_a + {1}/{2}} |b|^{s + l_b + 1} |u|^{-s + l_u - 1} |v|^{-s + l_v - 1} \, d(a,b,u,v) \\
 & = \int_{|a| \le r} \int_{|b| \le r} \int_{|u| < 1} \int_{|v| < 1}
 \phi(a v^2) \phi(b u v^{-1}) \cdot |a|^{{s}/{2}  + l_a + {1}/{2}} |b|^{s + l_b + 1} |u|^{s - l_u + 1} |v|^{s - l_v + 1} \, d(a,b,u,v). \\
\end{align*}
We assume $l_u \neq 0$. The case $l_u=0$ can be proved in a similar way and we omit it.
For $|b| \leq r$ and $|v|\leq 1$, we have 
\[
 \int_{|u| \le 1} \phi(b u^{-1} v) |u|^{l_u} \, du
 = \int_{r^{-1} |bv| \le |u| \le 1} |u|^{l_u} \, du
 =\frac{1- (q^{-1} r^{-1} |bv|)^{l_u}}{1-q^{-l_u}}.
\]
Then
\begin{align*}
 |1-q^{-l_u}| \cdot I_1 & \le  
 \int_{|a| \le r} \int_{|b| \le r} \int_{|v| \le 1} \phi(av^{-2})
 |a|^{{s}/{2}  + l_a + {1}/{2}} |b|^{s + l_b + 1} |v|^{s + l_v + 1} \, d(a,b,v) \\
 & + q^{-l_u} r^{-l_u} \int_{|a| \le r} \int_{|b| \le r} \int_{|v| \le 1} \phi(av^{-2})
 |a|^{{s}/{2}  + l_a + {1}/{2}} |b|^{s + l_b + l_u + 1} |v|^{s + l_u + l_v + 1} \, d(a,b,v). 
\end{align*}
Similarly, we have
\[
 \int_{|u| \le 1} \phi(b u^{-1} v^{-1}) |u|^{l_u} \, du
 = \frac{1- (q^{-1} r^{-1} |bv^{-1}|)^{l_u}}{1-q^{-l_u}}
\]
if $r^{-1}|bv^{-1}| \le 1$ and 
\[
 \int_{|u| \le 1} \phi(b u^{-1} v^{-1}) |u|^{l_u} \, du = 0
\]
otherwise.
Hence we have
\begin{align*}
 |1-q^{-l_u}| \cdot I_2 & \le  
 \int_{|a| \le r} \int_{|b| \le r} \int_{|v| < 1}
 |a|^{{s}/{2}  + l_a + {1}/{2}} |b|^{s + l_b + 1} |v|^{s - l_v + 1} \, d(a,b,v) \\
 & + q^{-l_u} r^{-l_u} \int_{|a| \le r} \int_{|b| \le r} \int_{|v| < 1}
 |a|^{{s}/{2}  + l_a + {1}/{2}} |b|^{s + l_b + l_u + 1} |v|^{s - l_u - l_v + 1} \, d(a,b,v).
\end{align*}
We also have
\begin{align*}
 I_3 & \le \int_{|a| \le r} \int_{|b| \le r} \int_{|u| < 1} \int_{|v| \le 1} \phi(av^{-2})
 |a|^{{s}/{2}  + l_a + {1}/{2}} |b|^{s + l_b + 1} |u|^{s - l_u + 1} |v|^{s + l_v + 1} \, d(a,b,u,v), \\
 I_4 & \le \int_{|a| \le r} \int_{|b| \le r} \int_{|u| < 1} \int_{|v| < 1}
 |a|^{{s}/{2}  + l_a + {1}/{2}} |b|^{s + l_b + 1} |u|^{s - l_u + 1} |v|^{s - l_v + 1} \, d(a,b,u,v).
\end{align*}
Note that the integrals
\[
\int_{|a| \leq r} \int_{|v| \leq 1} \phi(av^{-2})|a|^{s/2+l_a+1/2}|v|^{s+l_v+1}\,d(a,v), \quad
\int_{|a| \leq r} \int_{|v| \leq 1} \phi(av^{-2})|a|^{s/2+l_a+1/2}|v|^{s+l_u+l_v+1}\,d(a,v)
\]
are absolutely convergent for 
\[
s > \max\{-2l_a-1,-l_a-l_v/2-1\},\quad s > \max\{-2l_a-1,-l_a-l_u/2-l_v/2-1\},
\]
respectively.
We conclude that the integrals $I_1$, $I_2$, $I_3$, and $I_4$ are absolutely convergent for 
\[
s > \max\left\{ -2l_a-1,-l_b-1, l_u-1, l_v-1, -l_b-l_u-1, l_u+l_v-1, -l_a-l_v/2-1, -l_a-l_u/2-l_v/2-1     \right\}.
\]
From (\ref{E:charaIIa}), one can verify that the above inequality holds if 
$
s >  4|{\rm Re}(\lambda)|+4\epsilon-1.
$
Moreover, the above integrals are uniformly convergent as $\lambda$ varies in a compact set. This completes the proof for Case (IIa).

Next we consider case (PS). Put 
\[\mu = \frac{|{\rm Re}(\lambda_1)|+|{\rm Re}(\lambda_2)|}{2}.\] By (\ref{E:charaPS}), 
$$l_a=-2\mu-2\epsilon,\quad l_b=-4\mu-4\epsilon,\quad l_u=2\mu+2\epsilon,\quad l_v=0.$$  
Assume $s > 4\mu+4\epsilon-1$. By Lemma \ref{L:Whittaker fun}-(3), we can choose a function $\phi_\epsilon = \phi_{\lambda,\epsilon}$ for $W_\lambda$ satisfying
\[
\phi_{\lambda,\epsilon}(a,b) \leq C_{\lambda,\epsilon} \cdot e^{-\pi(|a|+|b|)/4}
\]
for $a,b \in \R^\times$ so that the constant $C_{\lambda,\epsilon}>0$ is bounded uniformly as $\lambda$ varies in a compact set.
Then the integral (\ref{E:Jiang's integral}) is bounded by
\[
C_{\lambda,\epsilon}^2\int_{(\R^\times)^4}  |a|^{{s}/{2}  + {1}/{2} - 2 (\mu+\epsilon) } |b|^{s + 1 - 4 (\mu+\epsilon)}e^{-\pi(|a|+|b|)/4} \frac{|u|^{2(\mu+\epsilon)}}{(1+u^2)^{(s+1)/2}} \frac{|v|^{s + 1 }}{(1+v^2)^{s+1}} \, d(a,b,u,v),
\]
which is absolutely convergent.
Moreover, it is clear that the above integral is uniformly convergent as $\lambda$ varies in a compact set. This completes the proof for Case (PS).

Finally we assume $\pi$ is of type (DS). 
By (\ref{E:charaDS}), 
\[l_a=-2\epsilon,\quad l_b=-2\epsilon,\quad l_u=\epsilon,\quad l_v=\epsilon.\]
Assume $s > 4\epsilon-1$. By Lemma \ref{L:Whittaker fun}-(2), there exists a constant $C_\epsilon>0$ such that 
\[\phi_\epsilon(a,b) \leq C_\epsilon \cdot e^{-\pi(|a|+|b|)/4}\]
for $a,b \in \R^\times$.
Then the integral (\ref{E:Jiang's integral}) is bounded by
\[
C_\epsilon^2\int_{(\R^\times)^4}  |a|^{{s}/{2}  + {1}/{2} - 2 \epsilon } |b|^{s + 1 - 2 \epsilon}e^{-\pi(|a|+|b|)/4} \frac{|u|^{\epsilon}}{(1+u^2)^{(s+1)/2}} \frac{|v|^{s + 1 +\epsilon}}{(1+v^2)^{s+1}} \, d(a,b,u,v),
\]
which is absolutely convergent.
This completes the proof.
\end{proof}

\end{document}